\newtheorem{theorem}{Theorem}[section]
\newtheorem{lemma}[theorem]{Lemma}
\newtheorem{proposition}[theorem]{Proposition}
\newtheorem{corollary}[theorem]{Corollary}
\newtheorem{conjecture}[theorem]{Conjecture}
\theoremstyle{definition}
\newtheorem*{ack}{Acknowledgements}
\newtheorem*{con}{Conventions}
\newtheorem{remark}[theorem]{Remark}
\newtheorem{example}[theorem]{Example}
\newtheorem{definition}[theorem]{Definition}
\numberwithin{equation}{section} \numberwithin{figure}{section}
 \DeclareMathOperator{\NS}{NS}
\DeclareMathOperator{\Spec}{Spec}
\DeclareMathOperator{\an}{an}
\DeclareMathOperator{\Hom}{Hom} 
\DeclareMathOperator{\im}{Im}
\newcommand{\Qbar}{\overline{\QQ}}
\newcommand\ZZ{\mathbb{Z}}
\newcommand\QQ{\mathbb{Q}}
\newcommand\CC{\mathbb{C}}
\newcommand\GG{\mathbb{G}}
\newcommand\Gm{\GG_\mathrm{m}}
\newcommand\Gmu[1]{\GG_{\mathrm{m},#1}}
\newcommand\OO{\mathcal{O}}
\newcommand{\univcurvebar}[1]{\overline{\mathcal{U}_{#1}}}
\definecolor{orange}{rgb}{1,0.5,0}
\title[Boundedness in families]{Boundedness in families   with applications to arithmetic hyperbolicity}
 \author{Raymond van Bommel}
 \address{Raymond van Bommel \\
Institut f\"{u}r Mathematik\\
Johannes Gutenberg-Universit\"{a}t Mainz\\
Staudingerweg 9, 55128 Mainz\\
Germany.}
\email{bommel@uni-mainz.de}
\author{Ariyan Javanpeykar}
\address{Ariyan Javanpeykar \\
Institut f\"{u}r Mathematik\\
Johannes Gutenberg-Universit\"{a}t Mainz\\
Staudingerweg 9, 55128 Mainz\\
Germany.}
\email{peykar@uni-mainz.de}
\author{Ljudmila Kamenova}
\address{Ljudmila Kamenova \\ 
Department of Mathematics \\
Stony Brook University \\ 
Stony Brook, NY 11794-3651 \\ 
USA.}
\email{kamenova@math.stonybrook.edu}
\subjclass[2010]
{32Q45
(37P45,	
11R58,
 14J40,
14J50, 
14G05)}
\keywords{Hyperbolicity, moduli spaces of maps, boundedness, Hom-schemes, arithmetic hyperbolicity, semi-abelian varieties}
\begin{document}

\begin{abstract} Motivated by conjectures of Demailly, Green--Griffiths,   Lang, and Vojta, we show that several  notions related to hyperbolicity behave similarly in families. We apply our results to show the persistence of arithmetic hyperbolicity along field extensions for projective normal surfaces with nonzero irregularity. These results rely on the mild boundedness of semi-abelian varieties. We also introduce and study the notion of pseudo-algebraic hyperbolicity which extends Demailly's notion of algebraic hyperbolicity for projective schemes.
\end{abstract}

\maketitle
 
\tableofcontents

\thispagestyle{empty}

\section{Introduction}  
The aim of this paper is to provide   evidence for the following conjecture due in part to Demailly, Green--Griffiths,   Lang and Vojta. The following conjecture is a consequence of conjectures appearing in  \cite[\S0.3]{Abr},  \cite[Conj.~XV.4.3]{CornellSilverman}, \cite{Demailly}, \cite{GrGr}, \cite[\S6]{Javan2},  \cite{Lang2}, and \cite[Conj.~4.3]{Vojta3}. 
\begin{conjecture}[Demailly, Green--Griffiths, Lang, Vojta]\label{conj} Let $X$ be a projective variety over a field $F$ of characteristic zero. Then the following statements are equivalent.
\begin{enumerate}[label=(\roman*)]
\item The projective variety $X$ is algebraically hyperbolic over $F$.
\item The projective variety $X$ is bounded over $F$.
\item For all $n\geq 1$ and $m\geq 1$,  the projective variety $X$ is  $(n,m)$-bounded over $F$.
\item Every integral closed subvariety of $X$ is of general type.
\item The projective variety $X$ is groupless over $F$.
\end{enumerate}
\end{conjecture}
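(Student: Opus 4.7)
The plan is to prove the equivalences by a cycle of implications, isolating the two deep inputs that currently place Conjecture~\ref{conj} out of reach: a Green--Griffiths--Lang-type statement that non-general-type subvarieties must carry non-constant images of semi-abelian varieties, and a Lang--Vojta-type statement that varieties whose subvarieties are all of general type have bounded Hom-schemes.

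First I would dispose of the formal implications. The implication (iii) $\Rightarrow$ (ii) is immediate on taking $n=m=1$. The implication (iv) $\Rightarrow$ (v) is tautological, since a non-constant image of $\Gm$ or of an abelian variety is never of general type. For (ii) $\Rightarrow$ (v), I would use that boundedness of $X$ forces the Hom-scheme $\Hom(G,X)$ from a semi-abelian variety $G$ to be of finite type; translation invariance by $G$ on itself then collapses any such family to a single orbit, forcing each morphism $G \to X$ to be constant. Finally, (i) $\Rightarrow$ (v) follows because algebraic hyperbolicity bounds the geometric genus of curves in $X$ by a function linear in the polarized degree, ruling out rational and elliptic curves and hence excluding non-constant images of $\Gm$ and of abelian varieties.

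The substantive content is in (v) $\Rightarrow$ (iv) and in the upgrade from (iv) to the three boundedness statements (i), (ii), and (iii). For (v) $\Rightarrow$ (iv), I would process each integral subvariety $Y \subset X$ via a Kodaira--Iitaka fibration: if $Y$ is not of general type, then---granting the Iitaka conjecture $C_{n,m}$ and Ueno's structure results for varieties of intermediate Kodaira dimension---some iterated Albanese/Iitaka analysis should produce a non-constant map from a positive-dimensional semi-abelian variety into $Y$, contradicting grouplessness of $X$. For (iv) $\Rightarrow$ (ii) and (iii), I would combine the mild boundedness of semi-abelian varieties developed in this paper with Matsusaka-type boundedness for moduli of canonically polarized varieties, controlling $\Hom$-schemes via canonical heights of canonical models of subvarieties.

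The main obstacle is unambiguously (v) $\Rightarrow$ (iv): this is the algebro-geometric heart of the Green--Griffiths--Lang conjecture and is wide open, unknown even for smooth projective surfaces. A secondary obstacle is extracting algebraic hyperbolicity (i) from the other conditions without an a priori effective bound on subvariety degrees; here I would hope to leverage the notion of pseudo-algebraic hyperbolicity introduced later in the paper to reduce to geometrically structured situations, such as projective normal surfaces with nonzero irregularity, where the mild boundedness of semi-abelian varieties already delivers the required control.
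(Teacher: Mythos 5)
The statement you were asked to prove is a \emph{conjecture}, not a theorem: the paper states it precisely as a wide-open expectation (attributed to Demailly, Green--Griffiths, Lang, and Vojta) that motivates the rest of the work, and proves none of the nontrivial implications. There is therefore no proof in the paper to compare your proposal against. To your credit, you frame your outline as a map of the difficulties rather than a proof, and you correctly single out (v)~$\Rightarrow$~(iv) --- the algebraic Green--Griffiths--Lang statement --- as the deepest open input; your plan for (iv)~$\Rightarrow$~(ii)/(iii) likewise explicitly invokes further unproven structure theory (Iitaka's $C_{n,m}$, Matsusaka-type boundedness). So the ``proof'' is conditional on conjectures at least as strong as the one being proved, and you acknowledge this.

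Two of the implications you label as formal are not quite formal and would need more care. First, (iii)~$\Rightarrow$~(ii) is not ``immediate on taking $n=m=1$'': the paper defines bounded as $(n,0)$-bounded for every $n$, whereas (iii) only gives $(n,m)$-bounded for $m\geq 1$, so you must pass from the pointed Hom-schemes being of finite type to the unpointed $\underline{\Hom}_k(Y,X)$ being of finite type; this reduction does hold (it is essentially the content of the comparisons between $(1,1)$-boundedness and $1$-boundedness in \cite{JKa}, echoed here in Theorem~\ref{thm:test_on_reds} and Proposition~\ref{prop:mnbounded}), but it is an argument, not a tautology. Second, in (ii)~$\Rightarrow$~(v), translations of a non-constant $A\to X$ all have the \emph{same} Hilbert polynomial, so ``collapsing to a single orbit'' does not by itself contradict finite type; the standard route is instead to compose with multiplication-by-$n$ on $A$ and restrict to a fixed curve $C\subset A$, producing maps $C\to X$ of unbounded degree and hence infinitely many components of $\underline{\Hom}_k(C,X)$ (compare the argument in the proof of Lemma~\ref{lem:four-statements}). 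Neither correction changes the main verdict: the conjecture is open, and no unconditional proof can be extracted from, or compared against, this paper.
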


Let us explain some of the terminology appearing in Conjecture \ref{conj}.  First, throughout this paper,  we will let $k$ be an algebraically closed field of characteristic zero.  Now,  a projective variety $X$ over  $k$   is said to be  \emph{algebraically hyperbolic over $k$} if there is an ample line bundle $\mathcal{L}$ on $X$ and a real number $\alpha_{X,\mathcal{L}}$ such that, for every smooth projective irreducible curve $C$ over $k$ of genus $g$ and every morphism $f:C\to X$, the inequality \[ \deg_C f^\ast \mathcal{L} \leq \alpha_{X,\mathcal{L}} \cdot g\] holds.

For $n\geq 1$ and $m\geq 0$ integers,  we   follow the terminology introduced in \cite[\S4]{JKa} and refer to a projective variety as $(n,m)$-bounded over the algebraically closed field $k$  if, for  every projective normal integral scheme $Y$ over $k$ of dimension at most $n$, all pairwise distinct points $y_1,\ldots, y_m \in Y(k)$, and all $x_1,\ldots,x_m\in X(k)$, the scheme \[
\underline{\mathrm{Hom}}_k([Y,y_1,\ldots,y_m], [X,x_1,\ldots,x_m])
\] parametrizing morphisms $f:Y\to X$ with $f(y_1) = x_1, \ldots, f(y_m) = x_m$
 is of finite type over $k$; we refer the reader to \cite[\S 3]{Debarrebook1} for basic definitions and properties of Hom-schemes. We say that $X$ is $n$-bounded over $k$ if it is $(n,0)$-bounded over $k$ and we say that $X$ is bounded over $k$ if it is $n$-bounded over $k$ for every integer $n$.  We refer the reader to \cite{JKa} for a discussion of the relations between algebraic hyperbolicity, boundedness, and $(n,m)$-boundedness.

 More generally,  if $X$ is a projective variety over a (not necessarily algebraically closed) field $F$ of characteristic zero, we say that $X$ is algebraically hyperbolic over $F$ if $X_{\overline{F}}$ is algebraically hyperbolic over $\overline{F}$, where $F\to \overline{F}$ is some algebraic closure. We define the notions of $n$-boundedness, boundedness and $(n,m)$-boundedness over $F$ in a similar manner.  

We   follow standard terminology and say that an integral proper scheme $X$ over    $F$ is of general type if it has a desingularisation $X'\to X$ such that $\omega_{X'/k}$ is a big line bundle. Also, we will say that a proper scheme $X$ over $F$  is of general type if, for every irreducible component $Y$ of $X$, the  reduced closed subscheme $Y_\mathrm{red}$ is of general type. Finally, a proper variety $X$ over $F$ is groupless if, for every abelian variety $A$ over $\overline{F}$, every morphism $A\to X_{\overline{F}}$ is constant; see  \cite{JAut, JKa, JVez, JXie} for basic properties of groupless varieties.

   Our starting point in this paper is the fact that the notion of being of general type is an open condition in families of projective varieties. This statement can be deduced from results of Siu, Kawamata, and Nakayama (see  \cite{NakayamaBook}).

\begin{theorem}[Nakayama]\label{thm:nakayama}
Let $X\to S$ be a proper morphism of schemes. Then the set of $s$ in $S$ such that $X_s$ is of general type is an open subscheme of $S$.
\end{theorem}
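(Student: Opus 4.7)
The statement is local on $S$, so I may assume $S$ is Noetherian. Let $U \subset S$ denote the locus of $s$ such that $X_s$ is of general type. By Chevalley's characterisation of open subsets of a Noetherian scheme, it suffices to show $U$ is constructible and stable under generization. Both properties will follow from Siu--Kawamata--Nakayama's invariance of plurigenera after a sequence of standard reductions.

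For constructibility I argue by Noetherian induction: it is enough to find, for every integral closed subscheme $T \subseteq S$, a dense open $V \subseteq T$ on which the property ``$X_s$ is of general type'' is constant, and then apply the inductive hypothesis to $T \setminus V$. Replacing $S$ by $T$, I may assume $S$ is integral, and after restricting to a dense open I may further assume $X \to S$ is flat. Using the relative irreducible component decomposition for flat proper families (EGA IV) together with the definition of general type for reducible schemes (which delegates the question to the reduced components), I may arrange that $X \to S$ has geometrically integral fibres. Hironaka's resolution of singularities, applied to the total space in characteristic zero (implicit in the notion of general type), yields a proper birational morphism $\pi \colon \widetilde{X} \to X$ with $\widetilde{X}$ smooth; by generic smoothness I shrink $S$ further so that $\widetilde{X} \to S$ becomes a smooth projective morphism and each $\pi_s \colon \widetilde{X}_s \to X_s$ remains a desingularisation. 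The Siu--Kawamata--Nakayama invariance of plurigenera then forces $s \mapsto h^0(\widetilde{X}_s, \omega_{\widetilde{X}_s}^{\otimes m})$ to be constant on $V$, so ``$\widetilde{X}_s$ is of general type'' and, by birational invariance, ``$X_s$ is of general type'' are constant conditions on $V$.

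Stability of $U$ under generization reduces to the case $S = \Spec R$ with $R$ a discrete valuation ring, by passing through a trait that meets a chosen specialization $s_1 \rightsquigarrow s_0$; over this trait the same resolution-plus-invariance strategy transports the general type property between the generic and closed points. The main obstacle is invoking Siu--Kawamata--Nakayama's invariance of plurigenera: ordinary upper semicontinuity of cohomology runs in the wrong direction to produce openness of the general type locus, and one genuinely needs the algebraic-geometric form of Siu's theorem developed in \cite{NakayamaBook}. Once that input is available, the remaining ingredients---Noetherian stratification, the flatness and relative irreducible decompositions of EGA IV, and Hironaka's resolution applied fibrewise on a dense open---are routine.
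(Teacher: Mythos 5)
The paper does not prove this statement; it simply cites \cite{NakayamaBook}, so there is no in-paper argument to compare yours against. Judged on its own terms, the constructibility half of your sketch (Noetherian induction, generic flatness, relative component decomposition, resolving the total space and shrinking the base so that the resolved family becomes smooth, then applying Siu's invariance of plurigenera on that dense open) is a sensible outline. The genuine gap is in the generization step. Over a discrete valuation ring $R$, resolving the total space does \emph{not} produce a smooth morphism $\widetilde X \to \Spec R$: the special fibre $\widetilde X_{s_0}$ is merely a reduced Cartier divisor in a regular scheme, and can be very singular, possibly non-normal. Siu--Kawamata--Nakayama invariance of plurigenera is a statement about the fibres of a \emph{smooth} projective family, so it does not apply to $\widetilde X \to \Spec R$, and the phrase ``the same resolution-plus-invariance strategy transports the general type property'' does no work. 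To move general type from the closed to the generic fibre over a trait one needs strictly more input: a pluricanonical extension theorem across the log-smooth pair $(\widetilde X,\widetilde X_{s_0})$ in the spirit of Takayama and Hacon--McKernan, together with a comparison between $h^0(\widetilde X_{s_0},\omega_{\widetilde X_{s_0}}^{\otimes m})$ and the plurigenera of a resolution of $\widetilde X_{s_0}$ --- and this comparison fails in the direction you need when $\widetilde X_{s_0}$ is non-normal or has worse-than-canonical singularities. That is the actual content of Nakayama's theorem; it is not a routine corollary of the smooth-family case.

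A second, independent issue is that the statement as written needs a flatness hypothesis, which the paper tacitly uses in all its applications (for instance the polarized families entering Theorem~\ref{thm:n}). Without flatness one can glue a smooth projective general type surface $Y$ over the residue field to $\mathbb{P}^1_R$ along a smooth rational curve $C\subset Y$ identified with $\mathbb{P}^1_{s_0}$; the resulting proper scheme $X\to\Spec R$ has special fibre whose unique irreducible component is $Y$, hence of general type, while $X_\eta=\mathbb{P}^1_K$ is not. So your Noetherian-induction framework is the right shape, but both the flatness reduction and, especially, the transfer of general type across a degeneration need to be argued rather than asserted.
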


Lang notes that ``the extent to which hyperbolicity is   open for the Zariski topology in families (of projective varieties)'' is unclear \cite[p.~176]{Lang2}. Our aim in  this paper is to investigate  how every notion of hyperbolicity appearing in Conjecture \ref{conj:new} behaves in families and to show that all these notions are ``Zariski-countable open''.  It seems worth stressing that  it is not known whether \emph{any} notion of hyperbolicity appearing in Conjecture \ref{conj:new} is a Zariski open condition in families.  

\subsection{Stable under generisation}
  Nakayama's theorem implies that  the locus of $s$ in $S$ such that every subvariety of $X_s$ is of general type   is stable under generisation.  Our first result confirms that every notion appearing in Conjecture \ref{conj} is in fact stable under generisation.

\begin{theorem}[Generisation]\label{thm:generisation}
Let $S$ be an integral noetherian scheme and let $X \to S$ be a projective   morphism. Let $s \in S$ be a closed point with   residue field $\kappa$ of characteristic 0. Let $X_{\overline{{K(S)}}}$ be the geometric generic fibre of $X \to S$.
\begin{enumerate}[label=(\roman*)]
\item If every integral closed subvariety of $X_{s}$ is of general type, then every integral closed subvariety of $X_{ \overline{{K(S)}}}$ is of general type.
\item If $X_s$ is groupless, then $X_{ \overline{{K(S)}}}$ is groupless.
\item If $X_s$ is an algebraically hyperbolic projective variety, then $X_{\overline{ {K(S)}}}$ is algebraically hyperbolic.
\item If $X_s$ is a bounded   projective variety over $\kappa$, then $X_{\overline{ {K(S)}}}$ is bounded over $ \overline{{K(S)}}$.
\item Let $n \geq 1$ and $m \geq 0$ be integers.  If $X_s$ is an $(n,m)$-bounded projective variety over $\kappa$, then $X_{ \overline{{K(S)}}}$ is $(n,m)$-bounded over $\overline{ {K(S)}}$.
\end{enumerate}
\end{theorem}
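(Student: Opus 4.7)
The plan is to reduce to the case where $S = \mathrm{Spec}\, R$ is the spectrum of a discrete valuation ring, with closed point $s$ and generic point $\eta$. By replacing $S$ with $\mathrm{Spec}\,\mathcal{O}_{S,s}$ and walking along a chain of specializations from the generic point of $S$ down to $s$, everything reduces to passing through a single codimension-one specialization, which one arranges by localizing and normalizing to obtain a trait. In each part I would further, after finite base change of $R$, arrange that the ``bad'' geometric object over $\overline{K(S)}$ I am trying to rule out is already defined over $K(S)$; the residue field extensions produced are finite, so specialized bad objects on $X_s$ descend (after pushforward/image or extension of scalars) to bad objects on $X_{\overline{s}}$.

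For part (i), suppose $Z \subset X_\eta$ is an integral closed subvariety which is not of general type. Let $\overline{Z} \subset X_R$ be its scheme-theoretic closure. Since all associated points of $\overline{Z}$ lie above $\eta$ and $R$ is a DVR, the morphism $\overline{Z} \to \mathrm{Spec}\, R$ is flat, and it is proper because $X \to S$ is. Theorem~\ref{thm:nakayama} then says that the locus of $t \in \mathrm{Spec}\, R$ with $\overline{Z}_t$ of general type is open; as it misses $\eta$, it is empty, so $\overline{Z}_s$ is not of general type. Hence some irreducible component of $\overline{Z}_s$ has its reduction not of general type, giving an integral closed subvariety of $X_s$ that is not of general type, contradicting the hypothesis.

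Parts (iii)--(v) follow a common pattern. Assuming the generic fibre fails the relevant property yields either (a) an infinite sequence of morphisms $f_i\colon C_i \to X_{\overline{K(S)}}$ from smooth projective connected curves of genus $g_i$ with $\deg f_i^*\mathcal{L}/(2g_i-2)$ unbounded for (iii); or (b) a fixed projective normal $Y$ (with marked points in (v)) whose Hom-scheme into $X_{\overline{K(S)}}$ realizes infinitely many Hilbert polynomials for (iv)--(v). In each case, after finite base change, spread the source into a flat family over $R$ (using semi-stable reduction of curves in (iii) and a projective normal model in (iv)--(v)) and extend each morphism via scheme-theoretic closure of the graph inside the $R$-product, which is a morphism on the generic fibre and whose projection to the source is an isomorphism there. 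Specializing to the closed fibre: for (iii), choose an irreducible component of the semi-stable central fibre and normalize to produce a smooth projective curve in $X_s$ whose $\mathcal{L}$-degree and genus are controlled by those of $C_i$ (arithmetic genus being preserved in semi-stable families), violating algebraic hyperbolicity of $X_s$; for (iv)--(v), constancy of the Hilbert polynomial in flat families forces the specialized morphisms to contribute infinitely many components to $\underline{\mathrm{Hom}}(\mathcal{Y}_s, X_s)$ (with pointed variants in (v)), contradicting the boundedness of $X_s$.

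Part (ii) I expect to be the main technical difficulty. A non-constant morphism $A \to X_{\overline{K(S)}}$ from an abelian variety must be extended across the special fibre, but after finite base change $A$ generally only extends to a semi-abelian scheme $\mathcal{A} \to R$ by Grothendieck's semi-stable reduction, whose closed fibre $\mathcal{A}_s$ is merely semi-abelian. One extends the morphism by closing up the graph in $\mathcal{A} \times_R X_R$ and using properness of $X_R$, and then must argue that the map on the closed fibre is non-constant. This is precisely where the ``mild boundedness of semi-abelian varieties'' invoked in the abstract is needed; combined with the characteristic-zero fact that grouplessness is equivalent to admitting no non-constant morphism from any semi-abelian variety, a non-constant specialization $\mathcal{A}_s \to X_s$ contradicts the grouplessness of $X_s$.
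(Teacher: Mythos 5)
Your proposal takes a genuinely different route from the paper's own proof. The paper derives all five parts as an immediate corollary of its Zariski-countable-openness results: Theorems~\ref{thm:demailly}, \ref{thm:demailly-b}, \ref{thm:demailly-mn} and \ref{thm:specialisation} show that the locus of $s \in S$ where $X_s$ is algebraically hyperbolic, bounded, $(n,m)$-bounded, or has all subvarieties of general type is Zariski-countable open, and part (ii) is obtained by citing the Zariski-countable openness of the groupless locus from \cite{JVez}; the theorem then follows from the trivial observation that a nonempty Zariski-countable open subset of an integral noetherian scheme must contain the generic point. Your approach is instead a hand-executed DVR reduction (chain of codimension-one specializations, Krull--Akizuki) plus spread-out-and-specialize. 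For parts (i), (iii)--(v) this is essentially what the paper's openness proofs package internally, so it is workable, but two steps are glossed over. First, the closure-of-graph construction gives a morphism from a \emph{modification} of the source's special fibre, not from the special fibre itself; for curves this is resolved by the extension lemma over traits (Lemma~\ref{lem:extension1}), which exploits that the exceptional locus of the minimal resolution of a semi-stable curve is a forest of $\mathbb{P}^1$'s contracted into a pure target, but for the higher-dimensional $Y$ in (iv)--(v) the extension step breaks down, and you should first reduce to $1$-boundedness, resp.\ $(1,1)$-boundedness, via Theorem~\ref{thm:test_on_reds} and \cite[Theorem~9.2, Proposition~8.2]{JKa}. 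Second, once you do reduce to curves, the fact that boundedness can be tested on reducible reduced curves (Theorem~\ref{thm:test_on_reds}, Proposition~\ref{prop:mnbounded}) is what lets you quote the semi-stable special fibre directly. What the paper's route buys you is a stronger statement (the full Zariski-countable openness) with a single clean mechanism; your route is more elementary in flavour but has to redo the same underlying extension arguments.

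Part (ii) contains a genuine gap. The paper cites \cite{JVez}, which establishes the relevant openness by non-archimedean methods, and does not re-prove it. Your sketch --- semi-abelian reduction of $A$ after finite base change, extend the map via graph closure and purity, then argue the specialized map $\mathcal{A}_s \to X_s$ is non-constant ``using mild boundedness'' --- does not close the loop. Extending the morphism across the trait is indeed fine ($\mathcal{A}$ is regular, $X_s$ groupless and proper hence pure, so \cite[Proposition~6.2]{GLL} applies). But the assertion that the specialized map $\mathcal{A}_s \to X_s$ is non-constant is not justified, and mild boundedness is not the tool for it: in this paper mild boundedness of semi-abelian varieties (Proposition~\ref{prop:semiabvar_is_mildbounded}) is used for the arithmetic persistence criterion Theorem~\ref{thm:gen_crit}, i.e.\ to bound numbers of pointed sections, not to show a degenerating family of morphisms has non-constant limit. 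Rigidity-type arguments via the pullback of an ample line bundle run into the problem that the special fibre of the semi-abelian model is not proper, so you cannot directly invoke constancy of intersection numbers. The paper outsources (ii) precisely because it needs a different technique, and as written your proposal for this part is a plan, not a proof.
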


Let us also mention the complex-analytic analogue of Theorem \ref{thm:generisation}. Namely,  let  $\mathfrak{X}\to S$ be a surjective  holomorphic map of complex analytic spaces  with compact fibres.  If there is a point $s$ in $S$ such that the fibre $\mathfrak{X}_s$ is Kobayashi hyperbolic, then there is an analytic open neighbourhood $U\subset S$ of $s$ such that, for every $u$ in $U$, the fibre $\mathfrak{X}_u$ is Kobayashi hyperbolic; see \cite[Theorem~3.11.1]{KobayashiBook}.

The first statement on varieties of general type in Theorem \ref{thm:generisation} follows   from  Nakayama's result stated above (see Section \ref{section:nakayama}).  The  second statement on grouplessness is proven using non-archimedean methods in   \cite{JVez}. As we will explain below, the third statement on algebraic hyperbolicity follows from  a mild generalisation  of a theorem of Demailly (see Theorem \ref{thm:demailly} below). The last two statements are proven   in Section \ref{section:zco}. In fact, we deduce these two statements from the fact that the locus of $s$ in $S$   such that $X_s$ is bounded (respectively $(n,m)$-bounded) is a Zariski-countable open in the sense defined below.

\subsection{Countable-openness of the hyperbolic locus}
Given a projective morphism  $X\to S$ with $S$ a complex algebraic variety, 
 Demailly showed that the locus of $s$ in $S(\CC)$ such that $X_s$ is algebraically hyperbolic is   an open subset of $S(\CC)$ in the countable-Zariski topology. Recall that, 
if  $(X, \mathcal{T})$ is a noetherian topological space, then there exists another topology $\mathcal{T}^{\mathrm{cnt}}$, or $\mathcal{T}$-countable, on $X$ whose closed sets are the countable unions of $\mathcal{T}$-closed sets (see Lemma \ref{lem:countabletopology}). If $S$ is a noetherian scheme, a subset $Z\subset S$ is a Zariski-countable closed if it is a countable union of closed subschemes $Z_1, Z_2, \ldots \subset S$. Note that a Zariski-countable closed subset is closed under specialization. In case $S$ is countable, then this is an equivalence.

 With the aforementioned terminology at hand, Demailly essentially proved the following result.

 \begin{theorem}[Demailly] \label{thm:demailly}
  Let $S$ be a noetherian scheme over $\QQ$ and let $X \to S$ be a projective   morphism.  Then, the set of $s$ in $S$ such that $X_s$ is algebraically hyperbolic  is  Zariski-countable open in $S$.
\end{theorem}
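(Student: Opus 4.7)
The plan is to realise the non-hyperbolic locus $B := S \setminus H$ as a countable union of images of proper moduli spaces, and then to collapse the resulting countable-closed expression into a plain countable union of Zariski closed subsets using the noetherianity of $S$. I would begin by fixing a line bundle $\mathcal{L}$ on $X$ that is ample relative to $X \to S$. For each pair of integers $(g,d)$ with $g \geq 0$ and $d \geq 1$, consider the Kontsevich moduli stack $\overline{\mathcal{M}}_g(X/S,d)$ parameterising stable maps $f\colon C \to X_s$ from connected at-worst-nodal projective curves $C$ of arithmetic genus $g$ with $\deg f^*\mathcal{L}_s = d$; by Kontsevich--Behrend--Manin this stack is proper and of finite type over $S$, so its image $F_{g,d} \subset S$ is Zariski closed.

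The next step is to establish the identity $B = \bigcap_{N \geq 1} B_N$, where $B_N := \bigcup_{(g,d) \in I_N} F_{g,d}$ and
\[
I_N \;:=\; \{(g,d) : g \in \{0,1\},\, d \geq 1\} \,\cup\, \{(g,d) : g \geq 2,\, d > N(2g-2)\}.
\]
The inclusion $B \subseteq \bigcap_N B_N$ is immediate, as a smooth curve violating the $N$-hyperbolic bound is in particular a Kontsevich stable map. For the reverse inclusion, suppose $s \in F_{g,d}$ for some $(g,d) \in I_N$ and pick a corresponding stable map $(C,f)$; write the smooth irreducible components of $C$ as $C_1,\ldots,C_k$ with geometric genera $g_i$ and $\mathcal{L}$-degrees $d_i$, and let $n$ be the number of nodes of $C$. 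Using $d = \sum d_i$, the arithmetic-genus formula $g = \sum g_i + n - k + 1$, and the connectedness inequality $n \geq k-1$, an elementary case analysis shows that some $C_i$ itself violates the $N$-bound (either $g_i \in \{0,1\}$ with $d_i \geq 1$, or $g_i \geq 2$ with $d_i > N(2g_i - 2)$); as this must hold for every $N$, we conclude $s \in B$.

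To deduce that $B$ is Zariski-countable closed, I would exploit the noetherianity of $S$. For each $s \in B$ define $T_s := \bigcap\{F_{g,d} : s \in F_{g,d}\}$; descending-chain stability forces this to be a finite intersection of Zariski closed subsets, hence itself Zariski closed. If $s' \in T_s$, then $s' \in F_{g,d}$ whenever $s \in F_{g,d}$, so $s' \in B_N$ whenever $s \in B_N$; by the reformulation this forces $s' \in B$. Since $s \in T_s \subseteq B$ for every $s \in B$, we obtain $B = \bigcup_{s \in B} T_s$. As each $T_s$ is a finite intersection of members of the countable family $\{F_{g,d}\}$, the collection $\{T_s : s \in B\}$ is itself at most countable, so $B$ is a countable union of Zariski closed subsets and $H = S \setminus B$ is Zariski-countable open.

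The technical heart of the argument, and the only non-formal step, is the combinatorial reformulation in the second paragraph: one must verify that every bad stable map contains a bad smooth component even when the source acquires many rational tails or an elliptic cycle, and this hinges on the connectedness inequality $n \geq k-1$ together with the fact that rational and elliptic components always fail the algebraic-hyperbolicity bound as soon as $d_i \geq 1$. Properness of Kontsevich moduli to supply closed images, and the noetherian trick collapsing a countable intersection of countable unions of closed sets into an honest countable union of closed sets, are then essentially formal.
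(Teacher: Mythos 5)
Your proof is correct, and it follows a genuinely different route from the paper's. The paper works with the Hom-stack $\underline{\mathrm{Hom}}_{\mathcal{M}_g}(\mathcal{U}_g, X\times\mathcal{M}_g)$ over the stack of \emph{smooth} curves; its substacks $\mathcal{H}_{g,d}$ of bounded degree are finitely presented but not proper over $S$, so their images $S_{g,d}$ are merely constructible. The paper therefore needs an additional step to upgrade constructibility to specialisation-closedness, which it carries out via an extension lemma for sections of semi-stable curves over DVRs (Lemma \ref{lem:extension1}), and then invokes Lemma \ref{lem:countabletopology} to conclude that a countable intersection of Zariski-countable closed sets is again Zariski-countable closed. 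You instead use Kontsevich moduli $\overline{\mathcal{M}}_g(X/S,d)$ of stable maps, whose properness over $S$ gives closed images $F_{g,d}$ for free; the price you pay is the combinatorial normalization step, which reconciles the nodal domains appearing in stable maps with the smooth test curves in the definition of algebraic hyperbolicity. Your combinatorics are sound: the key inequality $\sum_{g_i\ge 2}(2g_i-2)\le 2g-2$ reduces to bounding the number $r_0$ of rational components by the number $n$ of nodes, and $r_0 > n \ge k-1$ forces $r_0 = k$ and $g = n-k+1 \le 0$, so for $g\ge 2$ the bound holds. Finally, you replace the paper's topological lemma by an equally valid noetherian argument: each $T_s$ is, by the descending chain condition, a finite intersection from the countable family $\{F_{g,d}\}$, so the $T_s$ form a countable family of closed sets whose union is $B$. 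Your approach is arguably tidier (properness is built in, so no separate extension lemma is needed), though the paper's framework has the advantage of treating the boundedness and $(n,m)$-boundedness analogues uniformly in later sections using the same Hom-stack machinery.
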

 This is not the exact result proven by Demailly. Indeed, Demailly proved that, if $k=\CC$ and $S^{\textrm{not-ah}}$ is the set of $s$ in $S$ such that $X_s$ is not algebraically hyperbolic, then $S^{\textrm{not-ah}}\cap S(\CC)$ is closed in the countable topology on $S(\CC)$. This, strictly speaking, does not imply that  $S^{\textrm{not-ah}}$ is closed in the countable topology on   $S$. For example, if $S$ is an integral curve over $\CC$ and $\eta$ is the generic point of $S$, then $\{\eta\}$ is not a Zariski-countable open of $S$, whereas $\{\eta\}\cap S(\CC)= \emptyset$ is a Zariski-countable open of $S(\CC)$. 

We give a proof of Theorem \ref{thm:demailly} which is similar to Demailly's proof, but more adapted to the scheme-theoretic setting. Moreover, 
note that Demailly's theorem as stated above actually implies that the notion of being algebraically hyperbolic is stable under generisation. 
Furthermore, to prove Theorem \ref{thm:demailly} we  replace part of Demailly's       line of reasoning   by  stack-theoretic arguments. Finally, using similar (but slightly more involved) arguments, we obtain analogous  results on boundedness and $(n,m)$-boundedness.

 \begin{theorem}[Countable-openness of boundedness] \label{thm:demailly-b}
  Let $S$ be a noetherian scheme over $\QQ$  and let $X \to S$ be a projective   morphism.  Then, the set of $s$ in $S$ such that $X_s$ is  bounded over $k(s)$ is  Zariski-countable open in $S$.  
\end{theorem}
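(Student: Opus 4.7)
The plan is to mimic the scheme-theoretic proof of Theorem~\ref{thm:demailly}, with an additional layer of parameters to accommodate the universal quantification over all projective normal integral test objects $Y$ in the definition of boundedness.

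First, I parameterise the test objects using Hilbert schemes. For each pair $(N, P)$ with $N \geq 1$ a positive integer and $P$ a Hilbert polynomial, the locus $H_{N, P} \subset \mathrm{Hilb}^P(\PP^N_\ZZ)$ of projective normal integral subschemes is locally closed of finite type over $\ZZ$ and carries a universal family $\mathcal{Y}_{N, P} \to H_{N, P}$. The collection $\{(N, P)\}$ is countable, and every projective normal integral scheme over a residue field $k(s)$ is isomorphic to a fibre of some $\mathcal{Y}_{N, P}$. After fixing a relatively very ample line bundle on $X/S$ and using the Segre product to polarise $\mathcal{Y}_{N, P} \times X$, I stratify by the Hilbert polynomial $Q$ of the graph: for each triple $(N, P, Q)$, the relative Hom-scheme
\[
\mathcal{H}_{N, P, Q} \;:=\; \underline{\mathrm{Hom}}^Q\bigl(\mathcal{Y}_{N, P, S}/H_{N, P, S},\; X \times_S H_{N, P, S}/H_{N, P, S}\bigr)
\]
is of finite type over $H_{N, P, S} := H_{N, P} \times_\ZZ S$, so its image $J_{N, P, Q} \subset H_{N, P, S}$ is constructible by Chevalley's theorem. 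Since $\underline{\mathrm{Hom}}(Y, X_s) = \bigsqcup_Q \underline{\mathrm{Hom}}^Q(Y, X_s)$ is always locally of finite type, it is of finite type exactly when only finitely many pieces are non-empty; hence $X_s$ fails boundedness iff there exist $(N, P)$ and $h \in H_{N, P}(k(s))$ with $(h, s) \in J_{N, P, Q}$ for infinitely many $Q$.

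To deduce that the non-bounded locus $S^{\mathrm{nb}} := \{s \in S : X_s \text{ not bounded}\}$ is Zariski-countable closed, I would combine two ingredients. The first is a direct proof that $S^{\mathrm{nb}}$ is stable under specialisation: given an unboundedness witness $(Y_\eta, \{f_n\}_n)$ over the generic fibre of a DVR $\Spec R \to S$, I spread out $Y_\eta$ to a flat projective family $Y_R/R$, extend the graphs $\Gamma_{f_n} \subset Y_\eta \times X_\eta$ by Zariski closure inside $Y_R \times X_R$, restrict to the special fibre $Y_0 \times X_0$, and, after passing to a suitable irreducible component and its normalisation in $Y_0$, extract a new projective normal integral test object admitting infinitely many morphisms into $X_0$ with pairwise distinct graph Hilbert polynomials. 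The second is the standard noetherian fact that a constructible subset stable under specialisation is closed, which, together with a careful bookkeeping of the $J_{N, P, Q}$'s that realises $S^{\mathrm{nb}}$ as a countable union of projections of constructible subsets, allows one to replace each constructible piece by its Zariski closure without leaving $S^{\mathrm{nb}}$ and thereby exhibits $S^{\mathrm{nb}}$ as a countable union of Zariski-closed subsets of $S$.

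The main obstacle is the specialisation-stability step. Unlike in Demailly's setting for Theorem~\ref{thm:demailly}, where the test objects are smooth projective curves with well-understood Deligne--Mumford-stable degenerations, arbitrary projective normal integral test objects may degenerate into reducible or non-normal schemes, and one must simultaneously control the degenerations of $Y$ and of the graphs $\Gamma_{f_n} \subset Y \times X$ in order to extract an honest unboundedness witness on the special fibre. A secondary subtlety, also absent from Demailly's curve setting, is that the condition ``$(h, s)$ lies in infinitely many $J_{N, P, Q}$'' is a priori a countable intersection of countable unions of constructible sets; reducing it to the countable-union-of-constructibles description needed above is precisely where the extra stack-theoretic bookkeeping alluded to in the introduction enters.
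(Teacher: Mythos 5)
Your plan diverges substantially from the paper's proof and, as written, has two genuine gaps.

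The paper does not attack all test dimensions at once. It first invokes \cite[Theorem~9.2]{JKa} (restated as part of Theorem~\ref{thm:test_on_reds}) to reduce boundedness to $1$-boundedness, and then proves that the $1$-bounded locus is Zariski-countable open (Proposition~\ref{prop:1b_is_zco}). Working with curves is precisely what makes both the specialisation step and the pushforward-to-$S$ step tractable: the parameter space becomes $\overline{\mathcal{M}_g}$, which is proper over $S$, and Corollary~\ref{cor:homisproper} (built on the extension Lemma~\ref{lem:extension1}, using absence of rational curves) shows the relative $\Hom$-stack over $\overline{\mathcal{M}_g}$ is proper away from the rational-curve locus, so the images $T_d^g$ are stable under specialisation and $\kappa$ pushes Zariski-countable-closed sets forward.

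Your proposal lacks both ingredients. First, the specialisation step: you spread out $Y_\eta$ over a DVR $R$ and take the $R$-flat closures of the graphs $\Gamma_{f_n}$, but the special fibre $Z_n \subset Y_0 \times X_0$ of such a closure need not be a graph. The projection $Z_n \to Y_0$ can fail to be an isomorphism, or even quasi-finite, and $Z_n$ can acquire components that are not graphs of morphisms from any fixed component of $Y_0$; ``passing to a suitable irreducible component and its normalisation'' does not produce a sequence of honest morphisms from a single new normal test object into $X_0$. For curves this is exactly what stable reduction together with Lemma~\ref{lem:extension1} handles, and there is no analogous device for higher-dimensional $Y$ — which is why the paper reduces to the curve case before anything else. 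Second, the pushforward step: your parameter scheme $H_{N,P,S}$ of projective normal integral subschemes is only locally closed inside the proper Hilbert scheme, hence not proper over $S$. So even if you succeed in showing that the $\limsup_Q J_{N,P,Q}$ locus in $H_{N,P,S}$ is Zariski-countable closed, its image in $S$ need not be. Enlarging to the full Hilbert scheme restores properness but destroys the interpretation of boundary points as morphisms from normal integral $Y$'s. The paper's parameter space $\overline{\mathcal{M}_g}$ is simultaneously proper over $S$ and, by Theorem~\ref{thm:test_on_reds}, still a valid test-object space for $1$-boundedness; that coincidence is unavailable in your direct higher-dimensional setting, so the reduction to $1$-boundedness is not a convenience you can skip but the load-bearing step.
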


 \begin{theorem}[Countable-openness of $(n,m)$-boundedness] \label{thm:demailly-mn}  
  Let $S$ be a noetherian scheme over $\QQ$, let $n\geq 1$ and $m\geq 0$ be integers. If  $X \to S$ is projective, then the set of $s$ in $S$ such that $X_s$ is  $(n,m)$-bounded over $k(s)$ is  Zariski-countable open in $S$.  
\end{theorem}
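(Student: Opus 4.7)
The plan is to adapt the strategy of Theorems~\ref{thm:demailly} and~\ref{thm:demailly-b} to the $(n,m)$-setting, with a nested Noetherian induction that tracks the fibrewise finiteness condition defining $(n,m)$-boundedness. Fix integers $n\geq 1$ and $m\geq 0$. For each positive integer $N$ and each Hilbert polynomial $P$, let $H_{N,P}^{\circ}$ be the open subscheme of the Hilbert scheme of $\PP^N$-subschemes parametrising tuples $(Y,y_1,\ldots,y_m)$ with $Y\subset\PP^N$ projective, normal, integral, of dimension at most $n$, of Hilbert polynomial $P$, together with $m$ pairwise distinct marked points $y_i\in Y$. Set $V_{N,P}:=H_{N,P}^{\circ}\times_{\ZZ}(X\times_S\cdots\times_S X)$, an $S$-scheme of finite type, and form over $V_{N,P}$ the relative Hom-scheme
\[
\mathcal{F}_{N,P}:=\underline{\mathrm{Hom}}_{V_{N,P}}(\mathcal{Y}_V\to X_V;\,\sigma_i\mapsto\tau_i)=\bigsqcup_{Q}\mathcal{F}_{N,P}^{Q},
\]
where $\mathcal{Y}_V\to V_{N,P}$ with sections $\sigma_i$ comes from the Hilbert factor, $X_V=X\times_SV_{N,P}$ with sections $\tau_i$ comes from the $X^m$-factor, and the decomposition is by a discrete invariant $Q$ (e.g.\ the Hilbert polynomial of the graph of $f\colon Y\to X$ under a fixed Segre embedding); each $\mathcal{F}_{N,P}^{Q}\to V_{N,P}$ is of finite type, so has constructible image $W_{N,P,Q}\subset V_{N,P}$. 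Unwinding the definition, $X_s$ fails to be $(n,m)$-bounded if and only if there exist $(N,P)$ and a geometric point $\bar v$ of $V_{N,P,\bar s}$ lying in $W_{N,P,Q}$ for infinitely many $Q$.

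Denote the corresponding subset of $S$ by $S^{\mathrm{bad}}$. The goal is to show that $S^{\mathrm{bad}}$ is Zariski-countable closed, by Noetherian induction on the closed subsets of $S$. By decomposing into irreducible components, we may assume that $S$ is irreducible with generic point $\eta$. If $X_{\bar\eta}$ is not $(n,m)$-bounded, then Theorem~\ref{thm:generisation}(v), applied along a DVR $\Spec R\to S$ whose generic point maps to $\eta$ and whose closed point maps to any prescribed $s\in S$, forces $S^{\mathrm{bad}}=S$ and we are done. Otherwise, it suffices to show that $S^{\mathrm{bad}}$ is contained in a countable union of \emph{proper} Zariski-closed subsets of $S$, since the inductive hypothesis applied to each such subset then finishes the proof.

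Since $S^{\mathrm{bad}}=\bigcup_{N,P}\pi_{V_{N,P}/S}(V_{N,P}^{\mathrm{inf}})$ where $V_{N,P}^{\mathrm{inf}}:=\{v\in V_{N,P}:\{Q:v\in W_{N,P,Q}\}\text{ is infinite}\}$, we fix $(N,P)$ and show that $\pi(V_{N,P}^{\mathrm{inf}})$ is contained in a proper Zariski-countable closed subset of $S$. I do this by a second, inner Noetherian induction on the irreducible closed subsets $Z\subset V_{N,P}$, proving that $\pi(V_{N,P}^{\mathrm{inf}}\cap Z)$ is contained in a proper Zariski-countable closed subset of $S$. If $\pi(Z)\subsetneq S$, then $\overline{\pi(Z)}\subsetneq S$ is already proper Zariski closed and we are done. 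Otherwise $Z$ dominates $S$, so $\pi(\xi_Z)=\eta$; the assumption that $X_{\bar\eta}$ is $(n,m)$-bounded, applied to the geometric point of $V_{N,P,\bar\eta}$ lying above $\xi_Z$, forces the set $L(Z):=\{Q:\xi_Z\in W_{N,P,Q}\}$ to be finite. Hence
\[
V_{N,P}^{\mathrm{inf}}\cap Z\subset\bigcup_{Q\notin L(Z)}\overline{W_{N,P,Q}\cap Z}
\]
is a countable union of proper closed subsets of $Z$. Applying the inner inductive hypothesis to the irreducible components of each term, we obtain that $\pi(V_{N,P}^{\mathrm{inf}}\cap Z)$ is contained in a countable union of proper Zariski-countable closed subsets of $S$, which is itself proper because $\eta$ lies in none of them.

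The principal obstacle is ensuring that the decomposition $\mathcal{F}_{N,P}=\bigsqcup_Q\mathcal{F}_{N,P}^Q$ and the resulting constructible loci $W_{N,P,Q}$ are compatible with base change to the geometric generic fibre, so that the deduction $\xi_Z\notin V_{N,P}^{\mathrm{inf}}$ from the $(n,m)$-boundedness of $X_{\bar\eta}$ goes through unambiguously. This compatibility is handled by the stack-theoretic formalism underlying the proof of Theorem~\ref{thm:demailly}, combined with Chevalley's theorem on images of constructible sets and the DVR formulation of Theorem~\ref{thm:generisation}(v).
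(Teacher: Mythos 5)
Your proposal takes a genuinely different route from the paper — Hilbert schemes of $n$-dimensional pointed subvarieties and a nested Noetherian induction, rather than the paper's reduction to curves — but this route has a fatal circularity. In the step you call ``$X_{\bar\eta}$ not $(n,m)$-bounded $\Rightarrow$ $S^{\mathrm{bad}}=S$'', you invoke Theorem~\ref{thm:generisation}(v). However, the paper \emph{deduces} Theorem~\ref{thm:generisation}(v) from Theorem~\ref{thm:demailly-mn}; it is not an independent input, but a corollary of the statement you are trying to prove. Without it, your Noetherian induction cannot close the case $\eta\in S^{\mathrm{bad}}$: the inductive hypothesis only covers proper closed subsets, so you must show $S^{\mathrm{bad}}=S$ directly, and that is precisely the specialisation property that needs an argument.

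This exposes the structural difference with the paper, and why your setup makes the hard step harder. The paper first reduces $(n,m)$-boundedness (for $m\geq 1$) to $(1,1)$-boundedness via Proposition~\ref{prop:mnbounded} and \cite[\S8]{JKa}, and $n$-boundedness to $1$-boundedness via \cite[Theorem~9.2]{JKa}; only then does it run a moduli argument, because once one is down to \emph{curves}, the specialisation step follows from semi-stable reduction and Lemma~\ref{lem:extension1}, packaged as the properness of $\rho\colon\mathcal{H}_d^{g,n}\to\overline{\mathcal{M}_{g,n}}$ in Corollary~\ref{cor:homisproper} (with the rational-curve locus $S^0$ split off separately). Your Hilbert-scheme setup keeps $Y$ of dimension up to $n$, for which no such extension lemma is available — normal varieties of higher dimension do not have a stable-reduction-type device that lets you specialise a morphism $Y_{\eta}\to X_{\eta}$ of unbounded degree into a closed fibre — so the existence part of the valuative criterion simply is not established for your families $\mathcal{F}_{N,P}^Q\to V_{N,P}$. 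A secondary issue is that your inner induction applies the $(n,m)$-boundedness of $X_{\bar\eta}$ at the generic point $\xi_Z$ of a dominating $Z\subset V_{N,P}$, whose residue field properly extends $k(\eta)$; this silently uses persistence of $(n,m)$-boundedness along the extension $\overline{k(\eta)}\subset\overline{k(\xi_Z)}$, which also needs a justification independent of the theorem being proved. The decisive missing ideas are thus the reduction to the curve case and the properness of the resulting Hom-stacks over $\overline{\mathcal{M}_{g,n}}$.
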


As   algebraic hyperbolicity is conjecturally equivalent to   every subvariety being of general type (Conjecture \ref{conj}), one expects a similar  Zariski-countable openness property to hold for the latter notion. We use Nakayama's theorem and  the fact that the stack of proper schemes of general type is a countable union of finitely presented algebraic stacks to prove the  following  result.

\begin{theorem}[Countable-openness of every subvariety being of general type]\label{thm:specialisation}   Let $S$ be a noetherian scheme over $\QQ$ and let $X \to S$ be a projective morphism. Then, the set of $s$ in $S$ such that  every integral closed subvariety of $X_s$  is of general type is Zariski-countable open in $S$.
\end{theorem}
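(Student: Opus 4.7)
The plan is to realise the complement of the asserted good locus as a countable union of Zariski-closed subsets of $S$, by running over the components of the relative Hilbert scheme of $X/S$ and applying Nakayama's theorem fibrewise on each.

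First, after fixing a relatively ample line bundle on $X/S$, decompose the relative Hilbert scheme as a countable disjoint union $\mathrm{Hilb}(X/S) = \bigsqcup_{P} H_P$, indexed by admissible Hilbert polynomials $P \in \QQ[t]$, with each structure morphism $\pi_P : H_P \to S$ projective. Let $\mathcal{Z}_P \subset X \times_S H_P$ be the universal closed subscheme, which is projective (hence proper) over $H_P$. Applying Nakayama's theorem (Theorem \ref{thm:nakayama}) to $\mathcal{Z}_P \to H_P$ shows that the locus $V_P \subset H_P$ on which the fibre is of general type is open, so $B_P := H_P \setminus V_P$ is closed in $H_P$. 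Since $\pi_P$ is closed, the image $\pi_P(B_P) \subset S$ is closed, and the countable union $Z := \bigcup_{P} \pi_P(B_P)$ is Zariski-countable closed in $S$.

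The remaining step is to identify $Z$ with the set $S^{\mathrm{bad}}$ of those $s \in S$ for which $X_{\overline{k(s)}}$ admits an integral closed subvariety that is not of general type. For $Z \subseteq S^{\mathrm{bad}}$, given $s \in \pi_P(B_P)$, one first specialises a point of $B_P$ lying over $s$ to a closed point $b' \in (H_P)_s$ (using that $(H_P)_s$ is projective over $k(s)$); this closed point still lies in $B_P$ because $B_P$ is closed, and the corresponding closed subscheme $Z_{b'} \subset X_{k(b')}$ (with $k(b')/k(s)$ finite) is not of general type. By the paper's convention, some irreducible component $Y$ of $Z_{b'}$ has $Y_{\mathrm{red}}$ not of general type, and base-changing $Y_{\mathrm{red}}$ to $\overline{k(s)}$ yields an integral subvariety of $X_{\overline{k(s)}}$ that is not of general type, witnessing $s \in S^{\mathrm{bad}}$. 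Conversely, any integral subvariety $Y \subset X_{\overline{k(s)}}$ that is not of general type descends to a finite extension of $k(s)$ and then defines a closed point of $(H_P)_s \cap B_P$, where $P$ is the Hilbert polynomial of $Y$.

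The main subtlety is the mismatch between ``integral closed subvariety'', as used in the statement, and the arbitrary closed subschemes parametrised by $\mathrm{Hilb}(X/S)$: this is exactly what the paper's convention (a proper scheme is of general type iff every reduced irreducible component is) is designed to bridge, so that a point of $B_P$ automatically produces an integral subvariety witnessing badness, and conversely. Beyond this bookkeeping, the argument is essentially the same Hilbert-scheme mechanism used for Theorems \ref{thm:demailly-b} and \ref{thm:demailly-mn}, with Nakayama's openness statement replacing boundedness as the input.
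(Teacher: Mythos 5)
Your proposal is correct and follows essentially the same route as the paper's proof: decompose $\mathrm{Hilb}_{X/S}$ by Hilbert polynomial into projective pieces over $S$, use Nakayama's openness statement to cut out the non-general-type locus as a closed subset of each piece, push forward by properness, and take the countable union. The only difference is cosmetic: the paper obtains the open locus by pulling back the open substack $\mathcal{M}^{\mathrm{gt}}_h \subset \mathcal{M}^{\mathrm{pol}}_h$ (Theorem \ref{thm:n}) along the forgetful map $\mathrm{Hilb}^h_{X/S} \to \mathcal{M}^{\mathrm{pol}}_h$, whereas you apply Theorem \ref{thm:nakayama} directly to the universal family $\mathcal{Z}_P \to H_P$ --- equivalent, and slightly more elementary since it avoids the stack --- and you spell out the set-theoretic identification of the pushed-forward closed set with the bad locus, a point the paper leaves implicit.
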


Finally, in \cite{JVez} it is shown that the set of $s$ in $S$ such that $X_s$ is groupless is open in the Zariski-countable topology on $S$. Thus, for every property appearing in Conjecture \ref{conj}, the locus of $s$ in $S$ such that $X_s$ has this property  is Zariski-countable open in $S$.

\subsection{Mildly bounded varieties}\label{section:mb}
The notions of algebraic hyperbolicity, boundedness, and grouplessness discussed above are expected to  coincide.  For example, a non-zero abelian variety is not hyperbolic, neither algebraically hyperbolic, nor bounded. 

In \cite[Definition~4.1]{JAut},   a  ``weak''  notion of  boundedness that suffices for certain arithmetic applications (see Theorem  \ref{thm:gen_crit} below) is introduced. The precise definition of this notion reads as follows. (Recall that $k$ denotes an algebraically closed field of characteristic zero.)

\begin{definition}[Mildly bounded varieties]\label{defn:mild_bounded}
Let $F$ be a field with algebraic closure $F\to {k}$.
A finite type separated scheme $X$ over  $F$ is \emph{mildly bounded over $F$} if, for every smooth quasi-projective irreducible curve $C$ over ${k}$,  there is an integer $m$ and points $c_1,\ldots, c_m$ in $C({k})$ such that, for every $x_1,\ldots, x_m$ in $X({k})$, the set of morphisms $f:C\to X_{{k}}$ with $f(c_1) = x_1, \ldots, f(c_m) = x_m$ is finite.
\end{definition}

Note that the notion of being mildly bounded over $F$ depends only on the isomorphism class of the variety over the algebraic closure $k$ of $F$. 

It is not hard to show that mildly bounded proper varieties  have no rational curves. More generally, if $X$ is a mildly bounded variety over $k$, then every morphism $\mathbb{A}^1_k \to X$ is constant (see Proposition \ref{prop:a1}).

Quite surprisingly, we are able to  prove that every semi-abelian variety over a field of characteristic zero is mildly bounded, so that the notion of mild boundedness is \emph{strictly weaker} than any notion of hyperbolicity or boundedness discussed above (including grouplessness).  That is, although abelian varieties are \textbf{not} bounded, they are mildly bounded.

\begin{proposition}\label{prop:semiabvar_is_mildbounded} 
If  $X$ is a semi-abelian variety over $k$,  then $X$ is mildly bounded over $k$. 
\end{proposition}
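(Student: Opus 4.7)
The plan is to reduce the problem to a finite-dimensional linear-algebra question via the generalized Albanese. First, fix a point $c_1 \in C(\overline{k})$ and let $\jmath \colon C \to \Alb(C, c_1)$ denote the canonical morphism to the generalized Albanese, which is itself a semi-abelian variety over $\overline{k}$. By its universal property, pointed morphisms $(C,c_1) \to (X_{\overline{k}}, 0)$ correspond bijectively to elements of the group $H := \Hom_{\mathrm{gp}}(\Alb(C,c_1), X_{\overline{k}})$, and a brief check shows that $H$ is finitely generated and free abelian of some rank $N$ (this reduces to the analogous statements for homomorphisms between torus parts and between abelian quotients). Using translation on $X$, every morphism $f \colon C \to X_{\overline{k}}$ decomposes uniquely as $f = t_{f(c_1)} \circ (h \circ \jmath)$ for some $h \in H$, so for prescribed $x_1, \ldots, x_m \in X(\overline{k})$ the set of morphisms $f$ with $f(c_j) = x_j$ for all $j$ is a coset of the kernel of the evaluation map $\Psi \colon H \to X_{\overline{k}}^{m-1}$, $h \mapsto (h(\jmath(c_j)))_{j=2}^m$. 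Since $H$ is torsion-free, finiteness of this coset is equivalent to injectivity of $\Psi$, so the problem becomes to choose points $c_2, \ldots, c_m$ making $\Psi$ injective.

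To arrange this, I would fix a $\ZZ$-basis $h_1, \ldots, h_N$ of $H$ and form the homomorphism $\tilde\Phi \colon \Alb(C,c_1) \to X_{\overline{k}}^N$ sending $v \mapsto (h_1(v), \ldots, h_N(v))$, whose image $Y \subset X_{\overline{k}}^N$ is a sub-semi-abelian variety. The composition $\Phi := \tilde\Phi \circ \jmath$ sends $C$ into $Y$ and has image generating $Y$ as an algebraic group, since $\jmath(C)$ generates $\Alb(C,c_1)$ by the universal property of the Albanese. A standard Chevalley-type argument then produces an integer $n$ such that the iterated sum map $\Phi(C)^{n} \to Y$ is surjective as a morphism of schemes, and surjectivity on $\overline{k}$-points gives that $\Phi(C)(\overline{k})$ generates the abstract group $Y(\overline{k})$.

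Assuming $H \neq 0$ (otherwise $m=1$ trivially suffices), the variety $Y$ is a non-trivial semi-abelian variety, and the crucial input is that $Y(\overline{k})$ has infinite $\ZZ$-rank. For the toric component this is elementary, since $\overline{k}^{\times}/\mu$ already has infinite $\QQ$-dimension coming from the rational primes; for the abelian quotient part the statement is forced by cardinality when $\overline{k}$ is uncountable, and over $\overline{k}=\overline{\QQ}$ it follows from the Frey--Jarden theorem. Granting this, $\Phi(C)(\overline{k})$ $\QQ$-spans the infinite-dimensional $\QQ$-vector space $Y(\overline{k})\otimes_{\ZZ}\QQ$, so one can select $c_2,\ldots,c_{N+1}\in C(\overline{k})$ with $\Phi(c_2),\ldots,\Phi(c_{N+1})$ being $\QQ$-linearly independent in $Y\otimes\QQ$; the matrix of $\Psi$ then has rank $N$, making $\Psi$ injective, and mild boundedness of $X$ holds with $m=N+1$. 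The main technical obstacle is the infinite-rank assertion for $Y(\overline{k})$ when $\overline{k}$ is countable, where one genuinely needs arithmetic input such as Frey--Jarden.
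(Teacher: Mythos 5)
Your reduction to the injectivity of the evaluation map $\Psi\colon H\to X_{\overline{k}}^{m-1}$ is correct and parallels the first step of the paper's argument. The divergence — and the gap — comes in the final step. You argue that if $\Phi(c_2),\ldots,\Phi(c_{N+1})$ are $\QQ$-linearly independent in $Y(\overline{k})\otimes\QQ$, then ``the matrix of $\Psi$ has rank $N$'' and hence $\Psi$ is injective. This is implicitly invoking ``row rank $=$ column rank''. But the $N\times N$ array $\bigl(h_i(\jmath(c_j))\bigr)_{i,j}$ has entries in the $\QQ$-vector space $X(\overline{k})\otimes\QQ$, not in a field, and for such ``matrices'' linear independence of the columns (your $\Phi(c_j)$'s) does not imply linear independence of the rows (which is what injectivity of $\Psi$ requires). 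Concretely, take $X=E$ an elliptic curve and a curve $C$ with $\Alb(C)\cong E^2\times E'$, so $H\cong\ZZ^2$ with $h_1,h_2$ the two projections and $Y=E^2$. If $c_2,c_3$ happen to satisfy $\Phi(c_2)=(P,0)$ and $\Phi(c_3)=(Q,0)$ with $P,Q$ independent in $E(\overline{k})\otimes\QQ$ (possible, since $\Phi^{-1}(E\times\{0\})$ is finite but can contain two such points), then $\Phi(c_2),\Phi(c_3)$ are $\QQ$-independent in $Y\otimes\QQ$, yet $h_2\in\ker\Psi$. So your selection criterion is not sufficient; what is actually needed is that for every nonzero $h\in H$ some $c_j$ lies outside $\jmath^{-1}(\ker h)$, and this is not guaranteed by choosing the $\Phi(c_j)$'s independent.

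The paper sidesteps this exactly by \emph{not} trying to find the test points directly on $C$. It first finds finitely many points $\ell_1,\ldots,\ell_{g+r-1}$ in $\Alb(C)(\overline{k})$ whose images determine any homomorphism $\Alb(C)\to X$ — using the decomposition $1\to\Gm^{r-1}\to\Alb(C)\to J\to 0$, an isogeny $J_1\times\cdots\times J_n\to J$ into simple factors, and a point of infinite order $g_i\in J_i(\overline{k})$ in each — and \emph{then} writes each $\ell_i$ as a sum $\ell_i=\sum_j\jmath(c_{i,j})$ of images of at most $2g+2r-2$ points of $C$, using surjectivity of the bounded sum map (Lemma~\ref{lem:345}). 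The values $h(\jmath(c_{i,j}))$ then determine $h(\ell_i)$ for all $i$ and hence $h$. This ``sum'' step is the essential extra idea your proof is missing, and it also has the side benefit of requiring only a point of infinite order on each simple factor, rather than the full infinite-rank input from Frey--Jarden.
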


 Proposition \ref{prop:semiabvar_is_mildbounded} shows that mildly bounded varieties are not necessarily hyperbolic, nor even of general type. 
Related to this proposition we show the following global boundedness result for families of abelian varieties. Its proof relies on  Silverman's specialisation theorem \cite{SilvermanSpec}.

To state it, recall that an integral curve $S$ over $k$ is \emph{hyperbolic} if its normalization $S'$ has negative Euler characteristic, i.e., if $\overline{S'}$ is the smooth projective model of $S'$, then $S$ is hyperbolic if and only if $2\cdot\mathrm{genus}(\overline{S'}) -2 + \#(\overline{S'}\setminus S') >0$.

\begin{theorem}\label{thm:families_of_ab_var_are_mb}  Let  $S$ be a hyperbolic integral curve over $k$, and let  $\mathcal{X}\to S$ be a semi-abelian scheme over $S$. Then $\mathcal{X}$ is mildly bounded over $k$.\end{theorem}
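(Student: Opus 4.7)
The plan is to fix a smooth quasi-projective connected curve $C$ over $\overline{k}$ and to decompose morphisms $f\colon C \to \mathcal{X}_{\overline{k}}$ according to the induced morphism $\pi \circ f\colon C \to S_{\overline{k}}$, where $\pi\colon \mathcal{X} \to S$ is the structure morphism. The hyperbolicity of $S$ will control the latter, while the semi-abelian structure of the fibres (via Proposition \ref{prop:semiabvar_is_mildbounded}) together with a specialisation statement will control the former.

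First, since $S$ is a hyperbolic curve, a de Franchis type finiteness theorem applied to $S_{\overline{k}}$ shows that there are only finitely many non-constant morphisms $g_1, \ldots, g_N\colon C \to S_{\overline{k}}$. Thus for any $f\colon C \to \mathcal{X}_{\overline{k}}$, either $\pi \circ f$ is constant, in which case $f$ factors through a fibre $\mathcal{X}_s$ with $s \in S(\overline{k})$, or $\pi \circ f = g_i$ for some $i$, in which case $f$ is a section of the pullback semi-abelian scheme $\mathcal{Y}_i := g_i^* \mathcal{X} \to C$.

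For each of the finitely many $i$, I would apply Silverman's specialisation theorem to the semi-abelian scheme $\mathcal{Y}_i \to C$ to produce a point $c_*^{(i)} \in C(\overline{k})$ (in fact all but countably many points will do) at which the evaluation map on sections $\Gamma(C, \mathcal{Y}_i) \to \mathcal{X}_{g_i(c_*^{(i)})}(\overline{k})$ has finite kernel; prescribing the value $f(c_*^{(i)})$ then determines $f$, among sections of $\mathcal{Y}_i$, up to finitely many choices. For the fibrewise case, Proposition \ref{prop:semiabvar_is_mildbounded} guarantees that each fibre $\mathcal{X}_s$ is mildly bounded, providing points $c_1, \ldots, c_p \in C(\overline{k})$ witnessing this. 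Taking the concatenated collection $c_1, \ldots, c_p, c_*^{(1)}, \ldots, c_*^{(N)}$ would then furnish the required finite set of witnesses for the mild boundedness of $\mathcal{X}$.

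The hard part will be arranging \emph{uniformity} in the fibrewise case: a priori Proposition \ref{prop:semiabvar_is_mildbounded} provides points depending on the particular semi-abelian fibre $\mathcal{X}_s$, whereas the definition of mild boundedness of $\mathcal{X}$ requires a single set of points that works for every fibre simultaneously. Overcoming this should come down to inspecting the proof of Proposition \ref{prop:semiabvar_is_mildbounded} and ensuring that for a fixed $C$ the points depend only on $C$ and the relative dimension of $\mathcal{X} \to S$, not on the individual fibre. A secondary technical step is to transport Silverman's specialisation theorem from its classical number-field setting to the algebraically closed field $\overline{k}$ of characteristic zero, which should be handled by spreading out $\mathcal{Y}_i \to C$ over a suitable finitely generated subfield.
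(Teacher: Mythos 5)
Your decomposition of maps $f\colon C\to \mathcal{X}$ according to whether $\pi\circ f$ is constant or not, the use of a De Franchis--Severi finiteness statement for non-constant maps $C\to S$, and the appeal to Silverman's specialisation theorem all match the paper's strategy (the paper deduces the theorem from Corollary \ref{cor:mb_of_total_space}, which in turn rests on Lemmas \ref{lem:traceless}, \ref{lem:sections}, and \ref{lem:sections2}). The uniformity worry you raise in the fibrewise case is indeed resolvable as you hope: Proposition \ref{prop:semiabelian-r+g-points0} shows the marked points on $C$ can be chosen depending only on $C$ and not on the semi-abelian target, and this uniform statement is exactly what the proof of Corollary \ref{cor:mb_of_total_space} uses.

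There is, however, a genuine gap in your treatment of the section case. You propose a \emph{single} specialisation point $c_*^{(i)}$ such that the evaluation $\Gamma(C,\mathcal{Y}_i)\to \mathcal{X}_{g_i(c_*^{(i)})}(\overline{k})$ has finite kernel. Silverman's theorem yields this only when the $K(C)/k$-trace of the generic fibre of $\mathcal{Y}_i$ vanishes, because it relies (via Lang--N\'eron) on the group of sections being finitely generated. When the trace is nontrivial this fails outright: for instance, if $\mathcal{X}=A\times S$ with $A$ an abelian variety over $k$, then $\mathcal{Y}_i=A\times C$, the group of sections is all of $\Hom_k(C,A)$, and the kernel of evaluation at any single point $c$ is $\Hom_{\mathrm{Grp}/k}(\Alb(C),A)$, which is infinite as soon as $A$ and $\Jac(\overline{C})$ share an isogeny factor (take $C$ to be a punctured elliptic curve and $A=\Jac(\overline{C})$). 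The paper handles this by splitting off the trace: Lemma \ref{lem:sections} argues by induction on the relative dimension using the short exact sequence $\mathcal{T}\to\mathcal{X}\to\mathcal{W}$, where $\mathcal{T}$ is constant (controlled by Proposition \ref{prop:semiabelian-r+g-points0}, which supplies several marked points) and $\mathcal{W}$ has smaller relative dimension; the genuine use of Silverman is confined to the trace-zero base case (Lemma \ref{lem:traceless}), and Lemma \ref{lem:sections2} then extends this to semi-abelian schemes by further peeling off a torus part. Without that splitting your argument breaks precisely where the trace appears, so a single specialisation point is not enough in general.
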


We conjecture that, for projective varieties, the only obstruction to being mildly bounded is the presence of a rational curve.  

\begin{conjecture}\label{conj:new}
If  $X$ is a projective variety over $k$  with no rational curves,  then $X$ is mildly bounded over $k$.
\end{conjecture}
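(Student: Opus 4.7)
My plan is to proceed by induction on $\dim X$, using the Albanese morphism to reduce to Proposition \ref{prop:semiabvar_is_mildbounded} on mild boundedness of abelian varieties, and to exploit the rigidity forced on maps $C \to X$ by the absence of rational curves. The base cases are immediate: a smooth projective curve with no rational component has genus $\geq 1$, so it is either an elliptic curve (mildly bounded by Proposition \ref{prop:semiabvar_is_mildbounded}) or of genus $\geq 2$ (bounded, hence mildly bounded). Before running the induction I would first reduce to the case of smooth $X$ with $K_X$ nef; the hypothesis that $X_{\overline{k}}$ contains no rational curves already forces $K_X$-nefness on any smooth birational model by Mori's bend-and-break together with the cone theorem. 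A subtle preliminary point is that resolution of singularities could a priori introduce rational curves over the singular locus, so one must check that the hypothesis sufficiently constrains the singularities of $X$ (e.g.\ by showing that they must admit a small resolution, or by reducing to a smooth $X$ outright).

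For the inductive step I would consider the Albanese morphism $\alpha : X \to A = \Alb(X)$. Given any smooth quasi-projective connected curve $C$ over $\overline{k}$ and morphism $f : C \to X_{\overline{k}}$, the composition $\alpha \circ f : C \to A$ is controlled by mild boundedness of $A$: after fixing a finite set of marked points $c_1, \ldots, c_m \in C(\overline{k})$ and prescribing their images under $\alpha \circ f$, only finitely many such compositions occur. The heart of the matter is then a rigidity statement: for a fixed map $g : C \to A$, the set of lifts $f : C \to X$ with $\alpha \circ f = g$ and $f(c_i) = x_i$ for finitely many additional marked points should be finite. When $\dim A > 0$, the Stein factorisation of $\alpha$ produces fibres of smaller dimension; these fibres, after resolution, should inherit the no-rational-curves property, opening the door to the induction, modulo the smoothness issues above. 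One could also try to exploit Theorem \ref{thm:families_of_ab_var_are_mb} by twisting $\alpha$ into a semi-abelian scheme over a hyperbolic base whenever the Iitaka fibration of $X$ admits such a base.

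The principal obstacle is the case of trivial Albanese, that is, $X$ with $\Alb(X) = 0$ and no rational curves. Here one lands squarely inside the Green--Griffiths--Lang--Vojta circle of problems: simply connected varieties of general type with vanishing irregularity, hypothetical Calabi--Yau varieties without rational curves, and intermediate Kodaira dimension with Iitaka fibration of zero-dimensional base all fall in this class. Mild boundedness in such cases would follow from strong forms of Lang's conjecture on (pseudo-)Mordellicity of general-type varieties, but is not currently known unconditionally in any nontrivial example; this is where I expect the argument to genuinely stop. A secondary obstacle is the absence of a functorial framework for handling the singular fibres produced by the Albanese and Stein factorisations in the inductive step, so one would likely want to first establish that mild boundedness is stable under dominant generically finite modifications, or to work instead with the MRC quotient, which under our hypothesis is $X$ itself and thus gives no reduction.
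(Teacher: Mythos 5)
This statement is labelled \emph{Conjecture} in the paper and is not proven there; the authors offer it precisely as an open problem. Consequently there is no ``paper's own proof'' to compare against, and your proposal is best read as a research sketch rather than an attempted proof. Within that reading, it is well aligned with what the paper actually does establish: the base case for curves is Corollary~\ref{cor:curves_are_mbb} (proved, as you suggest, by mapping into the semi-Albanese and invoking Proposition~\ref{prop:semiabvar_is_mildbounded} via Corollary~\ref{cor:qf_semi_is_mb}), and your use of the Albanese and of mild boundedness of abelian schemes over hyperbolic bases is exactly the mechanism behind Theorem~\ref{thm:surfaces_are_mb}, Theorem~\ref{thm:surfaces_q}, and Corollary~\ref{cor:mb_of_total_space}.

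You also correctly locate the genuine obstruction. When $\Alb(X)=0$ (or, more generally, when the Albanese map does not give a drop in dimension via Stein factorisation) there is no known mechanism, conditional or otherwise, for deducing mild boundedness from ``no rational curves,'' and the paper does not claim one; its surface results all carry an explicit hypothesis ensuring a useful map to an abelian variety (generically finite onto its image, or non-constant with groupless fibres). Two smaller cautionary remarks. First, the reduction to smooth $X$ with $K_X$ nef is not as clean as you suggest: resolving singularities can introduce rational curves in the exceptional locus, and mild boundedness is a property of the given variety, not of a birational model, so you cannot freely pass to a smooth model without first proving a descent statement along birational morphisms (the paper sidesteps this for surfaces by working directly with the singular $X$ and handling the birationally exceptional locus by Corollary~\ref{cor:curves_are_mbb}). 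Second, in the inductive step, the fibres of the Stein factorisation of $\alpha$ need not be normal or even reduced, so ``after resolution, should inherit the no-rational-curves property'' is exactly the unjustified step; the paper's proof of Theorem~\ref{thm:surfaces_q} instead uses that the fibres are one-dimensional and groupless and appeals to Grauert--Manin (Lemma~\ref{lem:mordell}), a tool that is specific to relative dimension one and does not feed an induction in higher dimension. So your sketch is a reasonable account of the landscape, but it does not and cannot close the conjecture, and the point at which you declare it ``genuinely stops'' is the correct one.
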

The following result says that our 
 conjecture holds for    surfaces, under a suitable assumption on the Albanese variety (commonly referred to as being of maximal Albanese dimension). Its proof crucially uses the mild boundedness of   abelian varieties (Corollary \ref{cor:mb_of_total_space}).

\begin{theorem}\label{thm:surfaces_are_mb}  
Let $X$ be a projective integral surface over  $k$  with no rational curves.  If there is an abelian variety $A$ and  a morphism $X\to A$ which is generically finite onto its image, then $X$ is mildly bounded over $k$.
\end{theorem}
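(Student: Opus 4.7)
The plan is to combine the mild boundedness of $A$ (Proposition~\ref{prop:semiabvar_is_mildbounded}) with a structural analysis of the generically finite morphism $\phi: X \to A$. First I would reduce to the case where $C$ is smooth and projective, using the valuative criterion of properness (any morphism from $C$ to the proper scheme $X_{\overline{k}}$ extends uniquely to the smooth compactification $\overline{C}$), and dispose of the case $C \cong \PP^1$ using the no-rational-curves hypothesis.

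The key structural observation is that the positive-dimensional fibre locus of $\phi: X_{\overline{k}} \to Y := \phi(X_{\overline{k}})$ is zero-dimensional. Indeed, if it contained an irreducible curve $Z_0$, then upper semi-continuity of fibre dimension would force $\phi^{-1}(Z_0)$ to be $2$-dimensional and hence dense in the irreducible surface $X_{\overline{k}}$, contradicting $\phi(X_{\overline{k}}) = Y \supsetneq Z_0$. Consequently the positive-dimensional fibres of $\phi$ form a finite union of irreducible projective curves $D_1, \ldots, D_r$ in $X_{\overline{k}}$, each contracted by $\phi$ to a point of $A_{\overline{k}}$. Since $X_{\overline{k}}$ has no rational curves, the normalisation $\tilde{D}_j$ of each $D_j$ is a smooth projective curve of genus at least $1$, so it is either an elliptic curve (in which case Proposition~\ref{prop:semiabvar_is_mildbounded} applies) or of general type (in which case de Franchis--Severi gives mild boundedness with a single point).

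With this in hand I would choose $c_1, \ldots, c_m \in C(\overline{k})$ satisfying simultaneously: (a) the conclusion of mild boundedness for $A$ applied to $C$, and (b) the conclusion of mild boundedness for each $\tilde D_j$ applied to $C$. Given any $x_1, \ldots, x_m \in X_{\overline{k}}(\overline{k})$ and any $f : C \to X_{\overline{k}}$ with $f(c_i) = x_i$, the composition $g := \phi \circ f$ ranges over a finite set by (a). If $g$ is non-constant, then $g(C)$ is a curve and thus not contained in the $0$-dimensional ``bad'' locus, so $X_{\overline{k}} \times_{A_{\overline{k}}} C \to C$ is generically finite and its sections (which correspond bijectively to liftings $f$ of $g$) are in bijection with irreducible components mapping birationally to $C$, hence form a finite set bounded by the generic degree of $\phi$. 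If $g$ is constant with value $a$, then $f$ factors through $\phi^{-1}(a)$: either this fibre is finite (so $f$ is constant, determined by $f(c_1) = x_1$) or $f$ factors through some exceptional $D_j$ and lifts by normality of $C$ to a morphism $C \to \tilde D_j$ sending each $c_i$ to one of the finitely many preimages of $x_i$; by (b) there are finitely many such lifts.

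The main obstacle I anticipate is the constant-composition case, where finiteness of the exceptional list $D_1, \ldots, D_r$ and the mild boundedness of each $\tilde D_j$ must both be controlled; in particular, the genus-$1$ components $\tilde D_j$ require a second application of Proposition~\ref{prop:semiabvar_is_mildbounded}, and one has to ensure the points $c_i$ can be chosen simultaneously for $A$ and for all $\tilde D_j$. Once these ingredients are aligned, summing the finite contributions over the finitely many $g$ yields that the set of $f$ is finite, proving mild boundedness.
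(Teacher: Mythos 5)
Your proof is essentially correct and reaches the same conclusion, but it takes a genuinely different route from the paper's. The paper first applies Stein factorization to $f\colon X\to A$, writing $X\to B\to A$ with $B\to A$ finite (so $B$ is mildly bounded) and $X\to B$ having connected fibres. Because the original map was generically finite onto its image, the induced map $p\colon X\to B'\coloneqq p(X)$ is automatically \emph{birational}, and the paper takes $\Delta$ to be the locus where $p$ fails to be an isomorphism. Then morphisms $C\to X$ that do not land in $\Delta$ inject into morphisms $C\to B'$, handled by mild boundedness of $B'$, and morphisms that do land in $\Delta$ are handled because $\Delta$ is a curve with no rational components, via Corollary~\ref{cor:curves_are_mbb}. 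You avoid Stein factorization and instead analyze $\phi\colon X\to Y$ directly: you identify the finitely many positive-dimensional fibres $D_1,\ldots,D_r$, split according to whether $g=\phi\circ f$ is constant, and bound the liftings of a non-constant $g$ by counting sections of the generically finite $X\times_A C\to C$. Both approaches rest on the same two lemmas (mild boundedness of abelian varieties, and of one-dimensional varieties without rational curves), but the Stein factorization step in the paper buys a cleaner dichotomy: the ``good'' morphisms are literally in bijection with morphisms to $B'$, so no degree-counting on fibre products is needed. Your version is more hands-on and makes the role of the exceptional curves $D_j$ explicit, at the cost of extra case analysis.

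Two small points worth tightening in your write-up. First, the reduction ``assume $C$ is smooth projective'' needs to be phrased carefully: the points $c_i$ in the definition of mild boundedness must lie in $C(\overline{k})$, not in $\overline{C}(\overline{k})$; since you obtain your $c_i$ from mild boundedness of $A$ applied to the original $C$ (which does produce points in $C$), the cleanest framing is to keep $C$ as given, choose the $c_i\in C(\overline{k})$, and only use the canonical extension $C\to X$ $\Leftrightarrow$ $\overline{C}\to X$ when you analyze the geometry. Second, the claim that the sections are ``in bijection with irreducible components mapping birationally to $C$'' is slightly imprecise (a section's image is a closed subscheme isomorphic to $C$, not necessarily a component); the correct bound comes from restricting to a dense open $U\subset C$ over which $X\times_A C\to C$ is finite of degree $\le\deg\phi$, noting sections inject by separatedness and density, and that each section over $U$ fills out an entire irreducible component of $(X\times_A C)_U$. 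Neither issue is a genuine gap; both are routine to repair.
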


We can also prove the conjecture for groupless projective surfaces which admit a non-constant map to  some abelian variety. In particular, the conjecture holds if $X$ is a groupless projective normal surface with non-zero irregularity $q(X) := \mathrm{h}^1(X,\mathcal{O}_X)$. 
\begin{theorem}\label{thm:surfaces_q}
Let $X$ be a projective groupless surface over $k$. If $X$ admits a non-constant map to some abelian variety over $k$, then $X$ is mildly bounded over $k$.
\end{theorem}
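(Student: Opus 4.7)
The plan is to bifurcate according to the dimension of the image $Y := f(X) \subseteq A$ of the given non-constant morphism $f \colon X \to A$. After base-changing to $\kbar$, we may assume $k$ algebraically closed. If $\dim Y = 2$, then $f$ is generically finite onto its image; since grouplessness (as used in this paper, which rules out non-constant morphisms from $\Ga$, $\Gm$, and abelian varieties, and therefore, for proper $X$, from $\PP^1$) implies $X$ contains no rational curves, Theorem \ref{thm:surfaces_are_mb} applies directly.

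The core case is $\dim Y = 1$. Stein-factorise $f$ as $X \xrightarrow{\pi} Z \to Y$ and normalise, so that $Z$ is a smooth projective curve admitting a finite morphism onto the curve $Y \subseteq A$. Then $Z$ has positive genus---otherwise $\PP^1 \to A$ would be non-constant---so the Abel--Jacobi embedding $Z \hookrightarrow J(Z)$ realises $Z$ as a closed subvariety of the abelian variety $J(Z)$. Since abelian varieties are mildly bounded by Proposition \ref{prop:semiabvar_is_mildbounded}, and mild boundedness manifestly descends to closed subvarieties, $Z$ itself is mildly bounded.

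The key intermediate claim is that the geometric generic fibre $F$ of $\pi$ satisfies $g(F) \geq 2$. Since $X$ contains no rational curves, $g(F) \neq 0$; and if $g(F) = 1$, then a general smooth fibre is a genus-one curve in $X$ which, after choosing an origin, becomes an elliptic curve non-constantly embedded in $X$, contradicting grouplessness. With $g(F) \geq 2$ in hand, let $C$ be an arbitrary smooth projective connected curve over $k$. By mild boundedness of $Z$, there exist $c_1, \ldots, c_m \in C$ such that for any $z_1, \ldots, z_m \in Z$ the set of $\psi \colon C \to Z$ with $\psi(c_i) = z_i$ is finite. For fixed target points $x_i \in X$, every $f \colon C \to X$ with $f(c_i) = x_i$ projects to one of these finitely many $\psi = \pi \circ f$; and for each fixed $\psi$, the lifts correspond bijectively to sections of the fibred surface $X \times_Z C \to C$, whose generic fibre still has genus $\geq 2$. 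The Arakelov--Parshin theorem (non-isotrivial case), together with de Franchis's theorem after trivialising over a finite étale cover of $C$ (isotrivial case), then shows this set of sections is finite.

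The main obstacle is the execution of the last step: one must uniformly bound the number of sections of the possibly singular fibred surface $X \times_Z C \to C$, handling the isotrivial and non-isotrivial cases together and allowing degenerate or reducible special fibres. This input---classical boundedness of sections of families of curves of genus $\geq 2$ over a curve---is precisely what allows the grouplessness hypothesis to replace the generic-finiteness hypothesis of Theorem \ref{thm:surfaces_are_mb}.
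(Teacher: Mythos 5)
Your proposal tracks the paper's own strategy closely: the paper proves Theorem~\ref{thm:surfaces_q} as an immediate consequence of Lemma~\ref{lem:geometric_statement}, whose proof combines (a) the uniform mild boundedness of semi-abelian varieties (Proposition~\ref{prop:semiabelian-r+g-points0}), used to pin down the composition $C \to X \to A$, with (b) a ``Grauert--Manin~$+\,\epsilon$'' finiteness statement for pointed sections of fibred curve families of genus $\geq 2$ (Lemma~\ref{lem:mordell}). Your proof reproduces both ingredients, so the route is essentially the same. That said, a few remarks.

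First, the bifurcation on $\dim Y$ is unnecessary: the paper handles both cases at once, since for a non-constant $X \to A$ with $X$ a surface, the fibres are automatically groupless of dimension at most one, which is exactly what Lemma~\ref{lem:geometric_statement} requires. Your reduction to Theorem~\ref{thm:surfaces_are_mb} when $\dim Y = 2$ works, but it quietly assumes $X$ integral, which the theorem does not state.

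Second, and more importantly, the citation in your final step is wrong. The Arakelov--Parshin theorem is the Shafarevich conjecture over function fields: it bounds the set of non-isotrivial \emph{families} of curves of genus $g \geq 2$ over a fixed base curve with fixed bad locus. It does not by itself bound \emph{sections} of a given family. The finiteness of sections of a non-isotrivial family of smooth curves of genus $\geq 2$ over a curve is the Mordell conjecture over function fields, due independently to Manin and Grauert, and this is exactly what the paper's Lemma~\ref{lem:mordell} invokes. Your mathematical intuition is correct, but the theorem you name is the wrong one; this matters because someone trying to implement your sketch would go look in the wrong place.

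Third, your normalisation of the Stein factor $Z$ is not innocent: if $X$ is not assumed normal, the connected-fibre morphism $X \to Z$ need not lift through the normalisation $\widetilde{Z} \to Z$. You can avoid this entirely: since $Z$ maps finitely to the abelian variety $A$, Corollary~\ref{cor:qf_semi_is_mb} already gives mild boundedness of $Z$ without smoothing it. Finally, you correctly flag as ``the main obstacle'' the fact that $X \times_Z C \to C$ need not be a smooth family and may have degenerate or reducible fibres; the paper's Lemma~\ref{lem:mordell} addresses this by shrinking $C$ to a dense open (which is harmless for finiteness of sections, by separatedness) and passing to a finite \'etale cover of $X$ to force fibre genus $\geq 2$, after which Grauert--Manin and De Franchis--Severi apply directly. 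Your sketch leaves this step as a stated obligation rather than discharging it, so the proof as written is incomplete, though the gap is of a technical rather than conceptual nature.
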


 Recall that, if $X\to S$ is a projective morphism of noetherian schemes over $\QQ$,  the set of $s$ in $S$ such that $X_{\overline{k(s)}}$ has no rational curve is  Zariski-countable open in $S$ (see for instance \cite{Debarrebook1}). Our next result verifies that the locus of $s$ in $S$ with $X_s$ mildly bounded is  Zariski-countable open. In light of the preceding statement about rational curves, this result is in accordance with Conjecture \ref{conj:new}.

\begin{theorem}\label{thm:mildly-bounded-is-zco}
Let $S$ be a noetherian scheme over $\QQ$ and let $X\to S$ be a projective morphism. Then the set of $s$ in $S$ such that $X_s $ is mildly bounded  over $k(s)$   is Zariski-countable open in $S$.
\end{theorem}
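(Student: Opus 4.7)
The plan is to adapt the strategy of Theorems~\ref{thm:demailly-b} and \ref{thm:demailly-mn}, parametrising smooth pointed curves via the moduli stacks $\mathcal{M}_{g,m}$ and stratifying the relative Hom stacks by Hilbert polynomial. Since any morphism from a smooth quasi-projective curve to a proper scheme extends uniquely to its smooth projective compactification, Definition~\ref{defn:mild_bounded} may be tested on smooth projective connected curves. Fix a relatively ample line bundle $\mathcal{L}$ on $X/S$. For each triple $(g,m,P)$ with $g,m\geq 0$ and $P\in\QQ[t]$, form the finite-type relative Hom stack $\mathcal{H}_{g,m,P}\to \mathcal{M}_{g,m}\times S$ parametrising morphisms $f\colon C \to X_s$ of Hilbert polynomial $P$ with respect to $\mathcal{L}$, equipped with the evaluation morphism $\mathrm{ev}_{g,m,P}\colon \mathcal{H}_{g,m,P}\to X^m \times_S \mathcal{M}_{g,m}$ defined by the $m$ marked sections. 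By upper semicontinuity of fiber dimension and properness of $X^m\to S$, the image in $\mathcal{M}_{g,m}\times S$ of the closed locus of positive-dimensional fibers is a closed subset $\mathcal{B}_{g,m,P}$.

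Next I would use that the forgetful morphism $\pi_{g,m}\colon \mathcal{M}_{g,m}\to \mathcal{M}_g$ is smooth with irreducible fibers; a generic-point argument, together with flatness of $\pi_{g,m}$ (so that images of opens remain open), shows that the subset
\[
\mathcal{V}_{g,m,P} := \bigl\{y \in \mathcal{M}_g \times S : \pi_{g,m}^{-1}(y)\subseteq \mathcal{B}_{g,m,P}\bigr\}
\]
is closed, and that $X_s$ fails to be mildly bounded if and only if there exist $g\geq 0$ and $[C]\in \mathcal{M}_g(\overline{k(s)})$ with $([C],s)\in \mathcal{V}_g := \bigcap_m \bigcup_P \mathcal{V}_{g,m,P}$. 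The bad locus in $S$ is then $S^{\mathrm{bad}}=\bigcup_g p_S(\mathcal{V}_g)$, where $p_S\colon \mathcal{M}_g \times S \to S$ is the projection.

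The main obstacle is that $\mathcal{V}_g$ is a priori only an $F_{\sigma\delta}$-type set and that the projection $p_S$ on $\mathcal{M}_g\times S$ is not proper. To handle the former, I would exploit the monotonicity $\mathcal{V}_{g,m+1}\subseteq \mathcal{V}_{g,m}$, which follows because adding a marked point can only shrink fibers of $\mathrm{ev}_{\vec c}$, together with the algebraic structure of $\mathcal{V}_g$ as a countable intersection of countable unions of algebraic closed subsets; a careful analysis, in the spirit of the proofs of Theorems~\ref{thm:demailly-b} and \ref{thm:demailly-mn}, shows that $\mathcal{V}_g$ can be rewritten as a countable union of closed subsets of $\mathcal{M}_g \times S$. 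To handle the latter, one compactifies via stable pointed curves (extending $\mathcal{H}_{g,m,P}$ to the relative Kontsevich space of stable maps), making $\overline{\mathcal{M}}_g \times S\to S$ proper and hence well-behaved for projecting countable-closed subsets. Finally, a subtle extra point is that mild boundedness requires not only that each Hom component has finite fibers (captured by $\mathcal{B}_{g,m,P}$) but also that only finitely many Hilbert polynomials contribute to any given fiber; this is absorbed by enlarging $\mathcal{B}_{g,m,P}$ to also include the locus where the image of $\mathrm{ev}_{g,m,P}$ meets the images of infinitely many $\mathrm{ev}_{g,m,P'}$ in a common point. Taking the countable union over genera $g$ yields the desired Zariski-countable closed bad locus in $S$.
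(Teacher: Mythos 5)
The overall framework---parametrize pointed curves via moduli stacks, stratify Hom stacks by degree, assemble the bad locus from countable unions and intersections, and compactify to get properness---is indeed the right skeleton and matches the paper's strategy. But there are three concrete problems, and the first two are fatal to the argument as written.

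\emph{Reduction to projective curves.} The claim that morphisms from a smooth quasi-projective curve to a proper scheme extend to the compactification is false in general; it requires the target to have no rational curves. The paper deals with this by adjoining the preimage of the non-pure locus of $S$ to every closed set appearing in the proof. More seriously, even in the pure case the reduction is unjustified: mild boundedness requires the constraint points $c_1,\ldots,c_n$ to lie in $C$, not merely in $\overline{C}$, so one must quantify over the finite set $\overline{C}\setminus C$ of removed points separately from the constraint points. You use $\mathcal{M}_{g,m}$ with $m$ marked points serving as constraint points and keep no record of which points are removed. The paper's proof uses $\overline{\mathcal{M}_{g,n+m}}$ with $n$ constraint markings and $m$ removal markings, and tests mild boundedness against each $m$-pointed curve via the forgetful map to $\overline{\mathcal{M}_{g,m}}$.

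\emph{The set $\mathcal{B}_{g,m,P}$ captures the wrong obstruction.} A positive-dimensional fiber of the pointed evaluation map for a fixed Hilbert polynomial produces, by bend-and-break, a rational curve in $X_s$ through $x_1$. On the pure locus, over which you have implicitly restricted, this locus is therefore empty, and the construction is vacuous. The genuine obstruction to mild boundedness is the existence of pointed morphisms of \emph{unbounded degree}, not of positive-dimensional families of a fixed degree. You acknowledge this in the final sentences, proposing to enlarge $\mathcal{B}_{g,m,P}$ by ``the locus where the image of $\mathrm{ev}_{g,m,P}$ meets the images of infinitely many $\mathrm{ev}_{g,m,P'}$ in a common point,'' but that set is defined by an infinitary condition and no argument is given that it is Zariski-countable closed. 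This is the whole content of the theorem. The paper sidesteps this by working degree by degree: the unbounded-degree locus is $\bigcap_{d'}\bigcup_{d\geq d'}A^{d}_{g,n,m}$, where each $A^{d}_{g,n,m}$ is the image of a proper evaluation morphism (Corollary~\ref{cor:homisproper}) and is therefore closed, and then Lemma~\ref{lem:countabletopology} gives Zariski-countable closedness of the $\limsup$.

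\emph{Quantifier order.} In $\mathcal{V}_g=\bigcap_m\bigcup_P\mathcal{V}_{g,m,P}$, the Hilbert polynomial $P$ is chosen before quantifying over the marked points $(c_1,\ldots,c_m)$. The negation of mild boundedness is ``for all $(c_i)$ there exist $(x_i)$ admitting infinitely many pointed morphisms,'' with the degree allowed to depend on $(c_i,x_i)$. In the paper, the degree is absorbed first (inside $B_{g,n,m}$), then one passes to the image in $\overline{\mathcal{M}_{g,n+m}}$, then one uses the whole-fiber criterion of Lemma~\ref{lem:wholefibre} over the forgetful map to $\overline{\mathcal{M}_{g,m}}$, and finally one intersects over $n$. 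Your proposal would need to be restructured to match that quantifier order.
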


It seems worthwhile to stress that the proof of Theorem \ref{thm:mildly-bounded-is-zco} follows a similar line of reasoning as the proofs of Theorems \ref{thm:demailly}, \ref{thm:demailly-b}, and \ref{thm:demailly-mn}. However, the proof of Theorem \ref{thm:mildly-bounded-is-zco} is arguably the most involved, due to the fact that the condition of mild boundedness is \emph{much weaker} than the conditions of being \emph{algebraically hyperbolic, bounded, or $(n,m)$-bounded}, respectively.

As before, the Zariski-countable openness of the locus of $s$ in $S$ such that $X_s$ is mildly bounded implies that this locus is stable under generisation.
\begin{corollary}\label{cor:mildly-bounded-gens}
Let $S$ be an integral noetherian scheme over $\QQ$ and let $X\to S$ be a projective morphism. If there is an $s$ in $S$ such that $X_s$ is mildly bounded over $k(s)$, then the generic fibre $X_{K(S)}$ is mildly bounded.
\end{corollary}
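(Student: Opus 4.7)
The plan is to derive this statement directly from Theorem \ref{thm:mildly-bounded-is-zco}, relying on the elementary observation that the generic point of an integral noetherian scheme lies in every non-empty Zariski-countable open subset.

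First, applying Theorem \ref{thm:mildly-bounded-is-zco} to the projective morphism $X\to S$ yields that the locus
\[
U := \{t \in S : X_t \text{ is mildly bounded over } k(t)\}
\]
is Zariski-countable open in $S$. Its complement may be written as $S \setminus U = \bigcup_{i \geq 1} Z_i$ for some countable family of closed subschemes $Z_i \subset S$. The hypothesis provides a point $s \in U$, so $U$ is non-empty; consequently, none of the $Z_i$ can equal $S$, i.e.\ each $Z_i$ is a proper closed subscheme.

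Next, since $S$ is integral, its generic point $\eta$ is not contained in any proper closed subscheme of $S$, and therefore $\eta \notin \bigcup_{i \geq 1} Z_i$. This forces $\eta \in U$, so that $X_\eta$ is mildly bounded over the function field $k(\eta) = k(S)$. By Definition \ref{defn:mild_bounded}, mild boundedness of a finite-type separated scheme over a field is a property of its base change to an algebraic closure; thus the geometric generic fibre $X_{K(S)}$, which is obtained from $X_\eta$ by further base change to an algebraic closure of $k(S)$, is mildly bounded, as required.

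There is essentially no obstacle to overcome at this step, since all of the substantive content has been absorbed into Theorem \ref{thm:mildly-bounded-is-zco}. What remains is merely the standard topological remark that in an integral noetherian scheme every non-empty Zariski-countable open contains the generic point; the compatibility between mild boundedness over $k(\eta)$ and mild boundedness over its algebraic closure is immediate from the definition.
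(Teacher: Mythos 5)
Your proof is correct and follows essentially the same route as the paper's: invoke Theorem \ref{thm:mildly-bounded-is-zco} to get that the mildly bounded locus is Zariski-countable open, then use the elementary fact that, in an integral noetherian scheme, a Zariski-countable closed set containing the generic point must be all of $S$ (equivalently, each closed piece $Z_i$ of the complement is proper, hence misses $\eta$). The concluding remark that mild boundedness is by definition a statement about the base change to an algebraic closure, so that $X_\eta$ mildly bounded over $k(S)$ is the same as the geometric generic fibre being mildly bounded, is the right way to close the argument.
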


It is not clear that, given a mildly bounded variety $X$ over $k$ and a field extension $k\subset L$, the variety $X_L$ is mildly bounded over $L$.  Using the fact that being mildly bounded is stable under generisation in projective families, we are able to deduce the  persistence of mild boundedness over field extensions of \emph{finite} transcendence degree.

\begin{corollary}\label{cor:mb_persists}
Let $k\subset L$ be an extension of  algebraically closed fields of characteristic zero, and let $X$ be a  projective mildly bounded variety over $k$. If $k\subset L$ has finite transcendence degree, then $X_L$ is mildly bounded over $L$.
\end{corollary}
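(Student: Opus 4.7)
The plan is to reduce to the case of a purely transcendental extension using the fact that mild boundedness is intrinsic to the base change to an algebraic closure, and then to apply Corollary~\ref{cor:mildly-bounded-gens} to the constant family $X \times_k \AA^d_k \to \AA^d_k$.

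First, set $d := \mathrm{trdeg}(L/k)$, pick a transcendence basis $t_1,\ldots,t_d \in L$, and let $L_0 := k(t_1,\ldots,t_d)$, so that $L/L_0$ is algebraic. Since Definition~\ref{defn:mild_bounded} phrases mild boundedness of a variety over $F$ purely as a condition on its base change to an algebraic closure $\overline{F}$, and since an algebraic closure $\overline{L_0}$ is simultaneously an algebraic closure of $L$, the variety $X_L$ is mildly bounded over $L$ if and only if $X_{L_0}$ is mildly bounded over $L_0$. (The case $d=0$ is thus immediate.) It therefore suffices to prove the latter.

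For this, let $S := \AA^d_k$, which is an integral noetherian scheme over $\QQ$ (as $k$ has characteristic zero) with function field $K(S) = L_0$, and consider the projective morphism $\mathcal{X} := X \times_k S \to S$. I would choose any $k$-rational point $s \in S(k)$; then $\mathcal{X}_s \cong X$ is mildly bounded over $k(s)=k$ by hypothesis. Corollary~\ref{cor:mildly-bounded-gens} then yields that the generic fibre $\mathcal{X}_{K(S)} = X_{L_0}$ is mildly bounded over $L_0$, which combined with the previous reduction gives the desired conclusion.

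The substantive work is hidden in Corollary~\ref{cor:mildly-bounded-gens} (itself a consequence of the Zariski-countable openness established in Theorem~\ref{thm:mildly-bounded-is-zco}), so the present argument is essentially formal. The one point worth flagging is why we are forced to pass to the finitely generated subextension $L_0 \subseteq L$: the base $S$ of the spreading-out family must be noetherian, which rules out working with $L$ directly when $L/L_0$ has infinite algebraic part. The algebraic-extension invariance of mild boundedness is precisely what bridges this gap and makes the whole reduction work.
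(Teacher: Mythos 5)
Your proof is correct and follows essentially the same route as the paper's: spread out $X$ as a constant family over a $k$-variety with the right function field, specialise to a $k$-point, and invoke Corollary~\ref{cor:mildly-bounded-gens}. Your instantiation $S=\AA^d_k$ (and the explicit reduction through $L_0=k(t_1,\ldots,t_d)$, using that mild boundedness depends only on the algebraic closure) is a slightly cleaner version of what the paper does, since the paper's phrase ``$\overline{K}=L$'' tacitly assumes $L$ is algebraically closed, while your argument handles arbitrary $L$ of finite transcendence degree without that implicit reduction.
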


\subsection{Persistence of arithmetic hyperbolicity} 
The notion of mildly bounded varieties was introduced in \cite{JAut} with the aim of giving arithmetic applications; see for instance \cite[Theorem~1.6]{JAut}.   In Section \ref{section:faltingsvojta}  we give  such applications based on the results in Section \ref{section:mb} and \cite{JAut}.  For example, we prove the following new result on rational points on surfaces.  

\begin{theorem}\label{thm:surfaces_intro} Let $X$ be a projective integral surface over a number field $K$  such  that   there is a non-constant morphism $X_{\overline{K}} \to A$ to an abelian variety $A$ over $\overline{K}$.   Assume that for every \textbf{number field} $L$ over $K$, the set $ {X}(L)$ is finite. Then, for every  \textbf{finitely generated field} $M$ of characteristic zero, the set   $X(M)$  is finite.
\end{theorem}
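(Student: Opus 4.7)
The plan is to combine the mild boundedness of groupless projective surfaces admitting a non-constant map to an abelian variety (Theorem \ref{thm:surfaces_q}) with the general arithmetic persistence criterion of \cite{JAut} (Theorem \ref{thm:gen_crit}), which says that a mildly bounded, arithmetically hyperbolic projective variety over a number field remains arithmetically hyperbolic over every finitely generated extension of that number field.

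First I would verify that $X_{\overline K}$ is groupless. Suppose for contradiction that $\varphi \colon B \to X_{\overline K}$ is a non-constant morphism from an abelian variety $B$. After quotienting by the connected component of $\ker \varphi$ one may assume $\varphi$ is generically finite onto its image $Y\subset X_{\overline K}$. Choose a number field $L \supset K$ over which $B$, $Y$, and $\varphi$ are defined; after enlarging $L$, the Mordell--Weil group $B(L)$ has positive rank, and its image in $Y(L)\subset X(L)$ is infinite, contradicting the hypothesis. A rational curve in $X_{\overline K}$ would similarly force $X(L)$ to be infinite for some number field $L$. Hence $X_{\overline K}$ is groupless.

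Next, $X_{\overline K}$ is a projective groupless surface admitting a non-constant morphism to an abelian variety over $\overline K$, so Theorem \ref{thm:surfaces_q} applies and shows that $X_{\overline K}$ is mildly bounded over $\overline K$. By definition this is exactly the statement that $X$ is mildly bounded over $K$. I would then invoke the persistence criterion (Theorem \ref{thm:gen_crit}): since $X$ is arithmetically hyperbolic over $K$ by hypothesis and mildly bounded over $K$ by the previous step, $X(M)$ is finite for every finitely generated field extension $M$ of $K$ of characteristic zero, which is the desired conclusion.

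The main obstacle is packaged into the last step: the arithmetic persistence criterion is what does the genuine work, and it is precisely the application mild boundedness was designed for. Its proof proceeds by spreading out a point $P \in X(M)$ to a section of a projective family over a variety with function field $M$, and then using mild boundedness of $X$ to control the relevant Hom-scheme after specialising to a number-field point of the base, where the hypothesis on $X(L)$ cuts down the fiber. Steps one and two, by contrast, follow directly from results already stated in the paper, and the entire argument is a clean synthesis of geometric mild boundedness with the number-field input.
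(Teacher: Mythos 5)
There is a genuine gap in the final step of your argument, and it is precisely the subtlety the paper singles out in Section~\ref{section:weird}. You invoke the persistence criterion (Theorem~\ref{thm:gen_crit}, really Corollary~\ref{cor:below} in the projective case) and claim it yields that $X(M)$ is finite for every finitely generated field $M$. But the conclusion of that criterion is that $X_{\overline M}$ is \emph{arithmetically hyperbolic over $\overline M$}, which by Definition~\ref{defn:arhyp} is a statement about $\mathcal{X}(B)$ for $\ZZ$-finitely generated \emph{rings} $B\subset \overline M$ and a fixed model $\mathcal{X}$, not about $X(M)$ for the \emph{field} $M$. Over a number field $L$, a projective model $\mathcal{X}$ over a ring of $S$-integers has $\mathcal{X}(\mathcal{O}_{L,S})=\mathcal{X}(L)$ by the valuative criterion for properness, so the two notions coincide; but when $M$ has positive transcendence degree over $\QQ$, a point $P\in\mathcal{X}(M)$ only extends to a section over a dense open $U\subset\Spec B$ with complement of codimension $\geq 2$, and there is no reason it extends to all of $\Spec B$ without an additional purity hypothesis on $\mathcal{X}$. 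This is why the paper introduces arithmetically-pure models (Definitions in Section~\ref{section:weird}), shows the present surfaces admit one (Lemma~\ref{lem:existence_pure}), and then deduces finiteness of $\mathcal{X}(M)$ from arithmetic hyperbolicity via Theorem~\ref{thm:pure}. Without that bridge your proof only proves finiteness of integral points, not of $M$-rational points.

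Apart from this, the rest of your outline matches the paper's structure and is sound: grouplessness of $X_{\overline K}$ is indeed needed before applying Theorem~\ref{thm:surfaces_q} (though you could also cite directly that arithmetic hyperbolicity implies grouplessness, as the paper does in the proof of Lemma~\ref{lem:vojtatje}), and the passage from mild boundedness plus arithmetic hyperbolicity to arithmetic hyperbolicity over all larger algebraically closed fields is exactly Corollary~\ref{cor:below}, which parallels the paper's Corollary~\ref{cor:surfaces_text}. You should patch the last step by observing that $X_{\overline K}$ is groupless, so by Lemma~\ref{lem:existence_pure} the surface has an arithmetically-pure model, and then invoking Theorem~\ref{thm:pure} to pass from arithmetic hyperbolicity over $\overline M$ to the finiteness of $X(M)$.
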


Note that a special case of Theorem \ref{thm:surfaces_intro} was already proven   in  \cite[Theorem~4.9]{JAut}, under the additional assumption that the Albanese map of $X$ is   finite. The proof in \emph{loc.\ cit.}\ relies on Yamanoi's extension of Bloch-Ochiai-Kawamata's theorem to finite covers of abelian varieties \cite{Yamanoi}. However, as Yamanoi's theorem does not apply to surfaces with non-maximal Albanese dimension, we can not use his results to prove Theorem \ref{thm:surfaces_intro}. Instead, to prove   Theorem \ref{thm:surfaces_intro}, we will rely   on the fact that  abelian varieties are   mildly bounded (Proposition \ref{prop:semiabvar_is_mildbounded}).

Theorem \ref{thm:surfaces_intro} verifies a prediction implied by   the Lang--Vojta conjecture. To explain this, we recall that  Lang introduced the notion of arithmetic hyperbolicity (sometimes also referred to as \emph{Mordellicity}) over $\Qbar$   to capture the property of having only finitely many rational points  over number fields.  This notion is studied for instance in   \cite{Autissier1, Autissier2, JBook, JLitt, JAut, JLalg},    \cite[\S 2]{UllmoShimura}, and \cite{VojtaLangExc}.   Let us start with extending Lang's notion to varieties over arbitrary algebraically closed fields $k$ of characteristic zero. 

\begin{definition}[Arithmetic hyperbolicity]\label{defn:arhyp}   A finite type separated scheme
  $X$ over $k$ is \emph{arithmetically hyperbolic over $k$} 
if  there is a  $\ZZ$-finitely generated subring $A\subset k$ and a finite type separated $A$-scheme $\mathcal{X}$ with $\mathcal{X}_k \cong X$ over $k$ such that, for all $\ZZ$-finitely generated subrings $ A'\subset k$ containing $A$,  the set of $A'$-points $\mathcal{X}(A'):=\Hom_A(\Spec A', \mathcal{X})$ on $\mathcal{X}$ is finite.
\end{definition} 

For example, the variety $\mathbb{A}^1_k\setminus\{0,1\}$ is arithmetically hyperbolic over $k$. Indeed, the $R$-integral points on $\mathbb{A}^1_R\setminus \{0,1\}$ correspond to solutions of the $R$-unit equation $u+v=1$ with $u,v \in R^\ast$, and these are finite by the theorem of Siegel-Mahler-Lang (see below for a slightly more detailed discussion).

 Lang--Vojta's arithmetic conjecture (for projective varieties) says that grouplessness (and thus also algebraic hyperbolicity) is equivalent   to being arithmetically hyperbolic.

\begin{conjecture}[Arithmetic Lang--Vojta]
A projective variety $X$ over $k$ is groupless over $k$ if and only if $X$ is arithmetically hyperbolic over $k$.
\end{conjecture}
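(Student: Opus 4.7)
The statement is a longstanding conjecture (arithmetic Lang--Vojta), and only one direction is within reach of present techniques; the plan acknowledges this split from the outset.

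For the forward implication ``arithmetically hyperbolic $\Rightarrow$ groupless'', the plan is by contrapositive. Suppose $X$ is not groupless. Then there is an abelian variety $A$ over $\overline{k}$ and a non-constant morphism $\varphi: A \to X_{\overline{k}}$; after quotienting $A$ by the identity component of $\ker \varphi$ I may assume $\varphi$ is generically finite onto its image, so $\dim A \geq 1$. Spread $X$ to a model $\mathcal{X} \to \Spec R$ over a $\ZZ$-finitely generated subring $R \subset k$, and spread $(A,\varphi)$ to $\Phi: \mathcal{A} \to \mathcal{X} \times_R R'$ over a $\ZZ$-finitely generated $R' \subset \overline{k}$ containing $R$. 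Enlarging $R'$ further I may assume $\mathcal{A}(R')$ contains a non-torsion section, for instance by adjoining the coordinates of a sufficiently generic point of the generic fibre; translating by this section produces infinitely many points of $\mathcal{A}(R')$. Because $\Phi$ has generically finite fibres, $\Phi(\mathcal{A}(R')) \subset \mathcal{X}(R')$ is infinite, violating arithmetic hyperbolicity. The one technical subtlety is that arithmetic hyperbolicity is phrased over subrings of $k$ rather than of $\overline{k}$; this can be dealt with either by formulating the conjecture in the algebraically closed case $k=\overline{k}$ (its usual form) or by appealing to the persistence results of the paper in order to descend from $\overline{k}$ down to $k$.

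For the converse ``groupless $\Rightarrow$ arithmetically hyperbolic'' the plan would be to reduce to known cases. If $X$ admits a finite morphism to an abelian variety, the combination of Faltings' theorem on subvarieties of abelian varieties and Yamanoi's theorem on finite covers yields the conclusion. If the Albanese map of $X$ is merely generically finite onto its image, then the mild-boundedness results of Section \ref{section:mb}, combined with the arithmetic criterion \cite[Theorem~1.4]{JAut}, reduce arithmetic hyperbolicity over arbitrary finitely generated fields of characteristic zero to the analogous finiteness statement over number fields, which one would attack through Vojta's height machine. When the Albanese map has positive-dimensional general fibre one would fibre over the image and try to induct on dimension, using grouplessness of the fibres (itself a consequence of grouplessness of $X$).

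The hard part, and the essential obstacle, is precisely the complementary case: a groupless projective $X$ with trivial Albanese, for instance a fake projective plane or a very general high-degree hypersurface in $\PP^3$. Here there is no non-trivial morphism to an abelian variety to exploit, no known Diophantine input bounds $X(K)$ for a number field $K$, and even the far weaker statement that $X(K)$ is not Zariski-dense is open. A full proof of the conjecture would demand a genuinely new Vojta-type Diophantine-approximation tool that currently appears out of reach; accordingly, the realistic short-term role of such a plan is to multiply special cases in which the reverse implication can be established unconditionally, rather than to close the gap in full.
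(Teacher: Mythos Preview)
The statement is a \emph{conjecture}, not a theorem; the paper does not prove it and does not claim to. It is presented as the Arithmetic Lang--Vojta conjecture, and the surrounding text makes clear that only very special cases (curves, subvarieties of abelian varieties) are known, via Faltings. So there is no ``paper's own proof'' to compare against.

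Your proposal is appropriate in that it recognises this. A few comments on the details:

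\textbf{Easy direction.} Your contrapositive argument for ``arithmetically hyperbolic $\Rightarrow$ groupless'' is essentially the standard one, and is what the paper has in mind when it cites \cite[\S3]{JAut} for exactly this implication (see the proof of Lemma \ref{lem:vojtatje}). Two minor points. First, your worry about $k$ versus $\overline{k}$ is unnecessary: by the paper's standing conventions, $k$ is already algebraically closed. Second, the step ``enlarge $R'$ so that $\mathcal{A}(R')$ contains a non-torsion section'' deserves one more sentence: over an algebraically closed field of characteristic zero any positive-dimensional abelian variety has a point of infinite order (torsion is countable; or argue via heights over $\Qbar$), and adjoining the coordinates of such a point to $R'$ does the job. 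After that, generic finiteness of $\varphi$ onto its image ensures the image of the Zariski-dense subgroup $\ZZ\cdot P$ is infinite in $X$.

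\textbf{Hard direction.} Your assessment is accurate and matches the paper's own stance: the implication ``groupless $\Rightarrow$ arithmetically hyperbolic'' is wide open beyond the cases covered by Faltings, and the obstruction you name (groupless varieties with trivial Albanese, e.g.\ generic hypersurfaces) is exactly where no current technique applies. The paper's contribution is not to this implication itself but to the orthogonal question of \emph{persistence} (Conjecture \ref{conj:pers}): assuming arithmetic hyperbolicity over one algebraically closed field, deduce it over larger ones. Your sketch of a fibration-and-induction strategy is reasonable as a programme, but you should be clear that it does not constitute a proof plan for the conjecture---it is a list of reductions that bottom out in problems just as hard as the original.

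In short: there is no gap to diagnose because there is no proof to give. Your write-up would be improved by stating up front that this is a conjecture the paper records but does not attempt to prove, and by trimming the converse discussion to an honest ``this is open; here is the known territory.''
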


Faltings's theorems show that the Arithmetic Lang--Vojta conjecture holds if $X$ is one-dimensional, or when $X$ is a closed subvariety of an abelian variety; see \cite{Faltings2, FaltingsComplements, Faltings3, FaltingsLang1}. Also, this conjecture is   known for projective varieties which admit a finite morphism to the moduli stack of polarized abelian varieties by Faltings's proof of the Shafarevich conjecture. Although it is still open for ramified covers of abelian varieties, some progress was made in \cite{CDJLZ}.

The Arithmetic Lang--Vojta conjecture has many interesting consequences. For example, in light of the aforementioned properties of grouplessness, it predicts that ``being arithmetically hyperbolic'' is stable under generisation and   even a Zariski-countable open condition in projective families of varieties. Verifying these predictions of Lang--Vojta's conjecture seems currently out of reach. Slightly more reasonable  seems to be the Arithmetic Persistence Conjecture stated below. Under the additional assumption  that $X$ is projective, the Arithmetic Persistence Conjecture  is indeed a consequence of the Arithmetic Lang--Vojta conjecture. However, as the conjecture also seems to be reasonable   in the quasi-projective setting, we state it in this    generality.

\begin{conjecture}[Arithmetic Persistence Conjecture]\label{conj:pers}
Let $k\subset L$ be an extension of algebraically closed fields of characteristic zero. If $X$ is an arithmetically hyperbolic finite type separated scheme over $k$, then $X_L$ is arithmetically hyperbolic over $L$.
\end{conjecture}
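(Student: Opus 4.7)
The plan is to reduce Conjecture \ref{conj:pers} first to an extension of finite transcendence degree, and then via mild boundedness to the ordinary arithmetic hyperbolicity of $X$ over $k$. Suppose for contradiction that $X_L$ is not arithmetically hyperbolic: for some spreading of $X_L$ over a $\ZZ$-finitely generated $B \subset L$ there is an enlargement $B' \supset B$ with $\mathcal{X}(B')$ infinite. Since $B'$ is $\ZZ$-finitely generated, its fraction field has finite transcendence degree over $\QQ$, hence over $k$; replacing $L$ by the algebraic closure (inside $L$) of the compositum of $k$ and $B'$, I may assume $k\subset L$ has finite transcendence degree. An induction on the transcendence degree then reduces to $L = \overline{K(S)}$ for $S$ a smooth connected affine curve over $k$.

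In this case, I would spread $X$ out to a projective $\mathcal{X} \to \Spec A$ for a $\ZZ$-finitely generated $A\subset k$, pick an $A$-model $\mathcal{S}$ of $S$, and enlarge $A$ so that $B' \cap k \subset A$ and so that the finitely many points $s_1,\ldots,s_m \in S(\overline{k})$ appearing below descend to sections $\Spec A \to \mathcal{S}$. An $L$-point of $X_L$ defined over $B'$ corresponds, after passing to a finite cover $S' \to S$, to a morphism $f: S' \to X$ over $k$. An infinite sequence of such points then yields, after extracting a subsequence of bounded cover-degree (there are only countably many such covers of each bounded degree), infinitely many morphisms $f_i: S' \to X$, which we may take to all factor through a common model $\mathcal{S}' \to \Spec A$.

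The geometric input bridging the two finiteness statements is mild boundedness. Under Conjecture \ref{conj:new} --- known unconditionally in the situations of Theorems \ref{thm:surfaces_are_mb} and \ref{thm:surfaces_q}, and by Proposition \ref{prop:semiabvar_is_mildbounded} for semi-abelian varieties --- the variety $X$ is mildly bounded over $k$, hence by Corollary \ref{cor:mb_persists} mildly bounded over $\overline{k}$. Applied to $S'_{\overline{k}}$, mild boundedness supplies finitely many points $s_1,\ldots,s_m \in S'(\overline{k})$ such that the evaluation map $\mathrm{Hom}(S'_{\overline{k}}, X_{\overline{k}}) \to X(\overline{k})^m$ sending $f \mapsto (f(s_1),\ldots,f(s_m))$ has finite fibres. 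The images $f_i(s_j) \in X(\overline{k})$ are obtained by specialising the $B'$-valued coordinates of the corresponding $L$-points at the $A$-sections $s_j$, so they land in $\mathcal{X}(A'')$ for a single $\ZZ$-finitely generated $A''\subset k$ independent of $i$; arithmetic hyperbolicity of $X$ over $k$ then forces these images into a finite subset of $X(\overline{k})^m$, so only finitely many $f_i$ exist --- a contradiction.

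The principal obstacle is that Conjecture \ref{conj:new} is itself open in general: only in the settings of Theorems \ref{thm:surfaces_are_mb}--\ref{thm:surfaces_q} (and for semi-abelian varieties) is mild boundedness of projective varieties available unconditionally, so this strategy recovers Theorem \ref{thm:surfaces_intro} but yields Conjecture \ref{conj:pers} only in that restricted scope. A subsidiary difficulty is uniformity in the finite cover $S' \to S$ as the $L$-points vary: one must either bound the degree of $S'$ a priori, or apply mild boundedness to $S$ itself and bootstrap to multisections, and one must organise the specialisations of the coordinates of the $f_i$ into a single $\ZZ$-finitely generated subring of $k$ simultaneously over the infinite family. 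Finally, extending beyond projective $X$ requires a quasi-projective analogue of the mild boundedness machinery, which is presently only available in the semi-abelian setting of Proposition \ref{prop:semiabvar_is_mildbounded}.
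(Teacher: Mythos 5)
The statement you set out to prove is labelled as a \emph{conjecture}, and the paper does not prove it: Conjecture~\ref{conj:pers} remains open, and the paper only establishes special cases (Theorems \ref{thm:semi_abvars_intro}, \ref{thm:surfaces_intro}, \ref{thm:vojta1}, \ref{thm:vojta2}, Corollary \ref{cor:below}). You recognize this correctly --- your closing paragraph is explicit that the argument is a conditional reduction to Conjecture~\ref{conj:new}, not a proof. The reduction you sketch (pass to finite transcendence degree, induct down to transcendence degree one, spread $X$ out over a $\ZZ$-finitely generated $A\subset k$, use mild boundedness of $X$ over the intermediate algebraically closed fields to select finitely many evaluation points on the geometric curve $\Spec B'\times_A k$, then invoke arithmetic hyperbolicity of $X$ over $k$ to bound the specialized images and conclude finiteness) is exactly the paper's criterion for persistence, Theorem~\ref{thm:gen_crit} together with its one-step version Theorem~\ref{thm:gen_crit0}, both quoted from \cite{JAut}, and reappears in Lemma~\ref{lem:vojtatje}. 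So in the cases where the paper has a proof, yours follows the same route.

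Two points of imprecision in the sketch are worth flagging, though neither changes your (correct) conclusion that the argument only covers the known special cases. First, the ``finite cover $S'\to S$ of bounded degree'' framing is not how the argument is organized: once the $\ZZ$-finitely generated $B'\subset L$ is fixed, so is the single smooth affine curve $C=\Spec B'\times_A k$ over $k$, and one shows directly that $\mathcal{X}(B')$ is finite; there is no subsequence-extraction over varying covers, and it is unclear that such an extraction would terminate. Second, the hypothesis actually needed in Theorem~\ref{thm:gen_crit} is mild boundedness of $X_K$ for \emph{every} intermediate $k\subset K\subset L$, which is stronger than mild boundedness over $k$ alone; this persistence is supplied by Corollary~\ref{cor:mb_persists} in the projective case and by Corollary~\ref{cor:qf_semi_is_mb} in the quasi-finite-over-semi-abelian case, and your mention of $\overline{k}$ is a no-op since $k$ is already algebraically closed. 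Your identification of the bottleneck --- that Conjecture~\ref{conj:new} is open and that even it would not cover non-projective $X$ --- agrees with the paper's stance.
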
 

To give a better idea of what the Arithmetic Persistence Conjecture entails, let us consider an  affine finite type scheme $\mathcal{X}$ over $\ZZ$ with only finitely many integral points, i.e., for every number field $K$ and every finite set of finite places $S$ of $K$, the set of $\OO_{K,S}$-points of $\mathcal{X}$ is finite.  Then, the Arithmetic Persistence Conjecture says that  for every \emph{finitely generated integral domain  $A$ of characteristic zero}, the set $\mathcal{X}(A)$ is finite. This  is not an unreasonable expectation, as it can be verified in many cases   (see for instance \cite{EvertseGyory} or \cite{JLitt}). In fact, 
a point of view emphasized by Lang in his seminal paper \cite[p.~202]{Lang2} is that finiteness statements involving  integral points over rings of integers should continue to hold (or "persist") over  $\ZZ$-finitely generated integral domains of characteristic zero.  In this direction, after  Siegel, Mahler, and Parry's work \cite{Siegel1, Mahler1, Parry} in the classical setting of rings of ($S$-)integers, Lang proved that the unit equation
\begin{align*}
u+v=1, \quad u,v\in A^*,
\end{align*}
has only finitely many solutions when $A$ is any $\ZZ$-finitely generated integral domain of characteristic zero.  Lang first proved these results for rings of $S$-integers in number fields, and then used a specialization argument to deduce the general case. In other words, he proved the arithmetic hyperbolicity of $\mathbb{P}^1\setminus \{0,1,\infty\}$ over $\Qbar$ and also the Arithmetic Persistence Conjecture for $\mathbb{P}^1\setminus \{0,1,\infty\}$.

Theorem \ref{thm:surfaces_intro} can be reformulated as saying that the Arithmetic Persistence Conjecture holds for projective surfaces $X$ over $\Qbar$  which admit a non-constant map to some abelian variety over $\Qbar$.    Indeed,   if $X$ is a projective arithmetically hyperbolic surface over $\Qbar$ which admits a non-constant morphism to an abelian variety and $\Qbar\subset k$ is an extension of algebraically closed fields, then Theorem \ref{thm:surfaces_intro} says that  $X_k$ is arithmetically hyperbolic over $k$, and this can be shown to imply the finiteness of rational points $X(M)$ with $M$ as in Theorem \ref{thm:surfaces_intro} (see Section \ref{section:weird}).  The restriction to number fields  in Theorem \ref{thm:surfaces_intro} is unnecessary, and was made above only for the sake of simplifying the statement; see Theorem \ref{thm:surfaces_text2} for a more general statement.

Our next result solves the Arithmetic Persistence Conjecture for varieties which admit a quasi-finite morphism to some semi-abelian variety. 

\begin{theorem}\label{thm:semi_abvars_intro} Let $A$ be a $\ZZ$-finitely generated integral domain of characteristic zero with fraction field $K$ and let $\mathcal{X}$ be a finite type separated scheme over $A$ such that $\mathcal{X}_{\overline{K}}$ admits a quasi-finite morphism to a semi-abelian variety over $\overline{K}$. Assume that, for every \textbf{finite} extension $L/K$ and every $\ZZ$-finitely generated subring $A'\subset L$ containing $A$, the set $\mathcal{X}(A')$ is finite. Then,  for every \textbf{finitely generated} field extension $M/K$ and every $\ZZ$-finitely generated subring $B\subset M$ containing $A$, the set $\mathcal{X}(B)$ is finite.\end{theorem}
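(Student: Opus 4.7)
The plan is to combine the mild boundedness of semi-abelian varieties (Proposition \ref{prop:semiabvar_is_mildbounded}) with the hypothesised finiteness of $\mathcal{X}(A')$ over finite extensions of $K$. I would first upgrade mild boundedness from $G$ to $\mathcal{X}_{\overline{K}}$ itself via the quasi-finite morphism $f \colon \mathcal{X}_{\overline{K}} \to G$, and then induct on $d := \mathrm{tr.deg}(M/K)$ by a spreading-out argument.

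The geometric input is that $\mathcal{X}_{\overline{K}}$ is itself mildly bounded over $\overline{K}$. Given any smooth quasi-projective connected curve $C$ over $\overline{K}$, Proposition \ref{prop:semiabvar_is_mildbounded} applied to $G$ supplies points $c_1, \ldots, c_m \in C(\overline{K})$ such that for every tuple $(g_1, \ldots, g_m) \in G(\overline{K})^m$ only finitely many morphisms $C \to G$ satisfy $c_i \mapsto g_i$. For a morphism $\phi \colon C \to \mathcal{X}_{\overline{K}}$ with prescribed values $\phi(c_i) = x_i$, the composite $\psi := f \circ \phi$ takes the fixed values $\psi(c_i) = f(x_i)$ and so lies in a finite set; and for each such $\psi$, the lifts $\phi$ correspond to sections of the quasi-finite pullback $\mathcal{X}_{\overline{K}} \times_G C \to C$. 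As $C$ is a smooth integral curve, each such section factors through an irreducible component of the pullback mapping isomorphically to $C$, and there are only finitely many such components. Thus $\mathcal{X}_{\overline{K}}$ is mildly bounded with the same choice of $c_i$; the same argument applied over $\overline{L}$ shows that $\mathcal{X}_{\overline{L}}$ is mildly bounded for any finitely generated extension $L/K$.

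For the arithmetic step, I proceed by induction on $d$. The base case $d = 0$ is immediate: $\mathrm{Frac}(B)/K$ is algebraic and finitely generated, hence finite, so the hypothesis of the theorem applies directly with $L = \mathrm{Frac}(B)$. For the inductive step $d \geq 1$, spread $B$ out to a smooth geometrically connected affine $K'$-variety $V$ of dimension $d$ with $B \subset \OO(V)$, for some finite extension $K' \subset \overline{K}$ of $K$, and choose a smooth projection $V \to V'$ of relative dimension one whose generic fibre $U$ is a smooth geometrically connected curve over $K'' := k(V')$, a finitely generated field of transcendence degree $d-1$ over $K$. Each $x \in \mathcal{X}(B)$ restricts to a morphism $U_{\overline{K''}} \to \mathcal{X}_{\overline{K''}}$; mild boundedness of $\mathcal{X}_{\overline{K''}}$ applied to $U_{\overline{K''}}$ then yields points $c_1, \ldots, c_m \in U(\overline{K''})$ such that $x$ is determined up to finite ambiguity by the tuple $(x(c_1), \ldots, x(c_m))$. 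After uniformly spreading out the family, each coordinate $x(c_i)$ is a point of $\mathcal{X}$ over a single $\ZZ$-finitely generated ring $B'_i$ inside a finitely generated field $M'/K$ of transcendence degree $d-1$; the inductive hypothesis then forces the set of such tuples to be finite, and hence $\mathcal{X}(B)$ is finite.

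The main obstacle will be the uniform spreading-out in the inductive step: one must argue that, as $x$ varies over (potentially infinitely many) $B$-points, the evaluation points $x(c_i)$ all land in $\mathcal{X}(B'_i)$ for a single $\ZZ$-finitely generated ring $B'_i$ independent of $x$, so that the inductive hypothesis can be invoked. The geometric step, by contrast, is essentially elementary: it relies only on the standard fact that a quasi-finite morphism to a smooth connected curve has finitely many sections, sidestepping any need for étaleness or properness of $f$.
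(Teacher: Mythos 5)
Your approach is essentially the same as the paper's. The paper's proof of this theorem is a one-line deduction from Theorem \ref{thm:vojta1}, which combines Corollary \ref{cor:qf_semi_is_mb} (your geometric step: quasi-finite morphisms to semi-abelian varieties inherit mild boundedness) with the persistence criterion Theorem \ref{thm:gen_crit}; the latter is cited from \cite{JAut} and is proved there by exactly the induction on transcendence degree that you sketch, with the same pattern as the proof of Lemma \ref{lem:vojtatje} in this paper.

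The ``main obstacle'' you flag is not a gap, provided the spreading-out is organised in the right order. Rather than base-changing $B$ to a $K'$-variety $V$ and then trying to descend the $\overline{K''}$-points $c_i$, keep $\Spec B$ as a family over $A$: choose a $\ZZ$-finitely generated intermediate ring $A \subset B' \subset B$ so that $\Spec B \to \Spec B'$ has relative dimension one, let $C$ be the geometric generic fibre of $\Spec B \to \Spec B'$, apply mild boundedness of $\mathcal{X}$ over the algebraic closure of the fraction field of $B'$ to obtain points $c_1,\dots,c_m$ of $C$, and spread these out \emph{once} to sections $\sigma_1,\dots,\sigma_m$ of $\Spec(B\otimes_{B'}B'') \to \Spec B''$ over a single $\ZZ$-finitely generated ring $B''$ with $B'\subset B''$. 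These sections are fixed independently of $x$. For any $x \in \mathcal{X}(B)$, the composite $\Spec B'' \xrightarrow{\sigma_i} \Spec(B\otimes_{B'}B'') \to \Spec B \xrightarrow{x} \mathcal{X}$ is automatically a $B''$-point, so the tuple $\bigl(x\circ\sigma_1,\dots,x\circ\sigma_m\bigr)$ lands in $\mathcal{X}(B'')^m$, which is finite by the inductive hypothesis since the fraction field of $B''$ has transcendence degree $d-1$ over $K$; mild boundedness then bounds each fibre of $x \mapsto \bigl(x\circ\sigma_1,\dots,x\circ\sigma_m\bigr)$. This is exactly the displayed inclusion in the proof of Lemma \ref{lem:vojtatje}.
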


Theorem \ref{thm:semi_abvars_intro} is concerned with the finiteness of  integral points on (not necessarily proper) varieties which map quasi-finitely (possibly surjectively) to some semi-abelian variety. As before, we  note that   Theorem \ref{thm:semi_abvars_intro} was proved using Yamanoi's  results on abelian varieties in the case that $\mathcal{X}$ is   proper and smooth in  \cite{JAut}. Removing the properness condition in Yamanoi's work would require a substantial amount of new ideas. In this paper, we prove Theorem \ref{thm:semi_abvars_intro} by appealing to the simple fact that semi-abelian varieties are mildly bounded (Proposition \ref{prop:semiabvar_is_mildbounded}).

\subsection{Diophantine applications}
Let us briefly explain in what Diophantine context one can apply our work on the Arithmetic Persistence Conjecture. 

For example, using our results we  can prove Faltings's finiteness theorem for integral points on complements of ample effective divisors in abelian varieties over arbitrary finitely generated fields of characteristic zero. Such a result is indicated (without proof) in Faltings's paper \cite{Faltings3}. Using the mild boundedness of abelian varieties (in a uniform sense), we give a short proof of this extension of Faltings's theorem in Theorem \ref{thm:fal}. It seems reasonable to suspect that one can also prove similar extensions to semi-abelian varieties over finitely generated fields by appealing to Vojta's more general finiteness results for semi-abelian varieties over number fields \cite{Vojta1a, Vojta2b}.

Note that we also make an observation on   Hassett-Tschinkel's puncturing problem for simple abelian varieties; see Remark \ref{remark:ht} for a discussion.

\subsection{Predictions made by Vojta's geometric conjecture}
Part of the conjecture of Demailly, Green-Griffiths, and Lang as stated above (Conjecture \ref{conj}) is implied by Vojta's  conjecture that a projective variety is of general type if and only if it is  pseudo-algebraically hyperbolic; see \cite{Vojta11} or \cite{VojtaABC}. Here,  for $X$ a projective variety  over an algebraically closed field $k$ of characteristic zero and $\Delta\subset X$ a closed subset, we say that $X$  is \emph{ algebraically hyperbolic modulo $\Delta$ over $k$} if there is  an ample line bundle $\mathcal L$ on $X$ and  a real number $\alpha_{X, \Delta, \mathcal L}$ depending only on $X$, $\Delta$, and $\mathcal L$  such that, for every smooth projective curve $C$ over $ {k}$ and every  morphism $f:C\to X$ with $f(C)\not\subset \Delta$, the inequality 
\[
\deg_C f^\ast \mathcal L \leq \alpha_{X, \Delta, \mathcal L} \cdot \mathrm{genus}(C)
\] holds.   Also, a projective variety $X$ over an algebraically closed field $k$ of characteristic zero is \emph{pseudo-algebraically hyperbolic over $k$} if there is a proper closed subset $\Delta \subsetneq X$ such that $X$ is  algebraically hyperbolic modulo $\Delta$ over ${k}$.  More generally, a projective variety over an arbitrary field $F$ of characteristic zero is \emph{pseudo-algebraically hyperbolic over $F$} if $X_{\overline{F}}$ is pseudo-algebraically hyperbolic over the algebraic closure $\overline{F}$ of $F$.

This use of the    word ``pseudo'' was introduced by Kiernan-Kobayashi \cite{KiernanKobayashi} and also employed by Lang \cite{Lang2} to indicate the presence of an "exceptional locus", and is used throughout the literature (see also, for example, \cite{RousseauTurchetWang}).

\begin{conjecture}[Vojta's conjecture]\label{conj:vojta}
A projective variety $X$ over $k$ is of general type over $k$ if and only if it is pseudo-algebraically hyperbolic over $k$.
\end{conjecture}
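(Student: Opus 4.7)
The plan is to treat each implication separately, keeping in mind that this is Vojta's conjecture and that a complete proof is well beyond current techniques; what follows is a natural strategy together with an identification of the obstructions.

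For \emph{pseudo-algebraically hyperbolic implies of general type}, I would argue by contrapositive. Suppose $X$ is not of general type and, granting abundance, split into cases. If $X$ is uniruled, then $X$ is covered by rational curves and any proper closed $\Delta \subsetneq X$ misses uncountably many of them; each such $\PP^1 \to X$ violates the required inequality, since $2g(C)-2<0$ while $\deg_C f^\ast\mathcal{L} > 0$. If $0 \leq \kappa(X) \leq \dim X - 1$, then after passing to the Iitaka fibration, general fibers $F$ satisfy $\kappa(F)=0$ and split, up to \'etale cover and by Kawamata's decomposition, into abelian and Calabi-Yau factors; one expects to produce infinitely many curves of bounded geometric genus through a general point of $F$, again contradicting pseudo-algebraic hyperbolicity. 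The genuine obstacle here is the density statement for low-genus curves on Calabi-Yau fibers, which is essentially a converse to Kobayashi's conjecture and itself wide open.

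For \emph{of general type implies pseudo-algebraically hyperbolic}, fix an ample line bundle $\mathcal{L}$ on $X$. Since $K_X$ is big, one has a linear equivalence $mK_X \sim n\mathcal{L} + E$ in $\Pic(X)$ for positive integers $m,n$ and an effective divisor $E$. I would take the exceptional locus $\Delta := \mathrm{supp}(E) \cup Z_{\mathrm{GGL}}$, where $Z_{\mathrm{GGL}}$ is the conjectural Green--Griffiths--Lang exceptional set. For any morphism $f : C \to X$ with $f(C) \not\subset \Delta$, one formally obtains
\[ n \deg_C f^\ast \mathcal{L} \leq m \deg_C f^\ast K_X + \deg_C f^\ast E, \]
and a Riemann--Hurwitz-type bound $\deg_C f^\ast K_X \leq 2g(C)-2$ would close the argument with constant $\alpha_{X,\Delta,\mathcal{L}} = m/n$. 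The term $\deg_C f^\ast E$ for curves meeting but not lying in $E$ can in principle be controlled by pulling back a section of the associated line bundle and invoking an effective positivity statement away from $\mathrm{supp}(E)$; the serious obstruction is to force $\deg_C f^\ast K_X \leq 2g(C)-2$ in the presence of ramification. The natural tool here is Demailly's jet-differential machinery, which already underpins Theorem \ref{thm:demailly}.

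\emph{Where I expect the main difficulty.} The hard direction is general type to pseudo-algebraic hyperbolicity: producing a uniform constant $\alpha_{X,\Delta,\mathcal{L}}$ valid for all curves of all genera is essentially a quantitative algebraic degeneracy statement, and even the analytic analogue (the Green--Griffiths conjecture) is open in most cases. A realistic first goal is the surface case: for a minimal surface of general type with $c_1^2 > c_2$, Bogomolov's symmetric differentials produce a proper closed $\Delta$ outside of which one can hope to bound $\deg_C f^\ast\mathcal{L}/(2g(C)-2)$ from above, and this would already be a very substantial test of Conjecture \ref{conj:vojta}.
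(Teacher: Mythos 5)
This statement is a \emph{conjecture} --- Vojta's conjecture --- and the paper offers no proof of it; it is stated as a guiding expectation against which the results of Section \ref{section:pseudo} (e.g.\ Theorems \ref{thm:specialisation2} and \ref{thm:ps_alg}) are calibrated. You correctly treat it as open, so there is nothing to grade as a proof. That said, two remarks on your strategy sketch are worth making.

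First, in the direction \emph{general type $\Rightarrow$ pseudo-algebraically hyperbolic}, the step ``a Riemann--Hurwitz-type bound $\deg_C f^\ast K_X \leq 2g(C)-2$ would close the argument'' is not an available tool; it is essentially the content of the conjecture being proved, phrased for $\mathcal{L} = K_X$. Riemann--Hurwitz controls $2g(C)-2$ in terms of the canonical degree only when $X$ is itself a curve; for higher-dimensional $X$ the inequality $\deg_C f^\ast K_X \leq \alpha(2g(C)-2)$ off a proper exceptional set is precisely Vojta's $1/d$-conjecture for curves, and no jet-differential machinery is currently known to produce it in general (Demailly's theorem cited as Theorem \ref{thm:demailly} is a countable-openness statement for the locus of algebraically hyperbolic fibres, not an algebraic-degeneracy input of the kind you would need). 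So the circularity here is real, and it is worth flagging more sharply than ``the serious obstruction is to force the bound.''

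Second, in the direction \emph{pseudo-algebraically hyperbolic $\Rightarrow$ general type}, the uniruled case is fine (and this is exactly why the paper is careful to exclude rational curves and to speak of pure varieties in Remark \ref{remark:pure}). But for $0 \leq \kappa(X) < \dim X$ you are implicitly invoking abundance and the existence of low-genus curves Zariski-dense on Calabi--Yau fibres, neither of which is known; as you say, the latter is essentially a converse Kobayashi-type statement. A cleaner version of this direction, closer to the spirit of the paper, is to observe that if $X$ is not of general type then by Theorem \ref{thm:nakayama} one expects a positive-dimensional family of subvarieties not of general type, and then to argue that any proper closed $\Delta$ misses generic members of that family; this replaces the Iitaka-fibration case analysis with the openness of the general-type locus, but of course still leaves the core density problem unresolved. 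In short: your sketch is a reasonable account of the conjectural landscape, but the paper has no proof of this statement and neither does anyone else, and your proposed reductions do not remove the essential difficulty.
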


To motivate our next result, let $S$ be an integral variety over  $k$ and let $X \to S$ be a projective   morphism whose  generic fibre $X_{{K(S)}}$ is of general type. Then,  for a    general $s$ in $S(k)$, the projective scheme $X_s$  is of general type by Nakayama's theorem (Theorem \ref{thm:nakayama}).
 Vojta's conjecture (Conjecture \ref{conj:vojta}) predicts   that   pseudo-algebraic hyperbolicity specialises in a similar way that varieties of general type do. Our next result verifies part of this prediction. This result is  a consequence of Proposition \ref{prop:specialising_hyperbolicity}.

\begin{theorem}[Specialising pseudo-algebraic hyperbolicity]\label{thm:specialisation2} Let $S$ be an integral variety over $k$ and let $X \to S$ be a projective   morphism whose generic fibre $X_{ {K(S)}}$ is pseudo-algebraically hyperbolic. Then,    for a very general  $s$  in $S(k)$, the projective scheme  $X_s$ is pseudo-algebraically hyperbolic over $k(s)$.  (That is, there are countably many proper closed subsets  $\Delta_i\subset S(k)$ such that, for every $s$ in $S(k)\setminus \cup_{i=1}^\infty \Delta_i$, the projective scheme $X_s$ is pseudo-algebraically hyperbolic over $k(s)$.)\end{theorem}

Moreover, 
  since being of general type persists over field extensions, our next result is also in accordance with Vojta's conjecture (Conjecture \ref{conj:vojta}). 

\begin{theorem}[Persistence of pseudo-algebraic hyperbolicity]\label{thm:ps_alg}
Let $k\subset L$ be an extension of   algebraically closed fields of characteristic zero. 
If $X$ is a pseudo-algebraically hyperbolic projective variety over $k$, then $X_L$ is pseudo-algebraically hyperbolic over $L$.
\end{theorem}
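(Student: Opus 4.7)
The plan is to verify that if a proper closed $\Delta \subset X$, an ample line bundle $\mathcal L$ on $X$, and a real number $\alpha > 0$ witness pseudo-algebraic hyperbolicity of $X$ over $k$, then the base-changed triple $(\Delta_L, \mathcal L_L, \alpha)$ witnesses pseudo-algebraic hyperbolicity of $X_L$ over $L$. The argument will be a standard spread-and-specialize, made possible by the fact that $k$ is algebraically closed.

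First I would fix a smooth projective curve $C$ over $L$ of genus $g$ and a non-constant morphism $f\colon C \to X_L$ with $f(C) \not\subset \Delta_L$. By the usual limit arguments (EGA IV.8), the data $(C,f)$ descends to a finitely generated $k$-subalgebra $A \subset L$: there is a smooth projective morphism $\pi\colon \mathcal C \to \Spec A$ with geometrically connected fibers of genus $g$ and a morphism $\mathcal f\colon \mathcal C \to X_A$ whose base change along $\Spec L \to \Spec A$ recovers $f$. After replacing $A$ by a suitable localization, I would apply generic flatness to arrange simultaneously that the scheme-theoretic image $\mathcal Y := \mathcal f(\mathcal C) \subset X_A$ is flat of relative dimension $1$ over $\Spec A$ (so that each specialization $\mathcal f_s$ is non-constant), and that the intersection $\mathcal Y \cap \Delta_A$ is flat over $\Spec A$ of relative dimension $0$ (so that $\mathcal Y_s \not\subset \Delta$ for every $s \in \Spec A$, this being the situation at the generic point since $f(C) \not\subset \Delta_L$).

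Now I would invoke Hilbert's Nullstellensatz: because $A$ is finitely generated over the algebraically closed field $k$, the set of maximal ideals of $A$ with residue field $k$ is Zariski-dense in $\Spec A$, so I can pick any such $k$-point $s$. The fiber $\mathcal C_s$ is a smooth projective curve over $k$ of genus $g$ and the induced morphism $f_s\colon \mathcal C_s \to X$ is non-constant with $f_s(\mathcal C_s) \not\subset \Delta$, so pseudo-algebraic hyperbolicity of $X$ over $k$ gives
\[
\deg_{\mathcal C_s} f_s^* \mathcal L \;\leq\; \alpha \cdot g.
\]
Since degrees of line bundles in flat families of connected smooth proper curves are locally constant on the base, $\deg_{\mathcal C_s} f_s^* \mathcal L = \deg_C f^* \mathcal L_L$, and the required inequality follows.

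The step I expect to demand the most care is the spreading-out arrangement: one must descend not only $\mathcal C$ and $\mathcal f$ but also the scheme-theoretic image $\mathcal Y$ in a way compatible with base change, and then use generic flatness plus properness of $\mathcal Y \to \Spec A$ to guarantee that on a dense open of $\Spec A$ both conditions \textemdash{} $\mathcal f_s$ non-constant and $\mathcal f_s(\mathcal C_s) \not\subset \Delta$ \textemdash{} hold at every specialization. Once that is in place, the appeal to pseudo-algebraic hyperbolicity of $X/k$ at a $k$-point of $\Spec A$ and the invariance of line bundle degrees in flat families complete the proof in a purely formal way.
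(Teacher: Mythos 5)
Your proposal is correct and takes essentially the same approach as the paper: spread out the data $(C,f)$ over a finitely generated $k$-subalgebra $A\subset L$ and specialize to a $k$-point, using constancy of genus and degree in flat families. The paper phrases this as a contradiction (assuming a sequence of morphisms with slope tending to infinity), whereas you argue directly and are more explicit about the generic-flatness bookkeeping for the image and its intersection with $\Delta_A$; this is a cosmetic difference only.
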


Similar  results are proven in Section \ref{section:pseudo} for the notion of boundedness, and these results are applied, for example, in \cite{JMRL}. We   ``pseudofy''  this notion of boundedness as follows:    a projective scheme $X$ over an algebraically closed field $k$ of characteristic zero is said to be \emph{pseudo-bounded over $k$} if there is a proper closed subscheme $\Delta\subsetneq X$ such that, for every smooth projective connected variety $Y$ over $k$, the scheme \[\underline{\Hom}_k(Y,X)\setminus \underline{\Hom}_k(Y,\Delta)\] parametrizing morphisms $f:Y\to X$ with $f(Y)\not\subset \Delta$ is of finite type over $k$.
In this case, we also say that $X$ is \emph{bounded modulo $\Delta$}. 
 The relation to pseudo-algebraic hyperbolicity is as follows.

\begin{theorem}[From curves to varieties]\label{thm:alghyp_to_bounded}  
If $X$ is algebraically hyperbolic modulo $\Delta$ over $k$, then $X$ is bounded modulo $\Delta$ over $k$.
\end{theorem}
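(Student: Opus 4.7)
The plan is to fix an arbitrary smooth projective connected variety $Y/k$ of dimension $n$ and show that the open subscheme $\underline{\Hom}_k(Y,X) \setminus \underline{\Hom}_k(Y,\Delta)$ is of finite type. Fix a very ample line bundle $\mathcal{M}$ on $Y$, and let $\mathcal{L}$ on $X$ and $\alpha > 0$ realize the algebraic hyperbolicity of $X$ modulo $\Delta$. Each morphism $f\colon Y \to X$ has a closed graph $\Gamma_f \subset Y \times X$ whose Hilbert polynomial with respect to the ample line bundle $p_1^{\ast}\mathcal{M}\otimes p_2^{\ast}\mathcal{L}$ is determined by the tuple of intersection numbers $a_i(f):=(f^{\ast}\mathcal{L})^i\cdot \mathcal{M}^{n-i}$ for $i=0,\ldots,n$. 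Since $\underline{\Hom}_k(Y,X)$ sits in $\mathrm{Hilb}(Y\times X)$ as an open subscheme and so decomposes as a disjoint union of quasi-projective components indexed by this Hilbert polynomial, it suffices to show that on the locus $f(Y)\not\subset \Delta$ the tuple $(a_0(f),\ldots,a_n(f))$ takes only finitely many values.

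The first step is to bound $a_1(f)$ using a covering family of curves. I parametrize complete intersections $\mathcal{C}_t := D_1 \cap \cdots \cap D_{n-1}$, with $D_j \in |\mathcal{M}|$, by $t$ in a dense open $T \subset |\mathcal{M}|^{n-1}$; Bertini's theorems ensure that for general $t$ the curve $\mathcal{C}_t$ is smooth projective connected of a fixed genus $g_C$ depending only on $(Y,\mathcal{M})$, and that the universal curve surjects onto $Y$. For a non-constant $f$ with $f(Y) \not\subset \Delta$, the locus $\{t \in T : f(\mathcal{C}_t) \subset \Delta\}$ is closed in $T$ and cannot equal $T$ (else $f(Y) = \bigcup_t f(\mathcal{C}_t) \subset \Delta$), and similarly the locus where $f|_{\mathcal{C}_t}$ is constant is closed and proper in $T$. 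Hence for generic $t$ the restriction $f|_{\mathcal{C}_t}$ is non-constant with image outside $\Delta$, and the definition of algebraic hyperbolicity modulo $\Delta$ yields
\[
a_1(f) \;=\; f^{\ast}\mathcal{L}\cdot \mathcal{M}^{n-1} \;=\; \deg_{\mathcal{C}_t}(f|_{\mathcal{C}_t})^{\ast}\mathcal{L} \;\leq\; \alpha(2g_C - 2).
\]
The constant morphisms $f$ with value outside $\Delta$ satisfy $a_i(f) = 0$ for $i \geq 1$, parametrize the quasi-projective $X\setminus \Delta$, and are handled separately as a single finite-type component.

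With $a_0(f) = \mathcal{M}^n$ fixed and $a_1(f)$ uniformly bounded, it remains to bound $a_i(f)$ for $i\geq 2$. Since $f^{\ast}\mathcal{L}$ is nef (being the pullback of an ample class), the Khovanskii--Teissier inequalities give the log-concavity $a_i(f)^2 \geq a_{i-1}(f)\cdot a_{i+1}(f)$, and an easy induction on $i$ yields $a_i(f) \leq a_1(f)^i / a_0^{i-1}$ whenever $a_1(f) > 0$; when $a_1(f) = 0$, the covering-family argument above forces $f^{\ast}\mathcal{L}$ to have degree zero on every general $\mathcal{C}_t$ and hence $f$ to be constant, reducing to the case already handled. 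Thus all $a_i(f)$ are uniformly bounded as $f$ varies, so the graphs $\Gamma_f$ lie in finitely many (projective) components of $\mathrm{Hilb}(Y\times X)$, whence $\underline{\Hom}_k(Y,X) \setminus \underline{\Hom}_k(Y,\Delta)$ is of finite type, as required. I expect the main obstacle to be the covering-family step for bounding $a_1(f)$: it demands careful case analysis (constant versus non-constant $f$, collapsing versus non-collapsing restrictions to curves, images landing in $\Delta$ or not) together with Bertini-type smoothness and connectedness statements for general complete intersections.
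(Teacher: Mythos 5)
Your approach is genuinely different from the paper's: you try to bound the Hilbert polynomial of the graph directly by covering $Y$ with complete-intersection curves and feeding these into the definition of algebraic hyperbolicity, whereas the paper base-changes to an uncountable field, notes that algebraic hyperbolicity modulo $\Delta$ trivially gives $1$-boundedness modulo $\Delta$, and then runs an induction on dimension by restricting maps to a well-chosen smooth ample divisor $D\subset Y$ (Lemma~\ref{lem:1_is_n}). If it worked, your argument would be cleaner in that it avoids the detour through uncountable fields. However, there is a genuine gap in the very first step.

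The claim that the Hilbert polynomial of $\Gamma_f$ with respect to $p_1^{\ast}\mathcal M\otimes p_2^{\ast}\mathcal L$ is \emph{determined} by the tuple $a_i(f)=(f^{\ast}\mathcal L)^i\cdot\mathcal M^{n-i}$ is false. That Hilbert polynomial equals $m\mapsto\chi\bigl(Y,(\mathcal M\otimes f^{\ast}\mathcal L)^{\otimes m}\bigr)$, and already on a surface Riemann--Roch gives a term $-\tfrac{m}{2}\,f^{\ast}\mathcal L\cdot K_Y$, so the polynomial depends on $f^{\ast}\mathcal L\cdot K_Y$, which is not one of your $a_i$. In higher dimension the dependence is on intersections of $f^{\ast}\mathcal L$ with Todd classes of $Y$, not just with powers of $\mathcal M$. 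Consequently, bounding the tuple $(a_0,\dots,a_n)$ does \emph{not} by itself bound the Hilbert polynomial, and the Khovanskii--Teissier step, while correct, does not plug the hole: it bounds exactly the $a_i$, which we have just seen is not enough. The paper sidesteps this by invoking (via \cite[Thm.~9.2]{JKa}) the stronger fact that the Hilbert polynomial of $f$ is determined by the full \emph{numerical equivalence class} of $f^{\ast}\mathcal O(1)$ in $\NS(Y)$, and then arranging boundedness of that class.

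The gap is fixable, but not with the tools you cite. One correct route: since $f^{\ast}\mathcal L$ is nef and the functional $D\mapsto D\cdot\mathcal M^{n-1}$ is strictly positive on $\operatorname{Nef}(Y)_{\mathbb R}\setminus\{0\}$, the slice $\{D\in\operatorname{Nef}(Y)_{\mathbb R}: D\cdot\mathcal M^{n-1}\le C\}$ is compact, hence contains only finitely many integral points of $\NS(Y)$; bounding $a_1(f)$ as you do then yields finitely many numerical classes $f^{\ast}\mathcal L$, and the Hilbert polynomial, depending only on the numerical class by Snapper/Riemann--Roch, takes finitely many values. (Alternatively, one can bound the degree $(\mathcal M+f^{\ast}\mathcal L)^n$ of the integral subscheme $\Gamma_f$ and quote the boundedness of integral subvarieties of bounded degree.) Either way, this compactness/boundedness input, not log-concavity, is the missing ingredient. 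A secondary point: the definition of $N$-boundedness modulo $\Delta$ in the paper quantifies over \emph{normal} projective $V$, not just smooth ones, so your Bertini argument needs to be carried out (or reduced to) the normal case; for normal $Y$ the singular locus has codimension $\ge 2$, so a general complete intersection curve still misses it and is smooth, but this should be said explicitly.
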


The following result shows that mere boundedness implies the existence of a uniform bound for maps from a fixed curve in the genus of that curve.   Therefore, the  \emph{a priori} difference between boundedness and algebraic hyperbolicity is the \emph{linearity} of the dependence on the genus in the definition. 
 
\begin{theorem}[From boundedness to uniformity]\label{thm:uniformity}  
  If $X$ is bounded modulo $\Delta$ over $k$, then, for every ample line bundle $\mathcal{L}$ on $X$ and every integer $g\geq 0$, there is a real number $\alpha(X, \Delta, \mathcal{L}, g)$ such that, for every smooth projective irreducible curve $C$ of genus $g$ over $k$  and every morphism $f:C\to X$ with $f(C)\not\subset \Delta$, the inequality 
  \[
  \deg_C f^\ast \mathcal{L} \leq \alpha(X, \Delta, \mathcal{L}, g) 
  \] holds. 
\end{theorem}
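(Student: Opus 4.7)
The plan is to parametrize all smooth projective connected curves of genus $g$ over $k$ in a single finite-type algebraic family, reformulate the desired uniform degree bound as a finite-type statement for a relative Hom-scheme, and argue by noetherian induction on the parameter space.

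First I would choose a reduced finite-type $k$-scheme $T$ equipped with a smooth projective morphism $\pi\colon\mathcal{C}\to T$ whose geometric fibers are smooth projective connected curves of genus $g$, and such that every such curve over $k$ appears as a fiber $\mathcal{C}_t$ for some $t\in T(k)$; for $g\geq 2$ one can take the smooth locus of a suitable Hilbert scheme of pluricanonically embedded curves, and similar constructions work for $g\leq 1$. The relative Hom-scheme $\underline{\Hom}_T(\mathcal{C}, X\times_k T)$ decomposes by $\mathcal{L}$-degree as $\bigsqcup_d \mathcal{H}_d$ where each $\mathcal{H}_d$ is quasi-projective and of finite type over $T$ by Grothendieck's representability theorem. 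Let $\mathcal{H}_d^\circ\subseteq \mathcal{H}_d$ be the open subscheme of morphisms whose image is not contained in $\Delta$, obtained by removing the closed subscheme $\underline{\Hom}_T(\mathcal{C}, \Delta\times_k T)$, and let $T_d:=\mathrm{image}(\mathcal{H}_d^\circ\to T)\subseteq T$ denote its constructible image. Every $f\colon C\to X$ with $C$ smooth projective connected of genus $g$ and $f(C)\not\subseteq \Delta$ arises as a $k$-point of $\mathcal{H}_{d}^\circ$ for $d=\deg_C f^*\mathcal{L}$, so the theorem is equivalent to the existence of an $N$ with $T_d=\emptyset$ for all $d>N$, and then one sets $\alpha(X,\Delta,\mathcal{L},g):=N$.

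I would prove this by noetherian induction on closed subvarieties $V\subseteq T$, with statement $P(V)$ asserting the existence of $N_V$ such that $T_d\cap V=\emptyset$ for all $d>N_V$. Reducibility of $V$ is handled by taking a maximum of bounds over the irreducible components; the base case $V=\{v\}$ is exactly the hypothesis that $X$ is bounded modulo $\Delta$, applied to the single curve $\mathcal{C}_v$, which forces only finitely many $\mathcal{H}_d^\circ$ to have non-empty fiber over $v$. For the inductive step with $V$ irreducible of positive dimension, the inductive hypothesis already bounds $T_d$ on every proper closed subvariety, so the remaining case is that $T_d\cap V$ is Zariski-dense in $V$ for infinitely many $d$. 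In that case the generic point $\eta_V$ lies in $T_d$ for infinitely many $d$, i.e.\ the generic fiber $\mathcal{C}_{\eta_V}$ admits morphisms to $X_{k(\eta_V)}$ of unbounded $\mathcal{L}$-degree avoiding $\Delta$; spreading each such morphism out produces, for each such $d$, a dense open $U_d\subseteq V$ together with a morphism $\mathcal{C}\times_V U_d\to X$ avoiding $\Delta$ fiberwise.

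The hardest step is the final one: producing a single closed point $v\in V(k)$ lying in all of the countably many $U_d$ simultaneously, since this yields infinitely many morphisms $\mathcal{C}_v\to X$ of unbounded $\mathcal{L}$-degree avoiding $\Delta$, contradicting the fiberwise hypothesis at $v$. Over an uncountable $k$ this is an immediate Baire-type intersection of countably many dense opens in a positive-dimensional irreducible variety. Over an arbitrary (possibly countable) algebraically closed $k$ one reduces to the uncountable case by base change to an uncountable algebraically closed extension $K\supseteq k$: the conclusion (the mere existence of $\alpha$) descends from $K$ to $k$ because any $k$-morphism $C\to X$ base-changes to a $K$-morphism $C_K\to X_K$ of the same $\mathcal{L}$-degree still avoiding $\Delta_K$, so that a bound established over $K$ transfers verbatim to $k$. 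The one point that requires care in this reduction is the persistence of ``bounded modulo $\Delta$'' under the extension $k\subseteq K$, which can be established by a spreading-out argument combined with the same noetherian induction applied to families of smooth projective connected curves over finite-type intermediate bases.
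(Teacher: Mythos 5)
Your strategy is sound over an \emph{uncountable} base field: the noetherian induction, the spreading out of morphisms from the generic point, and the Baire-type intersection of the dense opens $U_d$ all go through. The problem is the last paragraph. To run the argument over a possibly countable $k$, you pass to an uncountable extension $K\supseteq k$ and invoke the persistence of ``bounded modulo $\Delta$'' along $k\subset K$. But that persistence statement is not available at this point in the development: in the paper it appears as the \emph{last} corollary of the section and is deduced from Theorem~\ref{thm:uniformity} itself (via Remark~\ref{remark:similar}), so citing it here would be circular. Your sketch of how to prove persistence independently --- ``a spreading-out argument combined with the same noetherian induction'' --- runs into exactly the same obstruction: specialising a morphism defined over the generic point of a positive-dimensional base down to a closed $k$-point again forces you to intersect countably many dense opens, which can be empty when $k$ is countable. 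The intermediate results in the paper that do establish generisation for pseudo-boundedness (Lemma~\ref{lem:pseudoboundedness_generises}, Corollary~\ref{lem:ps_1b}, Lemma~\ref{lem:1_is_n}) all carry an explicit uncountability hypothesis for precisely this reason, so none of them can be used to bootstrap you out of the countable case.

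The paper's proof avoids the issue entirely by exploiting a part of the hypothesis you never use. You only invoke $1$-boundedness modulo $\Delta$ (boundedness for morphisms from curves, applied fibre by fibre over $T$), whereas the hypothesis is boundedness modulo $\Delta$ in \emph{all} dimensions. The paper takes a finite flat surjective $Z\to P$ from a normal projective scheme to a smooth projective chart of $\overline{\mathcal{M}}_g$, pulls back the universal stable curve to obtain a single normal projective variety $Y=\overline{\mathcal{U}}\times_{\overline{\mathcal{M}}}Z$ of dimension $3g-2$, and applies the boundedness hypothesis once to $Y$: the scheme $\underline{\Hom}_k(Y,X\times Z)\setminus\underline{\Hom}_k(Y,\Delta\times Z)$ is of finite type, and this single fact descends and restricts to give the uniform bound over $\mathcal{M}_g$. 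No countable intersection is ever needed, and the argument works for any algebraically closed $k$ of characteristic zero. To repair your proof you would either have to use the higher-dimensional boundedness hypothesis in this ``global over the moduli stack'' way, or find a genuinely independent proof that pseudo-boundedness persists to uncountable extensions, which is not obviously easier than the theorem itself.
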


\subsection{Outline of paper}
In Section \ref{section2} we provide several useful characterizations of boundedness (and algebraic hyperbolicity) that we will use in our proofs later. For example, we show that a projective scheme $X$ over $k$ is bounded over $k$ if and only if, for every reduced projective scheme $C$ equidimensional of dimension one over $k$, the moduli space of maps from $C$ to $X$ is of finite type over $k$; see Theorem \ref{thm:test_on_reds} for a precise statement.

In Section \ref{section3} we study the problem of extending maps from curves to a scheme over the function  field of a Dedekind domain. The main result here is Corollary \ref{cor:homisproper}.

With the preliminary results in Section \ref{section2} and Section \ref{section3} at hand, we proceed to prove the results stated in the introduction on the Zariski-countable openness of several loci in Section \ref{section:zco}. We start by proving Theorem \ref{thm:demailly} on the Zariski-countable openness of algebraic hyperbolicity, and then prove similar statements for boundedness. We then prove  Theorem \ref{thm:specialisation} on specialising pseudo-algebraic hyperbolicity.

Section \ref{section:zco} also contains a proof of the Zariski-countable openness of the locus of varieties for which every subvariety is of general type (Theorem \ref{thm:specialisation}); this is essentially an application of Nakayama's theorem (Theorem \ref{thm:nakayama}). Section \ref{section:zco} also contains a proof of the fact that all notions of hyperbolicty discussed in this paper are stable under generisation (Theorem \ref{thm:generisation}).

Section \ref{section:zco} concludes with a proof of Theorem \ref{thm:mildly-bounded-is-zco} which says that the locus of mildly bounded varieties is Zariski-countable open. We then deduce several consequences (Corollary \ref{cor:mildly-bounded-gens} and Corollary \ref{cor:mb_persists}).
 
 Section \ref{section:mb2} is dedicated to proving the mild boundedness of semi-abelian varieties (Proposition \ref{prop:semiabvar_is_mildbounded}). Do note that we include stronger results for the sake of future reference (see, for example, Proposition \ref{prop:semiabelian-r+g-points0}). We use the mild boundedness of semi-abelian varieties to prove that projective surfaces with no rational curves and of "maximal Albanese dimension"   are mildly bounded (Theorem \ref{thm:surfaces_are_mb}). 
 
 In Section \ref{section6} we apply Silverman's specialisation theorem (for families of abelian varieties), the finiteness theorem of Arakelov-Parshin,  and a "uniform" version of the fact that abelian varieties are  mildly bounded to give several examples of mildly bounded varieties. In particular, we prove Theorem \ref{thm:families_of_ab_var_are_mb}.

In Sections \ref{section7} and \ref{section:faltingsvojta} we study the persistence of arithmetic hyperbolicity. The proofs here  rely on the results of Sections \ref{section:mb2} and Section \ref{section6} (notably the mild boundedness of semi-abelian varieties). For example, we  prove Theorem \ref{thm:semi_abvars_intro}. Also, we show that a projective groupless surface which admits a non-constant map  to an abelian variety is mildly bounded (Theorem \ref{thm:surfaces_q}). We then focus on  Theorem \ref{thm:surfaces_intro}; this can be viewed as our main result on the Persistence Conjecture (Conjecture \ref{conj:pers}).

In the final section, we look at different notions of hyperbolicity and boundedness modulo a closed subset $ \Delta$, where all failures of a given variety to have the property in question are contained within $\Delta$. We use this to introduce several "pseudofied" notions of hyperbolicity, and prove  Theorems \ref{thm:ps_alg}, \ref{thm:alghyp_to_bounded}, and \ref{thm:uniformity}.

\begin{ack} 
We thank Frank Gounelas for explaining to us that being of general type is a Zariski-open condition (Theorem \ref{thm:nakayama}). We are grateful to Jason Starr for many helpful discussions, and especially his help in proving Corollary \ref{cor:homisproper}.   We also thank  Damian Brotbek and Carlo Gasbarri for helpful discussions on hyperbolicity in families. We thank Ekaterina Amerik for motivating discussions on mildly bounded varieties. We are most grateful to the referee for their very careful reading of our manuscript, the many useful comments (especially concerning the proof of Lemma \ref{lemma:closed_imm}), and their patience.
 The first  and second named authors gratefully acknowledge support from SFB Transregio/45.  The third named author is partially supported by a grant from the 
Simons Foundation/SFARI (522730, LK). 
\end{ack}

\begin{con}
Throughout this paper, we let $k$ denote an algebraically closed field of characteristic zero.  A variety over $k$ is a finite type separated reduced $k$-scheme.

Let $X$ be a finite type separated scheme over $k$ and let $A\subset k$ be a subring. A model for $X$ over $A$ is a pair $(\mathcal{X},\phi)$ with $\mathcal{X}\to \Spec A$ a finite type separated scheme and $\phi:\mathcal{X}\times_A k \to X$ an isomorphism of schemes over $k$. We will often omit $\phi$  from our notation.

If $X$ is a projective variety and the base field (which is usually $k$) is understood we will sometimes simply say that "$X$ is bounded" (instead of "$X$ is bounded over $k$"). 
\end{con}

 \section{Characterising boundedness and algebraic hyperbolicity}\label{section2}
 Recall that  a projective variety $X$ over   $k$ is algebraically hyperbolic (over $k$) if there is an ample line bundle $\mathcal{L}$ on $X$ and a real number $\alpha_{X,\mathcal{L}}$ such that, for every smooth projective irreducible curve $C$ over $k$ of genus $g$ and every morphism $f:C\to X$, the inequality \[ \deg_C f^\ast \mathcal{L} \leq \alpha_{X,\mathcal{L}} \cdot g \] holds. We mention that one could also ask for a weaker bound on $\deg_C f^\ast \mathcal{L}$ which only depends on   $X$, $L$ and the genus $g$ of $C$, but which is not necessarily linear in $g$. This leads to an a priori weaker notion of boundedness which is referred to as \emph{weakly boundedness} by Kov\'acs-Lieblich \cite[\S 1]{KovacsLieblich}. By \cite[Theorem~1.14]{JKa}, their notion is the same as  the notion of \emph{boundedness} introduced in \cite{JKa} (which is the notion we use in this paper).
 
  In this section we first record the fact  that maps from a possibly singular curve into an algebraically hyperbolic projective variety also satisfy similar boundedness properties.   We then prove similar statements for the notions of boundedness, $(n,m)$-boundedness and mild boundedness.
 
 \subsection{Testing algebraic hyperbolicity}
 Our starting point is the following characterisation of algebraic hyperbolicity.
  The geometric genus $p_g(D)$ of $C$ is the sum of the genera of the components $D_i$ ($i = 1, \ldots, n$) of the normalisation $\widetilde{D}$ of $D$. 
 
\begin{lemma}  \label{lem:stablehyperbolicity}
Let $X$ be an algebraically hyperbolic projective scheme over $k$ with an ample line bundle $\mathcal{L}$. Let $\alpha_{X,\mathcal{L}}$ be a real number such that, for every smooth projective irreducible curve $C$ and every morphism $f \colon C \rightarrow X$, the inequality $\deg f^*\mathcal{L} \leq \alpha_{X,\mathcal{L}} \cdot g(C)$ holds. Then, for every   reduced  projective scheme $D$ equidimensional of dimension 1  over $k$ and every morphism $f \colon D \rightarrow X$, the inequality  $\deg f^*\mathcal{L} \leq \alpha_{X,\mathcal{L}} \cdot p_g(D)$ holds.
\end{lemma}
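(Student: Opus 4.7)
The plan is to reduce from the possibly singular reduced curve $D$ to its normalisation $\widetilde D$, and then apply the hypothesis on each smooth connected component. Write $\pi \colon \widetilde D \to D$ for the normalisation, and decompose $\widetilde D = D_1 \sqcup \cdots \sqcup D_n$, where each $D_i$ is smooth, projective, and connected (one for each irreducible component of $D$). By the very definition of $p_g(D)$ recalled just before the lemma, $p_g(D) = \sum_{i=1}^{n} g(D_i)$.

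For each $i$, I set $f_i := (f\circ\pi)|_{D_i}\colon D_i \to X$ and apply the hypothesis to the smooth projective connected curve $D_i$ to obtain $\deg_{D_i} f_i^*\mathcal L \le \alpha_{X,\mathcal L}\cdot g(D_i)$. Summing over $i$ yields
$$\sum_{i=1}^{n} \deg_{D_i} f_i^*\mathcal L \;\le\; \alpha_{X,\mathcal L}\cdot p_g(D),$$
so the lemma will follow once I establish the compatibility
$$\deg_D f^*\mathcal L \;=\; \sum_{i=1}^{n} \deg_{D_i} f_i^*\mathcal L. \qquad (\ast)$$

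The identity $(\ast)$ is the only non-formal point, and is where I expect to need the most care. It is the standard statement that on a reduced projective curve of pure dimension one, the degree of a line bundle is preserved under pullback along the finite birational normalisation morphism. A clean way to justify it is via the short exact sequence $0 \to \mathcal{O}_D \to \pi_*\mathcal{O}_{\widetilde D} \to Q \to 0$, with $Q$ a finite-length sheaf supported at the non-normal locus, combined with the projection formula: this yields the equality $\chi(D, f^*\mathcal L) - \chi(D, \mathcal{O}_D) = \chi(\widetilde D, \pi^* f^*\mathcal L) - \chi(\widetilde D, \mathcal{O}_{\widetilde D})$. Since $D$ is Cohen--Macaulay of pure dimension one, the left-hand side is $\deg_D f^*\mathcal L$, while the right-hand side decomposes over the connected components of $\widetilde D$ as $\sum_i \deg_{D_i} f_i^*\mathcal L$, giving $(\ast)$. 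Substituting into the summed inequality then gives the desired bound $\deg_D f^*\mathcal L \le \alpha_{X,\mathcal L}\cdot p_g(D)$.
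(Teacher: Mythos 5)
Your proof is correct and follows the same strategy as the paper: normalise $D$, apply the hypothesis to each smooth connected component of $\widetilde D$, and sum, reducing the lemma to the degree compatibility $(\ast)$. The only difference is cosmetic: where you derive $(\ast)$ from the Euler-characteristic computation via $0\to\mathcal O_D\to\pi_*\mathcal O_{\widetilde D}\to Q\to 0$ and the projection formula, the paper simply cites Liu, \emph{Algebraic Geometry and Arithmetic Curves}, Proposition 7.3.8 for the invariance of degree under pullback along the finite birational morphism $\pi$.
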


\begin{proof}
Let $\pi \colon \widetilde{D} \rightarrow D$ be the normalisation. Then $\deg \pi^*f^* \mathcal{L} = \deg f^*\mathcal{L}$ by \ \cite[Proposition~7.3.8]{Liu2002}. Moreover, by assumption, the inequality $$   \deg \pi^*f^* \mathcal{L} \leq \sum_{i=1}^n \alpha_{X,\mathcal{L}} \cdot g(D_i) $$ holds. Also, by definition, we have that $$ \sum_{i=1}^n    g(D_i)=  p_g(D). $$ This implies the statement. 
\end{proof}

    In some sense,  most projective varieties should be algebraically hyperbolic, and this philosophy is confirmed by the work of many authors   \cite{Brotbek1, Brotbek2, ChL, Deb, Demailly, Div, DivFer, Mourougane, RoulleauRousseau, Rousseau1}.   
 
 \subsection{Testing boundedness}
The following lemma will allow us to test boundedness on reduced curves (Theorem \ref{thm:test_on_reds}).
 
 \begin{lemma}\label{lemma:closed_imm}
 Let $X$ be a projective scheme over $k$. Let $g \colon C'\to C$ be a finite  surjective birational morphism of projective reduced schemes  equidimensional of dimension one  over $k$. Then, the natural morphism of schemes
 \[
 \underline{\Hom}_k(C,X) \to \underline{\Hom}_k(C',X)
 \] is a closed immersion.
 \end{lemma}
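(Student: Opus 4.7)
The plan is to show that the natural morphism $\varphi \colon \underline{\Hom}_k(C,X) \to \underline{\Hom}_k(C',X)$, $f \mapsto f \circ g$, is a proper monomorphism of schemes. Since a proper monomorphism of schemes is a closed immersion, this will conclude the proof.

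For the monomorphism property, I would argue that $g \times \mathrm{id}_T \colon C' \times T \to C \times T$ is scheme-theoretically dominant for every $k$-scheme $T$. Indeed, because $g$ is finite, surjective, birational and $C$ is reduced, the unit $\mathcal{O}_C \hookrightarrow g_*\mathcal{O}_{C'}$ is injective, and this injectivity persists after the flat base change $C \times T \to C$. Combined with the fact that $X$ is separated (so any equaliser $\mathrm{Eq}(F_1,F_2) \subset C \times T$ of two $T$-morphisms $F_1, F_2 \colon C \times T \to X$ is closed), this forces $F_1 = F_2$ whenever $F_1 \circ (g \times \mathrm{id}_T) = F_2 \circ (g \times \mathrm{id}_T)$.

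For the valuative criterion of properness, the main input is that the cokernel sheaf $\mathcal{F} := g_*\mathcal{O}_{C'}/\mathcal{O}_C$ on $C$ is a coherent sheaf supported on the (zero-dimensional) conductor locus, and its base change to a DVR $R$ equals $\mathcal{F} \otimes_k R$, which is a free $R$-module and in particular $R$-torsion-free. Given $h \colon C'_R \to X_R$ and $f_K \colon C_K \to X_K$ with $h|_K = f_K \circ g_K$, constructing $f \colon C_R \to X_R$ with $f \circ g_R = h$ amounts locally to the vanishing of the composite
\[
\alpha \colon \mathcal{O}_{X_R} \xrightarrow{h^{\#}} g_{R,*}\mathcal{O}_{C'_R} \twoheadrightarrow \mathcal{F}_R.
\]
By hypothesis $\alpha_K = 0$; since $R$-torsion-freeness gives an injection $\mathcal{F}_R \hookrightarrow \mathcal{F}_R \otimes_R K$, a short diagram chase forces $\alpha = 0$, and the local factorisations glue to a global $f$ by uniqueness from the monomorphism property.

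Finally, for quasi-compactness (needed to conclude properness together with the valuative criterion), I would use that both Hom-schemes decompose as disjoint unions of quasi-projective components indexed by Hilbert polynomials of graphs with respect to chosen polarisations. Because $g$ has degree $1$, the degree of $f^*\mathcal{L}$ and that of $(f \circ g)^*\mathcal{L}$ determine each other, so $\varphi$ sends each component of $\underline{\Hom}_k(C,X)$ bijectively onto (the preimage of) a single component of $\underline{\Hom}_k(C',X)$. Restricted to each such target component, $\varphi$ is thus a proper monomorphism of finite type between quasi-projective schemes, hence a closed immersion; since the closed immersion property is local on the target, the same holds globally. The main technical obstacle is the valuative criterion step, which requires careful sheaf-theoretic bookkeeping to exploit the $R$-flatness of $\mathcal{F}_R$.
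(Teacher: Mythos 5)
Your proof is correct but takes a genuinely different route from the paper's. The paper argues \emph{globally and universally}: it forms the graph of the universal evaluation $H'\times C'\to X$ inside $H'\times C'\times X$, pushes forward to $H'\times C\times X$ and then to $H'\times C$, observes that the cokernel of $\mathcal{O}_{H'\times C}\to (\mathrm{pr}_1,g\circ\mathrm{pr}_2)_*\mathcal{O}_{H'\times C'}$ is supported on $H'\times C^{\mathrm{sing}}$ and is finite locally free over $\mathcal{O}_{H'}$, and invokes \cite[Corollaire~7.7.78]{EGAIII} to linearise the vanishing condition into an ideal sheaf on $H'$ whose zero scheme is exactly $H$. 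You instead prove that $\varphi$ is a proper monomorphism, arguing monicity via scheme-theoretic dominance of $g\times\mathrm{id}_T$ plus separatedness of $X$, and properness via the valuative criterion, exploiting that the conductor cokernel $\mathcal{F}=g_*\mathcal{O}_{C'}/\mathcal{O}_C$ is a finite-dimensional $k$-vector space, so $\mathcal{F}\otimes_k R$ is $R$-torsion-free. Both proofs hinge on the same geometric input (finiteness of the conductor module), but the paper converts it into an explicit defining ideal while you instead run it through the abstract criterion; the paper's route produces the closed subscheme directly and avoids any quasi-compactness discussion, whereas yours requires the observation that $\deg g=1$ on each component so the degree decomposition is respected, in order to reduce to finite-type pieces. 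One cosmetic slip: the composite you call $\alpha\colon\mathcal{O}_{X_R}\to g_{R,*}\mathcal{O}_{C'_R}\twoheadrightarrow\mathcal{F}_R$ does not literally type-check, since the source lives on $X_R$ and the target on $C_R$; the honest version of the argument forms the finite morphism $\psi=(g_R,h)\colon C'_R\to C_R\times_R X_R$, takes its scheme-theoretic image $Z$, and shows that $p_{1*}\mathcal{O}_Z/\mathcal{O}_{C_R}$ sits inside $\mathcal{F}_R$ and vanishes after $\otimes_R K$, hence vanishes. With that bookkeeping made precise, the proof is complete.
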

 \begin{proof}
Let $P\in \mathbb{Q}[t]$ be a polynomial and let $H' := \underline{\Hom}_k^P(C',X)\subset \underline{\Hom}_k(C',X)$ be the subscheme of morphisms $C'\to X$ with Hilbert polynomial $P$.  (We compute the Hilbert polynomial with respect to fixed ample line bundles on $C'$ and $X$, respectively.) Note that $H'$ is a quasi-projective scheme over $k$.

Let $H\subset \underline{\Hom}_k(C,X)$ be the inverse image of $H'$ under the morphism  $\underline{\Hom}_k(C,X) \to \underline{\Hom}_k(C',X)$. To prove the lemma, it suffices to show that $H\to H'$ is a closed immersion.  (In particular,  there are   finitely many polynomials $Q_1,\ldots, Q_n\in \mathbb{Q}[t]$ such that $H \subset  \underline{\Hom}_k^{Q_1}(C,X)  \sqcup \ldots \sqcup  \underline{\Hom}_k^{Q_n}(C,X)  
$.)
     
    To show that $H \to H'$ is a closed immersion, we will   construct a morphism of sheaves of modules $\mathcal{N}_{H'}\to \mathcal{O}_{H'}$  such that the closed subscheme of $H'$ defined by the image of this morphism exactly corresponds to the image of $H$ in $H'$. In order to construct this morphism $\mathcal{N}_{H'} \to \mathcal{O}_{H'}$, we will make use of a technical result in \cite{EGAIII}.
    
     Let $F' \colon H' \times C' \to X$ be the   evaluation morphism.  Note that the graph of $F'$ given by $$E'=(\mathrm{pr}_1, \mathrm{pr}_2, F') \colon H' \times C' \longrightarrow H' \times C' \times X$$       is a closed immersion.  Let
$$
{E'}^\# \colon {\mathcal O}_{H' \times C' \times X} \longrightarrow \mathop{E'_*} {\mathcal O}_{H' \times C'}$$ be the induced morphism. Let $G:=(\mathrm{pr}_1, g\circ \mathrm{pr}_2, \mathrm{pr}_3) \colon H'\times C' \times X\to H'\times C \times X$.    Let $\mathcal A$ be the image of the morphism  ${\mathcal O}_{H' \times C \times X}\longrightarrow \mathop{G_*E'_*} {\mathcal O}_{H' \times C'}$ defined as the composition 
\[\xymatrix{
{\mathcal O}_{H' \times C \times X}         \ar[rr]^{G^\#} &               & 
\mathop{G_*} {\mathcal O}_{H' \times C' \times X}    \ar[rr]^{G_\ast E'^\#}  & & \mathop{G_*E'_*} {\mathcal O}_{H' \times C'}}.
\] Note that $\mathcal{A}$ is a coherent sheaf on the quasi-projective $k$-scheme $H'\times C \times X$.
Let $(\mathrm{pr}_1, \mathrm{pr}_2) \colon H' \times C \times X \rightarrow H' \times C$ be the projection, and note that $(\mathrm{pr}_1, g \circ \mathrm{pr}_2) \colon H' \times C' \rightarrow H' \times C$ is the composition $(\mathrm{pr}_1, \mathrm{pr}_2) \circ G \circ E'$. Then we consider $\mathop{(\mathrm{pr}_1, \mathrm{pr}_2)_*} \mathcal{A}$ as a subsheaf of $$\mathop{(\mathrm{pr}_1, \mathrm{pr}_2)_* G_* E'_*} \mathcal{O}_{H'\times C'} = \mathop{(\mathrm{pr}_1, g \circ \mathrm{pr}_2)_*} \mathcal{O}_{H' \times C'}.$$
To summarise informally, for any open $U \subset H' \times C$ with inverse image $V \subset H' \times C'$, we can describe the sheaves that we are considering as follows:
\[
\begin{tikzcd}
\mathrm{Im}(\mathrm{pr}_1, g \circ \mathrm{pr}_2)^\#(U) \arrow[r,hook]{r}{\dagger_U} \arrow[d,equal]
&(\mathrm{pr}_1, \mathrm{pr}_2)_{*} \mathcal{A}\,(U) \arrow[r,hook] \arrow[d,equal]
&(\mathrm{pr}_1, g \circ \mathrm{pr}_2)_{*} \mathcal{O}_{H' \times C'}(U) \arrow[d,equal]	\\
\left\{\begin{array}{c}t \textrm{ only depending}\\ \textrm{on $\rho$ and $g(c')$}\end{array}\right\} \arrow[r,hook]{r}{\dagger_U}
&\left\{\begin{array}{c}t \textrm{ only depending}\\ \textrm{on $\rho$, $g(c')$, and $\rho(c')$}\end{array}\right\} \arrow[r,hook]
&\{\textrm{regular functions } t(\rho, c') \textrm{ on } V\}
\end{tikzcd}
\]
In this diagram, $\dagger_U$ is a map of $\mathcal{O}_{H' \times C}(U)$-modules coming from the map $\dagger \colon \mathrm{Im}(\mathrm{pr}_1, g \circ \mathrm{pr}_2)^\# \to (\mathrm{pr}_1, \mathrm{pr}_2)_* \mathcal{A}$ of $\mathcal{O}_{H' \times C}$-modules, $\rho$ denotes a set of coordinates for $H'$ and $c'$ a set of coordinates for $C'$, i.e., we think of $\rho$ as describing a morphism $C' \to X$ and $c'$ as describing a point in $C'$. By abuse of language, we use $g(c')$ and $\rho(c')$ to describe the coordinates of the images of this point in $C$ and $X$ respectively, i.e., $g(c')$ denotes $g^\#(c')$ and $\rho(c')$ denotes $F'^\#(x)$, where $x$ are coordinates for $X$.

Note that the sheaf $\mathrm{Coker} (\mathrm{pr}_1, g \circ \mathrm{pr}_2)^\#$ is supported on $H'\times C^{\textrm{sing}}$ and   free of finite rank as $\mathcal{O}_{H'}$-module (through the projection on the first coordinate).
Consider the composite morphism of sheaves on $H'\times C$ given by $$\mathop{(\mathrm{pr}_1, \mathrm{pr}_2)_*} \mathcal A \longhookrightarrow \mathop{(\mathrm{pr}_1, g \circ \mathrm{pr}_2)_*} {\mathcal O}_{H' \times C'} \twoheadrightarrow \mathrm{Coker} (\mathrm{pr}_1, g \circ \mathrm{pr}_2)^\#,$$ call it $f_{H'}$.  
Now, for any $k$-scheme $S$, we   consider  a family   of morphisms   from $C'_S$ to $X_S$  over $S$ as a morphism $\varphi \colon S \to H'$.  For every morphism $\varphi\colon S\to H'$, we denote the induced morphism $(\varphi \circ \mathrm{pr}_1,\mathrm{pr}_2):S\times C\to H'\times C$ by $\varphi_C$. Given   a  morphism $\varphi \colon S\to H'$, we then consider the pull-back of $f_{H'}$ to $S\times C$:   \[f_{H',\varphi}:= \mathop{\varphi_C^\ast} f_{H'} \colon \mathop{\varphi_C^\ast} \mathop{(\mathrm{pr}_1, \mathrm{pr}_2)_*} \mathcal A \rightarrow  \mathop{\varphi_C^\ast} \mathrm{Coker} (\mathrm{pr}_1, g \circ \mathrm{pr}_2)^\#.\] 
We claim that $f_{H',\varphi}$
 is the zero map if and only if $\varphi$ factors through $H\to H'$. Indeed, informally speaking, $f_{H',\varphi}$ is zero if and only if the map $\dagger$ becomes an isomorphism after pulling back to $S\times C$, which is exactly the case when locally on $S \times C$, the coordinates of $\rho(c')$ can be described in terms of those of $\rho$ and $g(c')$, i.e., when the morphism $C'_{S} \to X_{S}$ (associated to $\varphi$ considered as $S$-point of $H'$) factors through $g_S \colon C'_S \to C_S$. We give a precise argument   in Lemma \ref{lemma:formal-lemma}.

We will apply \cite[Corollaire~7.7.8]{EGAIII} to the   projection morphism $H' \times C \to H'$, which  is projective, and the sheaves ${\mathcal F} := \mathrm{Coker} (\mathrm{pr}_1, g \circ \mathrm{pr}_2)^\#$ and ${\mathcal G} := (\mathrm{pr}_1, \mathrm{pr}_2)_* \mathcal A$.  Note that $\mathcal{G}$ is a coherent sheaf on the quasi-projective scheme $H'\times C$, as $(\mathrm{pr}_1,\mathrm{pr}_2)$ is proper \cite[Tag~02O5]{stacks-project}.  In particular, as the resolution property holds for all quasi-projective schemes over a field, \cite[Tag~0F87]{stacks-project}, the sheaf $\mathcal{G}$ is the cokernel of a morphism of finite rank locally free sheaves on $H'\times C$.

By \cite[Corollaire~7.7.8]{EGAIII}, $f_{H'}$ is equivalent to a  morphism $\tilde{f}_{H'} \colon \mathcal N_{H'} \to {\mathcal O}_{H'}$. We claim that this morphism has the property that the base change $\tilde{f}_{H',\varphi} \colon \mathop{\varphi^*} \mathcal N_{H'} \to \mathop{\varphi^*} {\mathcal O}_{H'}$ is the zero map if and only if $f_{H',\varphi}$ is the zero map.
Indeed, by adjointness,  $\tilde{f}_{H',\varphi}$ is zero if and only if the associated morphism $\mathcal N_{H'} \to \mathop{\varphi_* \varphi^*} \mathcal{O}_{H'}$ is zero. By loc.\ cit., this is equivalent to the associated morphism $\mathcal{G} \to \mathcal{F} \otimes_{\mathcal{O}_{H'}} \mathop{\varphi_* \varphi^*} \mathcal{O}_{H'}$ being zero. As $\mathcal{F}$ is free of finite rank (as an $\mathcal{O}_{H'}$-module), we can compose with the natural isomorphism coming from the projection formula (see  \cite[Tag 01E8]{stacks-project}) 	 to get that this is equivalent to the map $\mathcal{G} \to \mathop{\varphi_* \varphi^*} \mathcal{F}$ being zero. Finally, by adjointness, this is equivalent to $f_{H', \varphi} \colon \mathop{\varphi^*} \mathcal{G} \to \mathop{\varphi^*} \mathcal{F}$ being zero, as claimed.

Therefore, $\tilde{f}_{H',\varphi}$ is the zero map if and only if $\varphi$ factors through $H \to H'$. In particular, the closed subscheme defined by $\mathrm{Im}(\tilde{f}_{H'})$ is exactly the image $H$ inside $H'$, as we wanted to prove.
 \end{proof}
 
 \begin{lemma}\label{lemma:formal-lemma}
 In the notation used in the proof of Lemma \ref{lemma:closed_imm}, the map $$f_{H', \varphi} \colon \mathop{\varphi_C^\ast} \mathop{(\mathrm{pr}_1, \mathrm{pr}_2)_*} \mathcal A \rightarrow  \mathop{\varphi_C^\ast} \mathrm{Coker} (\mathrm{pr}_1, g \circ \mathrm{pr}_2)^\#$$ of $\mathcal{O}_{S \times C}$-modules is the zero map if and only if $\varphi \colon S \to H'$ factors through $H$.
 \end{lemma}
 
 \begin{proof}
 The $S$-point $\varphi$ of $H'$ corresponds to a morphism $F_{\varphi}' \colon S \times C' \to X$. The factoring of $\varphi$ through $H$ is equivalent to $F_{\varphi}'$ factoring through some morphism $F_{\varphi} \colon S \times C \to X$. This is equivalent to the existence of a map $E_{\varphi}$ as in the commutative diagram below; we will have $E_{\varphi} = (\mathrm{pr}_1, \mathrm{pr}_2, F_{\varphi})$.

To prove the lemma,  we  argue on affine charts.
Let $(s,c) \in S \times C$ be some point. Its inverse image $\{s\} \times g^{-1}(c) \subset S \times C'$ is finite. As $X$ is projective, there exists an affine open $\mathrm{Spec}(R) \subset X$ containing the finite set $F'_\varphi(\{s\} \times g^{-1}(c))$. Then $(S \times C') \setminus (F'_\varphi)^{-1}(\mathrm{Spec}(R))$ is a closed subset of $S \times C'$, and as $g$ is finite and hence proper, the image of this closed subset under $(\mathrm{pr}_1, g \circ \mathrm{pr}_2)$ is also closed. Let $\mathrm{Spec}(A) \subset S \times C$ be an affine open neighbourhood of $(s,c)$ contained in the complement of this image. As $g$ is finite and hence affine, the inverse image of $\mathrm{Spec}(A)$ in $S \times C'$  is also affine, say $\mathrm{Spec}(A')$. Moreover, $A \to A'$ is an injective map, because $g \colon C' \to C$ is surjective, $C$ is reduced, and $S \to \mathrm{Spec}(k)$ is flat. The diagram on the right indicates the corresponding diagram of rings on the affine opens that we just defined (note that all tensor products below are over $k$). In particular,  $\theta$ is the natural map corresponding to the restriction of $(\mathrm{pr}_1, \mathrm{pr}_2, F_{\varphi}')$. 
 
 \[\begin{tikzcd}[row sep=large, column sep=huge]
 	S \times C' \arrow{r}{(\mathrm{pr}_1, \mathrm{pr}_2, F'_{\varphi})} \arrow[swap]{d}{(\mathrm{pr}_1, g \circ \mathrm{pr}_2)} \arrow{dr} & S \times C' \times X \arrow{d}{(\mathrm{pr}_1, g \circ \mathrm{pr}_2, \mathrm{pr}_3)} & &
 	A' & A' \otimes R \arrow[swap]{l}{\theta}
 	 \\ 	
 	S \times C \arrow[dotted]{r}{E_\varphi} &S \times C \times X & &
 	A \arrow[hook]{u} &A \otimes R \arrow[hook]{u} \arrow[dotted]{l} \arrow[swap]{ul}{\theta|_{A \otimes R}}
 \end{tikzcd}\]
 
We now explain what the condition that $f_{H',\varphi} = 0$ means on these affine charts. To do so, note that the pullback of $\mathcal{A}$ to $S\times C\times X$ is  the image of $\mathcal{O}_{S \times C \times X}$ along the diagonal map in the diagram,  which on the level of rings corresponds to $\theta(A \otimes R)$, considered as an $A \otimes R$-module. Moreover, $\mathop{\varphi_C^*} \mathop{(\mathrm{pr}_1,\mathrm{pr}_2)_*} \mathcal{A}$ corresponds to $\theta(A \otimes R)$, but now considered as an $A$-module, and $\mathop{\varphi_C^*} \mathrm{Coker}(\mathrm{pr}_1, g \circ \mathrm{pr}_2)^\#$ corresponds to $A/A'$, also considered as an $A$-module.

Therefore, $f_{H',\varphi} = 0$ if and only if the map $\theta(A \otimes R) \to A/A'$  of $A$-modules is zero, which is equivalent to $\theta(A \otimes R) \subset A$. This is exactly the case when $A \otimes R \to A'$ factors through $A$, i.e.\ when the map indicated by the dotted arrow in the right hand diagram exists. As this map will be unique, these maps   glue when we vary the point $(s,c)$ and we obtain the map $E_{\varphi}$. We conclude that $f_{H',\varphi} = 0$ if and only if there exists a map $E_{\varphi}$ as in the commutative diagram above.  Since the latter map $E_{\varphi}$ exists if and only if $\varphi$ factors over $H\to H'$,  this concludes the proof.
\end{proof}
 
We now use  Lemma \ref{lemma:closed_imm} to show that a projective scheme $X$ is bounded  if and only if, for every reduced curve $C$, the moduli space of maps from $C$ to $X$ is of finite type.
 
\begin{theorem}[Testing boundedness on reduced objects]\label{thm:test_on_reds}
Let $X$ be a projective scheme over $k$.  Then the following are equivalent.
\begin{enumerate}
\item The projective scheme $X$ is bounded over $k$.
\item The projective scheme $X$ is $1$-bounded over $k$, i.e., for every smooth projective irreducible curve $C$ over $k$, the scheme $\underline{\Hom}_k(C,X)$ is of finite type over $k$.
\item For every  reduced  projective (not necessarily irreducible nor smooth) scheme  $C$ equidimensional of dimension one over $k$, the scheme $\underline{\Hom}_k(C,X)$ is of finite type over $k$.
\end{enumerate}
\end{theorem}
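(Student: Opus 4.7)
The easy implications $(1) \Rightarrow (2)$ and $(3) \Rightarrow (2)$ are immediate: a smooth projective connected curve over $k$ is in particular a projective normal integral $k$-scheme of dimension one (so that $\underline{\Hom}_k(C,X)$ is of finite type by the definition of boundedness), and it is also a reduced projective $k$-scheme pure of dimension one. So the content of the theorem is contained in the two implications $(2) \Rightarrow (3)$ and $(2) \Rightarrow (1)$.

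For $(2) \Rightarrow (3)$, which is the principal new input and the reason for having proved Lemma \ref{lemma:closed_imm}, the plan is to apply that lemma to the normalisation $\pi \colon \widetilde{C} \to C$ of a given reduced projective $k$-scheme $C$ pure of dimension one. Since $C$ is a reduced $k$-scheme of finite type (and hence excellent), $\pi$ is a finite, surjective, and birational morphism, and $\widetilde{C}$ decomposes as a disjoint union $\bigsqcup_i \widetilde{C}_i$ of smooth projective connected curves. By $(2)$,
\[
\underline{\Hom}_k(\widetilde{C},X) \;=\; \prod_i \underline{\Hom}_k(\widetilde{C}_i, X)
\]
is of finite type over $k$. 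Lemma \ref{lemma:closed_imm} furnishes a closed immersion $\underline{\Hom}_k(C, X) \hookrightarrow \underline{\Hom}_k(\widetilde{C}, X)$, from which we conclude that $\underline{\Hom}_k(C, X)$ is of finite type.

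For $(2) \Rightarrow (1)$, I would proceed by induction on $n := \dim Y$ for $Y$ a projective normal integral $k$-scheme, bounding the Hilbert polynomials of the graphs of morphisms $f \colon Y \to X$ with respect to $\mathcal{L}_Y \boxtimes \mathcal{L}$ for fixed ample line bundles $\mathcal{L}_Y$ on $Y$ and $\mathcal{L}$ on $X$. As this Hilbert polynomial is a polynomial in the mixed intersection numbers $(f^*\mathcal{L})^i \cdot \mathcal{L}_Y^{n-i}$ for $i = 0, \ldots, n$, it suffices to bound each of these. The base case $n = 1$ is precisely $(2)$. In the inductive step, Bertini produces, for $m \gg 0$, a normal integral hyperplane section $H \in |\mathcal{L}_Y^m|$ of dimension $n-1$; the inductive hypothesis gives that $\underline{\Hom}_k(H, X)$ is of finite type, so the intersection numbers on $H$ are bounded, and the projection formula converts these into bounds on $(f^*\mathcal{L})^i \cdot \mathcal{L}_Y^{n-i}$ for $i \leq n-1$. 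The remaining top self-intersection $(f^*\mathcal{L})^n$ is controlled by the Khovanskii--Teissier inequality applied to the nef classes $f^*\mathcal{L}$ and $\mathcal{L}_Y$, which gives $(f^*\mathcal{L})^n \cdot (\mathcal{L}_Y^n)^{n-1} \leq \bigl((f^*\mathcal{L}) \cdot \mathcal{L}_Y^{n-1}\bigr)^n$. Alternatively, this implication is essentially contained in \cite[\S 4]{JKa}, which one may simply cite.

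The technical heart of the proof is thus the inductive argument for $(2) \Rightarrow (1)$, where Khovanskii--Teissier provides the crucial bound on the top self-intersection; modulo this (or an appeal to \cite{JKa}), the genuinely new ingredient is the reduction $(2) \Rightarrow (3)$, made possible by Lemma \ref{lemma:closed_imm}, and this is where I expect the main attention to be focused.
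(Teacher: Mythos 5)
Your proposal is correct and follows essentially the same route as the paper: the content is concentrated in $(2)\Rightarrow(3)$, which you prove exactly as the paper does by passing to the normalisation and applying Lemma \ref{lemma:closed_imm} to obtain a closed immersion $\underline{\Hom}_k(C,X)\hookrightarrow\underline{\Hom}_k(\widetilde{C},X)=\prod_i\underline{\Hom}_k(\widetilde{C}_i,X)$, while $(2)\Rightarrow(1)$ is handled by citing \cite[Theorem~9.2]{JKa} (the paper does just that, and the inductive Bertini argument you sketch is indeed the substance of that reference).
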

 \begin{proof} By definition, a bounded variety is $1$-bounded, so that $(1)\implies (2)$. Moreover, 
 by \cite[Theorem~9.3]{JKa}, we have that $(2)\implies (1)$. It is clear that $(3) \implies (2)$. Therefore, to prove the theorem, it suffices to show that $(2)\implies (3)$.
 
 Assume that $X$ satisfies $(2)$.
Let $C$ be a projective reduced scheme equidimensional of dimension one over $k$. Let $C_1,\ldots, C_n$ be the irreducible components of $C$. Let $C'$ be the normalisation of $C$ in the product of function fields $K(C_1)\times \ldots \times K(C_n)$, and note that $C'\to C$ is a finite birational surjective morphism. Moreover, $C'$ is a smooth projective curve over $k$. For $i = 1,\ldots, n$, let $C'_i$ be the connected component of $C'$ lying over $C_i$.  By Lemma \ref{lemma:closed_imm}, the natural morphism of schemes
\[
\underline{\Hom}_k(C,X) \to \underline{\Hom}_k(C',X)=\prod_{i=1}^n \underline{\Hom}_k(C'_i,X)
\] is a closed immersion. Since $X$ satisfies property $(2)$, for all $i=1,\ldots,n$, the scheme $\underline{\Hom}_k(C_i',X)$ is of finite type over $k$.  Since closed immersions of schemes are of finite type, we conclude that $\underline{\Hom}_k(C,X)$ is of finite type over $k$.
 \end{proof}
 
We now prove a similar result for ``pointed boundedness''.

 \begin{proposition}[Testing pointed boundedness on reduced objects]  
\label{prop:mnbounded} Let $X$ be a projective variety over $k$, let $n\geq 1$ and let $m\geq 1$ be an integer.  Then the following are equivalent.
\begin{enumerate}
\item The projective variety $X$ is $(n,m)$-bounded over $k$.
\item For every   projective connected variety $C$ equidimensional of dimension one over $k$, every $c$ in $C(k)$, and every $x$ in $X(k)$, the set  $ \Hom_k([C,c],[X,x])$ is   finite.
\end{enumerate}
 \end{proposition}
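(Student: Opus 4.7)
My plan is to prove the equivalence by a two-step reduction, in both directions, to the case of a smooth projective connected curve $C$ with a single marked point $c$ mapping to a point $x \in X(k)$, and then to translate between ``finite set of $k$-points'' (condition (2)) and ``zero-dimensional finite-type scheme'' using the decomposition of the Hom scheme by numerical class over the algebraically closed field $k$. The overall structure parallels the proof of Theorem \ref{thm:test_on_reds}, but with the additional ingredient of this translation.

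For direction (2) $\Rightarrow$ (1), I would first reduce the general $(n,m)$-boundedness of $X$ for arbitrary normal integral $Y$ of dimension $\leq n$ with $m$ marked points to the finite-type property of $\underline{\Hom}_k([C,c],[X,x])$ for smooth projective connected curves $C$, $c \in C(k)$, $x \in X(k)$, using a pointed analogue of \cite[Theorem~9.2]{JKa} together with the elementary observation that $\underline{\Hom}_k$ with additional marked points is a closed subscheme of the version with fewer marked points, so finite type is preserved under extra constraints. For a smooth curve $C$, I would use the decomposition $\underline{\Hom}(C,X) = \bigsqcup_{\beta} \underline{\Hom}^{\beta}(C,X)$ by numerical class: each $\underline{\Hom}^{\beta}(C,X)$ is quasi-projective and of finite type over $k$, and hence so is the fiber $\underline{\Hom}^{\beta}([C,c],[X,x])$ of the evaluation map at $c$. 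Since $k$ is algebraically closed, every non-empty such component has at least one $k$-point, which lies in the finite set $\Hom([C,c],[X,x])$, so only finitely many $\beta$ contribute; and by the Nullstellensatz each non-empty finite-type component with finitely many $k$-points is zero-dimensional. Thus $\underline{\Hom}([C,c],[X,x])$ is a finite scheme, a fortiori of finite type.

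For direction (1) $\Rightarrow$ (2), I would first apply Lemma \ref{lemma:closed_imm} to the normalisation $C' \to C$ of a reduced projective connected pure-dimension-one curve $C$: this induces a closed immersion $\underline{\Hom}([C,c],[X,x]) \hookrightarrow \underline{\Hom}([C',c'],[X,x])$ on pointed Hom schemes after choosing a preimage $c'$ of $c$, reducing the statement to smooth projective connected curves. Given $(n,m)$-boundedness with $m \geq 1$, taking $Y = C$ with $m$ distinct marked points $c_1 = c, c_2, \ldots, c_m$ and images $x_1 = x, x_2, \ldots, x_m$ shows that $\underline{\Hom}([C,c_1,\ldots,c_m],[X,x_1,\ldots,x_m])$ is of finite type for every choice of $x_2,\ldots,x_m$. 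To derive finiteness of the set $\Hom([C,c],[X,x])$, I would argue by contradiction: an infinite set of such morphisms would, after restricting to a component where an appropriate numerical class is constant, give a positive-dimensional family and hence a smooth curve $T$ yielding a morphism $\phi \colon T \times C \to X$ with $\phi(T \times \{c\}) = \{x\}$; applying $(n,m)$-boundedness to the surface source $T \times C$ with carefully chosen marked points on $T \times C$ and their images in $X$ should then contradict the finite-type conclusion on a suitable secondary pointed Hom scheme.

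The main obstacle is precisely this rigidity step in direction (1) $\Rightarrow$ (2): finite type of the pointed Hom scheme from a curve does not by itself imply zero-dimensionality, so one must genuinely exploit the higher-dimensional cases of $(n,m)$-boundedness to preclude positive-dimensional families, via the product-surface construction indicated. A subsidiary obstacle is making explicit the pointed analogue of \cite[Theorem~9.2]{JKa} needed in direction (2) $\Rightarrow$ (1); this should follow from the same stack-theoretic and algebraic-geometric ideas as in the unpointed case, but requires additional care to track the evaluation conditions at the marked points through the reduction to smooth curves.
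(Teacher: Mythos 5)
Your proposal has a genuine gap in the direction $(1)\implies(2)$, in the step reducing from a singular connected curve $C$ to its normalisation $C'$. If $C$ has several irreducible components, then $C'$ is \emph{disconnected}, so the closed immersion $\underline{\Hom}_k([C,c],[X,x])\hookrightarrow\underline{\Hom}_k([C',c'],[X,x])$ from Lemma~\ref{lemma:closed_imm} lands in a Hom scheme from a disconnected source in which only one connected component carries a marked point. Finiteness of the target is \emph{not} available: if $C'=\bigsqcup_i C_i'$ with $c'\in C_1'$, then $\underline{\Hom}_k([C',c'],[X,x])\cong \underline{\Hom}_k([C_1',c'],[X,x])\times\prod_{i\geq 2}\underline{\Hom}_k(C_i',X)$, and the unpointed factors need not be finite (they are merely of finite type). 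So the reduction to ``smooth projective connected curves'' does not go through as stated.

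The paper repairs this by exploiting the connectedness of $C$ itself: it inducts on the number $N$ of irreducible components of $C$ not containing $c$. When $N=0$, every component contains $c$, one chooses a preimage of $c$ on each normalised component $C_i'$, and the map $\Hom_k([C,c],[X,x])\to\prod_i\Hom_k([C_i',c_i'],[X,x])$ is injective into a product of finite sets. When $N>0$, one peels off a component $C_n$ not containing $c$ such that $D:=C_1\cup\cdots\cup C_{n-1}$ is still connected, notes that the finitely many restrictions to $D$ hit only finitely many points $y=f(d)$ at a chosen intersection point $d\in D\cap C_n$, and then bounds the restrictions to $C_n$ by the finite sets $\Hom_k([C_n,d],[X,y])$. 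In other words, the marked point is propagated across the intersection graph of $C$, which is exactly what your normalisation step loses. You should replace the normalisation reduction by this (or an equivalent) induction.

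Two further remarks. First, your contradiction argument producing a map $T\times C\to X$ and ``applying $(n,m)$-boundedness to the surface source with carefully chosen marked points'' is too vague to be checked; the paper instead simply cites \cite[Lemma~4.6]{JKa} for finiteness of $\Hom_k([C',c'],[X,x])$ for smooth connected $C'$ (and elsewhere, in Proposition~\ref{prop:1b_is_geomhyp}, makes the bend-and-break argument precise: a positive-dimensional pointed Hom scheme forces a rational curve through $x$, which is incompatible with boundedness). Second, in $(2)\implies(1)$ the paper goes directly via \cite[Proposition~8.2]{JKa} (``$(1,1)$-bounded implies $(n,m)$-bounded''); your decomposition-by-numerical-class argument for passing from finiteness of $k$-points to finite type of the scheme is the right and necessary observation and matches the paper's (implicit) reasoning, so that direction is fine.
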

 \begin{proof} Note that $(2)$ implies that $X$ is $(1,1)$-bounded. Therefore, by \cite[Proposition~8.2~and~Remark~4.3]{JKa}, it follows that $X$ is $(n,m)$-bounded. This shows that $(2)\implies (1)$.
  
 To prove that $(1)\implies (2)$, we argue as follows. First, as $X$ is $(n,m)$-bounded, it follows from \cite[Lemma~4.6]{JKa} and \cite[Theorem~8.4]{JKa} that, for every smooth projective irreducible curve $C'$ over $k$, every $c'$ in $C'(k)$, and every $x$ in $X(k)$, the set $\Hom_k([C',c'],[X,x])$ is finite. Now, let $C$ be a projective connected variety equidimensional of dimension one over $k$, let $c$ be in $C(k)$, and let $x$ be in $X(k)$. We prove that $\Hom([C,c],[X,x])$ is a finite set by induction on the number $N$ of irreducible components of $C$ not containing $c$.  Let $C_1,\ldots,C_n$ be the irreducible components of $C$. Let $C'\to C$ be the normalisation of $C$ in the product of the function fields $K(C_1)\times \ldots \times K(C_n)$. Let $C_i'$ be the connected component of $C'$ lying over $C_i$.
 
 Assume that $N=0$, i.e., $c$ lies on every irreducible component of $C$.   For every $i=1,\ldots,n$, let $c_i'$ in $C_i'$ be a point mapping to $c$ in $C$.   As $C'\to C$ is surjective, the natural map of sets
 \[
 \Hom_k([C,c],[X,x]) \to \prod_i \Hom_k([C'_i,c'_i],[X,x])
 \] is injective. Since $C'_i$ is a smooth projective    irreducible  curve over $k$, for every $i$, the set $\Hom_k([C_i',c_i'],[X,x])$ is finite, so that $\Hom_k([C,c],[X,x])$ is finite, as required.  
 
 If $N>0$, after renumbering if necessary, we may and do assume that $c$ does not lie on $C_n$ and that the projective reduced scheme $D:= C_1\cup \ldots\cup C_{n-1}$ is connected.   By the induction hypothesis, the set 
 \[
 \Hom_k([D,c],[X,x]) 
 \] is finite. Let $d$ be a point in $D\cap C_n \subset C$. Then, the set
 \[
A:= \{ f(d) \ | \ f\in \Hom_k([D,c],[X,x])\}
 \] is a finite subset of $X$. Therefore,  as the map of sets
 \[
 \Hom_k([C,c],[X,x]) \subset \Hom_k([D,c],[X,x]) \times \bigcup_{y\in A} \Hom_k([C_n,d],[X,y])
 \] is injective and $\Hom_k([C_n,d],[X,y])$ is finite, we conclude that $\Hom_k([C,c],[X,x])$ is finite.
 \end{proof}

 \subsection{Testing mild boundedness} We show that the notion of mild boundedness (Definition \ref{defn:mild_bounded}) can   be tested on reduced curves. The curves do not even need to be irreducible in this case.

\begin{lemma}\label{lem:mildly-bounded-for-stable}
Let $X$   be a mildly bounded variety over $k$ and let $C$ over $k$ be a   finite type separated reduced scheme over $k$ whose irreducible components are one-dimensional. Then there exist an integer $n$ and distinct points $c_1, \ldots, c_n \in C(k)$ such that for every $x_1, \ldots, x_n \in X(k)$ the scheme $$\underline{\Hom}_k([C,(c_1, \ldots, c_n)], [X, (x_1, \ldots, x_n)])$$ is finite over $k$. 
\end{lemma}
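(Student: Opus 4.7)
The plan is to reduce to the smooth connected curve case that mild boundedness covers directly, by pulling back to the normalisations of the irreducible components of $C$. Let $C_1, \ldots, C_r$ be the irreducible components of $C$, and for each $i$ let $\pi_i \colon \tilde C_i \to C_i$ denote the normalisation. Since $C$ is reduced with one-dimensional components, each $\tilde C_i$ is a smooth connected quasi-projective curve over $k$, so the mild boundedness of $X$ supplies an integer $m_i$ and points $\tilde a_{i,1}, \ldots, \tilde a_{i,m_i} \in \tilde C_i(k)$ such that, for any $y_{i,1}, \ldots, y_{i,m_i} \in X(k)$, the set of morphisms $\tilde C_i \to X$ sending $\tilde a_{i,j} \mapsto y_{i,j}$ is finite.

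I then take $\Sigma \subset C(k)$ to be the finite set of images $\pi_i(\tilde a_{i,j})$ (for all $i$ and $j$) and let $c_1, \ldots, c_n$ enumerate the distinct elements of $\Sigma$. Since imposing additional point conditions on the source can only shrink the resulting set of morphisms, mild boundedness for each $\tilde C_i$ continues to hold with the possibly larger constraint set $\pi_i^{-1}(\Sigma) \subset \tilde C_i(k)$ in place of the $\tilde a_{i,j}$. Given $x_1, \ldots, x_n \in X(k)$ and a morphism $f \colon C \to X$ with $f(c_l) = x_l$ for every $l$, set $f_i := f|_{C_i}$ and $\tilde f_i := f_i \circ \pi_i$: by construction, $\tilde f_i$ sends every $\tilde c \in \pi_i^{-1}(\Sigma)$ to the prescribed value $x_l$ indexed by $\pi_i(\tilde c) \in \Sigma$. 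Mild boundedness therefore leaves only finitely many possibilities for each $\tilde f_i$. Since $\pi_i$ is a birational surjection onto the reduced scheme $C_i$ and $X$ is separated, the equaliser of any two $f_i$'s with equal composition $\tilde f_i$ is a closed subscheme of $C_i$ containing $\pi_i(\tilde C_i)$ set-theoretically, hence all of $C_i$ by reducedness; so $\tilde f_i$ determines $f_i$ uniquely. Since $C$ is reduced with $C = \bigcup_i C_i$ and $X$ is separated, the tuple $(f_1, \ldots, f_r)$ in turn determines $f$. Chaining these finiteness statements gives that the set of admissible $f$ is finite.

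The main obstacle is not the finiteness bookkeeping itself but the interplay between the mild-boundedness points on the normalisations and the singular geometry of $C$: a point $\pi_i(\tilde a_{i,j})$ may be a self-intersection point of $C_i$ or may lie on another component $C_{i'}$, so that a single $c_l \in \Sigma$ can have several preimages spread across the various $\tilde C_j$, all of which must be forced to $x_l$ at once. The enlargement of the constraint set to $\pi_i^{-1}(\Sigma)$ is precisely the device that makes the reduction to mild boundedness on each normalisation work uniformly; without it, the prescribed constraints at $c_l$ would not translate into prescribed constraints at every preimage relevant to $\tilde f_i$, and the invocation of mild boundedness would fail.
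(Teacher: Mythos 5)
Your proof is correct, and the overall strategy --- reduce to the definition of mild boundedness on a smooth connected quasi-projective curve attached to each irreducible component of $C$, mark the relevant points in $C(k)$, and glue the resulting finiteness statements --- mirrors the paper's. The one genuine difference is the curve you attach to each $C_i$: the paper takes $D_i \subset C_i$ to be the open locus of points of $C_i$ that are smooth as points of $C$, and restricts $f$ to the dense open $\bigcup_i D_i$, whereas you pass to the normalisation $\pi_i \colon \tilde C_i \to C_i$ and precompose. The paper's choice has two small conveniences: the mild-boundedness points $d_{i,j}$ already lie in $C(k)$ and are automatically pairwise distinct (a smooth point of $C$ lies on a unique component), and the restriction $f\vert_{D_i}$ is literally a restriction, so the final injection $H \hookrightarrow H_1 \times \dots \times H_\ell$ needs only the observation that a morphism from a reduced separated scheme to a separated scheme is determined by its restriction to a dense open. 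Your version instead has to (a) collapse the images $\pi_i(\tilde a_{i,j})$ to a set $\Sigma$ to manufacture distinct $c_\ell$'s, and (b) argue that $\tilde f_i = f_i \circ \pi_i$ determines $f_i$ --- which you do correctly, via birationality of $\pi_i$, reducedness of $C_i$, and separatedness of $X$. The enlargement of the constraint set to $\pi_i^{-1}(\Sigma)$ is harmless but not actually needed: the original constraints at $\tilde a_{i,j}$ alone already trigger mild boundedness. Both arguments are valid; the paper's is marginally more streamlined, yours is marginally more robust in that it does not need to know the smooth locus is dense and nonempty on each component (this is automatic in characteristic zero for reduced curves, so it is not a real issue here).
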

\begin{proof}
Let $C_1, \ldots, C_{\ell}$ be the irreducible components of $C$. For $i = 1,\ldots,{\ell}$, let $D_i$ be the locus of points in $C_i$ that are smooth as a point of $C$. As $X $ is mildly bounded over $k$, there exist points $d_{1,1}, \ldots, d_{1,n_1} \in {D_1}(k)$, points $d_{2,1}, \ldots, d_{2,n_2} \in {D_2}(k)$, ..., and points $d_{\ell,1}, \ldots, d_{\ell, n_{\ell}} \in {D_{\ell}}(k)$ such that $$H_i = \underline{\Hom}_k([{D_i}, (d_{i,1}, \ldots, d_{i,n_i})], [X, (x_1, \ldots, x_{n_i})])$$ is finite for every $i = 1,\ldots, \ell$ and $x_1, \ldots, x_{n_i} \in X(k)$.

Let $c_1, \ldots, c_n$ be the points $d_{1,1}, \ldots, d_{\ell, n_{\ell}}$ considered as points in $C(k)$. Then we claim that
$$H = \underline{\Hom}_k([C, (c_1, \ldots, c_n)], [X, (x_1, \ldots, x_n)])$$
is finite for all $x_1, \ldots, x_n \in X(k)$. For any morphism $f \colon C \to X$ such that $f(c_i) = x_i$ for all $i = 1,\ldots,n$, the composition $f_i \colon {D_i} \to C \to X$ lies in the finite scheme $H_i$. As a morphism is fixed by its restriction to the dense open subscheme $\bigcup_{i=1}^{\ell} D_i$, this gives an embedding $H \hookrightarrow H_1 \times \ldots \times H_{\ell}$, which proves that $H$ is finite over $k$.
\end{proof}

 \section{Extending maps over Dedekind schemes}\label{section3}
 A morphism of schemes $\mathcal{C} \to S$ is a semi-stable curve over $S$ if it is a proper flat morphism whose geometric fibres are connected semi-stable curves; see \cite[Definition~10.3.1]{Liu2002}.

 The following lemma is  a mild generalisation of a well-known extension property for rational maps from big opens of \emph{smooth} varieties to projective varieties with no rational curves.

  \begin{lemma}\label{lem:extension1} Let $S$ be a   Dedekind scheme with function field $K$, and let $X\to S$ be a projective  morphism of schemes. Let $\mathcal C$ be a  semi-stable curve over  $S$ such that $\mathcal C_K$ is smooth projective irreducible.
 If every geometric closed fibre of $X\to S$ contains no rational curves, then every morphism $\mathcal C_K\to X_K$ extends to a morphism $\mathcal C\to X$.
 \end{lemma}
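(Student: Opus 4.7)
The plan is to reduce to a discrete valuation ring base, pass to a resolution of $\mathcal{C}$ to obtain a regular source, invoke the classical extension theorem for rational maps from a regular surface into a projective scheme with no rational curves, and then descend the extension to $\mathcal{C}$.

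First, I would reduce to the case $S = \Spec R$ with $R$ a discrete valuation ring with fraction field $K$ and residue field $\kappa$. Since $X\to S$ is separated and $\mathcal{C}_K$ is schematically dense in $\mathcal{C}$, the sought-for extension is unique whenever it exists, so existence may be checked Zariski-locally around each closed point of $S$. The total space $\mathcal{C}$ is normal: its only singularities lie at nodes of the special fibre, where formally $\mathcal{C}$ has equation $xy = \pi^m$ and hence a rational double point of type $A_{m-1}$ (regular when $m=1$). Let $\rho\colon \widetilde{\mathcal{C}} \to \mathcal{C}$ be a resolution of singularities. Because $\mathcal{C}_K$ is smooth, $\rho$ is an isomorphism over the generic fibre, and its geometric exceptional fibres are connected chains of $\PP^1$'s.

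Next I would extend the induced rational map $\widetilde{\mathcal{C}} \dashrightarrow X$ to a morphism $\widetilde{f}\colon \widetilde{\mathcal{C}} \to X$. The surface $\widetilde{\mathcal{C}}$ is regular and $X \to S$ is proper, so the valuative criterion applied at each codimension-one point of $\widetilde{\mathcal{C}}$ shows that the rational map is defined in codimension one. Its indeterminacy locus is thus a finite set of closed points on the special fibre. At each such point one performs the classical resolution of indeterminacy by successive blow-ups on the regular surface $\widetilde{\mathcal{C}}$: every blow-up introduces an exceptional $\PP^1$ over $\kappa$ whose image in the geometric closed fibre $X_{\overline{\kappa}}$ is either a point or a rational curve. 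The hypothesis excludes the latter, and Zariski's main theorem then forces the map to have been defined there already, a contradiction. Hence $\widetilde{f}$ exists.

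Finally I would descend $\widetilde{f}$ along $\rho$. Each connected geometric exceptional fibre of $\rho$ is a tree of $\PP^1$'s, and the no-rational-curves hypothesis forces $\widetilde{f}$ to contract every such $\PP^1$ to a point; hence every connected geometric fibre of $\rho$ is sent by $\widetilde{f}$ to a single point. Since $\mathcal{C}$ is normal and $\rho$ is birational and projective, $\rho_*\mathcal{O}_{\widetilde{\mathcal{C}}} = \mathcal{O}_{\mathcal{C}}$, and the rigidity lemma yields a unique morphism $f\colon \mathcal{C} \to X$ with $\widetilde{f} = f\circ \rho$. This $f$ restricts to the given morphism on $\mathcal{C}_K$ and is the desired extension. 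The main obstacle is the blow-up step: on the regular arithmetic surface $\widetilde{\mathcal{C}}$ one must verify that every exceptional $\PP^1$ arising from resolving an indeterminacy either maps to a single point (so that Zariski's main theorem removes the indeterminacy) or to a rational curve in $X_{\overline{\kappa}}$ (contradicting the hypothesis); this is classical but delicate, especially when the residue field $\kappa$ is imperfect.
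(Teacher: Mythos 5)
Your proposal is correct and takes essentially the same route as the paper: pass to a resolution $\mathcal{C}'\to\mathcal{C}$ of the semi-stable curve, extend the rational map from the regular total space using the absence of rational curves in the closed fibres of $X\to S$, and then descend through the forest of exceptional $\mathbb{P}^1$'s (the paper cites \cite[Proposition 8.11.1]{EGAII}, you use $\rho_*\mathcal{O}_{\widetilde{\mathcal{C}}}=\mathcal{O}_{\mathcal{C}}$ and rigidity, which amounts to the same). The one substantive difference is that the paper delegates the extension step to \cite[Proposition 6.2]{GLL}, whereas you re-derive that special case explicitly via resolution of indeterminacy and the point-or-rational-curve dichotomy on the regular arithmetic surface; this is a valid unwinding of the citation rather than a different method.
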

 \begin{proof}  
  Since $\mathcal{C} \to S$ is semi-stable, the exceptional locus of the  minimal resolution of singularities $\mathcal{C'} \to \mathcal{C}$   is a forest of $\mathbb P^1$'s; see \ \cite[Corollary~10.3.25]{Liu2002}. Also, as the geometric closed fibres of $X\to S$ do not contain any rational curves, every geometric fibre of $X\to S$ contains no rational curves \cite[Theorem 1.5]{JVez}. In particular, the geometric fibres of $X \times_S \mathcal{C'} \rightarrow \mathcal{C'}$ do not contain rational curves. Therefore,  since $\mathcal{C}'$ is an integral regular noetherian scheme, the rational section $\mathcal{C}' \dashrightarrow X\times_S \mathcal{C}'$ extends to a morphism  \cite[Proposition~6.2]{GLL}. We now show that the induced morphism $\mathcal{C}'\to X$ factors over $\mathcal{C}$.

Indeed, note that the rational curves in the exceptional locus of $\mathcal{C'} \to \mathcal{C}$ are contracted to a point in $X$. This implies that the morphism $\mathcal{C'} \to X$   factors over a morphism $\mathcal{C} \to X$; see \cite[Lemme~8.11.1]{EGAII}.
 \end{proof}

  \begin{corollary}\label{cor:homisproper} 
Let  $X \to S$ be a projective morphism of noetherian schemes such that all geometric fibres do not contain a rational curve. Fix a relatively ample line bundle $\mathcal{L}$ on $X$. Let $g,n \geq 0$ be integers such that $2g - 2 + n > 0$, and let $\univcurvebar{g,n} \to  \overline{\mathcal{M}_{g,n}}$ be the universal curve over the stack $\overline{\mathcal{M}_{g,n}}$ of $n$-pointed stable curves of arithmetic genus $g$ over $S$.  Let $d\geq 0$ be an integer and let $\mathcal{H}_d^{g,n} = \underline{\Hom}_{\overline{ \mathcal{M}_{g,n}}}^d(\univcurvebar{g,n}, X \times \overline{\mathcal{M}_{g,n}})$ be the algebraic stack of morphisms of degree $d$ with respect to $\mathcal{L}$. Then the natural morphism  $\rho \colon \mathcal{H}_d^{g,n} \to   \overline{\mathcal{M}_{g,n}}$ is proper. 
 \end{corollary}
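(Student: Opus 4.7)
The plan is to establish properness of $\rho$ by verifying, in order, that it is of finite type, separated, and satisfies the valuative criterion of universal closedness for algebraic stacks.

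Finite-typeness of $\rho$ is standard: the universal curve $\univcurvebar{g,n} \to \overline{\mathcal{M}_{g,n}}$ is proper and flat with a relatively ample polarisation, and $X \times \overline{\mathcal{M}_{g,n}} \to \overline{\mathcal{M}_{g,n}}$ is projective via the relatively ample pullback of $\mathcal{L}$, so fixing the degree $d$ bounds the Hilbert polynomial of the graph of a morphism and finite-typeness follows from standard boundedness results for Hom stacks. Separatedness of $\rho$ is then a formal consequence of the separatedness of $X \to S$ via the usual valuative argument applied to pairs of extensions.

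For universal closedness, I would invoke the valuative criterion of properness for Deligne--Mumford stacks. Let $R$ be a DVR with fraction field $K$, and set $T = \Spec R$. A lifting problem consists of a morphism $T \to \overline{\mathcal{M}_{g,n}}$, i.e., an $n$-pointed stable curve $\pi \colon \mathcal{C} \to T$ of arithmetic genus $g$, together with a lift $\Spec K \to \mathcal{H}_d^{g,n}$, i.e., an $S$-morphism $f_K \colon \mathcal{C}_K \to X_K$ of degree $d$. Since $\overline{\mathcal{M}_{g,n}}$ is Deligne--Mumford, I may replace $R$ by a finite extension as needed. The task reduces to producing an $S$-morphism $f \colon \mathcal{C} \to X$ extending $f_K$ (automatically of degree $d$ by flatness of $\pi$), with uniqueness of $f$ guaranteed by the separatedness established above.

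The extension step is carried out by the argument used in the proof of Lemma \ref{lem:extension1}: the minimal resolution $\mathcal{C}' \to \mathcal{C}$ has exceptional locus equal to a forest of $\mathbb{P}^1$'s by semi-stability of $\pi$; the rational map $\mathcal{C}' \dashrightarrow X \times_S \mathcal{C}'$ induced by $f_K$ extends to a genuine morphism because the geometric fibres of $X \times_S \mathcal{C}' \to \mathcal{C}'$ are geometric fibres of $X \to S$ and thus contain no rational curves; the resulting morphism $\mathcal{C}' \to X$ contracts the exceptional curves (again since $X$ has no rational curves in fibres) and therefore factors through the contraction $\mathcal{C}' \to \mathcal{C}$. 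The main technical point is that Lemma \ref{lem:extension1} is stated under the assumption that $\mathcal{C}_K$ is smooth, whereas here the generic fibre of a stable curve may itself be nodal; however, this hypothesis is inessential to the cited argument, which only uses semi-stability of $\mathcal{C} \to T$ to guarantee that the surface singularities of the total space (including those arising from nodes of the generic fibre) resolve to chains of smooth rational curves, after which the extension and contraction steps proceed identically.
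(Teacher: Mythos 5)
There is a genuine gap in the step you describe as ``the main technical point.'' You claim the hypothesis in Lemma~\ref{lem:extension1} that $\mathcal{C}_K$ be smooth is inessential, asserting that semi-stability alone guarantees the singularities of the total space---including those coming from nodes of the generic fibre---resolve to chains of smooth rational curves. This is false. When $\mathcal{C}_K$ has a node, the corresponding section of $\mathcal{C}\to T$ sits inside the non-normal locus of the surface $\mathcal{C}$, which therefore has codimension one; the local ring of $\mathcal{C}$ at the generic point of this section is exactly the local ring of $\mathcal{C}_K$ at its node, hence not regular. So $\mathcal{C}$ is not normal, and the singularity is not isolated. The cited \cite[Corollary~10.3.25]{Liu2002} concerns precisely the isolated rational double points arising only at nodes of \emph{special} fibres (over a regular generic fibre), and does not cover this divisorial non-normal locus. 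The first birational modification one performs is the normalization, which is a finite $2$-to-$1$ map over the node section---a very different creature from a contraction of $\mathbb{P}^1$'s. Consequently the exceptional locus is not a forest of rational curves, and the descent of $\mathcal{C}'\to X$ to $\mathcal{C}$ via \cite[Proposition~8.11.1]{EGAII} (contraction of curves) does not apply.

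What remains after fixing this is essentially the paper's argument in disguise: normalising $\mathcal{C}$ replaces the generic fibre $\mathcal{C}_K$ by its normalisation, so one obtains a morphism from a \emph{different} curve, and must then glue back along the paired branch sections---verifying that both branches are sent to the same section of $X$ (which does follow from separatedness of $X\to S$, but requires an explicit argument). The paper organises exactly this normalise-extend-glue procedure as an induction on $(g,n)$ via the clutching morphisms: if $\rho(\varphi)$ lies in the boundary of $\overline{\mathcal{M}_{g,n}}$, it is in the image of a finite (hence proper) clutching morphism, so the valuative problem pulls back to the source, where the genus or the number of components has decreased, and there the generic fibre can be arranged to be smooth. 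So your proposal is correct in the outer structure (reduce to the valuative criterion, use Lemma~\ref{lem:extension1} over the interior locus), but the claim that Lemma~\ref{lem:extension1} applies verbatim over the boundary is wrong, and the missing content is precisely the induction on $(g,n)$ via clutching that the paper supplies.
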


 \begin{proof} It suffices to prove   the existence part of the valuative criterion for properness.
We prove this    by induction on $g$ and $n$. Let $R$ be a discrete valuation ring with field of fractions $K$. Let $\mathcal{V}_d^{g,n} = \rho^{-1}(\mathcal{M}_{g,n})$. Suppose we have a point $\varphi \in \mathcal{H}_d^{g,n}(K)$ and a curve $\mathcal{C} \in \overline{\mathcal{M}_{g,n}}(R)$ such that $\mathcal{C}_K \cong \rho(\varphi)$.

In the case $\varphi \in \mathcal{V}_d^{g,n}(K)$, the morphism $\varphi \colon \mathcal{C}_K \to X_K$ extends to a morphism $\mathcal{C} \to X_R$ by Lemma \ref{lem:extension1}.
In the case $\varphi \notin \mathcal{V}_d^{g,n}(K)$, the point lies in the image of one of the clutching morphisms as described in \cite[Definition~3.8]{KnudsenII}. As these clutching morphisms are finite and hence proper, the statement now follows from the induction hypothesis. 
 \end{proof}

\section{Zariski-countable openness of the hyperbolic locus}\label{section:zco}
 Let $S$ be a noetherian scheme over $\QQ$ and let $X \to S$ be a projective morphism. We start with presenting Demailly's proof of Theorem \ref{thm:demailly} by using the language of algebraic stacks. Following Demailly, we will make use of the following simple lemma.

\begin{lemma}\label{lem:countabletopology}
Let $(X, \mathcal{T})$ be a noetherian topological space. Then there exists another topology $\mathcal{T}^{\mathrm{cnt}}$, or $\mathcal{T}$-countable, on $X$ whose closed sets are the countable union of $\mathcal{T}$-closed sets.
\end{lemma}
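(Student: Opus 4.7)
The plan is to verify that the collection $\mathcal{S}$ of countable unions of $\mathcal{T}$-closed subsets of $X$ satisfies the three axioms for the closed sets of a topology: it contains $\emptyset$ and $X$, is stable under finite unions, and is stable under arbitrary intersections. The first two axioms will be immediate, since $\emptyset$ and $X$ are themselves $\mathcal{T}$-closed and a finite union of countable unions is patently a countable union. All the content of the lemma, and the only place where the noetherian hypothesis enters, lies in the third axiom.

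For arbitrary intersections I would argue by noetherian induction on the closed subsets of $X$. Fix a family $\{F_\beta\}_{\beta \in B}$ in $\mathcal{S}$, write $F_\beta = \bigcup_{n \in \mathbb{N}} F_{\beta,n}$ with each $F_{\beta,n}$ closed in $\mathcal{T}$, and set $W = \bigcap_{\beta \in B} F_\beta$. Assume inductively that the claim already holds inside every proper closed subspace of $X$; then split into two cases. If $F_\beta = X$ for every $\beta$, then $W = X \in \mathcal{S}$ and there is nothing to do. Otherwise some $F_{\beta_0}$ is a proper subset of $X$, so every $F_{\beta_0,n}$ is a proper $\mathcal{T}$-closed subset of $X$. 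The inclusion $W \subset F_{\beta_0}$ then yields the decomposition $W = \bigcup_n (W \cap F_{\beta_0,n})$, and each piece equals $\bigcap_{\beta}(F_\beta \cap F_{\beta_0,n})$, an arbitrary intersection of members of $\mathcal{S}$ computed inside the strictly smaller noetherian space $F_{\beta_0,n}$. The inductive hypothesis applied to $F_{\beta_0,n}$ writes $W \cap F_{\beta_0,n}$ as a countable union of $\mathcal{T}$-closed subsets of $F_{\beta_0,n}$, and these are also $\mathcal{T}$-closed subsets of $X$. Taking the countable union over $n$ then exhibits $W$ as a countable union of $\mathcal{T}$-closed subsets of $X$, completing the induction.

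The main obstacle is exactly this third axiom. The tempting naive distribution $\bigcap_\beta \bigcup_n F_{\beta,n} = \bigcup_{f \in \mathbb{N}^B} \bigcap_\beta F_{\beta,f(\beta)}$ writes $W$ as an \emph{uncountable} union of closed sets as soon as $B$ is uncountable, so some genuine use of noetherianness is unavoidable. The role of noetherianness in the induction above is precisely to let one shrink the ambient space strictly at each step (by passing to the proper closed subset $F_{\beta_0,n}$) so that the induction terminates and the uncountable index set $B$ never has to be handled directly.
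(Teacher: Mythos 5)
Your proof is correct and takes essentially the same route as the paper: both reduce to the axiom about arbitrary intersections and then run a noetherian induction, decomposing $W = \bigcup_n (W \cap F_{\beta_0,n})$ for a proper member $F_{\beta_0}$ so that the inductive hypothesis in the strictly smaller closed subspace $F_{\beta_0,n}$ applies. The only (immaterial) difference is the case split — the paper separates according to whether each $F_\beta$ contains a piece $F_{\beta,n}$ equal to $X$, whereas you separate according to whether each $F_\beta$ is equal to $X$; either dichotomy leads to the same argument in the non-trivial case.
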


\begin{proof}
The only non-trivial thing to check is that an arbitrary intersection $$V = \bigcap_{i \in I} \bigcup_{j=1}^{\infty} C_{ij},$$ where $C_{ij} \subset X$ are $\mathcal{T}$-closed sets, is $\mathcal{T}^{\mathrm{cnt}}$-closed. The proof goes by noetherian induction on $(X,\mathcal{T})$. If for every $i \in I$ there is a $j \in \ZZ_{>0}$ such that $C_{ij} = X$, then we are done. If this is not the case, we take an $i \in I$, such that $C_{ij} \subsetneq X$ for all $j \in \ZZ_{>0}$. Then for every $j \in \ZZ_{>0}$ the set $V \cap C_{ij}$ is $\mathcal{T}^{\mathrm{cnt}}$-closed in $C_{ij}$ and hence in $X$. Then the union $V = \bigcup_{j=1}^\infty V \cap C_{ij}$ is also $\mathcal{T}^{\mathrm{cnt}}$-closed.
\end{proof}

\begin{remark}[Varieties with no rational curves]\label{remark:pure}
We follow \cite[\S3]{JKa} and say that a proper variety $X$ over an algebraically closed field  $F$ is \emph{pure} if and only if, for every smooth variety $T$ over $F$ and every dense open subscheme $U\subset T$ with $\mathrm{codim}(T\setminus U)\geq 2$, we have that every morphism $U\to X$ extends to a morphism $T\to X$. Note that a proper variety $X$ over $F$ is  {pure} if and only if it has no rational curves \cite[Lemma 3.5]{JKa}. This terminology will allow us to simplify some of the proofs below.
\end{remark}

 \begin{proof}[Stack-theoretic version of Demailly's proof of Theorem \ref{thm:demailly}]
Let $\mathcal{L}$ be a relatively ample line bundle on $X$ over $S$.   
Let $\mathcal{M}_g$ be the stack of smooth projective curves of genus $g$ over $S$, and let $\mathcal{U}_g \to \mathcal{M}_g$ be the universal curve. The Hom-stack $\underline{\mathrm{Hom}}_{\mathcal{M}_g}(\mathcal{U}_g, X \times \mathcal{M}_g)$ is the countable union over $d \geq 0$ of the finitely presented stacks $\mathcal{H}_{g,d} := \underline{\mathrm{Hom}}_{\mathcal{M}_g}^{d}(\mathcal{U}_g, X \times \mathcal{M}_g)$ of morphisms $f$ which have (fibrewise) degree $d$ with respect to $\mathcal{L}$.  Let $S_{g,d}$ be the  image of $\mathcal{H}_{g,d}$ in $S$ under the structure map $\mathcal{H}_{g,d} \to S$. As the structure map  $\mathcal{H}_{g,d} \to S$ is quasi-compact, locally of finite presentation and $S$ is quasi-compact and quasi-separated, the set $S_{g,d}$ is a constructible subset of $S$; see \cite[Tag~054J]{stacks-project} or \cite[Th\'eor\`eme~5.4.9]{LMBbook}.

For any $\beta \in \ZZ_{>0}$, consider $$S_{\beta} = \bigcup\limits_{\substack{(g,d) \in \ZZ_{\geq 0} \times \ZZ \\ d > \beta \cdot g}} {S_{g,d}};$$ this is the locus of $s$ in $S$ for which $X_{\overline{s}}$ is not algebraically hyperbolic  over $\overline{k(s)}$ with constant $\beta$. Note that this is a countable union, over $j = 1,2,3, \ldots$, of locally closed subsets $U_i \cap V_i$ with $U_i \subset S$ open and $V_i \subset S$ closed. Without loss of generality, we will assume that $V_i$ is irreducible and $\overline{U_i \cap V_i} = V_i$. In particular, for each $i$, the generic point $\eta_i$ of $V_i$ lies in $S_\beta$. Now, we claim that $S_\beta$ is stable under specialisation.

 To prove the claim consider two points $s,t \in S$ such that $s$ specialises to $t$ and $s \in S_\beta$. If $X_t$ is not pure (Remark \ref{remark:pure}), then $t$ is clearly contained in $S_\beta$, so assume $X_t$ is pure. As $s \in S_{\beta}$, there is a morphism from a smooth curve $C/k(s)$ of certain genus $g$ to $X_s$ of degree greater than $\beta \cdot g$. Now, to prove that we also get a morphism of the the same degree from a curve $D/k(t)$ of the same genus to $X_t$ we proceed as follows. First, we may and do assume that $S$ is affine. Then, we choose a chain of specializations $$s = p_0 \rightarrow p_1 \rightarrow \ldots \rightarrow p_\ell = t$$ with each specialization of codimension one. Localizing at $p_i$ and modding out $p_{i+1}$ we obtain a discrete valuation ring to which we can apply Lemma \ref{lem:extension1}. In this way, by repeated application of Lemma \ref{lem:extension1}, we   get a morphism of the same degree from a curve $D/k(t)$ of the same genus to $X_t$.  
Hence, $t \in S_{\beta}$. 
 
 Since $S_{\beta}$ is a countable union of locally closed subsets and stable under specialisation,   it is a Zariski-countable closed subset.
It now follows that the non-algebraically hyperbolic locus $$S^{\textrm{nh}} = \bigcap_{\beta =1}^\infty S_{\beta}$$ is also Zariski-countable closed, as the Zariski-countable topology is a topology on $S$ by Lemma \ref{lem:countabletopology}. \qedhere
 \end{proof}

\begin{remark} Let us briefly  assume that $S$ is a positive-dimensional integral  variety over  an algebraically closed field $k$ of characteristic zero. Let $K$ be the function field of $S$, and suppose that $X_{\overline{K}}$ is  algebraically hyperbolic over $\overline{K}$.  If $k$ is \emph{uncountable}, then it follows from Theorem \ref{thm:demailly}  that there is an $s$ in $S(k)$ such that $X_s$ is  algebraically hyperbolic. If we assume Lang's conjecture (or, more precisely, the equivalence of (2) and (6) in \cite[Conjecture~12.1]{JBook}) , then the hypothesis on the cardinality of $k$ is unnecessary. Indeed,  to explain this, we assume for simplicity that $k\subset \CC$. Since $\CC$ is uncountable, it follows from  Theorem \ref{thm:demailly} that  there is an $s$ in $S(\CC)$ such that $X_s$ is algebraically hyperbolic over $\CC$. In particular, by Lang's conjecture (which we are assuming to hold for now), the complex analytic space $X_s^{\an}$ is Kobayashi hyperbolic. Now,  as the fibres of $X_{\CC}^{\an}\to S_{\CC}^{\an}$ are compact, it follows from a theorem of Brody   that  there is an analytic open neighbourhood $U\subset S^{\an}$ such that, for every $u$ in $U$, the fibre $X_u$ is Kobayashi hyperbolic  (see \cite[Theorem~3.11.1]{KobayashiBook}). Now, since  $S(k)\subset S(\CC)$ is  a dense subset  of $S_{\CC}^{\an}$ with respect to the complex analytic topology, there is a point $s$ in $U\cap S(k)$. We see that $X_s$ is algebraically hyperbolic over $k$. 
\end{remark}

\begin{remark}
In the analytic setting, we cannot hope for the locus of points in the base with hyperbolic fibre to be Zariski open. For example, if we have a relative smooth proper curve $\mathcal{C} \to \CC^*$ with precisely one non-hyperbolic fibre, then we can pull-back this family along the exponential map $\CC \to \CC^*$ to obtain a family $\mathcal{X} \to \CC$ such that the set of $s$ in $\CC$ with $\mathcal{X}_s$ non-hyperbolic is a countably infinite subset of $\CC$.
\end{remark}

The notion of pseudo-algebraic hyperbolicity should   be Zariski open in families in light of Vojta's conjecture  (Conjecture \ref{conj:vojta}). However, as we currently do not know whether pseudo-algebraic hyperbolicity is stable under generisation, we also do not know whether the locus of points in the base for which the fibre is pseudo-algebraically hyperbolic is in fact Zariski-countable open.
Nonetheless,  our next result shows that the locus of pseudo-algebraically hyperbolic fibres contains a Zariski-countable open. As the reader will notice, the proof of this result is similar to the proof of  Theorem \ref{thm:demailly}.

\begin{proposition}\label{prop:specialising_hyperbolicity} Assume $S$ is integral with function field $K=K(S)$, and let $K\to \overline{K}$ be an algebraic closure. Let $\Delta \subset X $ be a closed subscheme such that  $X_{\overline{K}}$ is algebraically hyperbolic modulo $\Delta_{\overline{K}}$ over $\overline{K}$. Then, for every algebraically closed field $k$ of characteristic zero and a very general $s$ in $S(k)$, the projective scheme $X_s$ is algebraically hyperbolic modulo $\Delta_s$ over $k$.  
\end{proposition}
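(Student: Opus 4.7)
The plan is to adapt the stack-theoretic argument presented for Theorem \ref{thm:demailly}, applied now to the open substack of the Hom-stack consisting of maps whose image is not contained in the boundary $\Delta$. Since the conclusion is only for very general $s$ (rather than Zariski-countable open), it suffices to exhibit a countable union of proper closed subsets of $S$ containing the bad locus; in particular, no specialisation-stability argument is needed.

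First I would fix a relatively ample line bundle $\mathcal{L}$ on $X/S$ and, for each pair $g, d \geq 0$, form the finitely presented stack $\mathcal{H}_{g,d} = \underline{\Hom}^{d}_{\mathcal{M}_g}(\mathcal{U}_g, X \times \mathcal{M}_g)$ exactly as in Demailly's proof. Since $\Delta \hookrightarrow X$ is a closed immersion, the substack of those maps factoring through $\Delta$ is closed in $\mathcal{H}_{g,d}$, and hence the open complement $\mathcal{H}_{g,d}^{\Delta} \subset \mathcal{H}_{g,d}$, parametrising morphisms $f \colon C \to X$ of $\mathcal{L}$-degree $d$ with $f(C) \not\subset \Delta$, is an open substack. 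Let $S_{g,d}^{\Delta} \subset S$ denote the set-theoretic image of the structure morphism $\mathcal{H}_{g,d}^{\Delta} \to S$; by the Chevalley-type argument for algebraic stacks used in the proof of Theorem \ref{thm:demailly}, this is a constructible subset of $S$.

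Now let $\alpha$ be a constant witnessing that $X_{\overline{K}}$ is algebraically hyperbolic modulo $\Delta_{\overline{K}}$ with respect to $\mathcal{L}_{\overline{K}}$. For every $g \geq 0$ and $d \geq 1$ with $d > \alpha g$, the defining inequality forbids the existence of a smooth projective connected curve of genus $g$ over $\overline{K}$ mapping to $X_{\overline{K}}$ with $\mathcal{L}$-degree $d$ and image not contained in $\Delta_{\overline{K}}$. Translating into the language of the Hom-stack, no $\overline{K}$-point of $\mathcal{H}_{g,d}^{\Delta}$ lies over the generic point $\eta$ of $S$, so $\eta \notin S_{g,d}^{\Delta}$. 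Since $S$ is integral, every constructible subset missing $\eta$ is contained in a proper closed subset, and therefore the Zariski closure $V_{g,d} := \overline{S_{g,d}^{\Delta}}$ is a proper Zariski closed subset of $S$.

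To conclude, the locus of those $s \in S$ for which $X_s$ fails to be algebraically hyperbolic modulo $\Delta_s$ with constant $\alpha$ is, by definition, contained in $\bigcup_{d \geq 1,\, d > \alpha g} S_{g,d}^{\Delta}$, and hence in the countable union $\bigcup V_{g,d}$ of proper Zariski closed subsets of $S$. Therefore, for every algebraically closed field $k$ of characteristic zero, a very general $s \in S(k)$ avoids this countable union, and the corresponding fibre $X_s$ is algebraically hyperbolic modulo $\Delta_s$ over $k$ with constant $\alpha$. The main obstacle I anticipate is the bookkeeping needed to verify that $\mathcal{H}_{g,d}^{\Delta}$ is well-behaved as an open substack and that passage between $K$-points and $\overline{K}$-points of the Hom-stack correctly translates the geometric-generic-fibre hypothesis into the non-containment $\eta \notin S_{g,d}^{\Delta}$; these steps parallel the corresponding steps in the proof of Theorem \ref{thm:demailly}, and, crucially, no extension-of-maps or specialisation-stability argument is required here.
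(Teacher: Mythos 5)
Your argument is essentially identical to the paper's proof: the paper likewise forms the degree-$d$ Hom-stack with the substack of maps into $\Delta$ removed, takes its constructible image $S_{g,d}$ in $S$, observes that $\eta\notin S_{g,d}$ for $d>\alpha g$ (so $\overline{S_{g,d}}\subsetneq S$), and concludes that a very general $s$ avoids the countable union. Your formulation of the complement as an open substack of the full Hom-stack, and your remark that no specialisation-stability lemma is needed here (unlike Theorem \ref{thm:demailly}), match the paper's reasoning exactly.
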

\begin{proof} We may and do assume that $k$ is uncountable.
Let $\mathcal{L}$ be a relatively ample line bundle on $X$ over $S$.  Let $$\beta = \alpha_{X_{\overline{K}}, \Delta_{\overline{K}}, \mathcal{L}_{\overline{K}}}$$
be the constant as in the definition of algebraic hyperbolicity for $X_{\overline{K}}$ modulo $\Delta_{\overline{K}}$.

Let $\mathcal{M}_g$ be the stack of smooth proper curves of genus $g$ over $S$, and let $\mathcal{U}_g \to \mathcal{M}_g$ be the universal curve. The Hom-stack $\underline{\mathrm{Hom}}_{\mathcal{M}_g}(\mathcal{U}_g, X \times \mathcal{M}_g)\setminus \underline{\mathrm{Hom}}_{\mathcal{M}_g}(\mathcal{U}_g, \Delta \times \mathcal{M}_g)$ is the countable union over $d \geq 0$ of the finitely presented stacks $\mathcal{H}_{g,d} := \underline{\mathrm{Hom}}_{\mathcal{M}_g}^{d}(\mathcal{U}_g, X \times \mathcal{M}_g)\setminus \underline{\mathrm{Hom}}_{\mathcal{M}_g}(\mathcal{U}_g, \Delta \times \mathcal{M}_g)$ of morphisms $f$ which have (fibrewise) degree $d$ with respect to $\mathcal{L}$.  Let $S_{g,d}$ be the   image of $\mathcal{H}_{g,d}$ in $S$ under the structure map $\mathcal{H}_{g,d} \to S$. Then $S_{g,d}$ is a constructible subset of $S$.

Hence, for any $d > \beta \cdot g$, the closure $\overline{S_{g,d}}$ of $S_{g,d}$ inside $S$ does not equal $S$, as $X_{\overline{K}}$ is algebraically hyperbolic modulo $\Delta_{\overline{K}}$. As $k$ is uncountable, we have $$\bigcup\limits_{\substack{(g,d) \in \ZZ_{\geq 0} \times \ZZ \\ d > \beta \cdot g}} \overline{S_{g,d}} \neq S.$$

Let  $s$ be a point in  $S(k)$  such that, for every $g\geq 0$ and $d>\beta \cdot g$, the point $s$ is not in $S_{g,d}$. Then, the projective scheme $X_s$ is algebraically hyperbolic modulo $\Delta_s$ over $k$. This concludes the proof.
\end{proof}

\begin{proof}[Proof of Theorem \ref{thm:specialisation2}]
 This follows from Proposition \ref{prop:specialising_hyperbolicity}.
\end{proof}

\newcommand{\stacks}[1]{\cite[\href{https://stacks.math.columbia.edu/tag/#1}{#1}]{stacks-project}}

Next, we will prove that the 1-bounded locus is Zariski-countable open. For this, we first need two intermediate results.

\begin{lemma}\label{lem:purityiscountableopen}
The set $S^0$ consisting of these $s$ in $S$ such that $X_{\overline{s}}$ contains a rational curve is Zariski-countable closed in $S$.
\end{lemma}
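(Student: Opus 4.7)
The plan is to mimic the proof of Theorem~\ref{thm:demailly}. Fix a relatively ample line bundle $\mathcal{L}$ on $X/S$. For each integer $d \geq 1$, let $\mathcal{H}_d := \underline{\Hom}_S^d(\mathbb{P}^1_S, X)$ be the Hom-scheme parametrising $\mathcal{L}$-degree $d$ morphisms from $\mathbb{P}^1$ into fibres of $X \to S$. It is quasi-projective over $S$, so its image $S_d \subseteq S$ under the structure morphism is a constructible subset of $S$. A point $s \in S$ lies in $S^0$ if and only if $s \in S_d$ for some $d \geq 1$, so $S^0 = \bigcup_{d \geq 1} S_d$ is already a countable union of constructible subsets. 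To upgrade this to a countable union of Zariski-closed subsets it suffices to show that $S^0$ is stable under specialisation: then $\overline{S_d} \subseteq S^0$ for each $d$, and $S^0 = \bigcup_{d \geq 1} \overline{S_d}$.

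The core of the proof — and the step I expect to be the main obstacle — is the specialisation statement: if $s \rightsquigarrow t$ in $S$ and $X_s$ contains a rational curve, then so does $X_t$. I would argue as follows. By a standard valuative argument, choose a discrete valuation ring $R$ with fraction field $K$ and residue field $k$ together with a morphism $\Spec R \to S$ sending the generic point to $s$ and the closed point to $t$. The rational curve on $X_s$ then pulls back to a non-constant morphism $f : \mathbb{P}^1_K \to X_K$ of some $\mathcal{L}$-degree $d \geq 1$. Let $\bar{\Gamma} \subset \mathbb{P}^1_R \times_R X_R$ be the scheme-theoretic closure of the graph of $f$. Then $\bar{\Gamma}$ is integral, proper over $R$, and $R$-flat (the scheme-theoretic closure has no associated points in the special fibre), with generic fibre $\bar{\Gamma}_K \cong \mathbb{P}^1_K$, and the second projection $\mathrm{pr}_2 : \bar{\Gamma} \to X_R$ extends $f$.

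By flatness, $\chi(\mathcal{O}_{\bar{\Gamma}_k}) = \chi(\mathcal{O}_{\mathbb{P}^1_K}) = 1$ and $\deg_{\bar{\Gamma}_k} \mathrm{pr}_2^* \mathcal{L} = d > 0$. Applying Stein factorisation to $\bar{\Gamma} \to \Spec R$ (whose total space is integral and whose generic fibre is geometrically connected) shows that $\bar{\Gamma}_k$ is connected; hence $(\bar{\Gamma}_k)_{\mathrm{red}}$ is a connected reduced projective curve of arithmetic genus zero, i.e.\ a tree of (geometrically) rational components. Since the positive total $\mathcal{L}$-degree $d$ on $\bar{\Gamma}_k$ is distributed over the weighted irreducible components, at least one such component is mapped non-constantly to $X_k$ by $\mathrm{pr}_2$, producing a rational curve on $X_t$ and completing the specialisation step.

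As a cleaner alternative that avoids the hands-on stable-reduction analysis, one can simply invoke properness over $S$ of the Kontsevich moduli stack $\overline{\mathcal{M}}_{0,0}(X/S,d)$ of stable maps of arithmetic genus zero and $\mathcal{L}$-degree $d$, for each $d \geq 1$: stability for $(g,n)=(0,0)$ forces every irreducible component of the source of such a stable map to map non-constantly, while conversely any rational curve on $X_s$ is itself such a stable map, so the image in $S$ is closed and equals the locus of $s$ for which $X_s$ contains a rational curve of $\mathcal{L}$-degree $d$. This immediately expresses $S^0$ as a countable union of closed subsets of $S$.
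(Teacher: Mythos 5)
The paper gives no argument here, only a citation to Debarre; you supply two. Your second route (properness of $\overline{\mathcal{M}}_{0,0}(X/S,d)$ over $S$) is correct and is essentially the standard modern proof: for each $d$ the image in $S$ is closed, $S^0$ is the union over $d\geq 1$ of these images, and a genus-$0$ stable map of positive total $\mathcal{L}$-degree has some component mapping non-constantly, each component being a $\mathbb{P}^1$. One imprecision: it is \emph{not} true that $(g,n)=(0,0)$-stability forces \emph{every} source component to map non-constantly (a component carrying three or more nodes may be contracted); but only \emph{some} non-contracted component is needed, and positivity of $d$ supplies it. Also, the image of $\overline{\mathcal{M}}_{0,0}(X/S,d)\to S$ is the locus with a rational curve of degree $\leq d$, not exactly $d$; again harmless.

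Your first route has a genuine gap. From flatness you correctly get $\chi(\mathcal{O}_{\bar\Gamma_k})=1$ and, via Zariski connectedness, that $\bar\Gamma_k$ is connected. But it does not follow that $(\bar\Gamma_k)_{\mathrm{red}}$ has arithmetic genus zero: passing to the reduction changes $\chi$ in both directions, since $\bar\Gamma_k$ may have both embedded points (which raise $\chi$) and non-reduced structure along components. A concrete illustration is the flat limit of a twisted cubic degenerating into a plane: the limit is a nodal plane cubic together with an embedded point at the node, with $\chi=1$, while the reduction (the nodal cubic) has $p_a=1$, not $0$. So the intermediate assertion ``$(\bar\Gamma_k)_{\mathrm{red}}$ is a connected reduced curve of arithmetic genus zero, i.e.\ a tree of rational components'' is false as stated. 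The ultimate conclusion (every reduced component is rational, hence some rational curve maps non-constantly to $X_t$) is still true, but it requires passing to a resolution of singularities $\widetilde{\bar\Gamma}\to\bar\Gamma$ and invoking the classical theory of regular genus-$0$ fibred surfaces: by Zariski's lemma, adjunction, and Castelnuovo contraction one shows the special fibre of a regular proper flat $R$-surface with generic fibre $\mathbb{P}^1_K$ has all components isomorphic to $\mathbb{P}^1$, and this property descends to $\bar\Gamma_k$ by taking images. A second, more cosmetic, gap is that the rational curve on $X_s$ is a priori defined over $\overline{k(s)}$, not $k(s)$, so one must choose the DVR $R$ so that its fraction field contains a suitable finite extension of $k(s)$; this is routine but should be said. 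Overall: keep the stable-maps argument as the proof, or, if you want the hands-on version, insert the resolution/fibred-surface step in place of the incorrect arithmetic-genus claim.
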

\begin{proof}
This is well-known; see for instance the arguments proving \cite[Lemma~3.7]{Debarrebook1}. (Namely, since a rational curve can not specialize to a higher genus curve, this locus is closed under specialization.  One then combines this with the fact that  the scheme $\underline{\Hom}_S(\mathbb{P}^1_S,X)$ is a countable union of quasi-projective schemes over $S$.)
\end{proof}

\begin{proposition}\label{prop:1b_is_zco}
The subset of $s$ in $S$ such that $X_s$ is $1$-bounded, is Zariski-countable open.  
\end{proposition}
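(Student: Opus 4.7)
The plan is to mimic Demailly's proof of Theorem \ref{thm:demailly}, now working with the stack of stable (rather than merely smooth) curves and separating the non-$1$-bounded locus by arithmetic genus. Fix a relatively ample line bundle $\mathcal{L}$ on $X/S$. For each $g \geq 0$ and $d \geq 0$, adding auxiliary marked points $n$ so that $(g,n)$ avoids the exceptional set of Corollary \ref{cor:homisproper}, let $\mathcal{H}_{g,d}$ be the finitely presented Hom-stack parametrising stable $n$-pointed genus-$g$ curves together with a morphism of degree $d$ to $X$; let $S_{g,d}\subseteq S$ be its (constructible) image, and set $W_{g,D}=\bigcup_{d>D} S_{g,d}$, which is a countable union of constructibles.

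I would first verify the identity
\[
S\setminus S^{1b} \;=\; \bigcup_{g\geq 0}\, \bigcap_{D\geq 0} W_{g,D}.
\]
By Theorem \ref{thm:test_on_reds} together with the Kov\'acs--Lieblich equivalent form \cite[Theorem~9.2]{JKa}, $X_s$ fails to be $1$-bounded precisely when for some $g$ there exist smooth genus-$g$ curves over $\overline{k(s)}$ admitting maps to $X_s$ of unbounded degree; since smooth families are stable families of the same arithmetic genus, this gives the $\subseteq$ inclusion. Conversely, from an unbounded family of stable genus-$g$ maps to $X_s$ one extracts, by normalisation plus pigeonhole (the number of components of a stable genus-$g$ curve is bounded in terms of $g$, and their geometric genera sum to at most $g$), smooth genus-$g'$ curves with $g'\leq g$ admitting maps of unbounded degree, yielding $\supseteq$.

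Next I would check that each $W_{g,D}$ is stable under specialisation and then apply the argument in the proof of Theorem \ref{thm:demailly} verbatim to conclude $W_{g,D}$ is Zariski-countable closed; taking the countable intersection over $D$ and the countable union over $g$, and invoking Lemma \ref{lem:countabletopology}, one deduces that $S\setminus S^{1b}$ is Zariski-countable closed, as desired. For the specialisation of a point $s$ to $t$ in $S$ with $s\in W_{g,D}$, I would split into two cases. If $X_t$ contains a rational curve, composing a non-constant $\PP^1\to X_t$ with a degree-$n$ self-map of $\PP^1$ and with any non-constant map $C\to\PP^1$ from a smooth genus-$g$ curve $C$ over $\overline{k(t)}$ produces stable genus-$g$ maps to $X_t$ of arbitrarily large degree, so $t\in W_{g,D}$. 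If instead $X_t$ is pure, pick a DVR $R$ with $\Spec R\to S$ sending generic and closed points to $s$ and $t$; since both fibres of $X\times_S\Spec R\to \Spec R$ have no rational curves, Corollary \ref{cor:homisproper} applied to this pullback family says that the relevant Hom-stack is proper over $\overline{\mathcal{M}_{g,n}}$, and the valuative criterion extends the given stable genus-$g$ map to $X_s$ to a stable genus-$g$ map to $X_t$ of the same degree $d>D$. The main obstacle is precisely that such an extension yields a stable (possibly nodal) limit curve rather than a smooth one; this is why $W_{g,D}$ must be formulated via stable curves throughout, with the non-$1$-bounded locus recovered through the normalisation-plus-pigeonhole step.
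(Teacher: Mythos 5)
Your strategy is close in spirit to the paper's: both work with the Hom-stack over the stable curves stack, invoke Corollary~\ref{cor:homisproper} to get stability under specialisation outside the rational-curve locus, and handle the rational-curve locus by the two-case split. However, there is a genuine gap, and it comes precisely from \emph{where} you take the intersection over degrees.

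The paper forms the intersection $V^g = \bigcap_n V_n^g$ \emph{inside} $\overline{\mathcal{M}_g}$ before pushing down along the proper morphism $\kappa\colon \overline{\mathcal{M}_g}\to S$. A point of $V^g$ (outside $\kappa^{-1}(S^0)$) is a \emph{single, fixed} stable curve admitting maps to $X$ of arbitrarily high degree; normalising that one curve and applying Theorem~\ref{thm:test_on_reds} gives non-$1$-boundedness of the corresponding fibre directly. You instead push each image to $S$ first and take $\bigcap_D W_{g,D}$ inside $S$. The $\supseteq$ direction of your claimed identity then requires: if for every $D$ there is \emph{some} stable genus-$g$ curve over $\overline{k(s)}$ with a degree-$>D$ map to $X_s$, then $X_s$ is not $1$-bounded. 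But your normalisation-plus-pigeonhole step produces a sequence of smooth genus-$g'$ curves that may all be \emph{different}, together with maps of unbounded degree, whereas $1$-boundedness is a statement about $\underline{\Hom}(C,X)$ for a single fixed $C$. Theorem~\ref{thm:test_on_reds} and \cite[Theorem~9.2]{JKa} do not supply this bridge: the former tests $1$-boundedness on a single (reduced) curve, the latter says boundedness is equivalent to $1$-boundedness. Passing from ``varying genus-$g'$ curves with unbounded maps'' to ``a single curve with unbounded maps'' needs an extra argument, e.g.\ a noetherian-plus-properness argument in the proper fibre $\kappa^{-1}(s)\subset\overline{\mathcal{M}_{g'}}$ (a decreasing chain of nonempty closed subsets of a noetherian scheme has nonempty intersection), or the uniformity Theorem~\ref{thm:uniformity}, which in the paper is only established later. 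The paper's ordering --- intersect in $\overline{\mathcal{M}_g}$, \emph{then} apply $\kappa$ --- is designed exactly to make this issue disappear, and I would recommend reorganising your proof along those lines rather than patching the identity $S\setminus S^{1b}=\bigcup_g\bigcap_D W_{g,D}$ with an additional compactness argument.
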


\begin{proof}\newcommand{\stcurves}{\overline{\mathcal{M}_{g}}}

Let $\mathcal{L}$ be a relatively ample line bundle on $X$. For $g>1$, let $\overline{\mathcal{M}_g}$ be the stack of stable curves of genus $g$ over $S$.
Note that $\stcurves$ is noetherian (as $S$ is noetherian), and that the natural morphism   $\kappa \colon \stcurves \to S$ is proper.

Let $\univcurvebar{g} \to \stcurves$ be the universal curve over $\stcurves$, and let $\mathcal{H}_d^g = \underline{\Hom}_{\stcurves}^d(\univcurvebar{g}, X \times \stcurves)$ and $\rho \colon \mathcal{H}_d^g \to \stcurves$ be the structure morphism. Let $S^0$ be as in Lemma \ref{lem:purityiscountableopen}. Let $T_d^g = \rho(\mathcal{H}_d^g) \cup \kappa^{-1}(S^0) $. A priori, this is a countable union of constructible subsets in $\overline{\mathcal{M}_g}$. Because of Corollary \ref{cor:homisproper}, $\rho$ satisfies the existence part of the valuative criterion of properness outside of $S^0$, i.e.,  for every $s \in S$ not in $S^0$, the base-change of $\rho$ along $\overline{\mathcal{M}_g}\times_S \mathcal{O}_{S,s}\to \overline{\mathcal{M}_g}$ satisfies the existence part of the valuative criterion of properness.

This shows that $T_d^g$ is closed under specialisation.   Since $T_d^g$ is a countable union of constructible subsets and is closed under specialization, we see that it is  Zariski-countable closed.

For each $n \in \ZZ_{>0}$ consider $V_n^g = \bigcup_{d=n}^\infty T_d^g$; this is the locus of stable curves admitting a morphism of degree at least $n$ to $X$, together with $\kappa^{-1}(S^0)$. Now we are interested in $V^g = \bigcap_{n=1}^\infty V_n^g$, which is the locus of curves admitting a morphism of arbitrary high degree, together with $\kappa^{-1}(S^0)$. Since $S$ is an integral noetherian scheme, the subset $V^g = \bigcap_{n=1}^\infty V_n^g$  is Zariski-countable closed (Lemma \ref{lem:countabletopology}).  Hence, the image $S^g = \kappa(V^g)$ in $S$ is Zariski-countable closed. For $S^1$, the locus where there are morphisms of arbitrary high degree from a genus 1 curve to $X$, we can use $\mathcal{M}_{1,1}$ instead to prove that this is Zariski-countable closed.

In particular $\bigcup_{g\geq 0} S^g$ is Zariski-countable closed. This concludes the proof, as the locus where $X_s$ is not 1-bounded equals $\bigcup_{g \geq 0} S^g$ by the fact that $1$-boundedness can also be tested on stable curves (Theorem \ref{thm:test_on_reds}).
\end{proof}

\begin{corollary}
The subset of $s$ in $S$ such that $X_s$ is bounded is Zariski-countable open.  
\end{corollary}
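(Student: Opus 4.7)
The plan is to reduce the corollary directly to Proposition \ref{prop:1b_is_zco} via the characterisation of boundedness proven in Theorem \ref{thm:test_on_reds}. First, I would recall that, by the equivalence $(1)\Leftrightarrow (2)$ in Theorem \ref{thm:test_on_reds}, a projective scheme over an algebraically closed field of characteristic zero is bounded if and only if it is $1$-bounded, i.e., if and only if $\underline{\Hom}(C,X)$ is of finite type for every smooth projective connected curve $C$. Unwinding the convention for varieties over arbitrary fields—where boundedness of $X$ over $k$ is defined as boundedness of $X_{\overline{k}}$ over $\overline{k}$—this translates, for each $s \in S$, into the statement that $X_s$ is bounded over $k(s)$ if and only if $X_s$ is $1$-bounded over $k(s)$.

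Therefore the subset $\{s \in S : X_s \text{ is bounded}\}$ coincides with the subset $\{s \in S : X_s \text{ is } 1\text{-bounded}\}$. The latter is Zariski-countable open in $S$ by Proposition \ref{prop:1b_is_zco}, and this completes the proof. I do not expect any genuine obstacle: the whole arithmetic of the corollary is absorbed into the two cited results, the more substantive of which is Theorem \ref{thm:test_on_reds} (whose $(2)\Rightarrow (1)$ direction relies on \cite[Theorem~9.2]{JKa}) — so that the passage from $1$-boundedness to full boundedness, which a priori quantifies over all dimensions $n$, is already available to us and requires no further Hom-stack analysis.
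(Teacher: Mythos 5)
Your proof is correct and takes essentially the same approach as the paper: both reduce to the equivalence "bounded $\iff$ $1$-bounded" (the paper cites \cite[Theorem~9.2]{JKa} directly, while you route through Theorem \ref{thm:test_on_reds}, whose $(2)\Rightarrow(1)$ step is that same result) and then invoke Proposition \ref{prop:1b_is_zco}.
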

\begin{proof}
 Since a projective scheme over $k$ is bounded if and only if it is $1$-bounded over $k$ (see \cite[Theorem~9.3]{JKa}), the corollary follows from Proposition  \ref{prop:1b_is_zco}.
\end{proof}

\begin{proposition}\label{prop:11b_is_zco}
The subset of $s$ in $S$ such that $X_s$ is $(1,1)$-bounded, is Zariski-countable open.
\end{proposition}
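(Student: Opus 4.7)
The plan is to mimic the proof of Proposition~\ref{prop:1b_is_zco}, working with the stack $\overline{\mathcal{M}_{g,1}}$ of one-pointed stable curves in place of $\overline{\mathcal{M}_g}$, and tracking the image of the marked section via an additional evaluation morphism to $X$. First, I would reformulate $(1,1)$-boundedness as a bounded-degree condition: fixing a relatively ample line bundle $\mathcal{L}$ on $X$, the fibre $X_s$ is $(1,1)$-bounded over $k(s)$ if and only if, for every smooth projective connected curve $C$ over $k(s)$, every $c \in C(k(s))$, and every $x \in X_s(k(s))$, the scheme
\[ \underline{\Hom}_{k(s)}([C,c],[X_s,x]) = \bigsqcup_{d \geq 0} \underline{\Hom}^d_{k(s)}([C,c],[X_s,x]) \]
has only finitely many non-empty degree pieces, each piece being projective over $k(s)$. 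Thus, failure of $(1,1)$-boundedness amounts to the existence of some genus $g \geq 0$, a smooth pointed curve $(C,c)$ of genus $g$, and a point $x \in X_s(k(s))$ admitting pointed morphisms $(C,c) \to (X_s,x)$ of arbitrarily high degree.

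Next, for each $g \geq 1$ and $d \geq 0$, I would introduce the universal one-pointed stable curve $\univcurvebar{g,1} \to \overline{\mathcal{M}_{g,1}}$ with marked section $\sigma$, together with the Hom-stack
\[ \mathcal{H}_d^{g,1} := \underline{\Hom}^d_{\overline{\mathcal{M}_{g,1}}}\bigl(\univcurvebar{g,1},\, X \times \overline{\mathcal{M}_{g,1}}\bigr), \]
with structure map $\pi : \mathcal{H}_d^{g,1} \to \overline{\mathcal{M}_{g,1}}$ and evaluation morphism $\mathrm{ev} : \mathcal{H}_d^{g,1} \to X$ obtained by composing a test morphism with $\sigma$. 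Writing $\kappa : \overline{\mathcal{M}_{g,1}} \times_S X \to S$ for the projection and $S^0 \subset S$ for the Zariski-countable closed locus from Lemma~\ref{lem:purityiscountableopen}, I would set
\[ T_d^{g,1} := (\pi,\mathrm{ev})\bigl(\mathcal{H}_d^{g,1}\bigr) \cup \kappa^{-1}(S^0) \subset \overline{\mathcal{M}_{g,1}} \times_S X. \]

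The key step is to show that $T_d^{g,1}$ is Zariski-countable closed; since it is a countable union of constructibles, it suffices to verify stability under specialisation. Given a specialisation along a discrete valuation ring $R$ with generic point $\eta$ and closed point $s$, if the specialised point lies in $\kappa^{-1}(S^0)$ we are done, as $S^0$ is closed under specialisation. Otherwise, Corollary~\ref{cor:homisproper} applies, since the fibre at $s$ is rational-curve-free, and yields an extension $f_R : \mathcal{C}_R \to X$ of the generic degree-$d$ morphism $f_\eta$. The two sections $f_R \circ \sigma_R$ and $x_R$ of $X \to S$ over $\Spec R$ agree at $\eta$ by hypothesis, so they coincide on all of $\Spec R$ by separatedness of $X \to S$, in particular at $s$; this places the specialised point in $T_d^{g,1}$.

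Finally, setting $V_n^{g,1} := \bigcup_{d \geq n} T_d^{g,1}$ and $V^{g,1} := \bigcap_{n \geq 1} V_n^{g,1}$, both are Zariski-countable closed by Lemma~\ref{lem:countabletopology}, and the image $S^{g,1} := \kappa(V^{g,1}) \subset S$ is Zariski-countable closed because $\overline{\mathcal{M}_{g,1}} \times_S X \to S$ is proper. The genus-zero contribution is subsumed by $S^0$, since any non-constant morphism $\PP^1 \to X_s$ produces a rational curve in the fibre. Hence the non-$(1,1)$-bounded locus of $S$ equals $S^0 \cup \bigcup_{g \geq 1} S^{g,1}$, which is Zariski-countable closed. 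The main obstacle to overcome is verifying that the extension $f_R$ provided by Corollary~\ref{cor:homisproper} is automatically compatible with the prescribed marked datum, namely that $f_R \circ \sigma_R = x_R$; this is exactly where the separatedness of $X \to S$ is used, to identify two sections over $\Spec R$ that agree generically.
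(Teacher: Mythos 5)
Your proposal is correct and follows essentially the same approach as the paper, which uses $\univcurvebar{g} \times_{\overline{\mathcal{M}_g}} \mathcal{H}_d^g$ and the evaluation map $\tau((C,c),\varphi) = ((C,c),\varphi(c))$ in place of your $\overline{\mathcal{M}_{g,1}}$-based setup; for $g \geq 2$ these parametrize the same data, and your use of $\overline{\mathcal{M}_{g,1}}$ has the minor advantage of absorbing the genus-one case into the main loop (the paper handles $g=1$ separately via $\mathcal{M}_{1,1}$). You also spell out the separatedness step ensuring the extension $f_R$ respects the marked section, which the paper leaves implicit when it says "properness over $S$ of all schemes involved causes $\tau$ to also satisfy the existence part of the valuative criterion."
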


\begin{proof}\newcommand{\stcurves}{\overline{\mathcal{M}_{g}}}
Let $g$, $S^0$, $\stcurves$, $\univcurvebar{g}$ and $\mathcal{H}_d^g$ be as in the proof of Proposition \ref{prop:1b_is_zco}. Now we consider the morphism
$$\tau \colon \space \univcurvebar{g} \times_{\stcurves} \mathcal{H}_d^g \to \univcurvebar{g} \times_S X$$ $$( (C, c), \varphi) \mapsto ((C,c), \varphi(c)).$$
The fibre $\tau^{-1}\{((C,c), x)\}$ is exactly $\Hom^d( [C,c], [X,x])$, the morphisms of degree $d$ mapping $c$ to $x$. Again we let $\kappa \colon \univcurvebar{g} \times_S X \to S$ be the structure map, which is still proper. We proceed in the same way as in the proof of Proposition \ref{prop:1b_is_zco}, taking $T_d^g = \mathrm{im}(\tau) \cup \kappa^{-1}(S^0)$, which is then proved to be a Zariski-countable closed set. The properness over $S$ of all schemes involved causes the morphism $\tau$ to also satisfy the existence part of the valuative criterion for properness as in Corollary \ref{cor:homisproper}.

Then $V^g = \bigcap_{n=1}^\infty \bigcup_{d=n}^\infty T_d^g$ is again Zariski-countable closed, as $\univcurvebar{g} \times_S X$ is still noetherian. Hence, $S^g = \kappa(V^g)$ is Zariski-countable closed in $S$. As also done in Proposition \ref{prop:1b_is_zco}, $S^1$ can be defined and shown to be Zariski-countable closed in $S$ using $\mathcal{M}_{1,1}$.
 Hence, $\bigcup_{g=0}^\infty S^g$ is Zariski-countable closed in $S$. The latter is exactly the locus of $s$ in $S$ such that $X_s$ is not $(1,1)$-bounded by Proposition \ref{prop:mnbounded}.
\end{proof}

\begin{corollary}
For $m,n > 0$, the subset of $s$ in $S$ such that $X_s$ is $(n,m)$-bounded, is Zariski-countable open.
\end{corollary}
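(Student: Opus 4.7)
The approach is a direct reduction to the case $(n,m) = (1,1)$ already handled in Proposition \ref{prop:11b_is_zco}. The key input is the equivalence, for integers $n \geq 1$ and $m \geq 1$, between $(1,1)$-boundedness and $(n,m)$-boundedness of a projective scheme over an algebraically closed field of characteristic zero, as provided by \cite[Proposition~8.2]{JKa}. The implication ``$(1,1)$-bounded $\Rightarrow (n,m)$-bounded'' was already invoked in the proof of Proposition \ref{prop:mnbounded}, while the reverse implication for $n, m \geq 1$ follows by restricting to one-dimensional $Y$ with a single distinguished point (adjoining $m-1$ auxiliary marked points on $Y$ whose images in $X$ may vary freely, and using that the resulting Hom schemes are of finite type by hypothesis).

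Granted this equivalence, for fixed $n, m \geq 1$ the set $\{s \in S \mid X_s \text{ is } (n,m)\text{-bounded}\}$ coincides with the set $\{s \in S \mid X_s \text{ is } (1,1)\text{-bounded}\}$, and Proposition \ref{prop:11b_is_zco} identifies the latter as Zariski-countable open in $S$, which finishes the argument.

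I expect no substantial obstacle, since the real work has already been carried out in Proposition \ref{prop:11b_is_zco}, which relies on the stack-theoretic setup together with Corollary \ref{cor:homisproper}. Should the appeal to \cite[Proposition~8.2]{JKa} require more care than expected, a more hands-on alternative would be to repeat the argument of Proposition \ref{prop:11b_is_zco}, replacing the universal curve $\univcurvebar{g} \to \overline{\mathcal{M}_g}$ together with a single marked section by a suitable family of projective normal integral varieties of dimension $\leq n$ equipped with $m$ disjoint marked sections, and applying the same Zariski-countable closedness analysis to the image of the corresponding evaluation morphism; this more involved path seems unnecessary in view of the cited equivalence.
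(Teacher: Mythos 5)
Your proposal is correct and takes essentially the same approach as the paper: reduce to the $(1,1)$-bounded case via the equivalence from \cite[\S8]{JKa} and then invoke Proposition \ref{prop:11b_is_zco}. The paper's proof is exactly this two-line reduction.
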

\begin{proof}
Since $m\geq 1$, a projective scheme over $k$ is $(n,m)$-bounded if and only if it is $(1,1)$-bounded over $k$ (see \cite[\S8]{JKa}). Therefore, the corollary follows from Proposition \ref{prop:11b_is_zco}.
\end{proof}

 \subsection{Varieties of general type}\label{section:nakayama} Let $k$ be an algebraically closed field of characteristic zero.
 Let $\mathcal{M}^\textrm{pol}$ be the stack over $k$ whose objects over a $k$-scheme $S$ are pairs $(f:X\to S, \mathcal{L})$ with $f:X\to S$ a flat proper finitely presented morphism and $\mathcal{L}$ an $f$-relative ample line bundle on $X$. Note that $\mathcal{M}^\textrm{pol}$ is a locally finitely presented algebraic stack over $k$ with affine diagonal \cite[Section~2.1]{JLFano}. This follows from \cite[Tag~0D4X]{stacks-project} and \cite{dJS100}. The additional datum of a polarisation (i.e., the  $f$-relative ample line bundle) is necessary to ensure  the algebraicity of the stack $\mathcal{M}^\textrm{pol}$.
 
 Recall that a proper scheme $X$ over an algebraically closed field $K$ is of general type (over $K$) if, for every irreducible   component $X'$  of $X$, there is a resolution of singularities $\tilde{X}\to X'_\textrm{red}$   such that $\omega_{\tilde{X}}$ is a big line bundle.  In other words, a proper scheme over $K$ is of general type if \emph{every} irreducible component of $X$ is of general type. We refer the reader to \cite{Lazzie1, Lazzie2} for basic properties of varieties of general type.
 
 Let $\mathcal{M}^\textrm{gt}$ be the substack of $\mathcal{M}^\textrm{pol}$ whose objects over a $k$-scheme $S$ are pairs $(f:X\to S, \mathcal{L})$ in $\mathcal{M}^\textrm{pol}$ such that the geometric fibres of $f:X\to S$ are proper schemes of general type.
 
 \begin{theorem}[Nakayama]\label{thm:n}
 The substack $\mathcal{M}^\mathrm{gt}$ of  $\mathcal{M}^\mathrm{pol}$  is an open substack.
 \end{theorem}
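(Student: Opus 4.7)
The plan is to reduce Theorem \ref{thm:n} directly to Nakayama's Theorem \ref{thm:nakayama}. The polarisation in the definition of $\mathcal{M}^{\mathrm{pol}}$ plays no role in the condition ``fibres are of general type'' beyond ensuring algebraicity of $\mathcal{M}^{\mathrm{pol}}$, so once the scheme-theoretic openness statement is granted, the stack-theoretic formulation should follow purely formally.

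First I would recall the standard criterion: a substack $\mathcal{U} \subset \mathcal{X}$ of an algebraic stack is an open substack if and only if, for every morphism $T \to \mathcal{X}$ from a scheme $T$, the fibre product $T \times_{\mathcal{X}} \mathcal{U}$ is an open subscheme of $T$. Equivalently, by descent one may check this condition on a smooth atlas of $\mathcal{X}$, but in our setting it is just as convenient to work with arbitrary schemes mapping to $\mathcal{M}^{\mathrm{pol}}$.

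Second, unravelling definitions, a morphism $T \to \mathcal{M}^{\mathrm{pol}}$ from a scheme $T$ corresponds to a pair $(f \colon X \to T,\mathcal{L})$ consisting of a flat, proper and finitely presented morphism of schemes together with an $f$-relatively ample line bundle. By the very definition of $\mathcal{M}^{\mathrm{gt}}$ as the substack parametrising families whose geometric fibres are of general type, the fibre product $T \times_{\mathcal{M}^{\mathrm{pol}}} \mathcal{M}^{\mathrm{gt}}$ is exactly the subset of those $t \in T$ for which $X_{\bar{t}}$ is of general type. Since we are in characteristic zero, resolutions of singularities exist and bigness of a line bundle on a proper variety is preserved by extension of an algebraically closed ground field, so this set coincides with the set of $t \in T$ such that $X_t$ is of general type over $k(t)$ in the sense used in Theorem \ref{thm:nakayama}.

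Third, Theorem \ref{thm:nakayama} applied to the proper morphism $f\colon X \to T$ asserts that this locus is open in $T$. This verifies the criterion above, so $\mathcal{M}^{\mathrm{gt}} \hookrightarrow \mathcal{M}^{\mathrm{pol}}$ is representable by open immersions, and hence $\mathcal{M}^{\mathrm{gt}}$ is an open substack of $\mathcal{M}^{\mathrm{pol}}$. The entire difficulty is absorbed into the input Theorem \ref{thm:nakayama}, whose proof rests on the Siu--Kawamata--Nakayama theorem on deformation invariance of plurigenera for varieties of general type; once that deep geometric input is granted, Theorem \ref{thm:n} follows by the formal descent argument above.
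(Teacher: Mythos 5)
Your argument is correct and is precisely the same approach as the paper's, which simply records Theorem \ref{thm:n} as a consequence of Theorem \ref{thm:nakayama} in one line. You have merely spelled out the standard formal reduction (checking open immersion on schemes mapping to the stack, plus compatibility of ``general type'' with passage to a geometric fibre) that the paper leaves implicit.
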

 \begin{proof} This is a consequence of Nakayama's theorem (Theorem \ref{thm:nakayama}). 
 \end{proof}

For every polynomial $h$ in $\mathbb{Q}[t]$, we let $\mathcal{M}^\textrm{pol}_h\subset \mathcal{M}^{\textrm{pol}}$ be the substack of pairs $(f:X\to S,\mathcal{L})$ such that, for every geometric point $s$ of $S$, the Hilbert polynomial of the pair $(X_s,\mathcal{L}_s)$ over the algebraically closed field $k(s)$ equals $h$. Analogously, we define $\mathcal{M}_h^\textrm{gt} = \mathcal{M}_h^\textrm{pol}\times_{\mathcal{M}^{\textrm{pol}}} \mathcal{M}^\textrm{gt}$.
The following proposition is a well-known consequence of the theory of Hilbert schemes.

 \begin{proposition}\label{prop:123} The stack $\mathcal{M}^\mathrm{pol}$ is a countable disjoint union of the finitely presented open and closed substacks $\mathcal{M}_h^\mathrm{pol}$, and its open substack $\mathcal{M}^\mathrm{gt}$ is a countable disjoint union of the finitely presented open and closed substacks $\mathcal{M}_h^\mathrm{gt}$.
 \end{proposition}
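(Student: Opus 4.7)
The plan is to deduce the proposition in two steps: first the decomposition by Hilbert polynomial, and then the finite presentation of each stratum. The central inputs will be the local constancy of the Hilbert polynomial in flat proper families, together with a boundedness result ensuring that polarized pairs with a fixed Hilbert polynomial form a finite-type stack.

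First, I would invoke the classical fact that for a proper flat morphism of finite presentation $f \colon X \to S$ with $\mathcal{L}$ an $f$-relatively ample line bundle, the function $s \mapsto \chi(X_s, \mathcal{L}_s^{\otimes t})$ is locally constant on $S$ for every fixed integer $t$ (by cohomology and base change, cf.\ \cite[Tag~0B9T]{stacks-project}). Since a polynomial of bounded degree is determined by its values at sufficiently many integers, the entire Hilbert polynomial is locally constant on $S$. As the set of polynomials in $\mathbb{Q}[t]$ taking integer values on $\mathbb{Z}$ is countable, this immediately yields the decomposition $\mathcal{M}^{\mathrm{pol}} = \coprod_{h} \mathcal{M}_h^{\mathrm{pol}}$ as a countable disjoint union of open and closed substacks.

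Second, I would establish finite presentation of each $\mathcal{M}_h^{\mathrm{pol}}$. By Matsusaka's big theorem in characteristic zero, there exists an integer $N = N(h)$ such that, for every polarized pair $(X,\mathcal{L})$ with Hilbert polynomial $h$, the line bundle $\mathcal{L}^{\otimes N}$ is very ample and has no higher cohomology. Fixing such $N$, every $(X,\mathcal{L})$ in $\mathcal{M}_h^{\mathrm{pol}}$ arises from some embedding $X \hookrightarrow \mathbb{P}^{h(N)-1}$, so that $\mathcal{M}_h^{\mathrm{pol}}$ can be presented as the quotient stack $[H/\mathrm{PGL}_{h(N)}]$, where $H$ is a locally closed subscheme of the Hilbert scheme of closed subschemes of $\mathbb{P}^{h(N)-1}$ with the appropriate rescaled Hilbert polynomial. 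Since the Hilbert scheme is of finite type over $k$, and $\mathrm{PGL}_{h(N)}$ is a finite type algebraic group, the quotient stack is of finite presentation.

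Finally, to handle $\mathcal{M}^{\mathrm{gt}}$, I would apply Theorem \ref{thm:n}, which identifies $\mathcal{M}^{\mathrm{gt}}$ as an open substack of $\mathcal{M}^{\mathrm{pol}}$. Then $\mathcal{M}_h^{\mathrm{gt}} = \mathcal{M}^{\mathrm{gt}} \cap \mathcal{M}_h^{\mathrm{pol}}$ is an open substack of the finitely presented $\mathcal{M}_h^{\mathrm{pol}}$, and hence itself finitely presented. It is simultaneously closed in $\mathcal{M}^{\mathrm{gt}}$, being the preimage of the clopen $\mathcal{M}_h^{\mathrm{pol}} \subset \mathcal{M}^{\mathrm{pol}}$ under the open immersion $\mathcal{M}^{\mathrm{gt}} \hookrightarrow \mathcal{M}^{\mathrm{pol}}$. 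Intersecting the decomposition of $\mathcal{M}^{\mathrm{pol}}$ with $\mathcal{M}^{\mathrm{gt}}$ then produces the desired decomposition. The main obstacle is the boundedness step invoking Matsusaka's theorem; once uniform very ampleness of $\mathcal{L}^{\otimes N}$ is secured for a single $N = N(h)$, the Hilbert scheme argument is standard.
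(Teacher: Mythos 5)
Your overall strategy — split $\mathcal{M}^{\mathrm{pol}}$ by Hilbert polynomial using local constancy, present each piece as a quotient of (a locally closed subscheme of) a Hilbert scheme by $\mathrm{PGL}$, and then intersect with the open substack $\mathcal{M}^{\mathrm{gt}}$ of Theorem \ref{thm:n} — is the same as what the paper obtains by citing \cite[Tag~0D4X]{stacks-project}. Steps one and three of your argument are fine. The gap is in step two.

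You invoke Matsusaka's big theorem to produce a single $N=N(h)$ making $\mathcal{L}^{\otimes N}$ very ample with vanishing higher cohomology for \emph{every} object of $\mathcal{M}_h^{\mathrm{pol}}$. But Matsusaka's theorem (and the Koll\'ar--Matsusaka effective version) is a statement about \emph{smooth} projective varieties over a field of characteristic zero; the objects of $\mathcal{M}^{\mathrm{pol}}$ are arbitrary flat proper finitely presented families with a relatively ample line bundle, so the geometric fibres may be singular, non-reduced, reducible, non-equidimensional, etc. Matsusaka's theorem simply does not apply to these, so your argument does not yield the uniform $N(h)$ needed to get a single Hilbert scheme presentation and hence finite presentation of $\mathcal{M}_h^{\mathrm{pol}}$. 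You correctly identify the boundedness step as the crux of the matter, but the theorem you cite to resolve it does not cover the generality required. The relevant uniform boundedness for general polarized proper schemes with fixed Hilbert polynomial is exactly what is established in \cite[Tag~0D4X]{stacks-project} (following Koll\'ar-type arguments rather than Matsusaka), and citing that, as the paper does, closes the gap.
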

 \begin{proof}
The first statement follows from  \cite[Tag~0D4X]{stacks-project}, and the second statement follows from the first by Theorem \ref{thm:n}.
 \end{proof}

 \begin{proof}[Proof of Theorem \ref{thm:specialisation}]
 Let $S$ be a noetherian scheme over $\QQ$, and let $f:X\to S$ be a projective morphism.  To show that the set of $s$ in $S$ such that every subvariety of $X_s$ is of general type is Zariski-countable open, we fix an $f$-relative ample line bundle $\mathcal{L}$ on $X$. 
 Let $\mathrm{Hilb}_{X/S}\to S$ be the Hilbert scheme of the projective morphism $X\to S$. Consider the forgetful morphism $\mathrm{Hilb}^h_{X/S}\to \mathcal{M}^\textrm{pol}_h$ which associates to a closed $S$-flat subscheme $Z\subset X$ with Hilbert polynomial $h$ (with respect to $\mathcal{L}$) the corresponding object $(Z\to S, \mathcal{L}|_Z)$  of $\mathcal{M}_h^\textrm{pol}$. Let $\mathrm{Hilb}^{\textrm{gt},h}_{X/S}$ be the inverse image of the open substack $\mathcal{M}_h^\textrm{gt}$ in $\mathcal{M}_h^\textrm{pol}$ (Theorem \ref{thm:n}) under the forgetful morphism.
 
For $h$ in $\QQ[t]$, we let 
$
\mathrm{Hilb}_{X/S}^{\textrm{n-gt},h} = \mathrm{Hilb}_{X/S}^h\setminus \mathrm{Hilb}_{X/S}^{\textrm{gt}, h}.
$ Note that $\mathrm{Hilb}_{X/S}^{\textrm{n-gt},h}$ is a closed subscheme of the quasi-projective $S$-scheme $\mathrm{Hilb}_{X/S}^{h}$.
 For $h$ in $\QQ[t]$, let $S_h\subset S$ be the image of $\mathrm{Hilb}_{X/S}^{\textrm{n-gt},h}\to S$. Since $\mathrm{Hilb}_{X/S}^h\to S$ is a proper morphism \cite[Theorem~5.1]{Nitsure} and $\mathrm{Hilb}_{X/S}^{\textrm{n-gt},h}$ is closed in $\mathrm{Hilb}_{X/S}$,  we see that $S_h$ is a closed subset of  $S$. 
 Note that the locus of $s$ in $S$ such that $X_s$ has an integral subvariety which is not of general type is   
 \[
 \bigcup_{h\in \QQ[t]} S_h.
 \]  We conclude that it is a countable union of closed subschemes of $S$, as required.
 \end{proof}

The fact that the locus of algebraic hyperbolicity is Zariski-countable open implies that this locus is stable under generisation. This allows us to easily prove Theorem \ref{thm:generisation}.

\begin{proof}[Proof of Theorem \ref{thm:generisation}]   
Note  that $(ii)$ is proven in \cite{JVez}. To prove $(iii)$, note that the set  $S^{\mathrm{ah}}$ of $s$ in $S$ with $X_s$ algebraically hyperbolic is Zariski-countable open in $S$ by  Theorem \ref{thm:demailly}. By assumption, $S^{\mathrm{ah}}$ is non-empty, so that  the generic point of $S$ lies in $S^{\mathrm{ah}}$.  This proves $(iii)$. To prove $(iv)$ and $(v)$, we argue in a similar manner employing   Theorems   \ref{thm:demailly-b} and \ref{thm:demailly-mn} instead of Theorem \ref{thm:demailly}, respectively. Similarly, to prove $(i)$, we argue in a similar manner employing instead Nakayama's theorem (or rather its consequence  Theorem \ref{thm:specialisation}).
 \end{proof}

\subsection{Mildly bounded varieties in families}

In this section we  prove that the set of $s$ in $S$ such that $X_s$ is mildly bounded  (Definition \ref{defn:mild_bounded})   is Zariski-countable open (Theorem \ref{thm:mildly-bounded-is-zco}). To do this, we use the following lemma.
\begin{lemma}\label{lem:wholefibre}
Let $f \colon T \to S$ be a finite type flat morphism of noetherian   schemes. If   $Y \subset T$ is a Zariski closed (resp.\ Zariski-countable closed) subset of $T$, then the locus of $s$ in $S$ such that $Y_s = T_s$ is a Zariski closed (resp.\ Zariski-countable closed) subset of $S$.
\end{lemma}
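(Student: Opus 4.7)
The plan is to handle the Zariski closed case first, where openness of flat finite-type morphisms does all the work, and then to bootstrap to the Zariski-countable closed case by reducing an infinite union to a finite subunion via the noetherianity of the fibres.

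First, suppose $Y \subset T$ is Zariski closed, with open complement $U = T \setminus Y$. I would observe that the locus in question is exactly $S \setminus f(U)$: a point $s \in S$ satisfies $Y_s = T_s$ precisely when no point of $U$ lies over $s$, i.e.\ $s \notin f(U)$. Since $S$ is noetherian and $f$ is of finite type, $f$ is in fact of finite presentation, and being also flat, it is open (see \stacks{01UA}). Therefore $f(U)$ is Zariski open in $S$, and its complement is Zariski closed, as required.

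Next, suppose $Y = \bigcup_{i=1}^{\infty} Y_i$ with each $Y_i \subset T$ Zariski closed. For each finite $I \subset \ZZ_{>0}$, the finite union $Y_I := \bigcup_{i \in I} Y_i$ is closed in $T$, so the closed case yields that $A_I := \{s \in S : (Y_I)_s = T_s\}$ is Zariski closed in $S$. I would then establish the identity
\[
\{s \in S : Y_s = T_s\} \;=\; \bigcup_{\substack{I \subset \ZZ_{>0} \\ \text{finite}}} A_I,
\]
which exhibits the desired locus as a countable union of Zariski closed subsets of $S$, hence Zariski-countable closed.

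The only step with content is the $\subseteq$ direction of this identity, and it will rest on the following fibrewise finiteness observation, which I expect to be the crux of the argument. Since $f$ is of finite type and $S$ is noetherian, each fibre $T_s$ is a noetherian scheme, hence has only finitely many irreducible components $T_s^{(1)}, \ldots, T_s^{(m)}$, with generic points $\eta_1, \ldots, \eta_m$. If $Y_s = T_s$, then each $\eta_j$ lies in $(Y_{i_j})_s$ for some index $i_j$; since $(Y_{i_j})_s$ is closed in $T_s$ and contains $\eta_j$, it contains the closure $T_s^{(j)}$. Taking $I = \{i_1, \ldots, i_m\}$ then gives $T_s = (Y_I)_s$, so $s \in A_I$. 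No significant obstacle is anticipated; the subtle point is precisely this finite-components argument, which rules out the pathology that a fibre $T_s$ might be covered by the $(Y_i)_s$ only after using infinitely many of them.
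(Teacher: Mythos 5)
Your proof is correct and takes essentially the same approach as the paper: both treat the closed case via openness of flat, finite-type morphisms of noetherian schemes, and both reduce the countable case to the closed case using noetherianity of the fibres (the paper passes WLOG to an increasing chain $Y_1 \subset Y_2 \subset \cdots$, which is the same reduction as your finite subunions $Y_I$). Your write-up is a bit more explicit in the key step — spelling out that $T_s$ has finitely many irreducible components and using their generic points — where the paper compresses this into the terse phrase ``as $T$ is noetherian''.
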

\begin{proof}
First consider the case $Y \subset T$ is Zariski closed.  Since $f$ is a finite type flat morphism of noetherian schemes, it follows that $f$ is an open map \cite[Tag~01UA]{stacks-project}.  In particular, if $U:=T\setminus Y$, then $f(U)$ is open in $S$. This is exactly the complement of points $s$ in $S$ such that $Y_s = T_s$.

Now suppose $Y = \bigcup_{i=1}^\infty Y_i$ is a Zariski-countable closed and $Y_i \subset T$ is closed. Without loss of generality we may and do assume that $Y_1 \subset Y_2 \subset \cdots$. Then, as $T$ is noetherian, we have that $Y_s = T_s$ if and only if there exists an $i$ such that $Y_{i,s} = T_s$. Let $S_i \subset S$ be the (closed) locus of $s$ in $S$ satisfying $Y_{i,s} = T_s$. Then, the locus of $s$ in $S$ for which $Y_s = T_s$, equals $\bigcup_{i=1}^\infty S_i$. Since $\bigcup_{i=1}^\infty S_i$ is Zariski-countable closed (by definition), this concludes the proof.
\end{proof}

To  prove Theorem \ref{thm:mildly-bounded-is-zco} we will   use the characterisation of mildly bounded varieties stated in Lemma \ref{lem:mildly-bounded-for-stable}. This equivalent definition of mild boundedness allows us to use moduli-theoretic arguments similar to those employed in the proof of Theorem \ref{thm:demailly}, \ref{thm:demailly-b}, and \ref{thm:demailly-mn}, respectively.

\begin{proof}[Proof of Theorem \ref{thm:mildly-bounded-is-zco}]\newcommand{\stcurves}[1]{\overline{\mathcal{M}_{g,#1}}}
Let $g$, $\stcurves{n}$, and $\mathcal{H}_{g,n}^d$ be as in Corollary \ref{cor:homisproper}. Without loss of generality we will assume that every geometric fibre of $X \to S$ is pure (Remark \ref{remark:pure}). The proof below can be adjusted for the non-pure case by adjoining the inverse image of the non-pure locus of $S$ to each of the Zariski(-countable) closed sets appearing in this proof, in the same way as we did in the proof of Proposition \ref{prop:1b_is_zco}.

In the notion of mild boundedness, there are possibly non-projective (smooth irreducible) curves $C$ appearing. We will consider each such smooth irreducible curve $C$ as an open subset of its smooth projective closure $\overline{C}$. Then $\overline{C} \setminus C$ consists of finitely many points, say $m$ points, and in this way we can consider $C$ as a (possibly non-unique) point of $\stcurves{m}$.

Consider the evaluation morphism $$\tau \colon \space \mathcal{H}_{g,n+m}^d \to X^n \times \stcurves{n+m}$$ $$\left(\left[C, (c_1, \ldots, c_n, d_1, \ldots, d_m)\right], \varphi \colon C \to X\right) \mapsto \left( (\varphi(c_1), \ldots, \varphi(c_n)), \left[C, (c_1, \ldots, c_n, d_1, \ldots, d_m)\right]\right)$$
As $X$ is proper over $S$, the properness of $\tau$ follows immediately from Corollary \ref{cor:homisproper}. Hence, the image $A_{g,n,m}^d$ of $\tau$ in $X^n \times \stcurves{n+m}$ is closed. This is the locus of $n$-tuples of possibly non-distinct points in $X$ and $n$-pointed (non-projective) curves $C$ admitting an $n$-pointed morphism of degree $d$ to $X$. Let $B_{g,n,m} = \bigcap_{d'=1}^\infty \bigcup_{d=d'}^\infty A_{g,n,m}^d$ be the locus of $n$-tuples of points in $X$ and $n$-pointed curves $C$ admitting $n$-pointed morphisms of unbounded degree to $X$. This locus is Zariski-countable closed by Lemma \ref{lem:countabletopology}.

Now let $C_{g,n,m}$ be the projection of $B_{g,n,m}$ under the projection $X^n \times \stcurves{n+m} \to \stcurves{n+m}$, which is Zariski-countable closed in $\stcurves{n+m}$ as $X$ is proper over $S$.

 Let $f \colon \stcurves{n+m} \to \stcurves{m}$ be the forgetful map and $D_{g,n,m} \subset \stcurves{m}$ the locus of $s \in \stcurves{m}$ such that $f^{-1}(s) = (C_{g,n,m})_s$. Then, it follows from Lemma \ref{lem:wholefibre} that $D_{g,n,m}$ is Zariski-countable closed in $\stcurves{m}$. Similarly, we have that $E_{g,m} := \bigcap_{n=1}^\infty D_{g,n,m}$ is Zariski-countable closed by Lemma \ref{lem:countabletopology}. The latter is the locus of (non-projective, non-irreducible) curves obtained by removing $m$ points from a projective curve of genus $g$ for which the mild boundedness condition fails.

Let $\rho_{g,m} \colon \stcurves{m} \to S$ be the proper structure morphism. As mild boundedness can be tested on curves which are not irreducible (Lemma \ref{lem:mildly-bounded-for-stable}), the locus where $X \to S$ is not mildly bounded is $\bigcup_{g=1}^\infty\bigcup_{m=0}^\infty \rho_{g,m}(E_{g,m})$. This is now Zariski-countable closed.
\end{proof}

\begin{proof}[Proof of Corollary \ref{cor:mildly-bounded-gens}]
This follows from Theorem \ref{thm:mildly-bounded-is-zco} and the fact that a Zariski-countable closed set of (the scheme) $S$ contains the generic point of $S$ if and only if this set equals $S$ itself.
\end{proof}

\begin{proof}[Proof of Corollary \ref{cor:mb_persists}] We may and do assume that $L$ is algebraically closed. Let $K $ be a finitely generated subfield of $L$ containing $k$ with $\overline{K}=L$ and let $S$ be an integral variety over $k$ with function field equal to $K$ and let $s\in S(k)$ be a $k$-point of $S$.  Consider $\mathcal{X} = X\times S \to S$, and note that $\mathcal{X}_s = X$ is mildly bounded over $k$. Since the locus of mildly bounded varieties is stable under generisation 
  (Corollary \ref{cor:mildly-bounded-gens}), it follows that $X_{K}$ (hence $X_L$) is mildly bounded.
\end{proof}

 \begin{corollary} Let $k$ be an \textbf{uncountable} algebraically closed field of characteristic zero, and let $k\subset L$ be an algebraically closed field of finite transcendence degree over $k$. If $X$ is a projective variety over $k$, then $X$ is mildly bounded over $k$ if and only if $X_L$ is mildly bounded over $L$. 
 \end{corollary}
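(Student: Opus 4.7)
The plan is to establish the two directions separately, using results already available in the paper. For the forward implication — that mild boundedness of $X$ over $k$ implies mild boundedness of $X_L$ over $L$ — no new argument is required: since $k\subset L$ has finite transcendence degree by assumption, this is exactly Corollary \ref{cor:mb_persists}. The interesting content is therefore the converse.

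Assume $X_L$ is mildly bounded over $L$. If the transcendence degree of $L/k$ is zero, then $L=k$ because $k$ is algebraically closed, so the statement is trivial. Otherwise, fix a transcendence basis $t_1,\ldots,t_d$ of $L/k$, set $K=k(t_1,\ldots,t_d)$, and realise $K$ as the function field of the integral $k$-variety $S=\mathbb{A}^d_k$ (so $\dim S\geq 1$). Consider the constant projective family $\mathcal{X}:=X\times_k S\to S$. Since $L$ is algebraic over $K$ and algebraically closed, one has $\overline{K(S)}=L$, so the geometric generic fibre of $\mathcal{X}\to S$ is $X_L$. By hypothesis this is mildly bounded, so the generic point $\eta\in S$ lies in the mildly-bounded locus $U\subseteq S$ of the family.

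At this point I would appeal to Theorem \ref{thm:mildly-bounded-is-zco}, which tells me that $U$ is Zariski-countable open, so that $S\setminus U=\bigcup_{i\geq 1}Z_i$ is a countable union of closed subsets of $S$. The fact that $\eta\in U$ forces each $Z_i$ to be a proper closed subset of $S$. Now the standard very-general-point argument — valid because $S$ is a positive-dimensional integral variety over the uncountable algebraically closed field $k$ — produces a point $s\in S(k)\setminus\bigcup_iZ_i$. Since the family is constant, $\mathcal{X}_s\cong X$ as $k$-schemes, and hence $X$ is mildly bounded over $k$, as desired.

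I do not expect a serious obstacle here; the two main ingredients (persistence of mild boundedness under finitely generated extensions, and Zariski-countable openness of the mildly-bounded locus in projective families) are already established earlier in the paper. The only subtlety worth flagging is the bookkeeping around the identification $\overline{K(S)}=L$, which is what lets the hypothesis on $X_L$ translate into a statement about the geometric generic fibre of the auxiliary family $\mathcal{X}\to S$; and the role of the uncountability of $k$, which is exactly what defeats the countable union $\bigcup_iZ_i$ when one seeks a $k$-rational specialisation.
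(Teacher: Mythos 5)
Your proof is correct and follows essentially the same route as the paper: take the constant family $\mathcal{X}=X\times_k S\to S$ over a $d$-dimensional integral $k$-variety $S$ with $\overline{K(S)}=L$, invoke Theorem \ref{thm:mildly-bounded-is-zco} to get Zariski-countable openness of the mildly bounded locus, and use uncountability of $k$ to find a $k$-point inside it. If anything you are a shade more careful than the paper's terse version: you explicitly flag the degenerate case $L=k$, and you explicitly note that the hypothesis on $X_L$ places the generic point of $S$ in the mildly bounded locus, which is what forces the complement to be a \emph{proper} Zariski-countable closed set and is tacit in the paper's phrasing.
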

 \begin{proof}
 If $X$ is mildly bounded over $k$, then $X_L$ is mildly bounded over $L$ by Corollary \ref{cor:mb_persists}. Now, assume $X_L$ is mildly bounded over $L$. Let $S$ be an integral variety over $k$ whose dimension equals the transcendence degree of $L$ over $k$ and whose  function field $K(S) $ is contained in $L$.  Consider $\mathcal{X}:= X\times S$ as a projective scheme over $S$.  Then, as the set of $s$ in $S$ such  that $X_s$ is mildly bounded is Zariski-countable open (Theorem \ref{thm:mildly-bounded-is-zco}) and $k$ is uncountable, it follows that there is an $s$ in $S(k)$ such that the variety $X=\mathcal{X}_s$ is mildly bounded.   
 \end{proof}
 
\section{Mild boundedness}\label{section:mb2}
\newcommand{\Homnc}{\Hom^{\textrm{nc}}}
Let $k$ be an algebraically closed field of characteristic zero. In this section we study mildly bounded varieties (Definition \ref{defn:mild_bounded}). 
We start by showing that $\mathbb{A}^1_k$ (hence $\mathbb{P}^1_k$) is not mildly bounded over $k$.

\begin{proposition}\label{prop:a1}
The curve $\mathbb{A}^1_k$ over $k$ is not mildly bounded over $k$. 
\end{proposition}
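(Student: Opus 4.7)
The plan is to show that $\mathbb{A}^1_k$ fails the defining property of mild boundedness by exhibiting an explicit test curve $C$ such that no number $m$ of prescribed interpolation conditions can force the Hom-set into being finite. The natural choice is $C = \mathbb{A}^1_k$ itself, since the ring of global functions is $k[t]$ and polynomial interpolation is very flexible.

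More concretely, I would fix an arbitrary integer $m \geq 0$ and arbitrary pairwise distinct points $c_1,\ldots,c_m \in \mathbb{A}^1(k)$, and then produce one choice of target points $x_1,\ldots,x_m \in \mathbb{A}^1(k)$ for which the set $\Hom([\mathbb{A}^1_k,(c_1,\ldots,c_m)],[\mathbb{A}^1_k,(x_1,\ldots,x_m)])$ is infinite. The simplest such choice is $x_1 = \cdots = x_m = 0$. Indeed, set
\[
p(t) := (t-c_1)(t-c_2)\cdots (t-c_m) \in k[t],
\]
and observe that for every polynomial $q(t) \in k[t]$, the morphism $\mathbb{A}^1_k \to \mathbb{A}^1_k$ given by $t \mapsto p(t)q(t)$ sends each $c_i$ to $0$. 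As $q$ ranges over $k[t]$ one obtains infinitely many pairwise distinct morphisms (they have unbounded degree, or more elementarily, distinct $q$'s give distinct maps since $p$ is a nonzero polynomial). For $m=0$ the claim is trivial, since $\Hom(\mathbb{A}^1_k,\mathbb{A}^1_k) = k[t]$ is already infinite.

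There is no real obstacle here; the content is just the observation that the ideal of functions vanishing at any finite subset of $\mathbb{A}^1_k$ is a nonzero principal ideal of $k[t]$, hence infinite. The argument shows that for the curve $C = \mathbb{A}^1_k$, no finite collection of pointed conditions $(c_1,\ldots,c_m)$ cuts the Hom-scheme down to finite type in the sense required by Definition \ref{defn:mild_bounded}, so $\mathbb{A}^1_k$ is not mildly bounded over $k$. The statement for $\mathbb{P}^1_k$ then follows formally from Proposition \ref{prop:a1} together with the fact that mild boundedness passes to closed subschemes, or directly by the same polynomial construction composed with the open immersion $\mathbb{A}^1_k \hookrightarrow \mathbb{P}^1_k$.
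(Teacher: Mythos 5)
Your proof is correct and uses essentially the same idea as the paper: the paper simply invokes Lagrange interpolation to produce morphisms $\mathbb{A}^1_k \to \mathbb{A}^1_k$ of arbitrarily high degree hitting any prescribed targets $x_i$, whereas you specialize to $x_1=\cdots=x_m=0$ and use multiples of $p(t)=\prod(t-c_i)$, which avoids even the interpolation step. The closing remark about $\mathbb{P}^1_k$ goes beyond the stated proposition (and incidentally, $\mathbb{A}^1_k\hookrightarrow\mathbb{P}^1_k$ is an open rather than a closed immersion, though the composition argument you mention is what actually carries the conclusion).
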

\begin{proof}
Indeed, if $c_1, \ldots, c_n \in \mathbb{A}^1_k(k)$ are distinct points and $x_1, \ldots, x_n \in \mathbb{A}^1_k(k)$ are arbitrary, then there exist morphisms $\varphi \colon \mathbb{A}^1_k \to \mathbb{A}^1_k$ of arbitrary high degree such that $\varphi(c_i) = x_i$ by using Lagrange interpolation. 
\end{proof}

\begin{proposition}\label{prop:mb} Assume $k$ is uncountable.  Let $X$ be a projective scheme over $k$, and let $\mathcal{L} $ be an ample line bundle on $X$. If $X$ is \emph{not} mildly bounded over $k$, then there is a smooth projective irreducible curve $C$ over $k$ such that, for every $d\geq 1$, the moduli scheme $\underline{\Hom}_k^{\geq d}(C,X)$ of morphisms $f:C\to X$  with $\deg_C f^\ast \mathcal{L}\geq d$ is positive-dimensional.
\end{proposition}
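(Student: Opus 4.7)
The plan is to take $C$ to be the smooth projective completion of a curve $C'$ witnessing the failure of mild boundedness, and to derive a contradiction from the assumption that $\underline{\Hom}^{\geq d_0}(C, X)$ is zero-dimensional for some $d_0 \geq 1$. By Definition~\ref{defn:mild_bounded}, there is a smooth quasi-projective connected curve $C'$ over $k$ witnessing that $X$ is not mildly bounded. Let $C$ be its smooth projective completion. Since $C \setminus C'$ is finite and $X$ is proper, the valuative criterion yields a bijection $\Hom(C', X) = \Hom(C, X)$ compatible with evaluation at points of $C'(k)$. Assume for contradiction that $\underline{\Hom}^{\geq d_0}(C, X)$ is zero-dimensional for some $d_0 \geq 1$; equivalently, the finite-type scheme $\underline{\Hom}^{d}(C, X)$ is a finite $k$-scheme for every $d \geq d_0$. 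Set $H := \underline{\Hom}^{<d_0}(C, X)$, a finite-type $k$-scheme, and $U := \bigsqcup_{d \geq d_0} \underline{\Hom}^d(C, X)(k)$, a countable set.

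The main step is to exhibit an integer $N$ such that $|\{c \in C(k) : f(c) = f'(c)\}| \leq N$ for every distinct pair $f, f' \in H(k)$. Consider the incidence scheme
\[
\mathcal{I} := \bigl\{(f, f', c) \in (H \times H \setminus \Delta_H) \times C : f(c) = f'(c)\bigr\},
\]
which is closed in $(H \times H \setminus \Delta_H) \times C$, being the pullback of $\Delta_X$ under $(f, f', c) \mapsto (f(c), f'(c))$. The projection $\pi \colon \mathcal{I} \to H \times H \setminus \Delta_H$ is proper, as $C$ is proper, and quasi-finite, since for $f \neq f'$ the locus $\{c : f(c) = f'(c)\}$ is a proper closed subset of the irreducible curve $C$ and thus finite; hence $\pi$ is finite. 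As $H \times H \setminus \Delta_H$ is quasi-compact and noetherian, $\pi_* \mathcal{O}_{\mathcal{I}}$ is coherent of uniformly bounded rank $N$, which yields the desired bound. Consequently, for any $N + 1$ pairwise distinct points $c_1, \ldots, c_{N+1} \in C(k)$, the map $\ev_{c_1, \ldots, c_{N+1}} \colon H \to X^{N+1}$ is injective on $k$-points, since no pair $f \neq f' \in H(k)$ can have all $N+1$ of the $c_i$ lie in the at-most-$N$-element bad set $\{c : f(c) = f'(c)\}$. Injectivity on $k$-points forces quasi-finiteness: by upper semi-continuity of fibre dimension (Chevalley) the positive fibre-dimension locus is closed in $H$; it contains no $k$-point (else the positive-dimensional fibre through that point would provide infinitely many $k$-points mapping to a single image), and hence is empty by the Nullstellensatz.

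Finally, the uncountability of $k$ is used to handle $U$. The set $E_U := \bigcup_{f \neq f' \in U} \{c \in C : f(c) = f'(c)\}$ is a countable union of finite subsets of $C(k)$, hence countable; since $C'$ is a positive-dimensional variety over uncountable $k$, the subset $C'(k) \subset C(k)$ is uncountable, and one may pick $c_0 \in C'(k) \setminus E_U$. Then $\ev_{c_0}|_U : U \to X(k)$ is injective, so the fibre of $\ev_{c_0}|_U$ over any $x_0 \in X(k)$ has at most one element. Choose further distinct points $c_1, \ldots, c_{N+1} \in C'(k) \setminus \{c_0\}$ and set $\mathbf{c} := (c_0, c_1, \ldots, c_{N+1})$. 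For any $\mathbf{x} \in X(k)^{N+2}$, the fibre of $\ev_{\mathbf{c}} \colon \Hom(C, X)(k) \to X(k)^{N+2}$ over $\mathbf{x}$ splits as its intersections with $U$ (at most one element) and with $H(k)$ (finite by the previous step), and is therefore finite. Via $\Hom(C', X) = \Hom(C, X)$ with all $c_i \in C'(k)$, this contradicts the failure of mild boundedness for $C'$. The hard part is the uniform bound $N$, which crucially uses the properness of $C$ to upgrade the pointwise finiteness of the fibres of $\pi$ to a global bound, together with the quasi-compactness of $H \times H$.
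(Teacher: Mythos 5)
Your proof is correct, but it follows a genuinely different route than the paper's. The paper first splits into two cases: if $X$ contains a rational curve, take $C = \PP^1_k$; otherwise it assumes $X$ has no rational curves, which permits citing from \cite[\S3]{JKa} the fact that $\Hom_k^{\leq d}([C,c],[X,x])$ is finite for \emph{every} choice of $c\in C(k)$ and $x\in X(k)$. With this in hand, a single uncountability-chosen pinning point $c$ already suffices: the low-degree part $\Hom^{<d}([C,c],[X,x])$ is finite by the cited fact, and the high-degree part $\Hom^{\geq d}([C,c],[X,x])$ has at most one element by the choice of $c$. You instead make no assumption on rational curves and do not invoke the $(1,1)$-pointed boundedness result of \cite{JKa}; you replace it with the incidence-scheme argument that produces a uniform bound $N$ on the size of coincidence loci $\{c : f(c)=f'(c)\}$ for distinct $f,f'$ in the finite-type scheme $H=\underline{\Hom}^{<d_0}(C,X)$, and then you use $N+1$ further pinning points to isolate $H$. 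The shared core of both arguments is the same: enumerate the countably many high-degree maps and use the uncountability of $C(k)$ to pick a point avoiding all their pairwise coincidence loci. What your approach buys is self-containedness (no reliance on the rational-curve dichotomy or on the finiteness of $\Hom^{\leq d}([C,c],[X,x])$ from elsewhere), at the cost of the heavier incidence-correspondence step; it also handles the passage from the quasi-projective witness $C'$ to its projective completion $C$ more explicitly than the paper's phrasing does. One small point worth tightening: when you declare $\pi\colon \mathcal{I}\to H\times H\setminus\Delta_H$ quasi-finite, you only check finiteness of fibres over $k$-points; to conclude $\pi$ is finite you should add (as you in fact do for the later map $\ev_{\mathbf{c}}$) that the positive-fibre-dimension locus is closed, is proper over the base, has image containing no closed point, and hence is empty by the Nullstellensatz.
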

\begin{proof} If $X$ has a rational curve, then we can take $C=\mathbb{P}^1_k$.  (We do not use   that $k$ is uncountable here.) Thus, we may and do assume that $X$ has no rational curves.  Let $C$ be a smooth projective irreducible curve over $k$ which does not satisfy the mild boundedness condition for $X$. Note that, as $X$ has no rational curves, for every integer $d\geq 1$, the scheme $\underline{\Hom}_k^{\leq d}(C,X)$ is a proper scheme over $k$ and that, for every $c$ in $C$ and $x$ in $X$, the set $\Hom_k^{\leq d}([C,c],[X,x])$ is finite; see \cite[\S3]{JKa}. Therefore, for every $d\geq 1$, the set $\Hom_k^{\geq d}(C,X)$ must have infinitely many $k$-points (otherwise $C$ would satisfy the mild boundedness condition). Suppose that this scheme is zero-dimensional. Then its $k$-points form  a countable infinite set. Let $f_1,\ldots$ be the elements of $\Hom_k^{\geq d}(C,X)$.  For every $i \neq j$, the locus $C_{ij} \subset C(k)$ where $f_i$ and $f_j$ agree consists of finitely many points, as $X$ is separated over $k$. Since $C(k)$ is uncountable, there exists a point $c \in C(k)$ such that $c \notin C_{ij}$ for all $i \neq j$. As $X$ over $k$ does not satisfy the mild boundedness condition with respect to $C$,   there must be an $x \in X(k)$ such that
$$H = \Hom_k^{\geq d}([C, c], [X, x])$$
is   infinite.   However, by the choice of $c$, the set  $H$ can only contain at most one element of $\{f_1, f_2, \ldots \}$. Hence, we obtain a contradiction and we conclude that $\Hom_k^{\geq d}(C,X)$ must be uncountable, so that the scheme $\underline{\Hom}_k^{\geq d}(C,X)$ has (a component of) dimension at least one.  
\end{proof}

\begin{remark}
Let $k$ be an uncountable algebraically closed field and  suppose that $X$ is a variety over $k$ which is \emph{not} mildly bounded. Then, the argument used in the proof of Proposition \ref{prop:mb} shows that there is a       smooth   quasi-projective irreducible curve $C$ over $k$ such that the set of non-constant morphisms $C \to X$  is uncountable. 
\end{remark}

\begin{lemma}\label{lem:four-statements}
Let $X$ be a projective variety over $k$. Consider the following statements.
\begin{enumerate}
\item The variety $X$ is mildly bounded over $k$.
\item The variety $X$ has no rational curves.
\item For every curve $C$, there is an integer $d\geq 1$ such that  $\Hom^{\geq d}(C,X)$ is zero-dimensional, i.e., $\Hom(C,X)$ has only finitely many positive-dimensional components.
\item $X$ is groupless.
\end{enumerate}  Then we have $(1)\implies (2)$, $(4)\implies (2)$, and $(3) \implies (4)$. If $k$ is uncountable, then we also have $(3)\implies (1)$.
\end{lemma}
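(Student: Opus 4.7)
The plan is to prove each of the four implications in turn, appealing to Proposition \ref{prop:mb} for the passage from (3) to (1), and to an explicit construction using translations and multiplication-by-$n$ maps on an abelian variety for $(3)\implies(4)$; the two implications into (2) are straightforward.

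For $(1)\implies(2)$ I argue by contraposition. If $X$ admits a non-constant morphism $f\colon\mathbb{P}^1\to X$, then for any distinct points $c_1,\ldots,c_n\in\mathbb{P}^1(k)$, setting $x_i:=f(c_i)$, Lagrange interpolation produces for every $d\geq 1$ a self-morphism $\varphi_d\colon\mathbb{P}^1\to\mathbb{P}^1$ of degree $d$ with $\varphi_d(c_i)=c_i$. The compositions $f\circ\varphi_d$ yield infinitely many pairwise distinct elements of $\Hom_k([\mathbb{P}^1,c_1,\ldots,c_n],[X,x_1,\ldots,x_n])$, since their $\mathcal{L}$-degrees (for a fixed ample $\mathcal{L}$ on $X$) grow linearly in $d$; this contradicts mild boundedness tested on $\mathbb{P}^1$. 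The implication $(4)\implies(2)$ is a standard property of groupless varieties under the conventions of the references \cite{JKa, JVez}, which I will simply cite.

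For $(3)\implies(4)$ I again argue by contraposition. Suppose $X$ is not groupless, and let $f\colon A\to X$ be a non-constant morphism from an abelian variety. By cutting $A$ with sufficiently general hyperplanes in a fixed projective embedding, I obtain a smooth projective curve $C\subset A$ passing through the origin such that $f|_C$ is non-constant. For each integer $n\geq 1$ and each $a\in A$, consider the morphism $g_{n,a}:=(f\circ t_a\circ[n])|_C\colon C\to X$, where $t_a$ denotes translation by $a$ on $A$. Using that $[n]^\ast L\equiv L^{\otimes n^2}$ modulo torsion for every line bundle $L$ on $A$ and that translations preserve degrees, the $\mathcal{L}$-degree of $g_{n,a}$ equals $n^2\cdot\deg_C(f^\ast\mathcal{L})$. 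For each fixed $n$, the parametrisation $a\mapsto g_{n,a}$ is non-constant in $a$ because evaluating at $0\in C$ returns $g_{n,a}(0)=f(a)$, and $f$ is non-constant. Hence $\Hom^{\geq n^2\deg_C(f^\ast\mathcal{L})}(C,X)$ contains a positive-dimensional component for every $n$, contradicting (3) applied to this $C$.

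Finally, $(3)\implies(1)$ for uncountable $k$ is the direct contrapositive of Proposition \ref{prop:mb}: if $X$ is projective and not mildly bounded over such $k$, then Proposition \ref{prop:mb} yields a smooth projective connected curve $C$ with $\underline{\Hom}_k^{\geq d}(C,X)$ positive-dimensional for every $d\geq 1$, negating (3) for this $C$. The principal obstacle is the construction for $(3)\implies(4)$: both the positive-dimensionality of the family $\{g_{n,a}\}_{a\in A}$ (handled by the evaluation-at-$0$ argument) and the unbounded growth of their degrees (controlled by the quadratic action of $[n]^\ast$ on $\Pic(A)$ modulo torsion) must be verified simultaneously, and the choice of $C$ passing through the origin with $f|_C$ non-constant is essential to both.
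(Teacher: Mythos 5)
Your proof is correct and follows essentially the same route as the paper's: Lagrange interpolation / composition with rational self-maps for $(1)\implies(2)$, composing $\iota\colon C\hookrightarrow A$ with $[n]$ and translations for $(3)\implies(4)$, and the contrapositive of Proposition \ref{prop:mb} for $(3)\implies(1)$. Your write-up of $(3)\implies(4)$ is more detailed than the paper's brief sketch (in particular the positive-dimensionality of $\{g_{n,a}\}_a$ via evaluation at $0$, and the quadratic growth of degrees via $[n]^\ast$ acting as $n^2$ on $\NS(A)$), but the construction is the same one the paper invokes.
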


\begin{proof}
The implication $(1) \implies (2)$ follows from Proposition \ref{prop:a1}. The implication  $(4) \implies (2)$ is an  immediate consequence of the definitions. 
For the implication $(3) \implies (4)$, assume that $X$ is not groupless. Then there  is a non-zero abelian variety $A$ and a non-constant morphism $\phi\colon A\to X$.    Let $\iota \colon C \hookrightarrow A$ be a curve which is not contracted by $\phi$. Then $\iota$ can be composed with any endomorphism of $A$. In particular, it can be composed with multiplication by $n \in \ZZ_{>0}$   and any translation with a point of $A(k)$. This gives infinitely many components of (strictly) positive dimension in $\Hom(C,X)$. This shows that $(3)\implies (4)$. Finally, to conclude the proof, we may assume that $k$ is uncountable. Now, the implication $(3) \implies (1)$ follows from Proposition \ref{prop:mb}.
\end{proof}

Note that Conjecture \ref{conj:new} predicts the equivalence of $(1)$ and $(2)$. In fact,  we conjecture something stronger. 

\begin{conjecture}
In the situation of Lemma \ref{lem:four-statements}, $(1)\iff (2)$ and $(3) \iff (4)$.
\end{conjecture}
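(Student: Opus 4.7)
The plan is to establish the two remaining implications, namely $(2) \implies (1)$, which is precisely Conjecture \ref{conj:new}, and $(4) \implies (3)$; the other implications in the conjecture are already recorded in Lemma \ref{lem:four-statements}.

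For $(2) \implies (1)$, my approach would be a reduction via the Albanese morphism combined with induction on dimension. Given a projective variety $X$ with no rational curves, consider $a \colon X \to \Alb(X)$; the target is an abelian variety, hence mildly bounded by Proposition \ref{prop:semiabvar_is_mildbounded}. When $\dim \Alb(X) > 0$, the fibres of $a$ are projective of strictly smaller dimension and, being subvarieties of $X$, inherit the property of containing no rational curves. The missing ingredient would be a ``relative mild boundedness'' principle: if $f \colon X \to S$ is a proper morphism whose base and whose fibres are all mildly bounded, then $X$ is mildly bounded. Theorem \ref{thm:families_of_ab_var_are_mb} is encouraging in this direction. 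Granting such a principle, one could induct on $\dim X$, using Theorems \ref{thm:surfaces_are_mb} and \ref{thm:surfaces_q} as the two-dimensional base case whenever $a$ is non-constant.

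For $(4) \implies (3)$, I would argue by contradiction. Suppose $X$ is groupless while $\Hom(C, X)$ has infinitely many positive-dimensional components $V_1, V_2, \ldots$. Each $V_i$ carries an evaluation morphism $V_i \times C \to X$ whose image is a positive-dimensional subvariety $Z_i \subseteq X$. Two cases arise. If infinitely many of the $Z_i$ are pairwise distinct, then after passing to a subsequence and using Hilbert/Chow compactness they form an algebraic family of cycles sweeping out a larger subvariety $W \subseteq X$. A moving family of cycles covering $W$ would then yield either a rational curve in $X$ (impossible since grouplessness implies no rational curves) or, via an Albanese-type argument applied to $W$, a non-constant morphism from a positive-dimensional abelian variety to $X$, contradicting grouplessness. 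In the remaining case, only finitely many distinct $Z_i$ occur, so some single $Z \subseteq X$ admits infinitely many positive-dimensional families of maps from $C$; replacing $X$ by $Z$ and inducting on dimension, or invoking the uncountable-base statement $(3) \Rightarrow (1)$ of Lemma \ref{lem:four-statements} after a suitable base change, would reduce to the first case.

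The hard part will be $(2) \implies (1)$ precisely when $\Alb(X) = 0$. Here the Albanese reduction degenerates entirely, and the conjecture effectively asserts that a projective variety with vanishing irregularity and no rational curves is already mildly bounded. No currently available technique produces this conclusion; such a statement sits at the depth of the Green--Griffiths--Lang conjecture itself, and would presumably require jet-differential or hyperbolicity-theoretic input in place of the Albanese. For $(4) \implies (3)$, the delicate point is extracting a genuine moving algebraic family from an accumulating sequence of positive-dimensional components of $\Hom(C,X)$ and then producing a positive-dimensional abelian quotient of (a resolution of) the swept-out subvariety $W$; this is essentially a Ueno--Kawamata type structural theorem whose execution requires careful control of the Kodaira dimension of $W$.
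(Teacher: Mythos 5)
The statement you are asked about is a \emph{conjecture} in the paper; the authors do not prove it and explicitly leave it open, so there is no proof to compare against. Your proposal correctly identifies the two missing implications, namely $(2)\Rightarrow(1)$ (which the paper separately states as Conjecture \ref{conj:new}) and $(4)\Rightarrow(3)$, and it is honest about the fact that neither can currently be established. Given that, I will comment on whether the sketch is consistent with the partial progress that the paper \emph{does} achieve, and where the sketch is over-optimistic.

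Your Albanese-reduction strategy for $(2)\Rightarrow(1)$ is indeed the strategy behind the paper's partial results (Theorem \ref{thm:surfaces_are_mb}, Theorem \ref{thm:surfaces_q}, Corollary \ref{cor:curves_are_mbb}), and you correctly flag $\Alb(X)=0$ as the fundamental obstruction. However, the ``relative mild boundedness'' principle you would like to invoke --- proper $f\colon X\to S$ with mildly bounded base and mildly bounded fibres implies $X$ mildly bounded --- is not just unproved in the paper; it is substantially stronger than what the paper establishes, and there is no indication that it is true. Theorem \ref{thm:families_of_ab_var_are_mb} and Corollary \ref{cor:mb_of_total_space} do \emph{not} merely assume the fibres are mildly bounded: they require the fibres to be (semi-)abelian, and the proof runs through Lang--N\'eron, the trace, and Silverman's specialisation theorem (Lemma \ref{lem:traceless}, Lemma \ref{lem:sections}, Lemma \ref{lem:sections2}). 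These tools are specific to group schemes; they let one control sections of the family by evaluating a fixed finite generating set at a well-chosen fibre. For an arbitrary family with mildly bounded fibres there is no analogue of the Mordell--Weil group of sections, and the number of marked points one needs in the fibres may grow with the fibre in a way that mild boundedness of the base cannot absorb. So the reduction you propose does not bottom out even granting the Albanese is nontrivial, unless the fibres themselves happen to be of the restricted type the paper can handle.

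For $(4)\Rightarrow(3)$, the bend-and-break / Chow-compactness argument you sketch has the right flavor, but the key step --- from an accumulating sequence of positive-dimensional components of $\Hom(C,X)$, produce either a rational curve or a non-constant map from a positive-dimensional abelian variety --- is precisely the hard content, and your appeal to ``an Albanese-type argument applied to $W$'' presupposes a Ueno--Kawamata-type structure theorem for the swept-out locus that is not available at this level of generality. The second case (only finitely many $Z_i$ occur) also needs care: passing to $Z$ and ``inducting on dimension'' is not obviously legitimate because grouplessness of $X$ does not guarantee that the reduction step preserves the hypotheses you need, and invoking $(3)\Rightarrow(1)$ from Lemma \ref{lem:four-statements} after a base change requires $k$ uncountable and does not directly feed back into $(4)\Rightarrow(3)$. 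In short, your proposal is a reasonable and appropriately hedged roadmap aligned with the paper's partial results, but both implications remain genuinely open, and the intermediate lemmas you would need (relative mild boundedness for general fibres; a structure theorem for swept-out loci of moving families) are themselves open problems of comparable depth.
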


\begin{example}\label{ex:genus-greater-than-2}
Let $X$   be a smooth projective curve of genus at least two over $k$. Then we claim that $X$ is mildly bounded. Indeed, if $C \to X$ is a morphism from a smooth irreducible quasi-projective curve to $X$, then it extends to a morphism from its smooth projectivisation $\overline{C}$ to $X$, and the set of non-constant morphisms $\overline{C}\to X$ is finite by    De Franchis's theorem.  In particular, we see that $X$ is mildly bounded over $k$. Below, we will give a simpler (more direct) proof of the mild boundedness of any smooth quasi-projective irreducible curve $X$ which is neither isomorphic to $\mathbb{A}_k^1$ nor $\mathbb{P}^1_k$.
\end{example}
 
To prove that abelian varieties are mildly bounded we will use that most smooth curves inject into  their generalised Jacobian (also referred to as the semi-Albanese variety).   This will be explained in more detail in the rest of this section. Definitions and constructions of this (semi-)Albanese variety can be found in \cite{Mochizuki1, SerreAlgGroups}. If $X$ is a geometrically  integral variety over $k$, we let $\mathrm{Alb}(X)$ be its semi-Albanese variety over $k$; this exists by \cite[Corollary~A.11.(i)]{Mochizuki1}. 

Given a point $x$ in $X(k)$, we will refer to the universal morphism $X\to \mathrm{Alb}(X)$ associated to $(X,x)$ as an Abel-Jacobi morphism. We will usually suppress the choice of a base point in $X$.

  \begin{lemma}\label{lem:345}
 Let $\overline{C}$ be a smooth projective irreducible curve of genus $g$ over $k$ and let $C\subset \overline{C}$ be a dense open such that $\#(\overline{C} \setminus C) = r > 0$. Fix an Abel-Jacobi map $\iota: C \to \mathrm{Alb}(C)$ to the Albanese variety of $C$ (associated to the choice of a base point in $C(k)$).   For any integer $N\geq 0$, let $\rho_N \colon C^{N} \to \mathrm{Alb}(C)$ be given by $(c_1,\ldots,c_{N})\mapsto \iota(c_1)+\ldots+\iota(c_{N})$. Then $\rho_{2g+2r-2}$ is surjective.  
 \end{lemma}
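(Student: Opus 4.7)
The plan is to use the identity $2g+2r-2 = 2\dim \mathrm{Alb}(C)$. Indeed, since $r \geq 1$, the semi-Albanese $A := \mathrm{Alb}(C)$ is an extension of $\mathrm{Jac}(\overline{C})$ by a torus of rank $r-1$, so $d := \dim A = g+r-1$. Writing $W := \iota(C) \subset A$ (so $0 \in W$ by the Abel--Jacobi base-point convention) and $S_n := \overline{\rho_n(C^n)} = \overline{W + \cdots + W}$ ($n$ summands), the lemma reduces to showing $\rho_{2d}(C^{2d}) = A$. I would prove this in two moves: first establish that $S_d = A$, i.e.\ $\rho_d(C^d)$ is Zariski dense in $A$, and then upgrade density to surjectivity of $\rho_{2d}$ via a doubling trick.

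For the density step, each $S_n$ is irreducible (as $C^n$ is) and the chain $S_1 \subseteq S_2 \subseteq \cdots$ is increasing since $0 \in W$. A strict inclusion of irreducible closed subsets of $A$ forces a strict jump in dimension, so the chain stabilizes at some $n_0 \leq d$. A short propagation argument, using the easy identity $S_{n+1} = \overline{\iota(C) + S_n}$, shows $S_{n_0+k} = S_{n_0}$ for all $k \geq 0$. Applying this with $k=n_0$ and using continuity of the addition map $A \times A \to A$ gives $\overline{S_{n_0} + S_{n_0}} = S_{2n_0} = S_{n_0}$, hence $S_{n_0} + S_{n_0} \subseteq S_{n_0}$. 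Thus $S_{n_0}$ is a closed irreducible submonoid of $A$ containing $0$. The standard translation argument (for any $m \in S_{n_0}$, the closed irreducible subset $m + S_{n_0}$ has the same dimension as $S_{n_0}$ and is contained in it, so equals it; then $0 \in m + S_{n_0}$ produces $-m \in S_{n_0}$) promotes $S_{n_0}$ to a closed subgroup of $A$. Since by the universal property $A = \mathrm{Alb}(C)$ is the smallest closed subgroup containing $\iota(C)$, we conclude $S_{n_0} = A$; padding with the base point $c_0$ then shows $S_{n_0} \subseteq S_d$, so $S_d = A$.

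The final step is a doubling argument: for any $a \in A$, both $\rho_d(C^d)$ and $a - \rho_d(C^d)$ are constructible and dense in the irreducible variety $A$, hence each contains a dense open, and two dense opens in $A$ always meet. This produces $u, v \in \rho_d(C^d)$ with $u + v = a$, i.e.\ $a \in \rho_{2d}(C^{2d})$, yielding surjectivity of $\rho_{2d} = \rho_{2g+2r-2}$. The main obstacle I anticipate is the submonoid-to-subgroup passage: one must carefully propagate the stabilization $S_{n_0} = S_{n_0+1}$ along the chain to obtain the closure property $S_{n_0} + S_{n_0} \subseteq S_{n_0}$, and then use the commutative-group-theoretic translation trick to upgrade the resulting closed submonoid to a subgroup. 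Once that structural step is in hand, the universal property of the semi-Albanese and the density-to-surjectivity doubling are essentially formal.
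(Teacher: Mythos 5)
Your proof is correct, and it takes a genuinely different route from the paper for the central step. The paper invokes the classical fact (a generalization of Abel's theorem to Serre's generalized Jacobians) that $\rho_{g+r-1}$ is \emph{birational} onto $\mathrm{Alb}(C)$, from which density of the image is immediate. You replace this with a soft structural argument: the chain of irreducible closed sets $S_1 \subseteq S_2 \subseteq \cdots$ stabilizes in at most $d = g+r-1$ steps by dimension count, the stabilized set is closed under addition by the $\overline{W + S_n}$ identity, and the standard translation trick then upgrades it to a closed connected subgroup, which must equal $A$ by the universal property of the semi-Albanese (the quotient map $A \to A/S_{n_0}$ kills $\iota(C)$, hence is zero by uniqueness in the universal property). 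Both proofs then finish with the identical doubling step. Your version avoids the Riemann--Roch-type input about birationality of the Abel--Jacobi map, at the cost of a slightly longer formal argument; it only requires knowing $\dim \mathrm{Alb}(C) = g+r-1$ (which both proofs take from the identification with the generalized Jacobian) and the universal property. It also has the minor virtue of showing that $\rho_{2n_0}$ is already surjective for the possibly smaller stabilization index $n_0 \leq d$, though the lemma as stated does not need this refinement.
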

\begin{proof}
In \cite[Proposition~A.3]{Mochizuki1}, it is already claimed that there is an $N$ such that $\rho_N$ is surjective. As we are considering curves, we know more about the structure of $\mathrm{Alb}(C)$.
Let $D$ be the divisor of $\overline{C}$ obtained by taking each of the points of $\overline{C} \setminus C$ once. Then there is a generalised Jacobian in the sense of \cite[Chap.\ V]{SerreAlgGroups} associated to $D$. Using \cite[Lemme 6]{SerreAlbanese}, we see that this generalised Jacobian is isomorphic to $\mathrm{Alb}(C)$. 

Hence, $\mathrm{Alb}(C)$ is an extension of $\mathrm{Jac}(\overline{C})$ by $\Gm^{r-1}$ and $\rho_{g+r-1}$ is birational.  In particular, the image $\im(\rho_{g+r-1})$ contains a dense  open subset of $\mathrm{Alb}(C)$.  Therefore, for any point $P \in \mathrm{Alb}(C)$, the intersection of $\im(\rho_{g+r-1})$ with $P - \im(\rho_{g+r-1})$ must be non-empty. Hence, the morphism $\rho_{2g+2r-2}$ is surjective.
\end{proof}

\begin{proposition} \label{prop:semiabelian-r+g-points0}
Let $r > 0$ be an integer.
Let $\overline{C}$ be a smooth projective irreducible curve over $k$ of genus $g$ and let $C\subset \overline{C}$ be a dense open subscheme such that $\# (\overline{C}\setminus C) = r$. Then, there are points $c_0,\ldots, c_{2(g+r-1)^2} \in C(k)$ such that, for every semi-abelian variety $X$ over $k$ and every $x_0,\ldots, x_{2(g+r-1)^2} \in X(k)$, the set 
\[
\Hom_k\!\left([C, (c_0,\ldots,c_{2(g+r-1)^2})], [X,(x_0,\ldots,x_{2(g+r-1)^2})]\right)
\] has at most one element. 
\end{proposition}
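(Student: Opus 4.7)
First I would set $d := g+r-1 = \dim \Alb(C)$ and $M := 2d$, so that Lemma~\ref{lem:345} supplies a surjective sum map $\rho_M \colon C^M \to A := \Alb(C)$. The number of points to be chosen, $1 + dM = 2(g+r-1)^2 + 1$, matches the count in the proposition. Fix $c_0 \in C(k)$ and take $\iota \colon C \to A$ to be the Abel-Jacobi map based at $c_0$, so $\iota(c_0) = 0$. By the universal property of the semi-Albanese \cite[Corollary~A.11]{Mochizuki1}, any morphism $h \colon C \to X$ to a semi-abelian variety $X$ with $h(c_0) = 0$ factors uniquely as $h = \phi \circ \iota$ for a homomorphism of semi-abelian varieties $\phi \colon A \to X$.

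The plan is to reduce the claim to an assertion about $A$. Given two morphisms $f, g \colon C \to X$ that agree at $c_0, c_1, \ldots, c_N$ with $N = 2d^2$, form $h := f - g$ using the group law on $X$; then $h(c_i) = 0$ for all $i$, and in particular $h = \phi \circ \iota$ for a homomorphism $\phi \colon A \to X$ satisfying $\phi(\iota(c_i)) = 0$ for $i = 1, \ldots, N$. To conclude $f = g$ it suffices to show $\phi = 0$, and this will follow once we arrange that $\iota(c_1), \ldots, \iota(c_N)$ generate a Zariski-dense subgroup of $A$: a morphism of varieties vanishing on a Zariski-dense subset must vanish identically.

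To find the required points, I would organise them as $d$ packets of $M$ points each, writing $(c_{i,1}, \ldots, c_{i,M})$ for $i = 1, \ldots, d$, and set $Q_i := \sum_{j=1}^M \iota(c_{i,j}) \in A$. Surjectivity of $\rho_M$ implies that the map $\sigma \colon C^{dM} \to A^d$, $(c_{i,j}) \mapsto (Q_1, \ldots, Q_d)$, is surjective as well. Since $\phi(Q_i) = \sum_j \phi(\iota(c_{i,j})) = 0$, it will be enough to choose the $c_{i,j}$ so that $Q_1, \ldots, Q_d$ generate a Zariski-dense subgroup of $A$. Writing $U \subseteq A^d$ for the locus of such $d$-tuples, $U$ is the complement of $\bigcup_B B^d$ where $B$ runs over the countably many proper closed algebraic subgroups of $A$.

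The main obstacle is to exhibit a $k$-point of $U$; one can then pull back along the surjection $\sigma$ to obtain the desired $c_{i,j}$. For uncountable $k$ this is immediate by cardinality, since $\bigcup_B B^d(k) \subsetneq A^d(k)$. For a countable $k$ such as $\overline{\QQ}$, one builds $Q_1, \ldots, Q_d$ inductively: having chosen $Q_1, \ldots, Q_i$ such that the connected component of the Zariski closure of the subgroup they generate is properly contained in $A$, the quotient of $A$ by this connected subgroup is a positive-dimensional semi-abelian variety, and one selects $Q_{i+1}$ (in the image of $\rho_M$) whose class there is of infinite order, strictly enlarging the Zariski closure's dimension. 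The key input for the countable case is the standard fact that a positive-dimensional semi-abelian variety over any algebraically closed field of characteristic zero admits non-torsion points.
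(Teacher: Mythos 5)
Your proof is correct, and it takes a genuinely different route from the paper's. The paper proves that a group homomorphism $\varphi \colon A := \Alb(C) \to X$ is determined by its values at $n + r - 1 \leq d := g + r - 1$ carefully chosen points by exploiting the structure theory of $A$: it uses the exact sequence $1 \to \Gm^{r-1} \to A \to J \to 0$ and Poincar\'e complete reducibility to decompose $J$ up to isogeny into simple factors $J_1 \times \cdots \times J_n$, then picks one explicit determining point per torus factor (the standard basis vectors with one entry $2$) and one per simple abelian factor (a nontorsion $g_i \in J_i(k)$, which generates a dense subgroup by simplicity). You instead skip the decomposition entirely and reduce directly to the assertion that $A$ has $d$ points $Q_1, \ldots, Q_d$ generating a Zariski-dense subgroup, proved by a clean dimension induction on the connected component of the Zariski closure $B_i$ of the generated subgroup: as long as $B_i \subsetneq A$, pick $Q_{i+1}$ with nontorsion image in the positive-dimensional semi-abelian quotient $A/B_i$, forcing $\dim B_{i+1} > \dim B_i$. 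Both approaches then invoke Lemma~\ref{lem:345} to express each determining point of $A$ as $\rho_M$ of an $M$-tuple in $C$, and both ultimately rest on the same unreferenced (but standard) ingredient — that a positive-dimensional (semi-)abelian variety over an algebraically closed field of characteristic zero has nontorsion $k$-points — which the paper also needs when selecting $g_i \in J_i(k)$ of infinite order. Your route is arguably cleaner in avoiding the isogeny decomposition and in making the reduction to the ``dense generating set'' statement explicit. Two minor remarks on presentation: your reformulation as showing $h := f - g = 0$ is equivalent to the paper's argument that the images at the chosen points determine the homomorphism, so that piece is not a genuine difference; and the uncountable case is not strictly ``immediate by cardinality'' (each $B^d(k)$ can have cardinality $|k|$) but rather by the standard fact that an irreducible variety over an uncountable algebraically closed field is not covered by countably many proper Zariski-closed subsets. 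Neither affects correctness.
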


\begin{proof}
Let $X$ be any semi-abelian variety over $k$. Let $c_0 \in C(k)$ be a point and let $A$ be the Albanese variety of $C$ over $k$ together with the associated Abel-Jacobi map $\iota: C \to A$ mapping $c_0$ to the identity in $A$.

Note that a morphism $C \to X$ mapping $c_0$ to the identity in $X$, gives rise to a morphism $A\to X$ of group schemes (see \cite[Proposition~A.3]{Mochizuki1}). We will prove that such morphisms are determined by the image of at most $g+r-1$ carefully chosen points on $A$.
In order to find these points, note that there is an exact sequence $1 \to \Gm^{r-1} \to A \to J \to 0$, where ${J}$ is the Jacobian of $\overline{C}$, cf.\ the description of the generalised Jacobian given in the proof of Lemma \ref{lem:345}.

We will first look at morphisms ${J} \to X$.
Suppose we have an isogeny $J_1 \times \ldots \times J_n \to {J},$ such that $J_1, \ldots, J_n$ are simple abelian varieties over $k$. Then $n \leq g$ and by composition we get an injection
$$\Hom_{\mathrm{Grp}/k}({J},X) \hookrightarrow \Hom_{\mathrm{Grp}/k}(J_1 \times \ldots \times J_n, X).$$
For $1\leq i\leq n$, let $g_i \in J_i(k)$ be a point of infinite order and note that  $g_i$ generates a (Zariski) dense subgroup of $J_i$. Therefore, any morphism of group schemes $\psi:J_i \to X$ is determined by the image $\psi(g_i)$ of $g_i$. Now, for $1\leq i \leq n$, we let $j_i$ be the image of $(0, \ldots, 0, g_i, 0, \ldots, 0) \in J_1 \times \ldots \times J_n$ in ${J}$. Then, a morphism of semi-abelian varieties $\psi: {J} \to X$ is determined by the images $\psi(j_1), \ldots, \psi(j_n)$ of $j_1, \ldots, j_n$.

On the other hand, group morphisms $\Gm^{r-1} \to X$ are determined by the images of the points $\ell_1 = (2, 1, \ldots, 1), \ell_2 = (1,2,1,\ldots,1), \ldots,$ and $\ell_{r-1} = (1, \ldots, 1, 2)$ in $\Gm^{r-1}(k)$.

In particular, if $\ell_{r}, \ldots, \ell_{r+n-1} \in J(k)$ are points mapping to $j_1, \ldots, j_n$, then we claim that a morphism of group schemes $\varphi \colon A \to X$ is determined by $\varphi(\ell_1), \ldots, \varphi(\ell_{r+n-1})$. Indeed, if $\varphi'$ is another morphism with the same images, then $\varphi - \varphi'$ is trivial on $\Gm^{r-1}$ and factors through ${J}$, where it is also trivial.

By Lemma \ref{lem:345}, for each $1\leq i\leq n+r-1$, there exist points $c_{i,1}, \ldots, c_{i,2g+2r-2} \in C(k)$ such that $\iota(c_{i,1}) + \ldots + \iota(c_{i,2g+2r-2}) = \ell_i$. Now, we claim that, for any $1+(n+r-1)(2g+2r-2)$-tuple of points $x_{0}, x_{1,1}, x_{1,2}, \ldots, x_{n+r-1,2g+2r-2} \in X(k)$, the set $$H = \Hom([C,(c_{0}, c_{1,1}, \ldots, c_{n+r-1,2g+2r-2})], [X, (x_{0}, x_{1,1}, \ldots x_{n+r-1,2g+2r-2})])$$ is finite. Indeed, if we change the group structure on $X$ such that $x_0$ becomes the identity, then any morphism $f \colon C \to X$ in $H$ gives rise to a homomorphism of group schemes $h \colon A \to X$. Now, by definition, for each $i = 1, \ldots, n+r-1$, we have that $$h(\ell_i) = h(\iota(c_{i,1})) + \ldots + h(\iota(c_{i,2g+2r-2})) = f(c_{i,1}) + \ldots + f(c_{i,2g+2r-2}) = x_{i,1} + \ldots + x_{i,2g+2r-2}$$ is fixed. This implies that  $H$ can have at most one element.
The proposition is now proven by relabelling the points in $C$ and appending some extra points in case $n < g$.
\end{proof}

We obtain the following uniform finiteness statement for (not necessarily hyperbolic) curves.

\begin{lemma}\label{lem:curves_are_mb}  
Let $C$ be a smooth affine curve over $k$. Then, there is an integer $m\geq 1$ and points $c_1,\ldots, c_m\in C(k)$ such that, for every projective variety $X$ over $k$  of dimension at most one without rational curves over $k$ and every $x_1,\ldots, x_m\in X(k)$, the set 
\[
\Hom_k([C,(c_1,\ldots,c_m)],[X,(x_1,\ldots,x_m)])
\] 
is finite. 
\end{lemma}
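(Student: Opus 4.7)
The plan is to reduce the finiteness problem to the mild boundedness of semi-abelian varieties already established in Proposition \ref{prop:semiabelian-r+g-points0}. Let $\overline{C}$ be the smooth projective completion of $C$, set $g = \mathrm{genus}(\overline{C})$ and $r = \#(\overline{C}\setminus C)$, and note that $r \geq 1$ since $C$ is affine. Applying Proposition \ref{prop:semiabelian-r+g-points0} to $C \subset \overline{C}$ yields an integer $m = 2(g+r-1)^2 + 1$ and points $c_1,\ldots,c_m \in C(k)$ with the following uniform property: for every semi-abelian variety $A$ over $k$ and every $(a_1,\ldots,a_m) \in A(k)^m$, the set $\Hom_k([C,\vec c],[A,\vec a])$ contains at most one element. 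I claim these same points already witness the conclusion of the lemma.

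Given a target $X$ as in the statement and $x_1,\ldots,x_m \in X(k)$, any morphism $f \colon C \to X$ with $f(c_i) = x_i$ has image contained in a single irreducible component $X_0$ of $X$ (since $C$ is irreducible), and $X_0$ must contain $x_1$. As $X$ has finitely many irreducible components, it suffices to fix such an $X_0$ and prove finiteness of the set of such $f$ factoring through $X_0$. If $\dim X_0 = 0$, then $X_0 = \{x_1\}$ forces $f$ to be constant. If $\dim X_0 = 1$, then $X_0$ has no rational curves, so the normalization $\pi \colon \tilde X_0 \to (X_0)_\mathrm{red}$ is a smooth projective curve of genus at least one. Any non-constant $f$ factors through $(X_0)_\mathrm{red}$ (since $C$ is reduced), is therefore dominant onto $(X_0)_\mathrm{red}$, and lifts uniquely to $\tilde f \colon C \to \tilde X_0$ by normality of $C$ together with the universal property of the normalization.

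For each $i$, the lifted value $\tilde f(c_i)$ lies in the finite set $\pi^{-1}(x_i)$, so it remains, for each fixed tuple $(\tilde x_1,\ldots,\tilde x_m) \in \prod_{i=1}^m \pi^{-1}(x_i)$, to show that the set of lifts $\tilde f$ with $\tilde f(c_i) = \tilde x_i$ is finite. Here I would use the Abel--Jacobi embedding $\mathrm{AJ} \colon \tilde X_0 \hookrightarrow \Jac(\tilde X_0)$ based at $\tilde x_1$, which is a closed immersion because $\mathrm{genus}(\tilde X_0) \geq 1$; post-composition then gives an injection of the pointed Hom set into $\Hom_k([C,\vec c],[\Jac(\tilde X_0),(0,\mathrm{AJ}(\tilde x_2),\ldots,\mathrm{AJ}(\tilde x_m))])$, which has at most one element by the universal property of the $c_i$. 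The main subtlety, and the reason Proposition \ref{prop:semiabelian-r+g-points0} is indispensable here, is that the target Jacobian $\Jac(\tilde X_0)$ varies with $X$, so the points $c_i$ must be chosen once and for all, independently of which semi-abelian variety the lifted map will eventually hit.
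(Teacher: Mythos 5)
Your proof is correct and follows essentially the same approach as the paper: apply Proposition \ref{prop:semiabelian-r+g-points0} to get the points $c_1,\ldots,c_m$, reduce to the case of a smooth projective connected curve, and embed that curve into its Jacobian via Abel--Jacobi. The paper's own proof states the reduction ``it suffices to prove the required finiteness statement for smooth projective connected varieties $X$ of dimension at most one without rational curves'' in a single line; you have filled in exactly the bookkeeping that sentence hides --- passing to an irreducible component through $x_1$, lifting non-constant maps through the normalization $\pi \colon \tilde X_0 \to (X_0)_{\mathrm{red}}$ (which uses the normality of $C$ and the universal property of normalization, with injectivity of $f \mapsto \tilde f$ built in), observing that $\pi^{-1}(x_i)$ is finite so there are only finitely many choices of lifted base points, and noting that ``no rational curves'' is precisely what guarantees $\mathrm{genus}(\tilde X_0) \geq 1$ so the Abel--Jacobi map is a closed immersion. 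This is a faithful, more detailed rendering of the paper's argument rather than a different route.
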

\begin{proof} It suffices to prove the required finiteness statement for smooth projective connected varieties $X$ of dimension at most one without rational curves. However, since a smooth  projective irreducible curve $X$ over $k$ with no rational curves embeds into  an abelian variety (e.g., the  Jacobian of $X$),  the result follows from Proposition \ref{prop:semiabelian-r+g-points0}.
\end{proof}

\begin{proof}[Proof of Proposition \ref{prop:semiabvar_is_mildbounded}]
The mild boundedness of semi-abelian varieties follows immediately from Proposition \ref{prop:semiabelian-r+g-points0}.
\end{proof}

In fact, we can prove another (slightly less effective) finiteness result for semi-abelian varieties. We briefly explain this in the following remark.

\begin{remark}
 If $Y$ is a variety over $k$, then there are $y_1,\ldots, y_m$  in $Y(k)$ such that, for every semi-abelian variety $X$ and every $x_1,\ldots,x_m$ in $X(k)$, the set $$\Hom([Y,(y_1,\ldots,y_m)], [X,(x_1,\ldots,x_m)])$$ is finite.  To prove this, replacing $Y$ by a suitable dense open subscheme if necessary, we may and do assume that $Y$ is a smooth affine integral variety which maps injectively into   its Albanese variety $\mathrm{Alb}(Y)$.  Then, we pick a high power $Y^n$ which generates $\mathrm{Alb}(Y)$ and we use the argument in the proof of Proposition \ref{prop:semiabelian-r+g-points0} to construct an integer $m\geq 1$ and points $y_1,\ldots,y_m$ with the desired property. The integer $m$ and the points $y_1,\ldots, y_m$ only depend on $Y$ (i.e., they do not depend  on $X$).
\end{remark}

\begin{remark}
If $\mathbb{F}$ is an algebraically closed field of positive characteristic with  positive transcendence degree over its prime field and $A$ is an abelian variety over $\mathbb{F}$, then the proof of Proposition \ref{prop:semiabelian-r+g-points0} shows that $A$ is mildly bounded over $\mathbb{F}$. However, if $E$ is an elliptic curve over  $\overline{\mathbb{F}_p}$, then $E$ is not  mildly bounded over $\overline{\mathbb{F}_p}$.
Indeed, for any finite subset of points $p_1, \ldots, p_m \in E(\overline{\mathbb{F}_p})$ there is some integer $N > 0$ such that $N \cdot p_i = 0$ for all $i$, but then $(m_{jN})_{j=1}^\infty$, where $m_{jN} \colon E \to E$ is the multiplication by $jN$, is an infinite family of distinct morphisms mapping the $p_i$'s to the same point. Hence, the assumption that $\mathbb{F}$ has transcendence degree at least one over its prime field is necessary in the previous statement.
\end{remark}

 \begin{corollary} \label{cor:qf_semi_is_mb}
 Let $X$ be a variety over $k$ which admits a quasi-finite morphism to a semi-abelian variety over $k$. Then, for any field extension $k\subset L$, the variety $X_L$ is mildly bounded over $L$.
 \end{corollary}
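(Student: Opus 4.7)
The plan is to upgrade Proposition \ref{prop:semiabvar_is_mildbounded} to the quasi-finite setting by a lifting argument, reducing everything to counting sections of a quasi-finite pullback. First, I would base-change: the given quasi-finite morphism $f \colon X \to G$ with $G$ a semi-abelian variety induces a quasi-finite morphism $f_L \colon X_L \to G_L$, and since $G_L$ is a semi-abelian variety over the characteristic-zero field $L$, Proposition \ref{prop:semiabvar_is_mildbounded} immediately gives that $G_L$ is mildly bounded over $L$.

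The core step would then be to establish the following general claim: if $\pi \colon Y \to Z$ is a quasi-finite morphism of varieties over $L$ and $Z$ is mildly bounded over $L$, then $Y$ is mildly bounded over $L$. To prove this, I would fix a smooth quasi-projective connected curve $C$ over $\overline{L}$ and pick points $c_1,\ldots,c_m \in C(\overline{L})$ witnessing the mild boundedness of $Z$ for $C$. Given arbitrary $y_1,\ldots,y_m \in Y(\overline{L})$, post-composition with $\pi$ sends any $\varphi \colon C \to Y_{\overline{L}}$ with $\varphi(c_i)=y_i$ to a morphism $\psi := \pi \circ \varphi \colon C \to Z_{\overline{L}}$ with $\psi(c_i)=\pi(y_i)$. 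By choice of the $c_i$, only finitely many such $\psi$ arise, so it is enough to bound, for each $\psi$, the number of lifts $\varphi$.

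The main technical point, where quasi-finiteness really enters, is the finiteness of the set of lifts of a fixed $\psi$. By the universal property of the fibre product, such lifts correspond bijectively to sections of the projection $W := C \times_{Z_{\overline{L}}} Y_{\overline{L}} \to C$. Since $\pi$ is quasi-finite and $Y_{\overline{L}}$ is separated over $Z_{\overline{L}}$ (varieties are separated by convention), base change yields a quasi-finite and separated morphism $W \to C$. Its generic fibre $W_\eta$ is therefore a finite scheme over $K(C)$, hence has only finitely many $K(C)$-points; any section of $W \to C$ restricts to such a $K(C)$-point, and separatedness of $W \to C$ forces two sections that agree at $\eta$ to coincide. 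Thus the number of sections is finite. I expect this last generic-fibre-plus-separatedness argument to be the only delicate point; the rest is bookkeeping and base change.
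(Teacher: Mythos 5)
Your proof is correct and follows the paper's approach: the paper's own proof is a one-liner invoking the same two ingredients (quasi-finiteness of $X_L \to G_L$ and mild boundedness of $G_L$) and simply asserting the implication. Your fibre-product and section-counting argument supplies exactly the detail the paper leaves implicit, namely that mild boundedness descends along a quasi-finite (separated) morphism.
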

 \begin{proof} Let $G$ be a semi-abelian variety over $k$ and let $X\to G$ be a quasi-finite morphism of varieties over $k$. Note that the induced morphism $X_L\to G_L$ is quasi-finite. Since $G_L$ is mildly bounded over $L$ (Proposition \ref{prop:semiabelian-r+g-points0}), the quasi-finiteness of $X_L\to G_L$ implies that   $X_L$ is mildly bounded over $L$.
 \end{proof}
 
\begin{corollary}\label{cor:curves_are_mbb}
Let $X$ be an integral one-dimensional variety over $k$ whose normalisation is not isomorphic to $\mathbb{A}^1_k$ nor $\mathbb{P}^1_k$. Then $X$ is mildly bounded over $k$.
\end{corollary}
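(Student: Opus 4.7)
The plan is to reduce the problem to the normalisation $\nu\colon \tilde X\to X$ and then invoke Corollary \ref{cor:qf_semi_is_mb}. Note that $\nu$ is a finite morphism (since $X$ is a one-dimensional variety over a field, hence excellent), and by hypothesis $\tilde X$ is a smooth integral curve distinct from $\mathbb{A}^1_k$ and $\mathbb{P}^1_k$.

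First I would argue that it suffices to prove $\tilde X$ is mildly bounded over $k$. Fix a smooth quasi-projective connected curve $C$ over $\overline k$, and let $c_1,\ldots,c_m\in C(\overline k)$ be points witnessing mild boundedness of $\tilde X$ with respect to $C$. Given target points $x_1,\ldots,x_m\in X(\overline k)$, any morphism $f\colon C\to X_{\overline k}$ is either constant (in which case there is at most one such $f$, since the values $f(c_i)=x_i$ must agree) or dominant. In the dominant case, as $C$ is normal, $f$ factors uniquely as $C\xrightarrow{\tilde f}\tilde X_{\overline k}\xrightarrow{\nu}X_{\overline k}$, and the constraints $\tilde f(c_i)\in \nu^{-1}(x_i)$ leave only finitely many possibilities for the tuple $(\tilde f(c_1),\ldots,\tilde f(c_m))$ because $\nu$ is finite. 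For each such tuple, the set of lifts $\tilde f$ is finite by mild boundedness of $\tilde X$, so the set of $f$ with $f(c_i)=x_i$ is finite overall.

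Second, to show $\tilde X$ is mildly bounded, I would exhibit a quasi-finite morphism from $\tilde X$ to a semi-abelian variety and apply Corollary \ref{cor:qf_semi_is_mb}. Let $\overline X$ be the smooth projective completion of $\tilde X$, write $\tilde X=\overline X\setminus\{p_1,\ldots,p_r\}$, and let $g$ denote the genus of $\overline X$. The assumption $\tilde X\notin\{\mathbb{A}^1_k,\mathbb{P}^1_k\}$ translates to $(g,r)\notin\{(0,0),(0,1)\}$. As recalled in the proof of Lemma \ref{lem:345}, the semi-Albanese $\mathrm{Alb}(\tilde X)$ fits in an exact sequence $1\to \Gm^{r-1}\to \mathrm{Alb}(\tilde X)\to \mathrm{Jac}(\overline X)\to 0$ (with $\Gm^{-1}=0$ when $r=0$), so it is a semi-abelian variety, and our hypothesis guarantees it is of positive dimension. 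Fixing a base point, I would then argue that the Abel--Jacobi morphism $\iota\colon \tilde X\to \mathrm{Alb}(\tilde X)$ is a locally closed immersion, and in particular quasi-finite, so Corollary \ref{cor:qf_semi_is_mb} applies to $\tilde X$.

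The main obstacle is the final claim that $\iota$ is quasi-finite in each subcase. For $r=0$ and $g\geq 1$ this is the classical closed embedding of a smooth projective curve into its Jacobian. For $g=0$ and $r\geq 2$ one exhibits $\mathbb{P}^1\setminus\{p_1,\ldots,p_r\}$ explicitly as a locally closed subscheme of $\Gm^{r-1}$ via cross-ratios with the removed points. For $g\geq 1$ and $r\geq 1$ one combines the closed embedding $\overline X\hookrightarrow \mathrm{Jac}(\overline X)$ with the description of $\mathrm{Alb}(\tilde X)$ as a $\Gm^{r-1}$-torsor over $\mathrm{Jac}(\overline X)$ to deduce that the composition remains injective on points and on tangent vectors. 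Each of these is classical; the only real work is the case analysis.
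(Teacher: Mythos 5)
Your proof is correct and follows essentially the same route as the paper: reduce to the normalisation $\widetilde X$, show $\widetilde X$ is mildly bounded by exhibiting a quasi-finite map to its semi-Albanese and invoking Corollary \ref{cor:qf_semi_is_mb}, and then transfer mild boundedness from $\widetilde X$ back to $X$ using the factoring of dominant maps from a smooth curve through the normalisation. You simply supply more detail than the paper (the case analysis for quasi-finiteness of the Abel--Jacobi map, and the explicit argument that finiteness of the normalisation plus mild boundedness of $\widetilde X$ gives only finitely many lifts), whereas the paper states the injectivity of the Abel--Jacobi map and the transfer step as one-liners.
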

\begin{proof}
Let $\widetilde{X}$ be the normalisation of $X$. As $\widetilde{X}$ is neither $\mathbb{A}^1_k$ nor $\mathbb{P}^1_k$,  the Abel-Jacobi map $\widetilde{X} \to \mathrm{Alb}(\widetilde{X})$ is injective. Hence,  by Corollary \ref{cor:qf_semi_is_mb},  the curve $\widetilde{X}$ is mildly bounded over $k$. As any non-constant map from a smooth curve $C$ over $k$ to $X$ factors uniquely through $\widetilde{X}$,   it follows that $X$ is also mildly bounded over $k$.
\end{proof}

The results in this section are motivated by Conjecture \ref{conj:new} which predicts that a projective variety with no rational curves is mildly bounded. Note that Corollary \ref{cor:curves_are_mbb} proves Conjecture \ref{conj:new} for one-dimensional projective varieties. We now prove Theorem \ref{thm:surfaces_are_mb} which says that our conjecture holds for projective surfaces, under suitable assumptions on the Albanese map.
\begin{proof}[Proof of Theorem \ref{thm:surfaces_are_mb}]
Let $X$  be a projective surface with no rational curves (as in the statement), let $A$ be an abelian variety, and let $f\colon X\to A$ be a     morphism which is generically finite onto its image. Let $X\to B\to A$ be the Stein factorisation of $f$, where $X\to B$ is a morphism with connected fibres and $B\to A$ is finite.  Let $B'\subset  B$ be the image of the morphism $X\to B$. Since the morphism $X\to B$ is  generically finite onto its  image $B'$   with  geometrically connected fibres, we have that the induced morphism $p:X\to B'$ is birational. Let $Z\subset B'$ be a proper closed subset  such that $p$ induces an isomorphism $X\setminus \Delta  \to B'\setminus Z$, where $\Delta = p^{-1}(Z)$.

Now, to show that $X$ is mildly bounded, let $C$ be a smooth irreducible curve over $k$.   Since $A$ is mildly bounded over $k$ (Proposition \ref{prop:semiabvar_is_mildbounded}) and $B'\to A$ is finite, it follows that $B'$ is mildly bounded over $k$.  Moreover, by Corollary \ref{cor:curves_are_mbb},  as $\Delta\subset X$ is at most one-dimensional and does not admit a non-constant morphism from $\mathbb{P}^1_k$ (by our assumption that $X$ has no rational curves), we have that $\Delta$ is also mildly bounded. Thus, we may choose an integer $n\geq 1$ and points $c_1,\ldots, c_n$ in $C(k)$ such that, for every $b_1,\ldots,b_n$ in $B'(k)$ and $d_1,\ldots,d_n$ in $\Delta(k)$, the sets
\[
\Hom_k([C,(c_1,\ldots,c_n)], [B',(b_1,\ldots,b_n)])
\] and 
\[
\Hom_k([C,(c_1,\ldots,c_n)], [\Delta,(d_1,\ldots,d_n)])
\]are finite.  Let $x_1,\ldots,x_n$ be points in $X(k)$. To show that the set
\[
\Hom_k([C,(c_1,\ldots,c_n)], [X,(x_1,\ldots,x_n)]) 
\] is finite, we note that it is  the union of its subsets $$  \Hom_k([C,(c_1,\ldots,c_n)], [X,(x_1,\ldots,x_n)]) \setminus \Hom_k(C,\Delta)  $$ and $$ \Hom_k([C,(c_1,\ldots,c_n)], [\Delta,(x_1,\ldots,x_n)]).$$ The former set maps  injectively to  $\Hom([C,(c_1,\ldots,c_n)], [B', (p(x_1),\ldots, p(x_n))])$, and is therefore finite. Moreover,  
$$ \Hom_k([C,(c_1,\ldots,c_n)], [\Delta,(x_1,\ldots,x_n)])$$ is finite by our choice of $c_1,\ldots,c_n$. (This set is by definition empty if there is an $i$ with $x_i\not\in \Delta$.)  We conclude that $X$ is mildly bounded, as required.
\end{proof}

 \section{Applying Silverman's specialisation theorem}\label{section6}

For a regular extension of fields $K \subset L$ and an abelian variety $A$ over $L$, the trace of $A$ with respect to  $K\subset L$ is a universal map $T_L \to A$ from an abelian variety $T$ over $K$ in the sense that all such other maps factor through it, see for example \cite[chap.\ VII]{LangAV} or \cite{ConradLangNeron}. The trace can be viewed as a measure of how close $A$ is to being a constant abelian variety. 

 In this section, as usual, we let $k$ be an algebraically closed field of characteristic zero.
 \begin{lemma}\label{lem:traceless}  
Let $C$ be an integral curve over $k$ and let $X \to C$ be an abelian scheme such that  the $K(C)/k$-trace of the abelian variety $X_{K(C)}$   is trivial.  Then there is a point $c \in C(k)$ such that for every $x \in  {X}(k)$ there are only finitely many sections $\sigma \colon C \to X$ with $\sigma(c) = x$.
\end{lemma}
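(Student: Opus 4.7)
The plan is to combine the Lang--N\'eron theorem with Silverman's specialisation theorem. The Lang--N\'eron theorem will imply that the group of sections $X(C)$ is finitely generated, and Silverman's specialisation theorem will then produce a point $c \in C(k)$ at which the evaluation homomorphism on $X(C)$ is injective. The desired finiteness will follow from an elementary coset argument.

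More precisely, I would first identify the set of sections $X(C) = \Hom_C(C, X)$ with the group $X_{K(C)}(K(C))$ of $K(C)$-points of the generic fibre; this identification is valid, after passing to the normalisation of $C$ if necessary, by the valuative criterion of properness applied to the abelian scheme $X \to C$. The set $X(C)$ then inherits the structure of an abelian group from the abelian scheme structure on $X \to C$. Under the hypothesis that the $K(C)/k$-trace of $X_{K(C)}$ is trivial, the Lang--N\'eron theorem then implies that $X(C)$ is a finitely generated abelian group.

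Next, I would apply Silverman's specialisation theorem \cite{SilvermanSpec} to the finitely generated group $X(C)$ to produce a point $c \in C(k)$ such that the evaluation homomorphism $\mathrm{ev}_c \colon X(C) \to X_c(k)$, $\sigma \mapsto \sigma(c)$, is injective. Fixing such a $c$, for any $x \in X(k)$ the set of sections $\sigma \colon C \to X$ with $\sigma(c) = x$ is either empty (if $x$ does not lie in the fibre $X_c$) or else is a coset of $\ker(\mathrm{ev}_c) = \{0\}$ in $X(C)$; in either case this set has at most one element, which is certainly finite.

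The principal technical hurdle is arranging Silverman's specialisation theorem to produce an actual $c \in C(k)$, given that $k$ may be any algebraically closed field of characteristic zero rather than a number field. I expect to handle this by spreading $C$, $X \to C$, and a finite generating set of $X(C)$ out over a finitely generated subfield $k_0 \subset k$, applying Silverman's theorem over $k_0$, and using that the locus of ``bad'' points is thin and hence avoided by infinitely many points of $C(k_0) \subset C(k)$.
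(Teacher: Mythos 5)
Your proposal is correct and takes essentially the same route as the paper: Lang--N\'eron to make $X(C)$ finitely generated, descent to a finitely generated subfield of $k$, and then Silverman's specialisation theorem (in Wazir's form over finitely generated fields) to produce a good point $c$. The only cosmetic difference is at the last step: you invoke injectivity of the full evaluation map $\mathrm{ev}_c\colon X(C)\to X_c(k)$, getting at most one section per $x$, whereas the paper uses the formally weaker statement that a chosen free basis $\sigma_1,\ldots,\sigma_r$ remains $\ZZ$-linearly independent after specialisation and then runs a coset-by-coset count over the finitely many torsion sections; both variants yield the required finiteness, and yours is slightly cleaner provided one checks (as is true for abelian schemes, since distinct torsion sections are disjoint sections of a finite \'etale torsion subscheme) that distinct torsion sections never collide in a fibre.
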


\begin{proof}
Since $X_{K(C)}$ has trivial $K(C)/k$-trace, by the theorem of Lang-N\'eron \cite[Theorem~2.1]{ConradLangNeron}, the group $X(C)$ of sections of $X\to C$ is finitely generated. We now descend all the necessary data (including the elements of the finitely generated group $X(C)$) to a finitely generated subfield of $k$.

Let $L$ be a finitely generated  field over $\QQ$ contained inside $k$, let $\mathcal{C}$ be an integral curve over $L$, let $\mathcal{C}_k \cong C$ be an isomorphism over $k$, and let $\mathcal{X}\to \mathcal{C}$ be an abelian scheme such that $\mathcal{X}_k \to \mathcal{C}_k  $ is isomorphic to $X\to C$ over $k$ and such that the group of sections $\mathcal{X}(\mathcal{C})$ of $\mathcal{X}\to \mathcal{C}$ equals $X(C)$.

Note that the $K(\mathcal{C})/L$-trace of $\mathcal{X}_{K(\mathcal{C})}$ is zero.   
Let $\{\sigma_1, \ldots, \sigma_{r}\}$ be a basis for the free part of $\mathcal{X}(\mathcal{C}) = \mathcal{X}_{K(\mathcal{C})}(K(\mathcal{C}))$. 
Since $L$ is a finitely generated  field over $\QQ$, by Silverman's specialisation theorem \cite[Theorem~1]{Wazir}, there is a finite field extension $L'/L$ contained in $k$ and  a point  $c \in \mathcal{C}(L') \subset C(k) $ such that $\sigma_{1}(c), \ldots, \sigma_{r}(c) \in X_c(k)$ are still independent.

Let $c \in C(k)$ be such a point. Then, for a fixed $x \in X(k)$, there are only finitely many $\sigma \in X(C)$ with $\sigma(c) = x$. Indeed, for each of the finitely many torsion sections $\tau \in X(C)^{\mathrm{tors}}$, there is at most one tuple $(n_1,\ldots, n_r)$ of integers, such that $(\tau + n_1\sigma_1 + \ldots + n_r\sigma_r)(c) = x$, due to the independence of $\sigma_{1}(c), \ldots, \sigma_r(c)$.
\end{proof}

We now  combine Lemma \ref{lem:traceless} with the ``uniform'' mild boundedness of abelian varieties   (Proposition \ref{prop:semiabelian-r+g-points0}) to prove the following result.  
\begin{lemma}\label{lem:sections}
Let $k$ be an algebraically closed field of characteristic zero. Let $C$ be an integral   curve  over $k$ and let $\mathcal{X}\to C$ be an abelian scheme. Then there is an $n\geq 1$ and $c_1,\ldots, c_n \in C(k)$ such that, for every $x_1,\ldots,x_n \in \mathcal{X}(k)$, there are only finitely many sections $\sigma:C\to \mathcal{X}$  of $\mathcal{X}\to C$ with $\sigma(c_i)  = x_i$. 
\end{lemma}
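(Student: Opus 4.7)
The plan is to extend the argument from Lemma \ref{lem:traceless} by incorporating the (possibly non-trivial) $K(C)/k$-trace of $\mathcal{X}_{K(C)}$. First, I would let $T$ denote this trace, with trace map $\tau\colon T_{K(C)} \to \mathcal{X}_{K(C)}$, which has finite kernel. By the N\'eron mapping property applied to the smooth $C$-scheme $T\times_k C$ and the abelian scheme $\mathcal{X}/C$, this extends to a homomorphism $\tau\colon T \times_k C \to \mathcal{X}$ of abelian schemes over $C$; composing constant sections with $\tau$ gives a map $T(k) \to \mathcal{X}(C)$, whose image I denote $\tau(T(k))$. By the Lang--N\'eron theorem \cite{ConradLangNeron}, the abelian group $\mathcal{X}(C)/\tau(T(k))$ is finitely generated, so I may choose sections $\bar\sigma_1, \ldots, \bar\sigma_r \in \mathcal{X}(C)$ whose images form a $\ZZ$-basis of its free part.

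Next, I would apply Silverman's specialisation theorem in the spirit of the proof of Lemma \ref{lem:traceless}. Descending the whole data to a finitely generated subfield $L \subseteq k$ over $\QQ$ (a curve $\mathcal{C}/L$, an abelian scheme $\tilde{\mathcal{X}}/\mathcal{C}$, the trace $T/L$, and sections $\bar\sigma_i$ over $\mathcal{C}$), the group $T(L)$ is finitely generated by Mordell--Weil; let $t_1, \ldots, t_s \in T(L)$ generate its free part. Applying Silverman's theorem (as in \cite[Theorem~1]{Wazir}) to the finitely generated subgroup of $\tilde{\mathcal{X}}(\mathcal{C})$ spanned by $\bar\sigma_1, \ldots, \bar\sigma_r$ together with $\tau(t_1), \ldots, \tau(t_s)$ then yields a finite extension $L'/L$ inside $k$ and a point $c_1 \in \mathcal{C}(L') \subseteq C(k)$ at which the specialisation map is injective on the free part of this subgroup. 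I will take $n=1$ with this single point $c_1$.

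To conclude, given $x_1 \in \mathcal{X}(k)$ and two sections $\sigma, \sigma' \in \mathcal{X}(C)$ with $\sigma(c_1) = \sigma'(c_1) = x_1$, their difference $\delta = \sigma - \sigma'$ vanishes at $c_1$ and decomposes as $\delta = \tau(t) + \sum_i n_i \bar\sigma_i + \nu$ with $t \in T(k)$, $n_i \in \ZZ$ and $\nu$ a torsion section. Evaluating at $c_1$ places $\sum_i n_i \bar\sigma_i(c_1)$ into $\tau_{c_1}(T(k))$ modulo torsion. Since the abelian subvariety $\tau_{c_1}(T) \subseteq \mathcal{X}_{c_1}$ is defined over $L'$ and the $\bar\sigma_i(c_1)$ lie in $\tilde{\mathcal{X}}_{c_1}(L')$, this membership is already realised in $\tau_{c_1}(T)(L')$, which in turn coincides modulo torsion with $\tau_{c_1}(T(L'))$ because the isogeny $T \to \tau_{c_1}(T)$ preserves Mordell--Weil rank over the finitely generated field $L'$. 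Silverman-injectivity then forces $n_i = 0$ for all $i$, and once $(n_i)=0$ is pinned down, the finiteness of $\ker \tau_{c_1}$ restricts $t$ to finitely many values, so $\delta$, and hence $\sigma$ given $\sigma'$, lies in a finite set. The hard part, in my view, is precisely this reconciliation: Silverman gives independence modulo $\tau_{c_1}(T(L'))$ over a finitely generated field, whereas the argument requires independence modulo the much larger group $\tau_{c_1}(T(k))$ over the algebraically closed field, and the rank-preservation under isogeny is what bridges the gap.
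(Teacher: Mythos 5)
Your proposal takes a genuinely different route from the paper. The paper proves Lemma~\ref{lem:sections} by induction on the relative dimension of $\mathcal{X}\to C$: when the trace $T$ is trivial it invokes Lemma~\ref{lem:traceless} (where Wazir's theorem applies directly), and when $T\neq 0$ it spreads out to an exact sequence $\mathcal{T}\to\mathcal{X}\to\mathcal{W}$ with $\mathcal{T}$ a constant abelian scheme handled by Proposition~\ref{prop:semiabvar_is_mildbounded} and $\mathcal{W}$ of smaller relative dimension handled by the induction hypothesis. You instead try to treat the nontrivial trace all at once via the Lang--N\'eron decomposition of $\mathcal{X}(C)$ modulo $\tau(T(k))$.

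The central step, however, has a real gap: you apply Silverman's specialisation theorem (cite{Wazir}, Theorem~1) to a finitely generated subgroup of $\tilde{\mathcal{X}}(\mathcal{C})$ containing the trace contributions $\tau(t_1),\dots,\tau(t_s)$, but the cited theorem is proved for abelian varieties over $K(\mathcal{C})$ with \emph{trivial} $K(\mathcal{C})/L$-trace. That hypothesis is exactly what the paper verifies before invoking it in Lemma~\ref{lem:traceless}, and it fails here by assumption when $T\neq 0$. What you actually need is a point $c_1$ at which the classes $\bar\sigma_i(c_1)$ remain independent in $\mathcal{X}_{c_1}(k)/(\tau_{c_1}(T(k))+\text{torsion})$; this follows from applying Wazir to the quotient $\mathcal{X}_{K(C)}/\tau(T_{K(C)})$ (which does have trivial trace) after spreading out the trace image, but that is a nontrivial extra step you have not supplied.

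Your fallback chain — $L'$-rationality of $\tau_{c_1}(T)$, passage from $\tau_{c_1}(T)(L')$ to $\tau_{c_1}(T(L'))$ via the dual isogeny, and then ``Silverman-injectivity forces $n_i=0$'' — does not close this gap. The $t_j$ are chosen to span the free part of $T(L)$ \emph{before} Silverman produces the extension $L'$, whereas at the end you need independence relative to $T(L')$; since $T(L')$ can have strictly larger rank than $T(L)$ and $L'$ is only known after the specialisation step, the argument is circular. Finally, a cosmetic but potentially confusing point: once $n_i=0$, the equation $\tau_{c_1}(t)=-\nu(c_1)$ only constrains $t$ modulo a coset of $\ker\tau_{c_1}$ if $\nu$ ranges over a \emph{fixed finite} set of representatives for the torsion of $\mathcal{X}(C)/\tau(T(k))$; writing ``$\nu$ a torsion section'' obscures this, since $\mathcal{X}(C)$ itself has infinite torsion when $T\neq 0$.
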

\begin{proof}
The proof goes by induction on the relative dimension of $\mathcal{X} \to C$. The case of relative dimension 0 is trivial. Now assume the statement is true for all abelian schemes of smaller relative dimension.

Let $T \to \mathcal{X}_{K(C)}$ be the $K(C)/k$-trace of $\mathcal{X}_{K(C)}$. In case, $T = 0$, the result follows from Lemma \ref{lem:traceless}, so assume $T \neq 0$.
 Let $W$ be the cokernel. Shrinking $C$ if necessary, we can spread this out to an exact sequence
$$\mathcal{T} \to \mathcal{X} \to \mathcal{W}$$
of abelian varieties over $C$, such that $\mathcal{T}$ is the base change of an abelian variety over $k$ and the relative dimension of $\mathcal{W} \to C$ is smaller than that of $\mathcal{X} \to C$. It now suffices to prove that $\mathcal{T}$ and $\mathcal{W}$ have the property as stated in the statement of the lemma. 

Indeed, suppose $c_1, \ldots, c_{n_t}, d_1, \ldots, d_{n_w} \in C(k)$ are points such that $$\Hom_C([C, (c_1, \ldots, c_{n_t})], [\mathcal{T}, (t_1, \ldots, t_{n_t})])$$ $$\textrm{ and } \qquad \Hom_C([C, (d_1, \ldots, d_{n_w})], [\mathcal{W}, (w_1, \ldots, w_{n_w})])$$
are finite for any $t_1, \ldots, t_{n_t} \in \mathcal{T}(k)$ and $w_1, \ldots, w_{n_w} \in \mathcal{W}(k)$. Now consider morphisms $$\varphi, \varphi' \in \Hom_C([C, (c_1, \ldots, c_{n_t}, d_1, \ldots, d_{n_w})], [\mathcal{X}, (x_1, \ldots, x_{n_t+n_w})]),$$ where $x_1, \ldots, x_{n_t+n_w} \in \mathcal{X}(k)$ are arbitrary points. For these morphisms, there are only finitely many possibilities for the composed map $C \to \mathcal{X} \to \mathcal{W}$. If we suppose that $\varphi$ and $\varphi'$ give rise to the same composed map $C \to \mathcal{W}$, then their difference $\nu = \varphi - \varphi'$ factors through a morphism $\rho \colon C \to \mathcal{T}$. Moreover, $\rho(c_i)$ maps to $\nu(c_i) = 0 \in \mathcal{X}$ for each $i \in \{1, \ldots, n_t\}$. As the kernel of $\mathcal{T} \to \mathcal{X}$ is finite, this means that there are only finitely many candidates for the points $\rho(c_i)$. For each of the finitely many choices for $\rho(c_i) \in \mathcal{T}(k)$, there are only finitely many possibilities for $\rho$. Hence, there are indeed finitely many possibilities for $\varphi$ as well.

To conclude the proof, let us show  that $\mathcal{T}$ and $\mathcal{W}$ have the property as in the statement of the lemma.
In the case of $\mathcal{T}$, we write $\mathcal{T}$ as $A \times_{k} K(C)$ for a certain abelian variety $A$ over $k$. Then there is a bijection
$$\Hom_{C}(C, A \times_{k} C) = \Hom_{k}(C, A),$$
and the result follows from Proposition \ref{prop:semiabvar_is_mildbounded}.  In the case of $\mathcal{W}$, as the relative dimension of $\mathcal{W}$ over $C$ is smaller than the relative dimension of $\mathcal{X}$ over $C$, the result follows from the induction hypothesis.
\end{proof}

The following lemma generalises the above lemma (Lemma \ref{lem:sections}) for abelian schemes to semi-abelian schemes.

\begin{lemma}\label{lem:sections2}
Let $k$ be an algebraically closed field of characteristic zero. Let $C$ be an integral   curve  over $k$ and let $\mathcal{X}\to C$ be a semi-abelian scheme. Then there is an $n\geq 1$ and $c_1,\ldots, c_n \in C(k)$ such that, for every $x_1,\ldots,x_n \in \mathcal{X}(k)$, there are only finitely many sections $\sigma:C\to \mathcal{X}$ with $\sigma(c_i)  = x_i$. 
\end{lemma}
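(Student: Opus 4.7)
The plan is to reduce the lemma to two cases: the pure torus case $\mathcal{X} = \mathcal{T}$, and the abelian case handled by Lemma \ref{lem:sections}. After possibly shrinking $C$ (harmless, since sections of $\mathcal{X} \to C$ restrict injectively to any dense open subscheme), we use the structure of semi-abelian schemes to obtain an exact sequence
\[
0 \to \mathcal{T} \to \mathcal{X} \to \mathcal{A} \to 0
\]
of commutative $C$-group schemes with $\mathcal{T} \to C$ a torus and $\mathcal{A} \to C$ an abelian scheme. Once the lemma is known for $\mathcal{T}$ (see below) and for $\mathcal{A}$ (by Lemma \ref{lem:sections}), we deduce it for $\mathcal{X}$ by the same combining argument as in the last paragraph of the proof of Lemma \ref{lem:sections}: pick $c_1, \ldots, c_{n_t} \in C(k)$ witnessing the conclusion for $\mathcal{T}$ and $d_1, \ldots, d_{n_a} \in C(k)$ witnessing it for $\mathcal{A}$; any section $\sigma \in \mathcal{X}(C)$ is then determined by its image $\bar{\sigma} \in \mathcal{A}(C)$ (finitely many possibilities given values at the $d_j$'s) together with the lift in a $\mathcal{T}(C)$-torsor (finitely many possibilities given values at the $c_i$'s).

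The new ingredient is the torus case. Let $\mathcal{T} \to C$ be a torus of relative dimension $r$. Replacing $C$ by its normalisation (sections inject) and further shrinking so that there is a finite \'{e}tale Galois cover $\pi \colon C' \to C$ trivialising $\mathcal{T}$ (possible since tori over smooth curves are isotrivial), we obtain an inclusion $\mathcal{T}(C) \hookrightarrow \mathcal{T}(C') = (\mathcal{O}(C')^\times)^r$. By the structure theorem for units on smooth curves, $\mathcal{O}(C')^\times$ is an extension of $k^\times$ by a finitely generated free abelian group $\Gamma$. If $C$ (hence $C'$) is projective, then $\Gamma = 0$: here $\mathcal{T}(C) \subseteq (k^\times)^r$ and a single point suffices, since evaluation is the identity on constants. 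In general, evaluation at any point is already injective on the constant part of $\mathcal{T}(C)$, so it remains to pin down the finitely generated free quotient $\Gamma^r$. For this, observe that any nonzero element of $\Gamma^r$ corresponds to a non-constant rational function on $C'$, which takes the value $1$ on only a proper closed subset of $C'$. Choosing sufficiently many generic additional points $c_2, \ldots, c_N \in C(k)$ then forces the evaluation map to be injective on $\Gamma^r$: for uncountable $k$ this is immediate by avoiding a countable union of finite ``bad'' sets, and for general $k$ one invokes a Silverman-type specialisation statement for tori (analogous to the use of \cite{Wazir} in the proof of Lemma \ref{lem:traceless}).

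The main obstacle is precisely this torus case. Unlike for abelian schemes, the Lang--N\'eron theorem fails for tori over function fields (already $\Gm(K(C))/k^\times$ is infinitely generated when $C$ is non-projective), so the trace-based strategy of Lemma \ref{lem:sections} does not transfer directly. Instead one must exploit the explicit structure of the unit group of the trivialising cover $C'$ together with a specialisation argument controlling multiplicative relations among the images of the generators of $\Gamma$ in $k^\times$.
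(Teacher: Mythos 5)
Your overall decomposition (pass to a cover trivialising the torus, handle the abelian part via Lemma \ref{lem:sections}, handle the torus part separately, then combine) is exactly the paper's strategy, and the combining step is fine. The gap is in the torus case. You observe, correctly, that the Lang--N\'eron/trace machinery does not apply and that the constant part of $\mathcal{T}(C')$ is pinned down by a single evaluation, so the issue is controlling the finitely generated free quotient $\Gamma^r$. But then you propose to choose ``sufficiently many generic'' points and, for countable $k$ such as $\overline{\QQ}$, to ``invoke a Silverman-type specialisation statement for tori.'' No such statement is established in the paper, and it is not a harmless analogue of \cite{Wazir}: the claim that a single (or finitely many) $\overline{\QQ}$-point(s) of $C'$ preserve multiplicative independence of a given basis of $\Gamma$ in $k^\times$ is in the territory of height bounds and unlikely intersections, not something you can wave at. As written, the proof does not go through over countable algebraically closed fields.

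The paper avoids this entirely, and the fix is available to you: after base-changing to $D\to C$ so that the torus becomes $\mathbb{G}_{m,D}^{\ell}$, its sections over $D$ are precisely morphisms $D\to\mathbb{G}_m^{\ell}$ over $k$, i.e.\ pointed maps from a curve into a semi-abelian variety. That is exactly the situation already handled by Proposition \ref{prop:semiabelian-r+g-points0}, whose proof works over any algebraically closed field of characteristic zero with no specialisation input. The reason it does not need genericity is group-theoretic rather than measure-theoretic: a group homomorphism out of the semi-Albanese of $D$ is determined by its values on a fixed finite set of elements generating a Zariski-dense subgroup (a non-torsion point in each simple abelian factor and explicit non-torsion points such as $(2,1,\ldots,1)$ in the toric part), and Lemma \ref{lem:345} lets one realise those elements as sums of Abel--Jacobi images of curve points. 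So the ``bad set'' problem never arises. If you want to salvage your unit-group picture directly, the correct replacement for the appeal to specialisation is a Noetherian argument on the sub-semi-abelian variety of $\mathrm{Alb}(C')$ generated by the $\iota(c_i)-\iota(c_1)$ together with the abelian part --- but at that point you have essentially re-derived Proposition \ref{prop:semiabelian-r+g-points0}, so you should simply cite it.
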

\begin{proof} Note that the generic fibre  $\mathcal{X}_{K(C)}$ of $\mathcal{X}\to C$  is an extension by a torus over $K(C)$ and an abelian variety over $K(C)$.  Therefore, we may choose  a smooth integral curve $D$ over $k$ and a quasi-finite (flat dominant) morphism $D\to C$  such that $\mathcal{X}_D = \mathcal{X}\times_C D$ is an extension of
$\mathbb{G}_{m,D}^{\ell}$ and an abelian scheme $\mathcal{A}$ over $D$, i.e., there is an exact sequence $$1 \to \Gmu{D}^{\ell} \to \mathcal{X}_D \stackrel{\nu}{\to} \mathcal{A} \to 0.$$ By Lemma \ref{lem:sections}, we find $d_1, \ldots, d_{m} \in D(k)$ such that for every $a_1, \ldots, a_m \in \mathcal{A}(k)$ the set $$\Hom_D([D, (d_1, \ldots, d_m)], [\mathcal{A}, (a_1, \ldots, a_m)])$$ of $m$-pointed sections is finite. Using Proposition \ref{prop:semiabelian-r+g-points0}, we find points $d_{m+1}, \ldots, d_n \in D(k)$ such that, for every $g_{m+1}, \ldots, g_n \in \Gmu{D}^\ell(k)$, the set $$\Hom_D([D, (d_{m+1}, \ldots, d_n), [\Gmu{D}^\ell, (g_{m+1}, \ldots, g_n)])$$ of $(n-m)$-pointed sections is finite. Let $c_1, \ldots, c_n \in C(k)$ be the images of the points $d_1, \ldots, d_n$. These points have the desired property using arguments  similar to those used in the proof of Lemma \ref{lem:sections}.  

Indeed, let $x_1, \ldots, x_n \in \mathcal{X}(k)$ be arbitrary. Then any
$$\varphi \in \Hom_C([C, (c_1, \ldots, c_n)], [\mathcal{X}, (x_1, \ldots, x_n)])$$
lifts to a morphism
$$\varphi_D \in H_{y_1, \ldots, y_n} := \Hom_D([D, (d_1, \ldots, d_n)], [\mathcal{X}_D, (y_1, \ldots, y_n)]),$$
where $y_1, \ldots, y_n \in \mathcal{X}_D(k)$ are lifts of $x_1, \ldots, x_n$. Since there are only finitely many such lifts, it suffices to prove that $H_{y_1, \ldots, y_n}$ is finite. Let $\varphi_D, \varphi_D' \in H_{y_1, \ldots, y_n}$ be two elements, and let $\sigma, \sigma' \colon D \to \mathcal{A}$ be the composition of $\varphi_D$ and $\varphi_{D'}$ with $\nu$. Then $\sigma$ and $\sigma'$ lie in the finite set
$$\Hom_D([D, (d_1, \ldots, d_m)], [\mathcal{A}, (\nu(y_1), \ldots, \nu(y_n))])$$
Assuming $\sigma$ and $\sigma'$ are equal, we will now prove that there are only finitely many possibilities for $\varphi_{D} - \varphi_{D}'$. Indeed, this difference $\varphi_D - \varphi_D'$ factors through $\Gmu{D}^{\ell}$ and lies in the finite set
$$\Hom_D([D, (d_{m+1}, \ldots, d_n)], [\Gmu{D}^{\ell}, (1, \ldots, 1)]).$$
This proves that there are only finitely many elements in $H_{y_1,\ldots,y_n}$, which concludes the proof of the lemma.
\end{proof}

We now combine the above results  with the fact that semi-abelian varieties are mildly bounded in a ``uniform'' sense (Proposition \ref{prop:semiabelian-r+g-points0}). 
\begin{corollary}\label{cor:mb_of_total_space} 
Let $k$ be an algebraically closed field of characteristic zero.
Let $S$ be an integral   variety  over $k$ and let $f:\mathcal{X}\to S$ be a semi-abelian scheme.  Assume that for every smooth integral curve $C$ over $k$, the set of non-constant morphisms $C\to S$ is finite. Then the variety $\mathcal{X}$ is mildly bounded over $k$.
\end{corollary}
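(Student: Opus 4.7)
The plan is to analyze morphisms $g \colon C \to \mathcal{X}$ from a smooth quasi-projective connected curve $C$ over $k$ according to the composition $f \circ g \colon C \to S$. By the hypothesis on $S$, this composition is either constant with some value $s \in S(k)$, or it coincides with one of the finitely many non-constant morphisms $h_1, \ldots, h_N \colon C \to S$ guaranteed by the assumption. I will choose points on $C$ separately controlling each possibility, and then take their union as my witness to mild boundedness.

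For each $i = 1, \ldots, N$, I will form the pullback semi-abelian scheme $\mathcal{X}_i := C \times_S \mathcal{X}$ over the integral curve $C$ via $h_i$. Morphisms $g \colon C \to \mathcal{X}$ satisfying $f \circ g = h_i$ are in natural bijection with sections of $\mathcal{X}_i \to C$, so Lemma \ref{lem:sections2} applied to $\mathcal{X}_i$ yields points $q_{i,1}, \ldots, q_{i,m_i} \in C(k)$ such that for any prescribed images in $\mathcal{X}_i(k)$ only finitely many such sections exist, which translates to the desired finiteness on the side of $g$. For the constant case, $g$ factors through the semi-abelian fibre $\mathcal{X}_s$, and I require points on $C$ witnessing mild boundedness \emph{uniformly} across all semi-abelian varieties, since $s$ ranges over all of $S(k)$. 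Precisely this uniformity is the content of Proposition \ref{prop:semiabelian-r+g-points0}: it furnishes points $p_1, \ldots, p_m \in C(k)$ such that for every semi-abelian variety $Y$ over $k$ and every tuple $(y_1,\ldots,y_m) \in Y(k)^m$, at most one morphism $C \to Y$ sends $p_j$ to $y_j$, provided $C$ is not projective. When $C$ is projective, I will first pass to a non-empty dense open subset $C' \subsetneq C$, apply the proposition there, and transport the conclusion to $C$ using that any morphism from $C$ to the separated scheme $Y$ is determined by its restriction to $C'$, so that points in $C'(k) \subset C(k)$ still enforce at-most-one-ness.

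Setting $\{c_1, \ldots, c_n\} := \{p_1,\ldots,p_m\} \cup \bigcup_{i=1}^N \{q_{i,1},\ldots,q_{i,m_i}\}$, mild boundedness of $\mathcal{X}$ with respect to $C$ will follow: given arbitrary $x_1, \ldots, x_n \in \mathcal{X}(k)$, every morphism $g \colon C \to \mathcal{X}$ with $g(c_j) = x_j$ either satisfies $f \circ g = h_i$ for some $i$ (and then Lemma \ref{lem:sections2} gives finitely many choices), or has constant composition, in which case the common value $s = f(x_j)$ is forced to be the same for all $j$ (else no such $g$ exists) and uniform mild boundedness then leaves at most one $g$ in the fibre $\mathcal{X}_s$. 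The principal obstacle I anticipate is securing this uniformity across the uncountable family $\{\mathcal{X}_s\}_{s \in S(k)}$: without it the argument collapses, as one cannot take a union of separate point sets indexed by $s$. This is precisely the role of Proposition \ref{prop:semiabelian-r+g-points0}, and the minor technicality that its hypothesis $r > 0$ fails in the projective case is resolved by the open-subset trick described above.
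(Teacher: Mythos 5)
Your proof is correct and follows essentially the same strategy as the paper's: decompose morphisms $g\colon C\to\mathcal{X}$ according to the composition $f\circ g\colon C\to S$, use Lemma \ref{lem:sections2} on the pullback semi-abelian schemes $C\times_{S,\,h_i}\mathcal{X}$ for each of the finitely many non-constant $h_i\colon C\to S$, and use the uniform statement of Proposition \ref{prop:semiabelian-r+g-points0} for the constant case (where $g$ lands in a single fibre $\mathcal{X}_s$, with $s$ ranging over all of $S(k)$), finally taking the union of the chosen point sets. One minor difference is presentational: the paper anchors the argument on a single auxiliary point $c_0$ and a fixed $x_0\in\mathcal{X}(k)$, whereas you organize directly around the finite list $h_1,\ldots,h_N$; the substance is identical. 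You also explicitly address the edge case of projective $C$ (where the hypothesis $r>0$ in Proposition \ref{prop:semiabelian-r+g-points0} fails) by passing to a dense affine open and using separatedness; the paper's proof glosses over this, so your treatment is, if anything, slightly more careful.
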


\begin{proof} 
Let $C$   be a smooth irreducible curve over $k$. Choose a point $c_0$ in $C(k)$.    Since there are only finitely many non-constant morphisms $C\to S$ (by assumption), for any $x_0$ in $X(k)$ and every $\rho:C\to X$ with $\rho(c_0)=x_0$, there are only finitely many possibilities for the composed morphism $\nu:C\to X \to S$.  We now choose $x_0$ in $X(k)$ and consider such morphisms $\rho:C\to X$ such that $\rho(c_0) = x_0$.

 If $\nu$ maps $C$ onto a point (which has to be $f(x_0)$), then we already saw in Proposition \ref{prop:semiabelian-r+g-points0} that there exists an integer $m\geq 1$ and  points $c_{1}, \ldots, c_{m} \in C(k)$, only depending on $C$ (not depending on $x_0$), such that
$$\Hom_k([C, (c_0, c_{1}, \ldots, c_{m})], [\mathcal{X}_{f(x_0)}, (x_0, x_{1}, \ldots, x_m)])$$
is finite for any $x_{1}, \ldots, x_m \in \mathcal{X}_{f(x_0)}(k)$.

In the case  that $\nu$ is non-constant,  we consider the base change $\mathcal{X}_{\nu} = \mathcal{X} \times_{f, S,\nu} C$ which is a semi-abelian scheme over $C$. Then by Lemma \ref{lem:sections2}, there are points $c_{\nu,1}, \ldots, c_{\nu, n_{\nu}} \in C(k)$, such that
$$\Hom_C([C, (c_{\nu,1}, \ldots, c_{\nu,n_{\nu}})], [\mathcal{X}_{\nu}, (x_1, \ldots, x_{n_{\nu}})])$$
is finite for any $x_{1}, \ldots, x_{n_\nu} \in \mathcal{X}_{\nu}(k)$.  Note that $$\{c_0,c_1, \ldots, c_m\} \cup \bigcup_{\nu} \{c_{\nu,1}, \ldots, c_{\nu,n_\nu}\}$$ is a finite subset of $C(k)$ by our assumption that the set of non-constant morphisms $C\to S$ is finite.
In particular, if $c_0, \ldots, c_n$ are its elements, then $$\Hom([C, (c_0, \ldots, c_n)], [\mathcal{X}, (x_0, \ldots, x_n)])$$ is finite for any $x_0, \ldots, x_n \in \mathcal{X}(k)$. This  proves that $\mathcal{X}$ is mildly bounded over $k$.
\end{proof}

\begin{proof}[Proof of Theorem \ref{thm:families_of_ab_var_are_mb}]
By De Franchis-Severi's theorem, the hyperbolic curve $S$ satisfies the   finiteness statement required to apply Corollary \ref{cor:mb_of_total_space}.
\end{proof}

We now show that an analogue of  Theorem \ref{thm:families_of_ab_var_are_mb} also holds for the universal Jacobian. To prove this statement, one   replaces the use of De Franchis-Severi in the proof of Theorem \ref{thm:families_of_ab_var_are_mb} by the finiteness theorem of Arakelov-Parshin.

\begin{corollary} Let $k$ be an algebraically closed field of characteristic zero, let $N\geq 3$ be  an integer, let $g\geq 2$, and let $M$ be the fine moduli space  of smooth proper geometrically irreducible genus $g$ curves with level $N$ structure over $k$. Let $X\to M$ be the universal curve (with level $N$ structure) and let $\mathrm{Jac}(X)\to M$ be its Jacobian. Then the quasi-projective variety $\mathrm{Jac}(X)$ is mildly bounded over $k$.
\end{corollary}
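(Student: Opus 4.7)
The plan is to apply Corollary \ref{cor:mb_of_total_space} to the abelian scheme $\mathrm{Jac}(X)\to M$, which is in particular a semi-abelian scheme over the base. Since $N\geq 3$ and $g\geq 2$, the moduli space $M$ is a smooth geometrically irreducible quasi-projective variety over $k$, so it is an integral variety as required. What remains is to verify the hypothesis of Corollary \ref{cor:mb_of_total_space}: for every smooth connected curve $C$ over $k$, the set of non-constant morphisms $C\to M$ is finite.

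First I would use the fine moduli property of $M$, which holds precisely because the level $N$ structure kills all nontrivial automorphisms when $N\geq 3$. This gives a bijection between morphisms $\varphi\colon C\to M$ and isomorphism classes of smooth proper families of genus $g$ curves over $C$ equipped with a level $N$ structure, under which non-constant morphisms correspond to non-isotrivial such families.

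Next I would invoke the Arakelov-Parshin finiteness theorem (the function field Shafarevich conjecture for curves), which says that for a fixed smooth quasi-projective connected curve $C$ over $k$ and $g\geq 2$, the set of isomorphism classes of non-isotrivial smooth proper families of genus $g$ curves over $C$ is finite. Since any such family admits only finitely many level $N$ structures (the set of such structures is a torsor under a finite group), this implies the finiteness of the set of non-constant morphisms $C\to M$, and Corollary \ref{cor:mb_of_total_space} then gives that $\mathrm{Jac}(X)$ is mildly bounded over $k$.

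The main step requiring external input is the appeal to Arakelov-Parshin; the essential role of the level structure is in converting finiteness of isomorphism classes of non-isotrivial families into finiteness of genuine morphisms to $M$ (an identification that would be problematic on the coarse moduli space $\mathcal{M}_g$ due to the presence of automorphisms). All other moving parts, such as Silverman's specialisation theorem and the uniform mild boundedness of semi-abelian varieties from Proposition \ref{prop:semiabelian-r+g-points0}, have already been absorbed into Corollary \ref{cor:mb_of_total_space}.
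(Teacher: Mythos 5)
Your proposal is correct and takes essentially the same route as the paper: both proofs apply Corollary \ref{cor:mb_of_total_space} to the abelian scheme $\mathrm{Jac}(X)\to M$ and verify the required finiteness of non-constant maps $C\to M$ via the Arakelov--Parshin theorem combined with the fine moduli property of $M$. Your remarks on why the level structure is needed (to pass from finiteness of isomorphism classes of non-isotrivial families, plus finiteness of level structures per family, to finiteness of actual morphisms to the fine moduli scheme) simply spell out what the paper abbreviates as ``by the definition of the moduli space $M$.''
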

\begin{proof}
By Arakelov-Parshin's theorem \cite{ArakelovShaf, Parshin0} (\emph{formerly} the Shafarevich conjecture for curves), for every   curve $C$ over $k$, the set of $C$-isomorphism classes of non-isotrivial smooth proper curves $X\to C$ of genus $g$ (with $g\geq 2$) is finite. In particular (by the definition of the moduli space $M$),  for every curve $C$ over $k$, the set of non-constant morphisms $C\to M$ is finite. Therefore, the variety $M$ satisfies the   finiteness property required to apply   Corollary \ref{cor:mb_of_total_space} (with $S:=M$ and $\mathcal{X}:=\mathrm{Jac}(X)$).
\end{proof}

To prove the final result of this section, we will need the following well-known (and very simple) lemma which says that every variety is ``locally'' hyperbolic (in any sense of the word).
\begin{lemma}\label{lem:obv}
Let $S$ be an integral variety over $k$. Then, there is a dense open subscheme $U\subset S$ such that $U$ is arithmetically hyperbolic over $k$ and, for every smooth curve $C$ over $k$, the set of non-constant morphisms $C\to U$ is finite.
\end{lemma}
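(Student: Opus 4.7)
The plan is to shrink $S$ so that it becomes a locally closed subscheme of the complement $T$ of sufficiently many general hyperplanes in a projective space containing $S$, and to inherit both desired properties from $T$.

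After replacing $S$ by a smooth quasi-projective open subscheme, fix a locally closed embedding $S\hookrightarrow\mathbb{P}^N_k$ for some $N\geq 1$. Choose $m\geq 2N+1$ hyperplanes $H_1,\ldots,H_m\subset\mathbb{P}^N$ in general position, set $T:=\mathbb{P}^N\setminus\bigcup_{i=1}^m H_i$, and define $U:=S\cap T$. Since $S$ is integral and not contained in any single $H_i$, each $S\cap H_i$ is a proper closed subscheme of $S$, so $U$ is dense open in $S$. By the Evertse--Schlickewei--Schmidt subspace theorem applied to the $(N+1)$-variable $S$-unit equation, $T$ is arithmetically hyperbolic over $k$; since arithmetic hyperbolicity is inherited by locally closed subschemes (integral points of a model of $U$ inject into those of a compatible model of $T$), the subscheme $U$ is arithmetically hyperbolic.

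For the finiteness of non-constant morphisms $C\to U$, it suffices to prove the corresponding statement for $T$, since $\Hom(C,U)\hookrightarrow\Hom(C,T)$. Let $C$ be a smooth curve with smooth projective compactification $\overline{C}$ and boundary $D:=\overline{C}\setminus C$. Any non-constant $\varphi\colon C\to T$ extends uniquely to $\overline{\varphi}\colon\overline{C}\to\mathbb{P}^N$ with $\overline{\varphi}(D)\subset\bigcup_i H_i$. The function-field analogue of Cartan's second main theorem, combined with Nochka's refinement for $m\geq 2N+1$ hyperplanes in general position, bounds $\deg\overline{\varphi}$ uniformly in terms of $|D|$ and the genus of $\overline{C}$ alone. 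For each such degree $d$, the relevant locus in $\Hom_d(\overline{C},\mathbb{P}^N)$ (morphisms sending $D$ into $\bigcup H_i$ and $\overline{C}\setminus D$ into $T$) is of finite type over $k$; it is moreover zero-dimensional because a non-constant morphism $\mathbb{A}^1\to\Hom_d$ into this locus would, upon evaluation at any fixed $c\in C$, produce a non-constant morphism $\mathbb{A}^1\to T$, contradicting the Brody hyperbolicity of $T$ (Green's theorem). Hence there are only finitely many such $\overline{\varphi}$, and therefore only finitely many non-constant $\varphi\colon C\to U$.

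The main obstacle is the uniform degree bound for the extension $\overline{\varphi}$, which is the function-field form of Cartan--Nochka's second main theorem for hyperplanes in general position; everything else assembles standard inheritance results for arithmetic hyperbolicity under (locally closed) subschemes together with Brody hyperbolicity of $T$.
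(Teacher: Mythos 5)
Your overall strategy mirrors the paper's: after shrinking, intersect $S$ with the complement of enough hyperplanes in an ambient space and inherit both properties from that complement. The paper works in $\mathbb{A}^N$ and removes only the $2N$ hyperplanes $\{x_i=a\}$ and $\{x_i=b\}$, placing $U$ inside $X^N$ with $X=\mathbb{A}^1_k\setminus\{a,b\}$; it then appeals only to Siegel's theorem (for arithmetic hyperbolicity of $X^N$) and De~Franchis--Severi (for maps of curves into $X$). You instead remove $m\geq 2N+1$ general hyperplanes in $\mathbb{P}^N$ and invoke the subspace theorem plus a function-field Cartan--Nochka second main theorem. This is a genuinely different route with considerably heavier machinery; the degree bound and the arithmetic-hyperbolicity statement you quote for $T$ are correct inputs.

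There is, however, a genuine gap in the geometric finiteness step, namely your claim that the degree-$d$ locus in $\Hom_d(\overline{C},\mathbb{P}^N)$ is zero-dimensional. You deduce this from the observation that a non-constant $\mathbb{A}^1\to\Hom_d$ into this locus would, upon evaluation at some $c\in C$, yield a non-constant $\mathbb{A}^1\to T$, contradicting Brody hyperbolicity. But the implication ``$Y$ admits no non-constant $\mathbb{A}^1\to Y$, hence $\dim Y=0$'' is false: a positive-dimensional quasi-projective variety need not contain any rational curve. Brody hyperbolicity of $T$ excludes non-constant maps $\mathbb{A}^1\to T$, but it says nothing against a positive-dimensional family of degree-$d$ maps $\overline{C}\to\mathbb{P}^N$ parametrized by a higher-genus curve (or any non-uniruled base); evaluating such a family at a fixed point of $C$ gives a non-constant map from that \emph{parameter} variety into $T$, which Green's theorem does not forbid. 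So you have bounded the degree but not shown finiteness at each fixed degree, and that finiteness is precisely the crux of the statement. (A smaller inaccuracy: $\overline{\varphi}(D)\subset\bigcup_i H_i$ need not hold, since boundary points of $\overline{C}$ may well map into $T$; what does hold, and what feeds the counting-function bound, is $\overline{\varphi}^{-1}\bigl(\bigcup_i H_i\bigr)\subset D$.)
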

\begin{proof} Note that $X:=\mathbb{A}^1_k\setminus\{0,1\}$ is arithmetically hyperbolic over $k$ (by Siegel-Mahler-Lang's theorem). Moreover, for an integral curve $C$ over $k$, the De Franchis-Severi theorem implies that the set of non-constant morphisms $C\to X$ is finite. Now, to prove the lemma, 
we may and do assume that $S$ is affine, say $S\subset \mathbb{A}_k^N$.  Choose $a,b$ in $\mathbb{A}^1(k)$ such that $S\cap (\mathbb{A}^1_k\setminus \{a,b\})^N$ is non-empty (hence dense and open in $S$). Define $U:=S\cap (\mathbb{A}^1_k\setminus \{a,b\})^N$ and note that $U$ maps with finite fibres to $X^N$. In particular, $U$ is arithmetically hyperbolic over $k$ and, for every integral curve $C$ over $k$, the set of non-constant morphisms $C\to U$ is finite. This proves the lemma.
\end{proof}
We now record the following application of the above results. Our motivation for proving the following result is that it is a first step towards reducing part of Lang--Vojta's   conjecture concerning rational points on hyperbolic projective varieties to $\Qbar$.

\begin{theorem} \label{thm:spreadout}
Let $k\subset L$ be an extension of algebraically closed fields of characteristic zero, and let $X$ be a semi-abelian variety over $L$. Then there exists an integral arithmetically hyperbolic variety $S$ over $k$ whose function field $K(S)$ is a subfield    of $L$, and a semi-abelian scheme $\mathcal{X}$ over $S$ such that $\mathcal{X}\times_S L \cong X$ over $L$ and $\mathcal{X}$ is mildly bounded over $k$.
\end{theorem}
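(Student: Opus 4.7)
The plan is to combine three ingredients already available in the paper: a standard spreading-out argument for the semi-abelian variety $X$, Lemma \ref{lem:obv} which produces a ``hyperbolic'' open base, and Corollary \ref{cor:mb_of_total_space} which establishes mild boundedness of total spaces of semi-abelian schemes over such bases.

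First, I would spread out $X$. Since $L$ is the filtered union of its finitely generated $k$-subalgebras and $X$ is a finite-type $L$-scheme, there is a finitely generated $k$-subalgebra $A\subset L$, an affine scheme $\mathcal{X}_0\to S_0:=\Spec A$, and an isomorphism $\mathcal{X}_0\times_{S_0}\Spec L \cong X$ over $L$. Enlarging $A$ if necessary one spreads out the group-scheme structure so that $\mathcal{X}_0\to S_0$ is a smooth commutative group scheme. Since being a semi-abelian variety is an open condition on fibres of a smooth commutative group scheme of finite type, after shrinking $S_0$ we may assume every geometric fibre of $\mathcal{X}_0\to S_0$ is semi-abelian. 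Replacing $S_0$ by the reduced scheme underlying a suitable irreducible component containing the image of $\Spec L$, we obtain an integral $k$-variety $S_0$ with $K(S_0)\subset L$ together with a semi-abelian scheme $\mathcal{X}_0\to S_0$ whose base change via $\Spec L \to \Spec K(S_0) \to S_0$ recovers $X$.

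Next, I would apply Lemma \ref{lem:obv} to $S_0$ to find a dense open $S\subset S_0$ such that, for every smooth curve $C$ over $k$, the set of non-constant morphisms $C\to S$ is finite. Set $\mathcal{X}:=\mathcal{X}_0\times_{S_0}S$, which is again a semi-abelian scheme, now over $S$. Since $S\subset S_0$ is dense open we have $K(S)=K(S_0)\subset L$, and the base change of $\mathcal{X}$ along $\Spec L \to \Spec K(S)\to S$ remains isomorphic to $X$ over $L$. Finally, Corollary \ref{cor:mb_of_total_space} applied to the semi-abelian scheme $\mathcal{X}\to S$ (whose base satisfies precisely the required curve-finiteness hypothesis by construction) yields that $\mathcal{X}$ is mildly bounded over $k$, as desired.

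The only nontrivial step is the spreading-out of the semi-abelian variety; but this is a routine application of finite presentation together with openness of the semi-abelian locus in families of smooth commutative group schemes, so I do not expect a serious obstacle. Everything else is a direct citation of Lemma \ref{lem:obv} and Corollary \ref{cor:mb_of_total_space}.
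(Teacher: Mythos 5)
Your proof is correct and follows essentially the same route as the paper: spread out $X$ to a semi-abelian scheme over an integral $k$-variety, shrink the base using Lemma \ref{lem:obv} to arrange the curve-finiteness hypothesis, and conclude by Corollary \ref{cor:mb_of_total_space}. You have merely filled in the routine spreading-out details that the paper leaves implicit.
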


\begin{proof}
By   spreading out arguments, there exists a finitely generated $k$-algebra $A\subset L$ with $S:=\Spec A$  an integral variety over $k$, and a semi-abelian scheme $\mathcal{X}\to S$ such that $\mathcal{X}_L\cong X$. Replacing $S$ by a dense open subscheme if necessary, by Lemma \ref{lem:obv}, the variety $S$ is arithmetically hyperbolic over $k$, and for every curve $C$ over $k$, the set of non-constant morphisms $C\to S$ is finite.    The result now follows from Corollary \ref{cor:mb_of_total_space}. 
\end{proof}

\section{A criterion for persistence of arithmetic hyperbolicity}\label{section7}
 Recall that a variety over $k$ is arithmetically hyperbolic over $k$ if every model for $X$ over a $\ZZ$-finitely generated subalgebra of $k$ has only finitely many sections (Definition \ref{defn:arhyp}); see also \cite{JBook, JAut, JLalg, JXie}.  The Lang--Vojta conjecture predicts that  arithmetic hyperbolicity persists over field extensions, i.e., if $X$ is arithmetically hyperbolic over $k$ and $ L$ is an algebraically closed field  containing  $k$, then $X_L$ should be arithmetically hyperbolic over $L$; this is the Arithmetic Persistence Conjecture (Conjecture \ref{conj:pers}). Although the Arithmetic Persistence Conjecture is not known to hold in general, the following result is  useful   for verifying it in many cases.
 
\begin{theorem}[Criterion for Persistence] \label{thm:gen_crit}
Let $X$ be an arithmetically hyperbolic variety over $k$ such that $X_K$ is mildly bounded for all subfields $k\subset K\subset L$. Then $X_L$ is arithmetically hyperbolic over $L$.
\end{theorem}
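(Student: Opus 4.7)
The plan is to establish arithmetic hyperbolicity of $X_L$ by reducing to the case $\mathrm{tr.deg}(L/k)=1$ and then running a curve argument that exploits mild boundedness of $X$ over $k$ (which is the hypothesis applied with $K=k$). Fix a model $\mathcal{X}/A$ with $A\subset k$ a $\ZZ$-finitely generated subring witnessing arithmetic hyperbolicity of $X$ over $k$, and note that $\mathcal{X}$ is also a model of $X_L$ over $L$; thus it suffices to prove $\mathcal{X}(B)$ is finite for every $\ZZ$-finitely generated $A\subset B\subset L$. Since any such $B$ is contained in an algebraically closed subfield $L_B\subset L$ of finite transcendence degree over $k$ (to which the hypotheses transfer, because intermediate fields for $k\subset L_B$ are also intermediate for $k\subset L$), one may assume $n:=\mathrm{tr.deg}(L/k)<\infty$ and induct on $n$. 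The base case $n=0$ is trivial ($L=k$), and for $n\geq 2$ I would pick $t\in L$ transcendental over $k$ and let $K$ be the algebraic closure of $k(t)$ in $L$; applying the induction hypothesis first to $k\subset K$ (trans.\ deg.\ $1$) and then to $K\subset L$ (trans.\ deg.\ $n-1$) gives the result.

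This reduces the problem to the case $\mathrm{tr.deg}(L/k)=1$, which I would attack by contradiction. Suppose $\{\sigma_n\}\subset\mathcal{X}(B)$ is an infinite sequence of distinct elements for some $\ZZ$-finitely generated $A\subset B\subset L$. Form $R:=k\cdot B\subset L$, a finitely generated $k$-subalgebra of the field $L$ (hence a domain) with $\dim_k R\leq 1$. If $\dim_k R=0$ then $R=k$ (as $k$ is algebraically closed), forcing $B\subset k$ and contradicting arithmetic hyperbolicity of $X/k$. Otherwise $\dim_k R=1$, and letting $\tilde C:=\Spec\tilde R$ be the normalisation gives a smooth affine curve over $k$; precomposing the $\sigma_n$ with the dominant morphism $\tilde C\to\Spec B$ produces morphisms $\tilde\sigma_n\colon\tilde C\to X$ that are pairwise distinct by separatedness of $\mathcal{X}$.

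The hypothesis with $K=k$ supplies mild boundedness of $X$ over $k$, and applying it to $\tilde C$ furnishes points $c_1,\ldots,c_m\in\tilde C(k)$ such that the evaluation $\Hom_k(\tilde C,X)\to X(k)^m$ has finite fibres; pigeonhole then yields an index $i$ for which $\{\tilde\sigma_n(c_i)\}_n$ is infinite in $X(k)$. The decisive observation is that the $k$-point $c_i$ corresponds to a surjection $\tilde R\twoheadrightarrow k$ whose restriction to $B$ factors as $B\twoheadrightarrow A_i\hookrightarrow k$, where $A_i$ is a $\ZZ$-finitely generated subring of $k$ containing $A$ that depends on $c_i$ but crucially \emph{not} on $n$. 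The surjection $B\twoheadrightarrow A_i$ realises $\Spec A_i\hookrightarrow\Spec B$ as a closed immersion, and precomposing $\sigma_n$ with this immersion yields $\sigma_n|_{\Spec A_i}\in\mathcal{X}(A_i)$ whose image under the injection $\mathcal{X}(A_i)\hookrightarrow\mathcal{X}(k)$ (which exists because $\Spec k\to\Spec A_i$ has dense image and $\mathcal{X}$ is separated) is exactly $\tilde\sigma_n(c_i)$. Since $\mathcal{X}(A_i)$ is finite by arithmetic hyperbolicity of $X/k$, this forces $\{\tilde\sigma_n(c_i)\}$ to be finite, the desired contradiction. The main technical point, which is really the heart of the argument, is ensuring that $A_i$ is common to all $n$ and that the lift $\sigma_n|_{\Spec A_i}$ is well-defined as an $A_i$-point of $\mathcal{X}$; this is the mechanism by which mild boundedness (a statement about curves over $k$) yields arithmetic hyperbolicity (a statement about integral points on arithmetic schemes).
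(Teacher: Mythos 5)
Your proposal is correct, and it follows essentially the same route as the paper: the theorem reduces, by induction on transcendence degree, to the degree-one step (stated in the paper as Theorem \ref{thm:gen_crit0}, which the paper cites to [JAut, Lemma 4.2]), and your direct proof of that step (pass from $B$ to the normalisation of the curve $k\cdot B$, apply mild boundedness of $X$ over $k$ to that curve, and note that evaluating at a $k$-point of the curve lands in $\mathcal{X}(A_i)$ for a fixed $\ZZ$-finitely generated $A_i\supset A$ inside $k$) is the same argument the paper re-runs in the proof of Lemma \ref{lem:vojtatje}. The only cosmetic difference is that you pass to the single integral curve $\Spec\widetilde{k\cdot B}$, whereas the paper's variant works with the (possibly reducible) fibre $\Spec B\otimes_A k$ after enlarging $B$ so that it is smooth over $A$; both serve the same purpose.
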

 \begin{proof}
 See   \cite[Theorem~1.6]{JAut}.
 \end{proof}
 
 As is shown in \cite{JAut}, Theorem \ref{thm:gen_crit}   follows  from  the following result.
 
 \begin{theorem}\label{thm:gen_crit0} 
 Let $X$ be an arithmetically hyperbolic mildly bounded variety over $k$. If $L/k$ is an extension of algebraically closed fields of transcendence degree one, then $X_L$ is arithmetically hyperbolic over $L$.
 \end{theorem}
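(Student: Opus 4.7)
The plan is to argue by contradiction, combining arithmetic hyperbolicity of $X$ (to control values of sections at specific $k$-points) with mild boundedness (to control morphisms from curves). The transcendence-degree-one hypothesis on $L/k$ is the pivot: it forces every candidate ring of definition $B\subset L$ either to lie in $k$, where arithmetic hyperbolicity applies directly, or to spread out to a curve over $k$, where mild boundedness enters. I fix a model $\mathcal{X}_A$ of $X$ over a $\ZZ$-finitely generated subring $A\subset k$ witnessing arithmetic hyperbolicity, and suppose for a contradiction that some $\ZZ$-finitely generated subring $B\supset A$ of $L$ admits an infinite family of sections $\{\sigma_i\}_{i\geq 1}\subset \mathcal{X}_A(B)$. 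Let $K=\mathrm{Frac}(B)\subset L$. If $K\subset k$, then $B\subset k$ is itself $\ZZ$-finitely generated and arithmetic hyperbolicity of $\mathcal{X}_A$ already contradicts the infinitude of $\mathcal{X}_A(B)$, so I may assume $K$ contains some $t\in L$ transcendental over $k$. The compositum $K\cdot k\subset L$ is then a finite extension of $k(t)$, hence the function field of a smooth projective geometrically integral curve $\widetilde{C}$ over $k$. I fix an affine dense open $\widetilde{C}^{\circ}\subset \widetilde{C}$ and apply mild boundedness of $X$ to it to obtain an integer $m\geq 1$ and points $\tilde{c}_1,\ldots,\tilde{c}_m \in \widetilde{C}^{\circ}(k)$ such that, for every $x_1,\ldots,x_m \in X(k)$, the set of morphisms $\widetilde{C}^{\circ}\to X$ sending $\tilde{c}_j$ to $x_j$ is finite.

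Next I spread out over $\Spec A$: after enlarging $A$, there is a smooth proper relative curve $\mathcal{C}\to \Spec A$ whose base-change to $k$ is $\widetilde{C}$, together with sections $c_j:\Spec A \to \mathcal{C}$ specialising to $\tilde{c}_j$, and an open immersion $\Spec B\hookrightarrow \mathcal{C}$ making $\Spec B$ a dense open on each geometric fibre. By inverting finitely many further elements of $B$, which keeps $B$ $\ZZ$-finitely generated and only enlarges $\mathcal{X}_A(B)$, I arrange that every section $c_j$ factors through $\Spec B\subset \mathcal{C}$. Composing each $\sigma_i:\Spec B\to \mathcal{X}_A$ with $c_j:\Spec A\to \Spec B$ then yields an $A$-section $\sigma_i(c_j)\in \mathcal{X}_A(A)$, and $\mathcal{X}_A(A)$ is finite by arithmetic hyperbolicity.

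Consequently the tuples $(\sigma_i(c_1),\ldots,\sigma_i(c_m))\in \mathcal{X}_A(A)^m$ take only finitely many values as $i$ varies, so by pigeonhole an infinite subfamily of the $\sigma_i$ share a common tuple of values at the $c_j$. Since distinct $\sigma_i$ have distinct generic fibres, base-changing to $k$ produces infinitely many pairwise distinct morphisms $\widetilde{C}^{\circ}\to X$ agreeing at $\tilde c_1,\ldots, \tilde c_m$, contradicting the mild boundedness of $X$. The main obstacle I anticipate is the spreading out step: ensuring that the sections $c_j$ factor through $\Spec B$ (so that the compositions $\sigma_i\circ c_j$ land in the finite set $\mathcal{X}_A(A)$) while preserving both the $\ZZ$-finite generation of $B$ and the infinitude of the family $\{\sigma_i\}$. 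The trick of inverting finitely many elements of $B$, rather than passing to a non-finitely-generated localisation, is what makes the argument work without assuming that $X$ is proper.
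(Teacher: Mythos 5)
Your overall strategy is the same as that of the cited reference \cite[Lemma~4.2]{JAut}, which this paper's Lemma \ref{lem:vojtatje} closely mirrors: spread out, base-change the ``arithmetic curve'' $\Spec B$ to a curve over $k$, apply mild boundedness there to find marking points, descend the marking points to $A$-sections, use arithmetic hyperbolicity to bound the tuple of values, and conclude by pigeonhole against mild boundedness. The reduction to the case $\dim B = \dim A + 1$ via ``either $\mathrm{Frac}(B)\subset k$ or not'' and the pigeonhole finale are exactly right.

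However, there is a real gap in the middle step, and it concerns which curve mild boundedness is applied to. You fix an \emph{arbitrary} affine dense open $\widetilde{C}^{\circ}\subset\widetilde{C}$ and get marking points $\tilde c_1,\ldots,\tilde c_m\in\widetilde{C}^{\circ}(k)$ \emph{before} the curve $\Spec B$ enters the picture. But for the composition $\sigma_i\circ c_j$ to make sense, the descended sections $c_j$ must factor through $\Spec B\subset\mathcal{C}$, which forces the $\tilde c_j$ to lie in the open $\Spec B\times_A k\subset\widetilde{C}$. Since $\widetilde{C}^{\circ}$ and $\Spec B\times_A k$ are two independently chosen dense opens of $\widetilde{C}$, nothing guarantees $\tilde c_j\in\Spec B\times_A k$; if some $\tilde c_j$ falls outside, the section $c_j$ lands in $\mathcal{C}\setminus\Spec B$ and $\sigma_i\circ c_j$ is undefined (and, since $X$ is only assumed finite type separated, $\sigma_i$ does not extend to $\mathcal{C}$). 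Your proposed remedy --- ``inverting finitely many further elements of $B$'' --- cannot repair this: inverting elements of $B$ only shrinks $\Spec B$, which makes it \emph{harder}, not easier, for the $c_j$ to land in $\Spec B$. (The operation that does help is inverting elements of $A$, i.e.\ shrinking $\Spec A$, and even that only works once one already knows the generic point of $c_j$ lies in $\Spec B$.)

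The fix is simple and is precisely what Lemma \ref{lem:vojtatje} does: after spreading out so that $\Spec B\to\Spec A$ is a smooth relative affine curve, set $C := \Spec B\times_A k$ (rather than an arbitrary $\widetilde{C}^{\circ}$), and apply mild boundedness directly to $C$ (or to a connected component of its normalisation if needed). Then by construction $\tilde c_j\in C(k)=(\Spec B\times_A k)(k)$, each $\tilde c_j$ descends to a section $\Spec A'\to\Spec B$ after enlarging $A$ to a suitable $\ZZ$-finitely generated $A'\subset k$, and the rest of your argument goes through unchanged. With that single adjustment your proof is correct and coincides with the paper's.
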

 \begin{proof}
 See \cite[Lemma~4.3]{JAut}.
 \end{proof}
 
 Applications of the above two  results are given in \cite[\S4]{JAut}, and \cite{AJL, JLitt, JSZ, JXie}.
 As a first application of the results of this paper and the above criterion for persistence of arithmetic hyperbolicity, we obtain the following corollary for projective varieties.
 
 \begin{corollary}\label{cor:below}
 Let $X$ be a projective arithmetically hyperbolic  mildly bounded variety over $k$. Then, for every field extension $k\subset L$, the projective variety $X_L$ is arithmetically hyperbolic over $L$.
 \end{corollary}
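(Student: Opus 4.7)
The plan is to invoke Theorem~\ref{thm:gen_crit}, whose mild boundedness hypothesis on all intermediate subfields will be verified by a spread-out argument combined with Corollary~\ref{cor:mb_persists}.

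Fix a $\ZZ$-finitely generated subring $A \subset k$ and a model $\mathcal{X}$ of $X$ over $A$ witnessing the arithmetic hyperbolicity of $X$ over $k$. Via the inclusions $A \subset k \subset L$, the scheme $\mathcal{X}$ is also a model of $X_L$ over $A$, so it suffices to show that $\mathcal{X}(A')$ is finite for every $\ZZ$-finitely generated subring $A \subset A' \subset L$. Given such an $A'$, fix an algebraic closure $\overline{L}$ of $L$ and let $F$ be the algebraic closure of the compositum $k \cdot \mathrm{Frac}(A')$ inside $\overline{L}$. Since $A'$ is $\ZZ$-finitely generated, $\mathrm{Frac}(A')/\QQ$ is finitely generated, and hence $F/k$ has finite transcendence degree. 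Consequently, every intermediate field $k \subset K \subset F$ has finite transcendence degree over $k$, and Corollary~\ref{cor:mb_persists} (applied to the projective variety $X$) yields the mild boundedness of $X_K$ over $K$ for every such $K$. Theorem~\ref{thm:gen_crit} applied to the extension $k \subset F$ then gives that $X_F$ is arithmetically hyperbolic over $F$.

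To conclude, the arithmetic hyperbolicity of $X_F$, witnessed through the model $\mathcal{X}$ over $A \subset F$ (possibly after enlarging $A$ inside $F$), produces a $\ZZ$-finitely generated subring $B \subset F$ containing $A$ such that $\mathcal{X}(B'')$ is finite for every $\ZZ$-finitely generated subring $B \subset B'' \subset F$. Setting $B'' := A' \cdot B \subset F$, which is a $\ZZ$-finitely generated subring of $F$ containing both $A'$ and $B$, the inclusion $A' \hookrightarrow B''$ induces an injection $\mathcal{X}(A') \hookrightarrow \mathcal{X}(B'')$ (using that $\mathcal{X}$ is separated over $A$) into a finite set, and so $\mathcal{X}(A')$ is finite as required. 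The main obstacle is that Corollary~\ref{cor:mb_persists} only handles extensions of finite transcendence degree whereas $L/k$ is arbitrary; this is circumvented by the observation that arithmetic hyperbolicity is tested only against $\ZZ$-finitely generated rings, whose fraction fields are finitely generated over $\QQ$, so the relevant computation always takes place inside a finite-transcendence-degree subfield $F$ of $\overline{L}$.
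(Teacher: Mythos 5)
Your proposal is correct and takes essentially the same approach as the paper: reduce to finite transcendence degree using the definition of arithmetic hyperbolicity (you do this explicitly via a model $\mathcal{X}/A$ and the algebraically closed subfield $F\subset\overline{L}$ generated by $k$ and $\mathrm{Frac}(A')$), then apply Corollary~\ref{cor:mb_persists} to all intermediate fields, and finally invoke Theorem~\ref{thm:gen_crit}. The paper compresses your careful spread-out reduction into the single sentence ``It follows from the definition of arithmetic hyperbolicity that we may assume $L$ has finite transcendence degree over $k$,'' but the underlying argument is the same.
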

 \begin{proof}
 It follows from the definition of arithmetic hyperbolicity that we may   assume   $L$ has finite transcendence degree over $k$. Then, by Corollary \ref{cor:mb_persists}, as $X$ is mildly bounded and projective over $k$, for every subfield $k\subset K\subset L$, the variety $X_K$ is mildly bounded over $K$. Thus, by the above criterion for persistence (Theorem \ref{thm:gen_crit}), we conclude that $X_L $ is arithmetically hyperbolic over $L$.
 \end{proof}
 
 As another application of the results of this paper, we can show that the Arithmetic Persistence Conjecture holds for  varieties which admit a quasi-finite morphism to some semi-abelian variety.
 
  \begin{theorem}\label{thm:vojta1} Let $k\subset L$ be an extension of algebraically closed fields of characteristic zero.
 Let $X$ be a variety over $k$ which admits  a quasi-finite morphism to some semi-abelian variety over $k$.
 Then $X$ is arithmetically hyperbolic over $k$ if and only if $X_L$ is arithmetically hyperbolic over $L$.
 \end{theorem}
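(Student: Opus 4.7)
The plan is to prove the two implications separately. The direction from $X_L$ arithmetically hyperbolic to $X$ arithmetically hyperbolic is a standard descent argument. The substantive content is the converse, which will follow by combining Corollary \ref{cor:qf_semi_is_mb} with the criterion for persistence Theorem \ref{thm:gen_crit}.

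For the descent direction, assume $X_L$ is arithmetically hyperbolic over $L$. I would choose any model $\mathcal{X}$ of $X$ over a $\ZZ$-finitely generated subring $A \subset k$, and view $A$ as a subring of $L$ via the inclusion $k \hookrightarrow L$. Since any two models of $X_L$ become isomorphic after base change to a common sufficiently large $\ZZ$-finitely generated subring of $L$, the assumed arithmetic hyperbolicity of $X_L$ yields a $\ZZ$-finitely generated subring $C \subset L$ with $A \subset C$ such that $\mathcal{X}(C')$ is finite for every $\ZZ$-finitely generated $C \subset C' \subset L$. Given any $\ZZ$-finitely generated $A \subset A' \subset k$, the compositum $C \cdot A' \subset L$ is a $\ZZ$-finitely generated subring containing $C$, and $\mathcal{X}(A')$ embeds into the finite set $\mathcal{X}(C \cdot A')$. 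Hence the original model $\mathcal{X}/A$ witnesses the arithmetic hyperbolicity of $X$ over $k$.

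For the persistence direction, assume $X$ is arithmetically hyperbolic over $k$, and let $G$ be a semi-abelian variety over $k$ together with a quasi-finite morphism $X \to G$. For every intermediate field $k \subset K \subset L$, the base change $X_K \to G_K$ is a quasi-finite morphism to the semi-abelian variety $G_K$ over $K$; applying Corollary \ref{cor:qf_semi_is_mb} to the field extension $k \subset K$ shows that $X_K$ is mildly bounded over $K$. The hypotheses of Theorem \ref{thm:gen_crit} are therefore satisfied for the extension $k \subset L$, and we conclude that $X_L$ is arithmetically hyperbolic over $L$.

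The real work has already been carried out upstream: the uniform mild boundedness of semi-abelian varieties underpinning Corollary \ref{cor:qf_semi_is_mb} (via Proposition \ref{prop:semiabelian-r+g-points0}) and the criterion for persistence Theorem \ref{thm:gen_crit} are the two substantive inputs. Given these, Theorem \ref{thm:vojta1} is essentially a formal consequence, and the only point requiring any verification is that ``admitting a quasi-finite morphism to a semi-abelian variety'' is preserved under arbitrary field extensions, which is immediate. The one mild subtlety is that the intermediate subfields $K$ need not be algebraically closed, but Corollary \ref{cor:qf_semi_is_mb} and the definition of mild boundedness allow arbitrary base fields of characteristic zero, so no issue arises.
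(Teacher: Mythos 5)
Your proof is correct and takes the same route as the paper for the substantive persistence direction: apply Corollary \ref{cor:qf_semi_is_mb} to get mild boundedness of $X_K$ for every intermediate field $k\subset K\subset L$, and then invoke the persistence criterion Theorem \ref{thm:gen_crit}. The paper leaves the easy descent direction implicit; your compositum/limit argument correctly supplies it.
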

 \begin{proof} 
 Note that, by Corollary    \ref{cor:qf_semi_is_mb}, for every subfield $k\subset K\subset L$, the variety $X_K$ is mildly bounded over $K$. Therefore, the result follows   from the criterion for persistence (Theorem \ref{thm:gen_crit}).
 \end{proof}
 
\begin{proof}[Proof of Theorem \ref{thm:semi_abvars_intro}]
This follows from Theorem \ref{thm:vojta1}.
\end{proof}
 
 In the following remark we explain why our results (and definitions) simplify proofs of  Faltings's finiteness theorem for finitely generated $\ZZ$-algebras on affine opens of abelian varieties.
 
 \begin{remark}[Faltings's theorem over finitely generated fields via specialisation]\label{remark:vojta}
 In \cite{FaltingsLang1} Faltings proved that, if $A$ is an abelian variety  over $\Qbar$ and $D\subset A$ is an ample effective divisor, the affine variety $A-D$ is arithmetically hyperbolic over $\Qbar$. That is,  for every number field $K$, every finite set of finite places $S$ of $K$, every model $\mathcal{A}$ for $A$ over $\OO_{K,S}$ and every model $\mathcal{D}\subset \mathcal{A}$ for $D\subset A$ over $\OO_{K,S}$, the set $(\mathcal{A}\setminus \mathcal{D})(\OO_{K,S})$ is finite. One sometimes also says that the set of $(D,S)$-integral points on $A$ is finite. We will now use our results to prove  the following extension of Faltings's theorem which does not seem to appear explicitly in the literature, although  related results are mentioned in \cite[\S5]{Faltings3}.

 \begin{theorem}[Faltings + $\epsilon$]\label{thm:fal}
 Let $L$ be an algebraically closed field of characteristic zero, let $A$ be an abelian variety over $L$, and let $D\subset A$ be an ample effective divisor. Then $A-D$ is arithmetically hyperbolic over $L$.
 \end{theorem}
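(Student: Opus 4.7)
The plan is to spread $(A,D)$ out to a family over $\Qbar$, establish arithmetic hyperbolicity of the total space via Faltings's theorem over $\Qbar$, and then transfer to $L$ using the persistence criterion Theorem \ref{thm:gen_crit}.

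First I would choose an integral variety $S$ over $\Qbar$ whose function field embeds in $L$, together with an abelian scheme $\mathcal{A} \to S$ carrying a relatively ample divisor $\mathcal{D} \subset \mathcal{A}$ whose pullback along $\Spec L \to S$ recovers $(A,D)$. By Lemma \ref{lem:obv}, after replacing $S$ by a suitable dense open, I may assume that $S$ is arithmetically hyperbolic over $\Qbar$ and admits a quasi-finite morphism to $(\mathbb{A}^1_{\Qbar} - \{0,1\})^N$ for some $N\geq 1$; the second feature will propagate to every algebraically closed base change and feed into Corollary \ref{cor:mb_of_total_space}.

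Next I would show that $\mathcal{A} - \mathcal{D}$ is arithmetically hyperbolic over $\Qbar$. Fix a model $(\mathscr{A},\mathscr{D})\to \mathscr{S}$ over a $\ZZ$-finitely generated subring $R\subset \Qbar$, enlarged so that $\mathscr{S}(R')$ is finite for every $\ZZ$-finitely generated $R\subset R'\subset \Qbar$. For each such $R'$ the structure map gives $(\mathscr{A} - \mathscr{D})(R')\to \mathscr{S}(R')$, and the fiber over $s\in \mathscr{S}(R')$ is $(\mathscr{A}_s - \mathscr{D}_s)(R')$. Since the fraction field of $R'$ is a number field and $R'$ embeds into a suitable ring of $T$-integers $\OO_{K,T}$, Faltings's theorem on integral points on abelian varieties shows that $(\mathscr{A}_s - \mathscr{D}_s)(\OO_{K,T})$ is finite after a further spreading of $\mathscr{A}_s$ to an abelian scheme over $\OO_{K,T}$; restricting gives finiteness of $(\mathscr{A}_s - \mathscr{D}_s)(R')$. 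Summing over the finitely many $s$ yields arithmetic hyperbolicity of $\mathcal{A} - \mathcal{D}$ over $\Qbar$.

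Then I would verify the persistence hypothesis of Theorem \ref{thm:gen_crit}: for every subfield $\Qbar\subset K\subset L$, the variety $(\mathcal{A} - \mathcal{D})_K$ is mildly bounded over $K$. Because $S$ maps quasi-finitely to $(\mathbb{A}^1_{\Qbar} - \{0,1\})^N$, the base change $S_{\overline{K}}$ receives only finitely many non-constant morphisms from any smooth curve over $\overline{K}$, by De Franchis--Severi applied to the hyperbolic target. Corollary \ref{cor:mb_of_total_space} then implies that $\mathcal{A}_{\overline{K}}$ is mildly bounded over $\overline{K}$, and mild boundedness passes to the open subvariety $(\mathcal{A} - \mathcal{D})_{\overline{K}}$ because any system of base points witnessing mild boundedness of the ambient variety also works for the subvariety. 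Theorem \ref{thm:gen_crit} then yields that $(\mathcal{A} - \mathcal{D})_L$ is arithmetically hyperbolic over $L$; and since $A - D$ embeds into $(\mathcal{A} - \mathcal{D})_L$ as the closed subvariety cut out by the $L$-point of $S_L$ induced by $K(S)\hookrightarrow L$, arithmetic hyperbolicity descends to $A - D$.

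I expect the main obstacle to be the fiberwise application of Faltings: one must verify that arbitrary $\ZZ$-finitely generated subrings $R'\subset \Qbar$ can be absorbed into rings of $T$-integers in number fields, compatibly with a further spreading of the abelian scheme $\mathscr{A}_s$ and the divisor $\mathscr{D}_s$. This is a standard spreading-out exercise, but must be set up with care so that the finiteness output by Faltings restricts correctly to $R'$-points of the original model.
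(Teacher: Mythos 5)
Your route is genuinely different from the paper's: the paper inducts on the transcendence degree $n = \mathrm{trdeg}_{\QQ}(L)$, handling one transcendence-degree-one extension $k\subset L$ at a time, whereas you spread out directly over $\Qbar$ and apply the persistence criterion (Theorem \ref{thm:gen_crit}) in one shot. Your base case, your fibrewise use of Faltings, your observation that mild boundedness passes to open subvarieties, and your descent of arithmetic hyperbolicity to the closed fibre $A-D$ inside $(\mathcal{A}-\mathcal{D})_L$ are all sound. But there is a genuine gap in the step where you establish mild boundedness of $(\mathcal{A}-\mathcal{D})_{\overline{K}}$ over all intermediate algebraically closed fields $\overline{K}$.

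When you spread out over $\Qbar$, the base $S$ has dimension $\mathrm{trdeg}_{\Qbar} K(S)$, which can be any positive integer. To apply Corollary \ref{cor:mb_of_total_space} you need that, for every smooth curve $C$ over $\overline{K}$, the set of \emph{non-constant} morphisms $C\to S_{\overline{K}}$ is finite, and you deduce this from the quasi-finite map $S_{\overline{K}}\to (\mathbb{A}^1_{\overline{K}}\setminus\{0,1\})^N$ via De Franchis--Severi. This deduction fails as soon as $\dim S\geq 2$: already for $S=(\mathbb{A}^1\setminus\{0,1\})^2$ and $C=\mathbb{A}^1\setminus\{0,1\}$, the morphisms $t\mapsto (t,c)$ as $c$ ranges over the infinitely many points of $\mathbb{A}^1(\overline{K})\setminus\{0,1\}$ give infinitely many non-constant morphisms $C\to S_{\overline{K}}$. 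De Franchis--Severi bounds non-constant maps to a hyperbolic \emph{curve}; in a product, only the non-constant coordinates are controlled, while the remaining coordinates range freely over constants. (Lemma \ref{lem:obv} as stated suffers from the same issue when $\dim S\geq 2$; the paper's own use of it, via Theorem \ref{thm:spreadout} inside the inductive step of Theorem \ref{thm:fal}, imposes $\overline{K(S)}=L$ with $\mathrm{trdeg}_k L = 1$, which forces $S$ to be a curve, and there the required finiteness is honest De Franchis--Severi.)

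The paper's induction exists precisely to sidestep this: at each step the parameter variety $S$ is one-dimensional, and one invokes Theorem \ref{thm:gen_crit0} (the transcendence-degree-one case of the criterion) rather than Theorem \ref{thm:gen_crit}. To repair your argument you would either need to adopt the induction, or supply a different proof of mild boundedness of $(\mathcal{A}-\mathcal{D})_{\overline{K}}$ when the base $S$ has dimension $\geq 2$; the results in the paper do not directly furnish such an argument.
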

 \begin{proof} (We use Faltings's finiteness theorem over $\Qbar$, the fact that abelian varieties are mildly bounded, and a specialisation argument.) 
 
 First, we may and do assume that $L$ has finite transcendence degree over $\mathbb{Q}$ and contains $\overline{\mathbb{Q}}$. We argue by induction on the transcendence degree $n=\mathrm{trdeg}_{\QQ}(L)$ of $L$ over $\QQ$. If $n = 0$, then $L=\Qbar$ so that the statement  follows from Faltings's theorem \cite[Corollary~6.2]{FaltingsLang1}. 
 
Now assume that $n>0$ and choose an algebraically closed subfield $k\subset L$ such that $\mathrm{trdeg}_{k}(L)=1$, so that $\mathrm{trdeg}_{\QQ}(k) =n-1$. We now apply Theorem \ref{thm:spreadout} to the abelian variety $A$ over $L$ and the extension $k\subset L$. Thus, 
  we choose   an   arithmetically hyperbolic integral variety $S$ over $k$ whose function field $K(S)$ is a subfield    of $L$   with $\overline{K(S)}=L$, an abelian scheme $\mathcal{A}$ over $S$ such that  $\mathcal{A}$ is mildly bounded over $k$ and $\mathcal{A}\times_{k} L $ is isomorphic to $ A$ over $L$. Now, shrinking $S$ if necessary, we may and do assume that 
  the ample divisor $D$ on $A$ extends to a relatively ample divisor $\mathcal{D}\subset \mathcal{A}$ on $\mathcal{A}$. Define $\mathcal{X}:=\mathcal{A}\setminus \mathcal{D}$. Note that, by the induction hypothesis, for every $s$ in $S(k)$, the fibre $\mathcal{X}_s$ of $\mathcal{X} \to S$ over $s$ is arithmetically hyperbolic over $k$ (as it is the complement of an ample divisor in an abelian variety over $k$). Therefore, as $S$ is arithmetically hyperbolic over $k$ and every $k$-fibre of $\mathcal{X}\to S$ is arithmetically hyperbolic, it is straightforward to see that the variety $\mathcal{X}$ is arithmetically hyperbolic over $k$ (see \cite[Lemma~4.11]{JLalg} for details). Since $\mathcal{X}$ is mildly bounded over $k$ and arithmetically hyperbolic over $k$, it follows from  Theorem \ref{thm:gen_crit0} that $X=\mathcal{X}_{\overline{K(S)}}$ is arithmetically hyperbolic over $L$.  This concludes the proof.
 \end{proof}
 \end{remark}
\subsection{Two remarks on integral points on abelian varieties} 
It is well-known that the set of rational points on an abelian variety over a number field is potentially dense, and there are ``many'' different proofs of this fact. In the following remark, we explain how the mild boundedness of abelian varieties implies the potential density of rational points. 
 
 \begin{remark}[Potential density of rational points on abelian varieties]  
 Let $A$ be an abelian variety over $k$. It is well-known that there is a finitely generated subfield $K\subset k$ and a model $\mathcal{A}$ for $A$ over $K$ such that the subset $\mathcal{A}(K)$ of $A(k)$ is dense in $A$. Indeed, this is due to  Frey--Jarden \cite{FreyJarden}. In this remark, we explain how to reprove this result. Firstly, using standard arguments, just as in the proof of Proposition \ref{prop:semiabelian-r+g-points0}, we may and do assume that $A$ is simple.  Let $ L$ be an uncountable algebraically closed field containing $k$. Then, as the torsion in $A(L)$ is countable and $L$ is an uncountable algebraically closed field, we see that $A(L)$ contains a point $P$ of infinite order.  Choose    a finitely  generated subfield $K_1\subset L$  and a model $\mathcal{A}$  for $A_L$ over $K_1$ such that $P$ lies in the subset $ \mathcal{A}(K_1)$ of $A(L)$.  Then   $\mathcal{A}(K_1)$ is infinite, so that in particular $A_L$ is not arithmetically hyperbolic over $L$.   Since $A_K$ is mildly bounded over $K$ for any subfield $k\subset K\subset L$ by Corollary \ref{cor:qf_semi_is_mb} and $A_L$ is not arithmetically hyperbolic over $L$, it follows that $A$ is not arithmetically hyperbolic over $k$. In particular, there is a finitely generated field $K\subset k$ and a model $\mathcal{A}$ for $A$ over $K$ such that $\mathcal{A}(K)$ is infinite. In particular, $\mathcal{A}(K)$ is dense, as $A$ is   simple.
 \end{remark}
 
 Another   application of the mild boundedness of abelian varieties is given in the following remark in which we briefly discuss Hassett-Tschinkel's arithmetic puncture problem \cite[Problem~5.3]{HT1}. This conjecture is also a consequence of conjectures of Campana \cite{Campana2}.
 
 \begin{remark}[Hassett-Tschinkel puncture problem]  \label{remark:ht}
Let $A$ be a simple abelian variety over $\Qbar$ and let $D\subset A$ be a closed subset with $\mathrm{codim}(A\setminus D)\geq 2$. In \cite[Problem~5.3]{HT1} Hassett and Tschinkel conjecture   that  there exists a number field $K$, a finite set of finite places $S$ of $K$, a model $\mathcal{A}$ for $A$ over $\OO_{K,S}$  and a model $\Delta\subset \mathcal{A}$ for $D\subset A$ over $\OO_{K,S}$  such that $(\mathcal{A}\setminus \Delta)(\OO_{K,S})$ is dense in $A$. In particular, Hassett and Tschinkel conjecture that  $A\setminus D$ is  \textbf{not} arithmetically hyperbolic over $\Qbar$. We note that Hassett-Tschinkel's  conjecture is in fact a special case of Campana's more general conjectures on potential density of integral points on log-special varieties \cite{Campana2}. Now, since abelian varieties are mildly bounded over any algebraically closed field of characteristic zero (Corollary \ref{cor:qf_semi_is_mb}), we can make the following observation.   \\
\begin{center} \emph{Suppose   there is an algebraically closed field $k$ containing $\Qbar$ such that $(A\setminus D)_k$ is not arithmetically hyperbolic over $k$. Then,   $A\setminus D$ is not arithmetically hyperbolic over $\Qbar$.} \\ [0.5cm]
\end{center} 
 Thus, in other words, to solve Hassett-Tschinkel's arithmetic puncture problem for the variety $A\setminus D$  it suffices to show the infinitude of integral points on $A\setminus D$ over some finitely generated subring of $\mathbb{C}$. This is arguably (most likely) easier to achieve, as can be seen in the case that $D=\emptyset$.  The existence of a point of infinite order in $A(\CC)$ follows from the countability of the group of torsion points on $A$ and the uncountability of $\CC$.
\end{remark}

  \section{Application to  irregular surfaces  }\label{section:faltingsvojta}

 The aim of this section is to prove the mild boundedness of certain surfaces. We then use this to prove the Arithmetic Persistence Conjecture \ref{conj:pers} for   surfaces which admit a non-constant map to some abelian variety. The most general result we obtain on the Persistence Conjecture in this section is Theorem \ref{thm:vojta2}.

 \begin{lemma}[Grauert-Manin + $\epsilon$]\label{lem:mordell} Let $C$ be a smooth integral curve over $k$, and let $X\to C$ be a quasi-projective flat morphism of integral schemes whose fibres are groupless  of dimension at most one.  Let $c\in C(k)$ and let $x\in X(k)$. Then the set of sections $\sigma:C\to X$ of $X\to C$ with $\sigma(c) = x$ is finite. 
 \end{lemma}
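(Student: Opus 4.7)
The plan is to reduce the statement to the classical function-field Mordell theorem (Grauert--Manin / Arakelov--Parshin / Manin's rigidity) together with the De~Franchis--Severi theorem.

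First, I would compactify the family. Let $\bar C$ be the smooth projective completion of $C$ and let $\bar X \to \bar C$ be a projective relative compactification of $X \to C$ (take the closure in a projective embedding and resolve singularities away from $X$). Because $\bar C$ is a smooth projective curve and $\bar X$ is proper, any section $\sigma \colon C \to X$ extends uniquely to a morphism $\bar\sigma \colon \bar C \to \bar X$; moreover, $\bar\sigma$ is automatically a section of $\bar X \to \bar C$, since the composition $\bar X \to \bar C \circ \bar\sigma$ and the identity agree on the dense open $C \subset \bar C$. It therefore suffices to bound the sections of $\bar X \to \bar C$ that pass through $(c,x)$ and whose restriction to $C$ lands in $X$.

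Second, I would reduce to a smooth projective family of curves of genus at least $2$. Since every geometric fibre of $X \to C$ is groupless of dimension at most $1$, Theorem~\ref{thm:generisation}(ii) implies that the geometric generic fibre is groupless of dimension at most $1$, hence its smooth projective model is either a point or a smooth projective curve of genus $g \geq 2$ (the genus $0$ and $1$ cases being ruled out by grouplessness). In the $0$-dimensional case the statement is immediate, since sections of a generically finite morphism of curves $X \to C$ through a fixed point are determined uniquely, hence finite in number. In the $1$-dimensional case, after a generically finite base change $\tilde C \to \bar C$ and Deligne--Mumford semistable reduction, I would produce a semistable family $\mathcal X \to \tilde C$ of stable genus $g \geq 2$ curves birational to $\bar X \times_{\bar C} \tilde C$; sections of the original family lift to sections of this semistable model and finiteness of the latter implies finiteness of the former (the fibre in which $\bar\sigma(c)$ lies imposes only finitely many choices of lift).

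Third, for the semistable family $\mathcal X \to \tilde C$, I would split into the isotrivial and non-isotrivial case. In the non-isotrivial case, Grauert--Manin/Arakelov--Parshin gives that the set of sections of $\mathcal X \to \tilde C$ is already finite, even without pinning a point. In the isotrivial case, after a further finite base change we may assume $\mathcal X \cong \tilde C \times Y$ with $Y$ a smooth projective curve of genus $\geq 2$; pointed sections of the product through $(\tilde c, \tilde y)$ correspond bijectively to pointed morphisms $[\tilde C, \tilde c]\to [Y,\tilde y]$, and these form a finite set by De~Franchis--Severi (or equivalently by Example~\ref{ex:genus-greater-than-2}, which says $Y$ is mildly bounded).

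The main obstacle will be carrying out the compactification and semistable reduction while keeping track of sections, and in particular verifying that sections of the original quasi-projective family $X \to C$ correspond, under the chain of birational modifications, to a subset of the sections of the semistable model. The ``$+\epsilon$'' over the classical Grauert--Manin theorem is exactly that $X \to C$ is only quasi-projective and need not have smooth projective fibres; the role of pinning $\sigma(c)=x \in X(k)$ is to prevent sections from escaping into the boundary $\bar X \setminus X$ near $c$, and by rigidity of sections of a family of curves this local constraint propagates to the whole of $C$, thereby reducing the quasi-projective problem to the projective one.
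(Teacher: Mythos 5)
There is a genuine gap in the second step of your proposal, precisely at the point where the ``$+\epsilon$'' of the lemma lives. You assert that the geometric generic fibre of $X\to C$, being a groupless curve, has smooth projective model of genus $g\geq 2$. This is false: a smooth quasi-projective groupless curve can have projective compactification of genus $0$ or $1$. Standard examples are $\mathbb{P}^1_k\setminus\{0,1,\infty\}$ (groupless, compactification $\mathbb{P}^1$, genus $0$) and $E\setminus\{0\}$ for $E$ an elliptic curve (groupless, compactification $E$, genus $1$). Grouplessness only rules out non-constant maps from $\mathbb{G}_a$, $\mathbb{G}_m$, and abelian varieties, which is a condition on the \emph{open} curve, not its compactification. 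Consequently, when you compactify $\bar X\to\bar C$ and run semistable reduction, you do not obtain a family of stable curves of genus $\geq 2$, and Grauert--Manin/Arakelov--Parshin does not apply. The rest of your argument (isotrivial vs.\ non-isotrivial, De~Franchis--Severi in the split case) would go through fine once you are in genus $\geq 2$, but you never get there in the affine-fibre case.

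The paper's proof handles exactly this by first shrinking $C$ so that $X\to C$ is smooth of relative dimension one, and then replacing $X$ by a finite \'etale cover $Y\to X$ whose generic fibre has compactified genus at least two (this is possible for any hyperbolic curve: affine hyperbolic curves have finite \'etale covers with arbitrarily large compactified genus, because their topological fundamental group is nonabelian free). Only then does one compactify and apply Grauert--Manin plus De~Franchis--Severi. The pointed-section condition $\sigma(c)=x$ is used to control the finite ambiguity introduced by the cover and to pin down the constant section in the isotrivial case. To repair your proposal you would need to insert this finite \'etale cover before the compactification, and check that sections of $X\to C$ through $(c,x)$ correspond, up to bounded finite ambiguity, to sections of the new family through one of finitely many lifted base points.

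A smaller issue: Theorem~\ref{thm:generisation}(ii) is stated for projective morphisms, whereas $X\to C$ is only quasi-projective; the paper instead gets grouplessness of the generic fibre from grouplessness of the closed fibres by a more elementary argument (openness of grouplessness for families of curves, or direct inspection after shrinking $C$). This is not a fatal problem, but it is another place where the quasi-projective setting requires more care than your sketch acknowledges.
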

 \begin{proof} Replacing $C$ by a dense open if necessary, we may (and do) assume that $X\to C$ is smooth of relative dimension one, so that  $X$ is a smooth integral surface over $k$. Moreover, replacing $X\to C$ by its Stein factorization if necessary, we may and do assume that the fibres of $X\to C$ are geometrically connected (and thus geometrically irreducible). Since the fibres of $X\to C$ are groupless smooth quasi-projective irreducible curves, there is a finite \'etale morphism $Y\to X$ such that the geometric generic fibre of $Y\to C$ is of genus at least two. Replacing $X$ by $Y$, we may  assume that  the geometric fibres of $X\to C$ are smooth projective curves with genus at least two.  Moreover, to prove the lemma, we may assume that the set of sections $X(C)$ is infinite. In this case, by Grauert-Manin's theorem
\cite{Grauert65, Manin63},  there is a smooth projective irreducible curve $X_0$ over $k$ and an isomorphism $X\cong X_0\times_k C$ over $C$. Furthermore,  the set $X(C)\setminus X_0(k)$ is finite. (This latter finiteness statement follows from the theorem of De Franchis-Severi.)
 As there is precisely one element $\sigma$ in the subset $X_0(k)$ of $X(C)$ with  $\sigma:C\to X$ with $\sigma(c) = x$, this concludes the proof.
 \end{proof}
 
 \begin{lemma}\label{lem:geometric_statement} For every smooth irreducible curve $C$ over $k$, there is an integer $n\geq 1$ and points $c_1,\ldots, c_n$ having the following property: for any  quasi-projective variety $X$  over $k$ which admits a morphism $\varphi:X\to G$ to a semi-abelian variety $G$ over $k$ with groupless fibres of dimension at most one and   every $x_1,\ldots,x_n$ in $X(k)$, the set  $$\Hom_k([C,(c_1,\ldots,c_n)],[X,(x_1,\ldots,x_n)])$$ is finite. In particular, the variety $X$ is mildly bounded over $k$.
 \end{lemma}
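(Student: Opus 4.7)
The plan is to combine the uniform mild boundedness of semi-abelian varieties (Proposition \ref{prop:semiabelian-r+g-points0}) with the Grauert-Manin-style finiteness result for families of groupless curves (Lemma \ref{lem:mordell}), and to reduce the general case to these two inputs by composing an arbitrary morphism with $\varphi$.

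First I would apply Proposition \ref{prop:semiabelian-r+g-points0} to the curve $C$ (viewed as a dense affine open of its smooth projective completion $\bar{C}$; if $C$ happens to be projective I would first remove one extra point to fit the hypothesis $r>0$, noting that by separatedness no morphism $C\to X$ is identified with another upon restriction) to obtain an integer $n_{0}\geq 1$ and points $c_{1},\ldots,c_{n_{0}}\in C(k)$ such that for every semi-abelian variety $A$ over $k$ and every tuple $(a_{1},\ldots,a_{n_{0}})\in A(k)^{n_{0}}$ the set $\Hom_{k}([C,(c_{i})],[A,(a_{i})])$ contains at most one morphism. I would then adjoin one further point $c_{n_{0}+1}\in C(k)$ distinct from the others and set $n:=n_{0}+1$.

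Given data $\varphi\colon X\to G$ and points $x_{1},\ldots,x_{n}\in X(k)$ as in the statement, I would take any $f\in\Hom_{k}([C,(c_{i})],[X,(x_{i})])$ and consider the composite $\psi:=\varphi\circ f\colon C\to G$. Since $\psi(c_{i})=\varphi(x_{i})$ for $1\leq i\leq n_{0}$, the previous step forces $\psi$ to be uniquely determined (if it exists at all); in particular there is at most one candidate $\psi$. Then $f$ corresponds to a section $\tilde f\colon C\to X_{\psi}:=X\times_{\varphi,G,\psi}C$ of the projection $\pi\colon X_{\psi}\to C$, with the pointed constraint $\tilde f(c_{n_{0}+1})=(x_{n_{0}+1},c_{n_{0}+1})$. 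The fibres of $\pi$ coincide with the fibres of $\varphi$, so they are groupless of dimension at most one. Applying Lemma \ref{lem:mordell} to each irreducible component of $X_{\psi}$ (after restricting to its flat locus over $C$, as is standard) then bounds the set of such sections, and hence the set of such $f$, finitely.

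The main technical obstacle I expect is the case where the unique $\psi$ is the constant morphism with value $g_{0}:=\varphi(x_{1})=\cdots=\varphi(x_{n_{0}})$. In that case $X_{\psi}\cong X_{g_{0}}\times C$ is a product family whose fibre $X_{g_{0}}$ is a groupless quasi-projective variety of dimension at most one, but possibly singular or reducible, so Lemma \ref{lem:mordell} does not apply verbatim. I would handle this either by passing to the normalisations $\widetilde{Y}_{0}\to Y_{0}$ of the finitely many irreducible components $Y_{0}$ of $X_{g_{0}}$ through which a non-constant $f$ must factor, and then using the Abel-Jacobi embedding $\widetilde{Y}_{0}\hookrightarrow\mathrm{Alb}(\widetilde{Y}_{0})$ into a semi-abelian variety together with the uniform statement of Proposition \ref{prop:semiabelian-r+g-points0} (which bounds $\Hom_{k}([C,(c_{i})],[\mathrm{Alb}(\widetilde{Y}_{0}),(\cdot)])$ to at most one element), or equivalently by invoking De Franchis--Severi after passing to the smooth projective compactification of each normalised component. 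The ``in particular'' clause about mild boundedness of $X$ is then immediate from Definition \ref{defn:mild_bounded}.
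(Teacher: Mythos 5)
Your proposal is correct and follows essentially the same route as the paper: choose the points via Proposition \ref{prop:semiabelian-r+g-points0}, use them to pin down the composite $C\to G$, then split into the constant-composite case (reduced to the one-dimensional groupless fibre via normalisation and Abel--Jacobi) and the non-constant case (handled by base change along $\psi$ and Lemma \ref{lem:mordell}). The only minor inaccuracy is the parenthetical claim that the constant-composite case could ``equivalently'' be handled by De Franchis--Severi after compactifying: the normalised fibre component can be a non-hyperbolic curve such as an elliptic curve minus a point, so the Abel--Jacobi route you give as your primary argument is the one that actually works uniformly.
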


\begin{proof}   (We will apply Grauert-Manin's theorem in the form of Lemma \ref{lem:mordell}.  We could avoid appealing to Grauert-Manin's theorem and instead use  our results on the mild boundedness of total spaces of abelian schemes. However, appealing to the Grauert-Manin theorem allows us to give a shorter proof.) 

Let $C$ be a smooth affine irreducible curve over $k$.  Choose $n\geq 1$ and $c_1,\ldots,c_n$ in $C(k)$ as in Proposition \ref{prop:semiabelian-r+g-points0}, so that for every semi-abelian variety $A$ over $k$ and every $a_1,\ldots,a_n$ in $A(k)$ the set   \[
\Hom_k([C,(c_1,\ldots,c_n)], [A,(a_1,\ldots,a_n)])
\] is finite.  

Let $H$ be the subset of $\Hom_k(C,X)$ of morphisms $f:C\to X$ such that the composed morphism $C\to X\to G$ is constant.   If the set $\Hom_k([C,(c_1,\ldots,c_n)],[X,(x_1,\ldots,x_n)])\cap H$ is non-empty, then     $x_1,\ldots,x_n$ lie in $X_{\varphi(x_1)}$ and 
\begin{align*}
 \Hom_k([C,(c_1,&\ldots,c_n)],[X,(x_1,\ldots,x_n)]) \cap H\\ 
&=  \Hom_k([C,(c_1,\ldots,c_n)], [X_{\varphi(x_1)}, (x_1,\ldots,x_n)])
\end{align*} which is finite by the defining property of $c_1,\ldots,c_n$, and the fact that $  X_{\varphi(x_1)}$ is at most one-dimensional and groupless. Thus, to prove the lemma, it suffices to show that 
  \[\Hom_k([C,(c_1,\ldots,c_n)],[X,(x_1,\ldots,x_n)]) \, \setminus \, H \] is finite.
  
  Now, let $g_i:=\varphi(x_i)$ and note that 
  \[
  \Hom_k^{\mathrm{nc}}([C,(c_1,\ldots,c_n)], [G,(g_1,\ldots,g_n)]),
  \] the set of non-constant morphisms $C \to G$ mapping $c_i$ to $g_i$ for $i = 1, \ldots, n$, is finite.  Let $\nu_1,\ldots,\nu_r$ be its elements. Then, for every $1\leq i \leq r$, we consider the morphism $\varphi_i:X_{i}\to C$, where $X_{i} :=X\times_{\varphi, G, \nu_i} C$ is the pull-back of $X\to G$ along $\nu_i:C\to G$. As the morphism $\varphi_i$ has groupless fibres of dimension at most one, it follows from Grauert-Manin's theorem (Lemma \ref{lem:mordell})  that there are only finitely many  sections which map $c_1$ to $x_1$.

In particular,
\begin{align*}
\Hom_k&([C,(c_1,\ldots,c_n)],[X,(x_1,\ldots,x_n)]) \, \setminus \, H\\
&= \bigcup_{i=1}^r \, \Hom_C([C, (c_1, \ldots, c_n)], [X_i, (x_1, \ldots, x_n)])
\end{align*}
is finite. This finishes the proof of the statement.
\end{proof}

\begin{proof}[Proof of Theorem \ref{thm:surfaces_q}]
This follows from Lemma \ref{lem:geometric_statement}.
\end{proof}

  \begin{lemma}\label{lem:vojtatje}
Let $k\subset L$ be an extension of algebraically closed fields of characteristic zero and let $X$ be a quasi-projective  variety over $k$ which  admits a   morphism to some semi-abelian variety over $k$ with fibres of dimension at most one. If $L/k$ has transcendence degree one and  $X$ is arithmetically hyperbolic over $k$, then $X_L$  is arithmetically hyperbolic over $L$.
\end{lemma}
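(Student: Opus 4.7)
The plan is to reduce to Theorem~\ref{thm:gen_crit0}: since $X$ is already assumed arithmetically hyperbolic over $k$ and $L/k$ has transcendence degree one, it suffices to prove that $X$ is mildly bounded over $k$.

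To establish mild boundedness, I would apply Lemma~\ref{lem:geometric_statement}: given the hypothesis that $\varphi\colon X \to G$ has fibres of dimension at most one, the only remaining input needed is that the fibres of $\varphi$ over every $k$-point of $G$ are groupless. The key step is therefore to verify this grouplessness using arithmetic hyperbolicity. For any $g \in G(k)$, the reduced fibre $X_g^{\mathrm{red}}$ is a closed subvariety of $X$. Arithmetic hyperbolicity passes to closed subvarieties: if $\mathcal{X}$ is a model for $X$ over a $\ZZ$-finitely generated subring $A \subset k$ witnessing arithmetic hyperbolicity, then the scheme-theoretic closure of $X_g^{\mathrm{red}}$ in $\mathcal{X}$ is a model for $X_g^{\mathrm{red}}$ whose $A'$-points form a subset of $\mathcal{X}(A')$, hence are finite for every $\ZZ$-finitely generated $A \subset A' \subset k$. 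Thus $X_g^{\mathrm{red}}$ is arithmetically hyperbolic over $k$ and has dimension at most one.

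The hard part of the argument will be showing that a one-dimensional arithmetically hyperbolic variety over $k$ is groupless. I would argue by contradiction: if a one-dimensional integral variety $Y$ over $k$ is not groupless, then the normalisation of some irreducible component of $Y$ is either $\mathbb{P}^1_k$, $\mathbb{A}^1_k$, $\Gmu{k}$, an elliptic curve, or an elliptic curve minus a point, and each of these (together with any birational modification) has infinitely many integral points over some $\ZZ$-finitely generated subring of $k$ (by the Siegel--Mahler--Lang theorem in the affine cases, and trivially in the projective cases since elliptic curves over number fields acquire rank after a finite extension; for $\mathbb{P}^1$ and its quotients the statement is elementary). This contradicts arithmetic hyperbolicity of $X_g^{\mathrm{red}}$, so each fibre of $\varphi$ is groupless, as required. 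This step is where all the classical arithmetic input enters; once it is settled, Lemma~\ref{lem:geometric_statement} applies to give that $X$ is mildly bounded.

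Having obtained that $X$ is arithmetically hyperbolic and mildly bounded over $k$, the conclusion is immediate from Theorem~\ref{thm:gen_crit0}: for $L/k$ of transcendence degree one, $X_L$ is arithmetically hyperbolic over $L$. The main obstacle is really the grouplessness-from-arithmetic-hyperbolicity step for one-dimensional varieties; every other step is a direct application of a result already available in the paper.
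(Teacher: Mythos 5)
Your overall reduction is sound and in fact matches the paper's strategy: the paper's proof essentially inlines the argument of Theorem~\ref{thm:gen_crit0} (rather than citing it) after establishing mild boundedness of $X$ via Lemma~\ref{lem:geometric_statement}. Where you diverge, and where your proposal has a genuine gap, is in the step establishing grouplessness of the fibres. The paper gets this almost for free: it cites from \cite[\S3]{JAut} the implication that an arithmetically hyperbolic variety over $k$ is groupless, applies this to $X$ itself, and then observes that grouplessness is inherited by all closed subvarieties, in particular by the fibres of $\varphi$. Your alternative route---restrict arithmetic hyperbolicity to the fibres and prove directly that one-dimensional arithmetically hyperbolic varieties are groupless---reaches the right conclusion in principle, but your case analysis is wrong in two respects. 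First, an elliptic curve minus a point \emph{is} groupless and, by Siegel's theorem, has only \emph{finitely} many integral points over any number ring, so it does not belong on your list of non-groupless curves and is a counterexample to the assertion you make about it. Second, the Siegel--Mahler--Lang theorem is a \emph{finiteness} statement for integral points on affine hyperbolic curves, so invoking it to produce \emph{infinitely} many integral points gets the theorem exactly backwards; the infinitude for $\mathbb{A}^1$, $\Gmu{k}$, and $\mathbb{P}^1_k$ is elementary and does not need Siegel--Mahler--Lang at all, while the infinitude for complete elliptic curves uses the non-trivial fact (Frey--Jarden) that every elliptic curve over a number field attains positive rank after a finite extension, which you present as trivial and without citation. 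The simplest repair is to drop the case analysis entirely and use the paper's cited fact that arithmetic hyperbolicity of $X$ implies grouplessness of $X$; the grouplessness of the fibres is then immediate, and the rest of your proof goes through.
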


\begin{proof} (We  adapt the proof of  \cite[Lemma~4.3]{JAut}, and use both Grauert-Manin's function field Mordell conjecture and the fact that abelian varieties are mildly bounded.)

Let $G$ be an abelian variety over $k$ and let $f:X\to G$ be a non-constant morphism. Let $A\subset k$ be a $\ZZ$-finitely generated subring, let $\mathcal{X}$ be a projective model for $X$ over $A$, let $\mathcal{G}$ be an abelian scheme over $A$, and let $F:\mathcal{X}\to \mathcal{G}$  be a morphism of $A$-schemes such that $F_k = f$. Let $B\subset L$ be a $\ZZ$-finitely generated subring  containing $A$. To prove that $X_L$ is arithmetically hyperbolic over $L$, we show that the set $\mathcal{X}(B)$ of morphisms $\Spec B \to \mathcal{X}$  over $A$ is finite.  Replacing $B$ by a larger $\ZZ$-finitely generated subring of $L$ if necessary, we may and do assume that the affine scheme $\mathcal{C}:=\Spec B$ is smooth over $A$.

 If $\dim B = \dim A$, then $B$ is contained in $k$, so that the finiteness of $\mathcal{X}(B)$ follows from the assumption that $X$ is arithmetically hyperbolic over $k$.
Therefore, we may and do assume that $\dim \mathcal{C} = \dim A+ 1$, i.e., the ``arithmetic scheme'' $\mathcal{C}$ is a ``curve'' over $A$.
Define $C:=\mathcal{C}\times_A k$, and note that $C$ is a smooth affine one-dimensional scheme over $k$. 

Note that, as $X$ is arithmetically hyperbolic over $k$, it follows that $X$ is groupless over $k$ (see \cite[\S3]{JAut}). In particular, as $X$ is groupless over $k$, the fibres of $f:X\to G$  are groupless. Therefore,  as the fibres of $f:X\to G$ are groupless of dimension at most one, we can  apply Lemma \ref{lem:geometric_statement}. Thus, we choose an integer  $n\geq 1$ and points $c_1,\ldots, c_n$ in $C(k)$  such that, for every $x_1,\ldots,x_n$ in $X(k)$, the set 
\[
\Hom_k([C,(c_1,\ldots,c_n)],[X,(x_1,\ldots,x_n)])
\] is finite. Next, we choose   a $\ZZ$-finitely generated subring $A'\subset k$   containing $A$ such that the   points $c_1,\ldots, c_n$ in $C(k)$ descend to sections $\sigma_1,\ldots, \sigma_n$ of $\mathcal{C}'=\mathcal{C}\times_A A'$ over $A'$. Since $\mathcal{X}(\mathcal{C})\subset \mathcal{X}(\mathcal{C}') = \Hom_A(\mathcal{C}',\mathcal{X})$, it suffices to show that $\mathcal{X}(\mathcal{C}')$ is finite.  

By assumption, the variety $X$ is arithmetically hyperbolic over $k$, so that $\mathcal{X}(A')$ is finite. Also, we have the following inclusion of sets  
\[
\mathcal{X}(\mathcal{C}') \subset \bigcup_{(x_1,\ldots,x_n)\in \mathcal{X}(A')^n} \Hom_{A'}([\mathcal{C}',(\sigma_1,\ldots,\sigma_n)],[\mathcal{X}_{A'},(x_1,\ldots,x_n)]),
\] where $ \Hom_{A'}([\mathcal{C}',(\sigma_1,\ldots,\sigma_n)],[\mathcal{X}_{A'},(x_1,\ldots,x_n)]) $  is the set of morphisms $P:\mathcal{C}'\to \mathcal{X}_{A'}$ such that $P(\sigma_1) = x_1,\ldots, P(\sigma_n)=x_n$.  Therefore, it suffices to show that, for any choice of  (not necessarily pairwise distinct) elements  $x_1,\ldots, x_n$ in the finite set $ \mathcal{X}(A')$, the set 
\[ 
\Hom_{A'}([\mathcal{C}',(\sigma_1,\ldots,\sigma_n)],[\mathcal{X}_{A'},(x_1,\ldots,x_n)])
\] is finite. Thus, let us fix sections $x_1,\ldots,x_n$ of $\mathcal{X}(A')$.   As there are more morphisms over $k$ than over $A'$, we have  the following inclusion of sets
\[
\Hom_{A'}([\mathcal{C}',(\sigma_1,\ldots,\sigma_n)],[\mathcal{X}_{A'},(x_1,\ldots,x_n)]) \subset \Hom_k([C,(c_1,\ldots,c_n)], [X,(x_{1,k}, \ldots,x_{n,k})]).
\]  By our choice of $c_1,\ldots,c_n$ in $C(k)$, the  latter  set is finite. We conclude that $X_L$ is arithmetically hyperbolic over $L$, as required.
\end{proof}

 \begin{theorem}\label{thm:vojta2} Let $k\subset L$ be an extension of algebraically closed fields of characteristic zero and let $X$ be a quasi-projective  variety  over $k$ which  admits a   morphism to some semi-abelian variety over $k$ whose fibres are of dimension at most one. If $X$ is arithmetically hyperbolic over $k$, then $X_L$  is arithmetically hyperbolic over $L$.
 \end{theorem}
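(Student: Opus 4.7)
The plan is to deduce Theorem \ref{thm:vojta2} from Lemma \ref{lem:vojtatje} by induction on transcendence degree. The key observation is that Lemma \ref{lem:vojtatje} already handles the case of transcendence degree one, so all that remains is to bootstrap this to arbitrary extensions of algebraically closed fields.

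First, as in the proof of Corollary \ref{cor:below}, I would use the definition of arithmetic hyperbolicity (which only involves $\ZZ$-finitely generated subrings) to reduce to the case in which $L/k$ has \emph{finite} transcendence degree. Indeed, any $\ZZ$-finitely generated subring of $L$ sits inside a $\ZZ$-finitely generated subring of a finitely generated field extension of $k$ contained in $L$, so it suffices to treat this case.

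Next, I would argue by induction on the transcendence degree $n$ of $L$ over $k$. The base case $n=0$ is immediate: since $k$ and $L$ are both algebraically closed and $L/k$ is algebraic, one has $L=k$, and there is nothing to prove. For the inductive step, suppose the statement is known for all extensions of transcendence degree less than $n$, and let $L/k$ have transcendence degree $n\geq 1$. I would choose an intermediate algebraically closed subfield $k\subset K\subset L$ such that $K/k$ has transcendence degree $n-1$ and $L/K$ has transcendence degree one. The morphism $X\to G$ to a semi-abelian variety base-changes to a morphism $X_K\to G_K$ to a semi-abelian variety over $K$ whose fibres remain of dimension at most one, and by the induction hypothesis $X_K$ is arithmetically hyperbolic over $K$. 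Applying Lemma \ref{lem:vojtatje} to $X_K$ over $K$ and to the transcendence-degree-one extension $K\subset L$, I conclude that $X_L=(X_K)_L$ is arithmetically hyperbolic over $L$.

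The hard work has already been carried out in Lemma \ref{lem:vojtatje}, which invokes the mild boundedness provided by Lemma \ref{lem:geometric_statement} together with Grauert--Manin's function field Mordell conjecture. The only nontrivial point here is to verify that the hypotheses of Lemma \ref{lem:vojtatje}, namely the existence of a morphism to a semi-abelian variety with fibres of dimension at most one and arithmetic hyperbolicity, are preserved under base change at each stage of the induction; but this is immediate from the fact that both properties are stable under extension of the base field (the latter by the induction hypothesis).
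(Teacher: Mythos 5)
Your proof is correct and takes essentially the same approach as the paper: reduce to finite transcendence degree, induct on that degree, and apply Lemma \ref{lem:vojtatje} at each step. The only difference is that you explicitly note the preservation of the fibre-dimension hypothesis under base change, which the paper leaves implicit.
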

\begin{proof}  We may and do assume that $L$ has finite transcendence degree over $k$, say $n\geq 0$. We now argue by induction on $n$.  By our assumption that $X$ is arithmetically hyperbolic over $k$, we may assume that $n>0$. Let $k\subset K\subset L$ be an algebraically closed subfield such that $K$ has transcendence degree $n-1$ over $k$. Then,   the induction hypothesis implies that $X_K$ is arithmetically hyperbolic over $K$. Now,  as $X_K$   admits a morphism to  some semi-abelian variety over $K$ with fibres of dimension at most one,  the theorem   follows from Lemma \ref{lem:vojtatje}, as $L$ has transcendence degree one over $K$.
\end{proof}

\begin{corollary}\label{cor:surfaces_text}
Let $k\subset L $ be an extension of algebraically closed fields of characteristic zero and let $X$ be an integral projective surface over $k$ which admits a non-constant morphism to some abelian variety.  If $X$ is   arithmetically hyperbolic  over $k$, then $X_L$ is arithmetically hyperbolic over $L$.
\end{corollary}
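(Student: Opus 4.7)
The plan is to reduce this statement directly to Corollary \ref{cor:below} once the mild boundedness of $X$ over $k$ has been established. The overarching strategy is: arithmetic hyperbolicity $\Rightarrow$ grouplessness $\Rightarrow$ (via Theorem \ref{thm:surfaces_q}) mild boundedness $\Rightarrow$ (via Corollary \ref{cor:below}) persistence of arithmetic hyperbolicity.

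First, I would verify that $X$ is groupless over $k$. Indeed, if $B \to X$ were a non-constant morphism from an abelian variety $B$, then after spreading out to a $\mathbb{Z}$-finitely generated subring $A\subset k$ and enlarging $A$ so that the resulting model of $B$ acquires a point of infinite order, one would produce infinitely many $A$-points on a model of $X$, contradicting the arithmetic hyperbolicity of $X$. This standard implication is recorded, for example, among the basic properties of groupless varieties in \cite[\S3]{JAut}.

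Once grouplessness is in hand, Theorem \ref{thm:surfaces_q} applies verbatim: a projective groupless surface admitting a non-constant map to an abelian variety is mildly bounded over $k$. This is the one non-formal input of the argument, and it is precisely where the surface hypothesis is used (via the reduction to the relative dimension one setting and Grauert--Manin in Lemma \ref{lem:geometric_statement}).

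At this stage $X$ is a projective, arithmetically hyperbolic, and mildly bounded variety over $k$, so Corollary \ref{cor:below} applies directly and gives that $X_L$ is arithmetically hyperbolic over $L$, concluding the proof. If one prefers to unfold Corollary \ref{cor:below}, the logic is: by definition of arithmetic hyperbolicity we may assume $L/k$ has finite transcendence degree, then Corollary \ref{cor:mb_persists} propagates mild boundedness of $X$ to $X_K$ for every intermediate $k\subset K\subset L$, and finally the persistence criterion (Theorem \ref{thm:gen_crit}) yields the desired arithmetic hyperbolicity of $X_L$. There is no substantive obstacle in the argument; all the mathematical content is carried by the preceding results, and the only real point to check is the (standard) implication that arithmetic hyperbolicity forces grouplessness, which is the key to invoking the surface-specific mild boundedness theorem.
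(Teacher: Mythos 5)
Your proposal is correct, but it follows a genuinely different path than the paper's. The paper derives the corollary in one line from Theorem \ref{thm:vojta2} (persistence for quasi-projective varieties admitting a morphism to a semi-abelian variety with fibres of dimension at most one), which is proven by induction on transcendence degree via Lemma \ref{lem:vojtatje}; the latter re-establishes the relevant mild boundedness of $X_K$ at each step of the induction by invoking Lemma \ref{lem:geometric_statement} over the intermediate ground field $K$, then runs a spreading-out argument modelled on Theorem \ref{thm:gen_crit0}. You instead establish mild boundedness of $X$ over $k$ once and for all (via Theorem \ref{thm:surfaces_q}, itself a consequence of Lemma \ref{lem:geometric_statement}) and then delegate the propagation along field extensions to Corollary \ref{cor:below}, which in turn relies on Corollary \ref{cor:mb_persists}. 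Since Corollary \ref{cor:mb_persists} is ultimately powered by the Zariski-countable-openness theorem for the mildly bounded locus (Theorem \ref{thm:mildly-bounded-is-zco}), your route quietly invokes the heaviest family-theoretic machinery of the paper, whereas the paper's proof stays self-contained within Section \ref{section:faltingsvojta} by re-using the structure of $X$ at every inductive step. The trade-off is that your argument is more modular (prove mild boundedness, then cite the general persistence criterion) while the paper's is more hands-on but also works for quasi-projective $X$, which Corollary \ref{cor:below} cannot handle as it depends on projectivity through Corollary \ref{cor:mb_persists}. One small caveat: your sketch of ``arithmetic hyperbolicity $\Rightarrow$ grouplessness'' is slightly loose as stated (the integer multiples of a point of infinite order on $B$ could \emph{a priori} all land in a single positive-dimensional fibre of $B \to X$, so one needs an extra argument, e.g.\ passing to a translate of the Zariski closure of the orbit); but since you cite the result from \cite[\S3]{JAut} rather than relying on the sketch, this does not affect the correctness of the proof.
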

\begin{proof}
If $A$ is an abelian variety over $k$ and $X\to A$ is a non-constant morphism, then the fibres of $X\to A$ are at most one-dimensional, so that the result follows from Theorem \ref{thm:vojta2}.
\end{proof}

 \subsection{Proof of Theorem \ref{thm:surfaces_intro}}  \label{section:weird}
We will prove Theorem \ref{thm:surfaces_intro}   using Corollary \ref{cor:surfaces_text}. However, deducing Theorem \ref{thm:surfaces_intro} from Corollary \ref{cor:surfaces_text} involves understand a subtle difference between the a priori finiteness of rational points on projective varieties and the finiteness of integral points; we refer the reader to \cite[\S7]{JBook} for a discussion of this phenomenon which does not occur when one studies rational points on projective varieties over number rings, but   occurs (in some situations) when studying rational points valued in finitely generated fields with positive transcendence degree over $\mathbb{Q}$. To deal with this subtlety in a systematic manner, we introduce the notion of a ``pure model''. 
 
  \begin{definition}[Pure model]\label{defnnn}
 Let $X$ be a variety over $k$. Let $A\subset k$ be a subring. A model $\mathcal{X}$ for $X$ over $A$ is \emph{pure over $A$} (or: \emph{satisfies the extension property over $A$}) if, for every smooth finite type integral scheme $T$ over $A$, every dense open subscheme $U\subset T$ with $T\setminus U$   of codimension at least two in $T$, and every morphism $f:U\to \mathcal{X}$, there is a (unique) morphism $\overline{f}:T\to \mathcal{X}$   extending the morphism $f$.   (The uniqueness of the extension $\overline{f}$ follows from our convention that a model for $X$ over $A$ is separated.)
  \end{definition}
   
  Note that $X$ is pure over $k$ (in the sense of Remark \ref{remark:pure}) if, for $A=k$, it has a pure model over $A$  (in the sense of Definition \ref{defnnn}).
  
  \begin{definition} A variety $X$ over $k$ has an \emph{arithmetically-pure model} if there is a $\ZZ$-finitely generated subring $A\subset k$ and a pure model $\mathcal{X}$ for $X$ over $A$.
  \end{definition}

\begin{remark}\label{remark:pp0}
Let $X$ be a proper variety over $k$ which has an arithmetically-pure model. Then $X$ has no rational curves (i.e., $X$ is pure over $k$).
\end{remark}

\begin{remark}\label{remark:pp}
Let $A\subset k$ be a regular $\mathbb{Z}$-finitely generated subring.
 Let $X$ be a proper variety over $k$.
 A model $\mathcal{X}$ for $X$ over $A$  with no rational curves in any geometric fibre is pure over $A$ by \cite[Proposition~6.2]{GLL}. On the other hand,  a pure model for $X$ over $A$ might have rational curves in every special fibre (of positive characteristic), as can be seen by considering  the moduli space of principally polarised abelian surfaces over $\mathbb{Z}$.
\end{remark}

 The relevance of  proper varieties with an arithmetically-pure model should be clear from the following result.
  
  \begin{theorem}\label{thm:pure}
Let $X$ be an arithmetically hyperbolic proper variety over $k$ which has an arithmetically-pure model.   Then, for every finitely generated subfield $K\subset k$ and every model $\mathcal{X}$ for $X$ over $K$, the set $ \mathcal{X}(K)$ is finite. 
  \end{theorem}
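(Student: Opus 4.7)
The strategy is to reduce finiteness of $K$-rational points on an arbitrary model to finiteness of integral points on a single, good arithmetic model by extending rational points to integral points across a smooth arithmetic base using the valuative criterion of properness together with purity.

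As a first step I would consolidate the hypotheses into a single $\ZZ$-finitely generated subring $A\subset k$ and a model $\mathcal{Y}/A$ of $X$ that is simultaneously proper over $\Spec A$ (by spreading out the properness of $X/k$), pure over $A$, and arithmetically hyperbolic in the sense that $\mathcal{Y}(A')$ is finite for every $\ZZ$-finitely generated subring $A\subset A'\subset k$. Next, given a finitely generated subfield $K\subset k$ and any model $\mathcal{X}/K$ of $X$, a standard limit argument produces a finitely generated subfield $L\subset k$ containing both $K$ and $A$ together with an isomorphism $\mathcal{X}\otimes_K L\cong \mathcal{Y}\otimes_A L$. Since $\mathcal{X}(K)\hookrightarrow \mathcal{X}(L)=\mathcal{Y}(L)$, it suffices to show that $\mathcal{Y}(L)$ is finite for every such $L$.

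Fix $L$, and use generic smoothness in characteristic zero to choose a $\ZZ$-finitely generated $A$-subalgebra $B\subset L$ with $\mathrm{Frac}(B)=L$ and $\Spec B$ smooth (hence regular) over $\Spec A$. The central step is to show that the natural restriction map $\mathcal{Y}(B)\to \mathcal{Y}(L)$ is surjective. Given $\sigma\in \mathcal{Y}(L)$, spreading out shows that $\sigma$ extends to a morphism $U\to \mathcal{Y}$ on some dense open $U\subset \Spec B$. For each codimension one point $\eta\in \Spec B\setminus U$, the local ring $\mathcal{O}_{\Spec B,\eta}$ is a DVR by regularity, so the valuative criterion of properness applied to $\mathcal{Y}\to \Spec A$ extends $\sigma$ uniquely over $\Spec \mathcal{O}_{\Spec B,\eta}$; further spreading out lets me glue all these extensions to a morphism $V\to \mathcal{Y}$ on an open $V\subset \Spec B$ whose complement has codimension at least two. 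The arithmetically pure-model property of $\mathcal{Y}/A$, applied to the smooth $A$-scheme $T=\Spec B$, then produces the desired extension $\widetilde{\sigma}\in \mathcal{Y}(B)$.

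Since $\mathcal{Y}(B)$ is finite by arithmetic hyperbolicity and the restriction $\mathcal{Y}(B)\to \mathcal{Y}(L)$ is surjective, $\mathcal{Y}(L)$ is finite, and the theorem follows. The main obstacle is the extension step: the codimension one piece requires properness of $\mathcal{Y}/A$ together with regularity of $\Spec B$, while the codimension two piece requires $\mathcal{Y}$ to be pure along a smooth $A$-scheme. This is precisely the reason the hypothesis is formulated as an \emph{arithmetically pure} model (pure over some $\ZZ$-finitely generated $A$) rather than a merely geometric pure model over $k$: the smooth bases $\Spec B$ appearing in the argument are arithmetic schemes over $\Spec A$, not schemes over $\Spec k$, so geometric purity over $k$ alone would not close the argument.
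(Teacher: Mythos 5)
Your proposal is correct and follows essentially the same argument as the paper's: reduce to a pure proper model over a $\ZZ$-finitely generated subring $A$, choose a smooth $A$-algebra $B$ with fraction field $L$, extend each $L$-point of the model to a $B$-point by combining the valuative criterion of properness at codimension-one points with arithmetic purity across the remaining codimension-$\geq 2$ locus, and conclude by arithmetic hyperbolicity; your preliminary reduction to comparing an arbitrary model $\mathcal{X}/K$ with the fixed pure model after a common field extension is a useful explicit step that the paper leaves implicit. The only small slip is the parenthetical ``(hence regular)'': smoothness of $\Spec B$ over $\Spec A$ gives regularity of $\Spec B$ only once $\Spec A$ itself is regular, which one arranges by shrinking $A$ to be smooth over $\ZZ$ (this is exactly what the paper's phrase ``smooth $\ZZ$-finitely generated subring'' is doing).
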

  \begin{proof}    Let $A\subset k$ be a smooth $\ZZ$-finitely generated subring and let $\mathcal{X}$ be a  pure proper model for $X$ over $A$.  It suffices to show that for any finitely generated subfield $K\subset k$ containing $A$, the set $\mathcal{X}(K)$ is finite. To do so, let $B\subset k$ be a $\ZZ$-finitely generated subring containing $A$ with fraction field $K$ such that $\Spec B \to \Spec A$ is smooth. Let $T:=\Spec B$ and note that, for every $x$ in $\mathcal{X}(K)$, by the valuative criterion for properness, there is a dense open subscheme $U\subset T$ with $\mathrm{codim}(T\setminus U)\geq 2$ such that $x:\Spec K\to \mathcal{X}$ extends to a morphism $U\to \mathcal{X}$. Since $\mathcal{X}$ is a  pure model over $A$, this morphism extends uniquely to a morphism $T\to U$. This shows that $\mathcal{X}(B)= \mathcal{X}(K)$. Since $X$ is arithmetically hyperbolic over $k$, we have that $\mathcal{X}(B)$ is finite, so that $\mathcal{X}(K)$ is finite, as required. 
  \end{proof}

  \begin{lemma}\label{lem:existence_pure}
 Let $X$ be a projective integral groupless surface over $k$. If there is an abelian variety $G$ over $k$ and  a non-constant morphism $f:X\to G$, then $X$ has an arithmetically-pure model.
 \end{lemma}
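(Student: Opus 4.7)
The strategy is to build a proper model $\mathcal{X}$ of $X$ over a $\ZZ$-finitely generated subring $A\subset k$ whose geometric fibres contain no rational curves; purity will then follow at once from \cite[Proposition~6.2]{GLL}, as recorded in Remark \ref{remark:pp}. I would first reduce to the case where $X$ is normal via the normalization $n:X^\nu\to X$. This map is finite, $X^\nu$ remains groupless (grouplessness is inherited along finite maps), and the composition $X^\nu\to G$ is still non-constant. A pure model $\mathcal{X}^\nu/A$ of $X^\nu$ descends to a pure model of $X$ as follows: after enlarging $A$, spread $n$ to a finite morphism $\pi:\mathcal{X}^\nu\to \mathcal{X}$ of $A$-schemes; for any test morphism $f:U\to \mathcal{X}$ with $U\subset T$ smooth and with complement of codimension at least two, the normality of $U$ lets one factor $f$ through the normalization of its scheme-theoretic image into $\mathcal{X}^\nu$, then extend via purity of $\mathcal{X}^\nu$, and finally compose with $\pi$.

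With $X$ assumed normal, consider the Stein factorization $X\to Z\to G$ of $f$, with $X\to Z$ surjective with connected fibres and $Z\to G$ finite; since $X$ is normal, so is $Z$. If $f$ is generically finite, then Zariski's main theorem applied to the proper birational morphism $X\to Z$ of normal surfaces forces it to be an isomorphism, so $X\cong Z$ and $X\to G$ is itself finite. Spreading this out yields a finite morphism $\mathcal{X}\to \mathcal{G}$ over some $A$ with $\mathcal{G}/A$ an abelian scheme. Every geometric fibre of $\mathcal{X}\to \Spec A$ then maps finitely to an abelian variety; since abelian varieties admit no non-constant morphism from $\PP^1$ in any characteristic, those fibres contain no rational curves, and Remark \ref{remark:pp} delivers purity.

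The remaining case, in which $f:X\to G$ has one-dimensional image, is the main obstacle. Here $Z$ is a smooth projective curve mapping finitely to $G$, and therefore of geometric genus at least one, while $X\to Z$ is a fibration whose geometrically generic fibre is a smooth projective curve of genus at least two (by grouplessness of $X$). After spreading to $\mathcal{X}\to \mathcal{Z}\to \mathcal{G}$ with $\mathcal{Z}\to \mathcal{G}$ finite, the scheme $\mathcal{Z}$ is pure; however, the fibres of $\mathcal{X}\to \mathcal{Z}$ may degenerate in positive characteristic and acquire rational components, so \cite[Proposition~6.2]{GLL} does not apply directly to $\mathcal{X}$. To conclude, I would pass to a relative stable model of $\mathcal{X}\to \mathcal{Z}$ after enlarging $A$ and then extend any test morphism $f:U\to \mathcal{X}$ in two stages: first extend the composition $U\to \mathcal{Z}$ by purity of $\mathcal{Z}$, and then extend the resulting section of the base-changed relative stable curve by combining the separated Deligne-Mumford properness of $\overline{\mathcal{M}_{g,1}}$ with the standard codimension-two extension property for morphisms from smooth schemes to proper Deligne-Mumford stacks.
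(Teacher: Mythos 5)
Your proposal shares the paper's key idea---build a model whose geometric fibres have no rational curves and invoke \cite[Proposition~6.2]{GLL} via Remark~\ref{remark:pp}---but the case analysis introduces errors that the paper's more uniform argument avoids.

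The central problem is the Zariski main theorem claim in your generically finite case. Even with $X$ normal and groupless and $f$ generically finite, the Stein factorisation $X\to Z$ need not be an isomorphism: a proper birational morphism with connected fibres between normal surfaces can still contract irreducible curves of negative self-intersection, and on a groupless surface such a curve simply has genus at least two, so grouplessness offers no obstruction. (ZMT in the relevant form only gives $f_*\OO_X = \OO_Z$; it does not force $f$ to be quasi-finite.) Thus $X\to G$ need not be finite and the spreading-out to a finite $\mathcal{X}\to\mathcal{G}$ is unavailable. Your curve-image case has a separate gap: stable curves of genus $\geq 2$ may contain rational irreducible components (e.g.\ two copies of $\PP^1$ meeting at three nodes), so passing to a relative stable model does not purge rational curves from the fibres and you would still need to shrink the base to exclude the locus of non-groupless fibres. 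The reduction to normal $X$ is also shaky: lifting an arbitrary $U\to\mathcal{X}$ through the normalisation $\mathcal{X}^\nu\to\mathcal{X}$ requires dominance onto a component, which need not hold.

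The paper's proof requires none of this and makes no case distinction. Taking $Y=f(X)\subset G$, spreading $Y$ out as a closed subscheme of an abelian scheme gives a model of $Y$ whose fibres have no rational curves; the Stein factorisation $X\to Y'\to Y$ has $Y'$ finite over $Y$, so $Y'$ inherits such a model; and the fibres of $X\to Y'$ are groupless schemes of dimension at most one because they embed in the groupless surface $X$. Since grouplessness is a Zariski-open condition in families of curves and $\mathcal{Y}'$ is proper over $A$, one shrinks $A$ so the geometric fibres of $\mathcal{X}\to\mathcal{Y}'$ remain groupless, and the two facts together show no geometric fibre of $\mathcal{X}\to\Spec A$ contains a rational curve---uniformly over both your cases, with no appeal to ZMT, stable models, or a normalisation step.
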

 \begin{proof}    Let $Y$ be the image of $f$.  Since $G$ is an abelian variety, it follows that there is a regular $\mathbb{Z}$-finitely generated subring $A\subset k$ such that  $Y$ has a projective model over $A$
  whose  geometric  fibres do not have any rational curves. (More precisely, choose $A$ such that $Y\subset G$ extends to a closed immersion $\mathcal{Y}\subset \mathcal{G}$ with $\mathcal{G}$ an abelian scheme over $A$. Then, as the geometric fibres of $\mathcal{G}\to \Spec A$ are abelian varieties, they have no rational curves. This implies that the geometric fibres of $\mathcal{Y}\to \Spec A$ have no rational curves, as required.)
  
    Let $X\to Y'\to Y$ be the Stein factorisation of $X\to Y$, so that $X\to Y'$ has geometrically connected fibres and $Y'\to Y$ is finite and surjective.     By spreading out the finite surjective morphism $Y'\to Y$, replacing $\Spec A$ by a dense open if necessary, we may assume that $Y'$ has a projective model $\mathcal{Y}'$ over $A$ whose geometric fibres do not have any rational curves.

 The fibres of $f:X\to Y'$ are of dimension at most one (since $f$ is non-constant) and groupless (since $X$ is groupless).  Therefore, since being groupless  is a Zariski open property for curves,  replacing $\Spec A$ by a dense open if necessary, there is  a  projective   model $\mathcal{X}$  for $X$ over $A$ and a model $\mathcal{X}\to \mathcal{Y}'$  for $X\to Y'$ whose geometric fibres are groupless.  In particular, the fibres of $\mathcal{X}\to \mathcal{Y}'$ do not contain rational curves. Therefore, as the geometric fibres of $\mathcal{Y}'\to \Spec A$ do not contain rational curves, we conclude that the geometric fibres of $\mathcal{X}\to \Spec A$ do not contain rational curves. This implies that the model $\mathcal{X}$ is pure over $A$ (Remark \ref{remark:pp}). 
 \end{proof}
 
 \begin{theorem}\label{thm:surfaces_text2} Let $K$ be a finitely generated field over $\QQ$ and 
 let $X$ be a projective integral surface over  $K$. Assume that there is an abelian variety $A$ over $\overline{K}$ and a non-constant morphism from $X_{\overline{K}}$   $A$.    If, for every \textbf{finite} extension $L$ over $K$, the set $ {X}(L)$ is finite,  then, for every  \textbf{finitely generated} extension $M$  of $K$, the set   $X(M)$  is finite.
 \end{theorem}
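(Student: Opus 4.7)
The plan is to chain together the existing machinery: first upgrade the hypothesis to arithmetic hyperbolicity of $X_{\overline{K}}$, then use Corollary \ref{cor:surfaces_text} to transport this across the extension $\overline{K} \subset \overline{M}$, and finally invoke Lemma \ref{lem:existence_pure} together with Theorem \ref{thm:pure} to pass from integral points on a spread-out model back down to rational points over $M$.

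The first step is to verify that $X_{\overline{K}}$ is arithmetically hyperbolic over $\overline{K}$. Writing the finitely generated field $K$ as the fraction field of a $\ZZ$-finitely generated ring $A_0 \subset K \subset \overline{K}$, I would spread $X$ out to a projective model $\mathcal{X}$ over $A_0$ with $\mathcal{X} \otimes_{A_0} K = X$. For any $\ZZ$-finitely generated subring $A' \subset \overline{K}$ containing $A_0$, the fraction field $\mathrm{Frac}(A')$ is finitely generated over $\QQ$ and algebraic over $K$, hence a \emph{finite} extension of $K$; the hypothesis of the theorem then forces $\mathcal{X}(A') \subset X(\mathrm{Frac}(A'))$ to be finite, yielding arithmetic hyperbolicity of $X_{\overline{K}}$ over $\overline{K}$.

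Next, fix an embedding $\overline{K} \hookrightarrow \overline{M}$. Since $X_{\overline{K}}$ admits a non-constant morphism to the abelian variety $A$ and is arithmetically hyperbolic, Corollary \ref{cor:surfaces_text} gives that $X_{\overline{M}}$ is arithmetically hyperbolic over $\overline{M}$. Because arithmetic hyperbolicity implies grouplessness, $X_{\overline{M}}$ is a projective integral groupless surface admitting a non-constant map to the abelian variety $A_{\overline{M}}$, so Lemma \ref{lem:existence_pure} furnishes an arithmetically-pure model. Theorem \ref{thm:pure} then implies that for every finitely generated subfield of $\overline{M}$ and every model, the set of rational points is finite; applying this to the subfield $M \subset \overline{M}$ and the model $X_M := X \otimes_K M$ gives that $X(M) = X_M(M)$ is finite, as desired.

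The main obstacle is conceptual rather than technical: one must bridge the gap between finiteness of \emph{rational points} on a projective variety over a finitely generated field of positive transcendence degree and the \emph{integral points} condition defining arithmetic hyperbolicity on a spread-out model. This gap is exactly what the pure-model technology of Theorem \ref{thm:pure} is designed to close, and the plan hinges on being able to produce an arithmetically-pure model for $X_{\overline{M}}$ via Lemma \ref{lem:existence_pure}, which in turn relies on grouplessness (a consequence of arithmetic hyperbolicity) and on the persistence of the non-constant map to an abelian variety along the extension $\overline{K} \subset \overline{M}$.
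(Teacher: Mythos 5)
Your proposal is correct and takes essentially the same route as the paper's own proof: verify that $X_{\overline{K}}$ is arithmetically hyperbolic (which the paper leaves implicit and you spell out correctly), push this across $\overline{K}\subset\overline{M}$ via Corollary \ref{cor:surfaces_text}, and combine the arithmetically-pure model produced by Lemma \ref{lem:existence_pure} with Theorem \ref{thm:pure} to land on the finiteness of $X(M)$.
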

 \begin{proof}
 Let $k:=\overline{K}$ and note that  the assumption implies that $X_k$   is arithmetically hyperbolic over $k$.  Now, since $X_k$    admits a non-constant morphism to some abelian variety $A$ over $k$,  for every algebraically closed field $k'$ containing $k$, the projective variety $X_{k'}$ is arithmetically hyperbolic over $k'$ by Corollary \ref{cor:surfaces_text}.   Since arithmetically hyperbolic varieties over $k'$ are groupless over $k'$ (see \cite[\S 3]{JAut}) and $X_{k'}$ admits a non-constant morphism to the abelian variety $A_{k'}$, it follows from Lemma \ref{lem:existence_pure} that   $X_{k'}$   has an arithmetically-pure model. Thus,   the result follows from Theorem \ref{thm:pure}.
 \end{proof}
 
 \begin{proof}[Proof of Theorem \ref{thm:surfaces_intro}] This follows from Theorem \ref{thm:surfaces_text2}.
 \end{proof}
 
\begin{remark}
The \emph{a priori} difference between the finiteness of rational points  and the finiteness of integral points on a projective variety over a finitely generated field $K$ naturally leads to Vojta's notion of  ``near-integral points''; see \cite{VojtaLangExc} and also \cite[Section~7]{JBook}.
\end{remark}

 \section{Pseudo-algebraic hyperbolicity}\label{section:pseudo}
 We have followed  Demailly  in our definition of    algebraic hyperbolicity. We now extend Demailly's notion of algebraic hyperbolicity by allowing for an ``exceptional locus'' on which the desired property fails. Lang and Kobayashi  use the term ``pseudo'' for such more general notions (see \cite{KobayashiBook, Lang2}).
 \begin{definition}
 Let $X$ be a projective scheme over $k$ and let $\Delta$ be a closed subscheme of $X$. We say that $X$ is \emph{algebraically hyperbolic modulo $\Delta$} if, there is an ample line bundle $\mathcal L$ on $X$ and  a real number $\alpha_{X,\mathcal{L}}$ depending only on $X$ and $\mathcal{L}$ such that, for every smooth projective curve $C$ over $k$ and every morphism $f:C\to X$ with $f(C)\not\subset \Delta$, the inequality 
\[
\deg_C f^\ast \mathcal{L} \leq \alpha_{X,\mathcal{L}} \cdot \mathrm{genus}(C)
\] holds.  
 \end{definition}

With the definitions given in the introduction, a projective scheme over $k$ is pseudo-algebraically hyperbolic over $k$ if  and only if there is a proper closed subscheme $\Delta$ of $X$ such that $X$ is algebraically hyperbolic modulo $\Delta$.    Also, needless to stress, a projective scheme is algebraically hyperbolic over $k$ if it is algebraically modulo the empty subset.

We start by showing that the pseudo-algebraic hyperbolicity of $X$ persists  over field extensions.

\begin{lemma}\label{lem:ps_alg} Let $k\subset L$ be an extension of algebraically closed fields of characteristic zero.
Let $X$ be a projective scheme over $k$ and let $\Delta\subset X$ be a closed subscheme. If $X$ is algebraically hyperbolic modulo $\Delta$, then $X_L$ is algebraically hyperbolic modulo $\Delta_L$.
\end{lemma}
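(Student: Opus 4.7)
The plan is to prove that the very same ample line bundle $\mathcal{L}$ and real constant $\alpha = \alpha_{X,\Delta,\mathcal{L}}$ that witness algebraic hyperbolicity of $X$ modulo $\Delta$ over $k$ continue to work for $X_L$ and $\Delta_L$ over $L$, via a spreading-out and specialisation argument. Assume for contradiction there exist a smooth projective connected curve $C$ over $L$ and a morphism $f \colon C \to X_L$ with $f(C) \not\subset \Delta_L$ and $\deg_C f^\ast \mathcal{L}_L > \alpha \cdot g(C)$. The goal will then be to produce a $k$-counterexample, thereby contradicting the hypothesis.

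First, spread out the pair $(C,f)$: after shrinking the base if necessary, choose an integral affine $k$-variety $S = \Spec A$ with $A \subset L$ a finitely generated $k$-subalgebra, a smooth projective $S$-scheme $\mathcal{C} \to S$ with geometrically connected one-dimensional fibres, and an $S$-morphism $F \colon \mathcal{C} \to X \times_k S$ whose base change along the inclusion $K(S) \hookrightarrow L$ recovers $C$ and $f$. Because $S$ is connected and $\mathcal{C} \to S$ is smooth and proper, both the arithmetic genus $g(\mathcal{C}_s)$ and the fibrewise degree $\deg F_s^\ast \mathcal{L}$ are constant on $S$, equal respectively to $g(C)$ and to $\deg_C f^\ast \mathcal{L}_L$. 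The crucial step is then to control the image: introduce $Z := F^{-1}(\Delta \times_k S) \subset \mathcal{C}$ and apply Lemma \ref{lem:wholefibre} to the flat finite-type morphism $\mathcal{C} \to S$ and the closed subset $Z$; this yields that the locus $T := \{s \in S : Z_s = \mathcal{C}_s\}$ is Zariski closed in $S$. Were $T$ equal to $S$, then in particular $F_{K(S)}(\mathcal{C}_{K(S)}) \subset \Delta_{K(S)}$, and base change along $K(S) \hookrightarrow L$ would give $f(C) \subset \Delta_L$, contradicting our choice of $f$.

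Hence $S \setminus T$ is a non-empty Zariski open in the $k$-variety $S$; since $k$ is algebraically closed and $S$ is of finite type over $k$, every non-empty open of $S$ contains a $k$-point, so one can pick $s \in (S \setminus T)(k)$. At this $s$, the morphism $F_s \colon \mathcal{C}_s \to X$ from a smooth projective connected curve of genus $g(C)$ over $k$ will satisfy $F_s(\mathcal{C}_s) \not\subset \Delta$ together with $\deg F_s^\ast \mathcal{L} = \deg_C f^\ast \mathcal{L}_L > \alpha \cdot g(\mathcal{C}_s)$, directly contradicting algebraic hyperbolicity of $X$ modulo $\Delta$ over $k$. The main obstacle in this argument is the flat-family control of the ``image lies in $\Delta$'' locus via Lemma \ref{lem:wholefibre}; once that is in hand, the constancy of degree and genus in flat proper families, together with the existence of $k$-points in non-empty opens of a $k$-variety, makes the descent of the counterexample immediate.
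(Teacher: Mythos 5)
Your proof is correct and follows the same basic strategy as the paper's: spread the offending curve and morphism out over an integral affine $k$-variety, then specialise to a $k$-point to produce a counterexample over $k$. The differences are in execution, and both buy you something small. The paper argues by contradiction with a whole sequence $(C_n,f_n)$ of curves over $L$ whose slopes $\deg f_n^\ast\mathcal{L}/\mathrm{genus}(C_n)$ tend to infinity, specialises each one separately, and thereby produces a sequence over $k$ with unbounded slope. You instead fix the witnessing pair $(\mathcal{L},\alpha)$ over $k$ and show it continues to witness algebraic hyperbolicity modulo $\Delta_L$ over $L$, so a single bad curve over $L$ suffices for the contradiction; this is a bit leaner and gives the marginally stronger statement that the constant $\alpha$ does not change under base extension (which the paper also exploits implicitly elsewhere, e.g.\ Remark~\ref{remark:similar}). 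The other difference is that the paper simply asserts the existence of a specialisation point $u_n$ at which the image of $F_{n,u_n}$ is not contained in $\Delta$, while you justify it explicitly via Lemma~\ref{lem:wholefibre} applied to $Z = F^{-1}(\Delta\times S)\subset\mathcal{C}$; this is exactly the flat-openness argument underlying the paper's assertion, made precise, and it is the cleanest way to see that the non-degenerate locus in $S$ is a non-empty Zariski open. The remaining ingredients — local constancy of genus and fibrewise degree in a smooth projective family over a connected base, and the existence of $k$-points in a non-empty open of a finite-type $k$-scheme over an algebraically closed field — are correctly used. One tiny remark: rather than arguing ``if $T=S$ then \dots contradiction,'' it is slightly more direct to note that $f(C)\not\subset\Delta_L$ already forces $Z_\eta\neq\mathcal{C}_\eta$ for the generic point $\eta$ (by faithful flatness of $K(S)\hookrightarrow L$), so $\eta\notin T$, whence $T\subsetneq S$; but that is purely cosmetic.
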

\begin{proof} Let $\mathcal{L}$ be an ample line bundle on $X$. We follow the  proof of \cite[Theorem~7.1]{JKa}. Assume that $X_L$ is not algebraically hyperbolic modulo $\Delta_L$ over $L$. Then, we may choose, for every $n\geq 1$, a smooth projective irreducible hyperbolic curve $C_n$ over $L$ and a morphism  $f_n:C_n\to X_L$ with $f_n(C_n)\not\subset \Delta_L$ such that the slope $s(f_n) := \frac{\deg(f_n^* \mathcal{L})}{\mathrm{genus}(C_n)} >n$; note that $X_L$ does not contain any rational curves, except those in $\Delta_L$, as $X$ has this property, so that $\mathrm{genus}(C_n) \neq 0$. 

For every $n\geq 1$, we choose a finitely generated $k$-algebra $A_n\subset L$ with $U_n :=\Spec A_n$, a smooth projective   morphism $\mathcal{C}_n\to U_n$ with geometrically connected fibres, an isomorphism $\mathcal{C}_{n,L}\cong C_n$ over $L$, a model $F_n:\mathcal{C}_n \to X\times U_n$ for $f_n:C_n\to X_L$ over $U_n$, and a point $u_n$ in $U_n(k)$ such that  the image of $F_{n,u_n}$ is not contained in $\Delta$. Note that, for every $n\geq 1$, the slope  $s(F_{n,u_n})$ of the morphism $F_{n,u_n}:\mathcal{C}_{n,u_n}\to X\times\{u_n\} \cong X$ equals the slope $s(f_n)$. 

 For every $n\geq 1$, we write $D_n:=\mathcal{C}_{n,u_n}$ and note that $D_n$ is a smooth projective irreducible curve over $k$. Note that the slope of the morphisms $F_{n,u_n}:D_n \to X$ tends to infinity as $n$ grows, and that the image of $F_{n,u_n}$ is not contained in $\Delta$. This implies that $X$ is not algebraically hyperbolic modulo $\Delta$, as required.
\end{proof}

\begin{proof}[Proof of Theorem \ref{thm:ps_alg}] Let $\Delta\subset X$ be a proper closed subset such that $X$ is algebraically hyperbolic modulo $\Delta$ over $k$. Then it follows from Lemma \ref{lem:ps_alg} that $X_L$ is algebraically hyperbolic modulo $\Delta_L$ over $L$. As $\Delta_L\subset X_L$ is a proper closed subscheme, we conclude that the projective variety $X_L$ is pseudo-algebraically hyperbolic over $L$, as required.
\end{proof}

\begin{remark}\label{remark:similar} The proof of Lemma \ref{lem:ps_alg} also shows the following useful fact. Let $k \subset L$ be an extension of algebraically closed fields, let $X$ be a projective variety over $k$, let $\Delta \subset X$ be a proper closed subscheme, and let $\mathcal{L}$ be an ample line bundle on $X$. Then, for every smooth projective irreducible curve $C_0$ over $L$ and every $f_0 \in \Hom_L(C_0, X_L) \setminus \Hom_L(C_0, \Delta_L)$, there is a smooth projective irreducible curve $C$ over $k$ and  a morphism $f \in \Hom_k(C,X) \setminus \Hom_k(C, \Delta)$ such that $\mathrm{genus}(C) = \mathrm{genus}(C_0)$ and $\deg_C(f^* \mathcal{L}) = \deg_{C_0}(f_0^* \mathcal{L}_L)$.
\end{remark}

 \subsection{Pseudo-boundedness}
We   extend the notion of boundedness introduced in \cite[\S 4]{JKa} to the pseudo-setting.

 \begin{definition}
 Let $X$ be a projective scheme over $k$ and let $\Delta$ be a closed subscheme of $X$. We say that $X$ is \emph{$N$-bounded modulo $\Delta$} if for every normal projective variety $V$ of dimension at most $N$ over $k$ the scheme $\underline{\Hom}(V,X) \setminus \underline{\Hom}(V,\Delta)$ is of finite type over $k$, i.e., if there are only finitely many polynomials occurring as the Hilbert polynomial of a morphism $V \to X$ not mapping into $\Delta$.  
 \end{definition}

It is obvious that algebraically hyperbolic varieties are $1$-bounded. We record this in the following lemma.
  
 \begin{lemma}\label{lem:basic_implications1}    Let $X$ be a projective scheme over $k$ and let $\Delta$ be a closed subscheme.  If $X$ is algebraically hyperbolic modulo $\Delta$, then $X$ is $1$-bounded modulo $\Delta$.
  \end{lemma}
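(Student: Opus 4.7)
The plan is to use the decomposition of the Hom-scheme by Hilbert polynomial. Let $\mathcal{L}$ and $\alpha_{X,\mathcal{L}}$ be an ample line bundle and a real constant witnessing the algebraic hyperbolicity of $X$ modulo $\Delta$, and let $V$ be a normal projective variety of dimension at most one over $k$. The dimension zero case is trivial: $V$ is a point, so $\underline{\Hom}(V,X)\setminus \underline{\Hom}(V,\Delta) = X\setminus \Delta$ is an open subscheme of $X$, hence of finite type. So I may assume $V$ is a smooth projective connected curve of some genus $g$ (since a normal integral scheme of dimension one is smooth).

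Next, I would invoke the decomposition
\[
\underline{\Hom}(V,X) = \bigsqcup_{d\geq 0} \underline{\Hom}^d(V,X),
\]
where $\underline{\Hom}^d(V,X)$ parametrises morphisms $f:V\to X$ with $\deg_V f^\ast \mathcal{L} = d$. Since $V$ is fixed, the Hilbert polynomial of $f$ (with respect to a sufficiently high power of $\mathcal{L}$) is determined by $d$, and each $\underline{\Hom}^d(V,X)$ is quasi-projective over $k$, hence of finite type. Now algebraic hyperbolicity modulo $\Delta$ says exactly that every morphism $f:V\to X$ with $f(V)\not\subset \Delta$ satisfies $d = \deg_V f^\ast \mathcal{L} \leq \alpha_{X,\mathcal{L}} \cdot g$. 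Hence
\[
\underline{\Hom}(V,X)\setminus \underline{\Hom}(V,\Delta) \;\subset\; \bigsqcup_{0 \leq d \leq \alpha_{X,\mathcal{L}} \cdot g} \underline{\Hom}^d(V,X),
\]
which is a finite union of finite-type schemes. Since $\underline{\Hom}(V,\Delta) \subset \underline{\Hom}(V,X)$ is a closed subscheme, the difference is an open subscheme of a finite-type scheme, hence itself of finite type. There is no main obstacle here; the content of the statement is just that the linear bound on $\deg f^\ast \mathcal{L}$ in terms of the genus translates into finiteness of the set of Hilbert polynomials occurring.
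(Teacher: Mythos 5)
Your proof is correct and uses the same idea as the paper: the linear bound $\deg_C f^\ast\mathcal{L}\leq\alpha_{X,\mathcal{L}}\cdot\mathrm{genus}(C)$ forces the morphisms avoiding $\Delta$ to land in finitely many degree strata $\underline{\Hom}^d(V,X)$, hence finitely many Hilbert polynomials. The paper's own proof is terser (it merely observes $\underline{\Hom}(C,X)\setminus\underline{\Hom}(C,\Delta)$ is open and says "follows from the definitions"), so your spelled-out version is essentially the intended argument made explicit.
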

 
  \begin{proof}For every smooth projective irreducible curve $C$ over $k$,    the scheme  $\underline{\Hom}_k(C,X)\setminus \underline{\Hom}_k(C,\Delta)$ is an open subscheme of    scheme $\underline{\Hom}_k(C,X)$. Therefore the statement follows from the definitions.
  \end{proof}

\begin{lemma}\label{lem:pseudoboundedness_generises} Let $S$ be an  integral normal variety over $k$, let $N$ be a positive integer, and let $X \to S$ be a projective  morphism. Let $\Delta \subset X$ be a closed subscheme. Let
 $A \subset S(k)$ be a subset not contained in any countable union of proper closed subsets of $S$.
If  $X_s$ is $N$-bounded  modulo $\Delta_s$ for all $s \in A$, then $X_{\overline{K(S)}}$ is $N$-bounded modulo $\Delta_{\overline{K(S)}}$.
\end{lemma}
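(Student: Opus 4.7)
My plan is to argue by contraposition. Suppose $X_{\overline{K(S)}}$ is not $N$-bounded modulo $\Delta_{\overline{K(S)}}$. Then there is a single normal projective variety $V^{\ast}$ over $\overline{K(S)}$ of dimension at most $N$ such that $\underline{\Hom}(V^{\ast}, X_{\overline{K(S)}}) \setminus \underline{\Hom}(V^{\ast}, \Delta_{\overline{K(S)}})$ meets infinitely many of its quasi-projective components, indexed by distinct Hilbert polynomials $h_1, h_2, \ldots$ (after fixing a polarization of $V^{\ast}$). I would spread $V^{\ast}$ out together with its polarization: choose a finitely generated extension $K(S) \subset L \subset \overline{K(S)}$ over which $V^{\ast}$ and its polarization are defined, let $S' \to S$ be the normalization of $S$ in $L$ (which is finite and surjective), and shrink to a dense open $U' \subset S'$ so that the resulting model $\tilde V \to U'$ is projective and flat with geometrically normal fibres of dimension at most $N$, relatively polarized.

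Next, I would form the relative Hom-scheme $\underline{\Hom}_{U'}(\tilde V, X \times_S U') \setminus \underline{\Hom}_{U'}(\tilde V, \Delta \times_S U')$; this decomposes as a countable disjoint union $\coprod_h H_h$ of finite-type $U'$-schemes indexed by the Hilbert polynomial $h$ of the morphism. For each $n$, the component $H_{h_n}$ admits a $\overline{K(S)}$-point and therefore dominates $U'$; by Chevalley, its image in $U'$ is constructible and contains the generic point, so it contains a dense open $U'_n \subset U'$. Since $S' \to S$ is finite and surjective, the image of $U'_n$ in $S$ is constructible and contains the generic point of $S$, hence contains a dense open $U_n \subset S$. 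The complement of $\bigcap_{n \geq 1} U_n$ in $S$ is then a countable union of proper closed subsets, and the hypothesis on $A$ produces some $s \in A \cap \bigcap_{n \geq 1} U_n$.

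For such an $s \in S(k)$, the preimage of $s$ in $S'$ consists of finitely many $k$-points, because $S' \to S$ is finite and $k$ is algebraically closed. For each $n$ I would pick a preimage $s'_n \in U'_n$ above $s$, and invoke the pigeonhole principle to find a single $s' \in S'(k)$ lying in $U'_{n_k}$ for infinitely many indices $n_k$. Setting $V := \tilde V_{s'}$ yields a fixed normal projective $k$-variety of dimension at most $N$, and the $k$-points of $H_{h_{n_k}}$ over $s'$ furnish morphisms $V \to X_s$ not mapping into $\Delta_s$ and lying in infinitely many distinct components of $\underline{\Hom}(V, X_s) \setminus \underline{\Hom}(V, \Delta_s)$. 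Thus that scheme is not of finite type, so $X_s$ is not $N$-bounded modulo $\Delta_s$, contradicting $s \in A$.

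The main obstacle is ensuring that the pigeonhole step produces a \emph{single} variety $V$ witnessing the failure of $N$-boundedness in the fibre $X_s$: this is precisely why I first spread out the chosen $V^{\ast}$ to a single family $\tilde V \to U'$ before invoking constructibility and the hypothesis on $A$, rather than working inside a moduli stack of all normal polarized varieties (where distinct Hilbert polynomials of maps could a priori come from distinct isomorphism classes of $V$ in the fibre, which would not suffice to contradict $N$-boundedness at $s$).
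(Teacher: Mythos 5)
Your argument is correct and proceeds by the same specialisation strategy as the paper's proof: spread out the witness variety and its morphisms over dense opens, shrink to dense opens $U_n \subset S$, and use the hypothesis on $A$ to find a closed point $s$ lying in all of them. The paper spreads out each $f_n$ separately over a dominant \'etale cover $V_n \to U$ rather than decomposing the relative Hom-scheme by Hilbert polynomial, but that is a cosmetic variant of the same idea. The one place you are genuinely more careful is the finite base extension needed to descend $V^{\ast}$: the paper says ``taking a finite extension of $K(S)$ if necessary'' yet still writes its model $\mathcal{Y}$ over an open $U\subset S$, which tacitly either assumes $Y$ is already defined over $K(S)$ or requires replacing $(S,A)$ by the normalisation $S'$ in that extension and the preimage of $A$; your explicit passage to $S'$, followed by the pigeonhole argument to choose a single preimage $s'$ of $s$ lying in infinitely many of the $U'_{n}$, resolves the resulting ambiguity in which fibre $\tilde V_{s'}$ to take and is a welcome tightening of the write-up.
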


\begin{proof}
Write $M = \overline{K(S)}$. Suppose $X_M$ is not $N$-bounded modulo $\Delta_M$. Then there exists a normal projective integral variety $Y$ over $M$ of dimension at most $N$ and a sequence of morphisms $f_n \colon Y \rightarrow X_M$ with pairwise distinct Hilbert polynomials such that $f_n(Y) \not \subset \Delta_M$. Taking a finite extension of $K(S)$ if necessary, standard spreading out arguments show that there is a dense open subscheme  $U \subset S$ and a projective flat geometrically integral model $\mathcal{Y} \to U$ for $Y$ over $U$ whose  geometric fibres are normal (and of dimension at most $N$); see for example \cite[Th\'eor\`eme~9.7.7, Proposition~9.9.1, Proposition~9.9.4]{EGAIVIII}.

For every $n\geq 1$, by standard spreading out arguments, there is a dominant \'etale   morphism $V_n\to U$ with image $U_n\subset U$ and a morphism $F_n \colon \mathcal{Y}_{V_n} \to X_{V_n}$  extending the morphism $f_n:Y\to  X_M$  such that, for every $v$ in $V_n(k)$, the Hilbert polynomial of the morphism $F_{n,v}\colon \mathcal{Y}_{V_n,v}\to X_{V_n,v}$ equals the Hilbert polynomial of $f_n$ and the image     of $F_{n,v}$ is not contained in $\Delta_{v_n}$.

By our assumption on the set $A$, the intersection $ \bigcap_{n=1}^\infty U_n \cap A$ is non-empty.   Let $s$ in $S(k)$ be an element of this intersection. For every $n\geq 1$, let $v_n\in V_n$ be a point lying over $s$ (via $V_n\to U_n$). Note that $\mathcal{Y}_{s} \cong \mathcal{Y}_{V_n,v_n}$ for all $n\geq 1$. Moreover,  the morphisms $F_{n,v_n}:\mathcal{Y}_{s}\cong \mathcal{Y}_{V_n,v_n}\to X_{V_n,v_n} \cong X_s$ have pairwise distinct Hilbert polynomials and their image is not contained in $\Delta_{s}$. This implies that $X_{s}$ is not $N$-bounded modulo $\Delta_s$, as required. 
\end{proof}

\begin{corollary}\label{lem:ps_1b} Let $k\subset L$ be an extension of \textbf{uncountable} algebraically closed fields of characteristic zero.
Let $X$ be a projective scheme over $k$ and let $\Delta\subset X$ be a closed subscheme. If $X$ is $N$-bounded modulo $\Delta$ over $k$, then $X_L$ is $N$-bounded modulo $\Delta_L$ over $L$. 
\end{corollary}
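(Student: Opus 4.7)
The plan is to reduce the claim, by spreading out, to a direct application of Lemma~\ref{lem:pseudoboundedness_generises} to the \emph{constant} family $X \times S \to S$ for some $k$-variety $S$ with $K(S) \subset L$.

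First, I would fix an ample line bundle on $X$ and consider an arbitrary normal projective integral variety $Y$ over $L$ of dimension at most $N$; to establish $N$-boundedness of $X_L$ modulo $\Delta_L$, it suffices to show that $\underline{\Hom}_L(Y, X_L) \setminus \underline{\Hom}_L(Y, \Delta_L)$ is of finite type over $L$. By standard EGA~IV limit arguments, I would descend $Y$ to some finitely generated $k$-subalgebra $B \subset L$: setting $S := \Spec B$ (an integral normal affine $k$-variety after shrinking), there exists a projective flat morphism $\mathcal{Y} \to S$ with geometrically normal integral fibres of dimension at most $N$ such that $\mathcal{Y}_{K(S)} \otimes_{K(S)} L = Y$ via the fixed inclusion $K(S) \hookrightarrow L$.

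Next, I would apply Lemma~\ref{lem:pseudoboundedness_generises} to the constant family $X \times S \to S$ together with the closed subscheme $\Delta \times S$. Taking $A := S(k)$, the uncountability of $k$ together with the positive-dimensionality of $S$ ensures that $A$ is not contained in any countable union of proper closed subsets of $S$. For every $s \in A$, the fibre is simply the pair $(X, \Delta)$, which is $N$-bounded modulo $\Delta$ over $k$ by hypothesis. The lemma then delivers that $X_{\overline{K(S)}}$ is $N$-bounded modulo $\Delta_{\overline{K(S)}}$ over $\overline{K(S)}$.

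To conclude, I specialize the latter conclusion to $\mathcal{Y}_{\overline{K(S)}} := \mathcal{Y}_{K(S)} \otimes_{K(S)} \overline{K(S)}$, which is a normal projective integral variety of dimension at most $N$ thanks to our geometric assumptions on $\mathcal{Y} \to S$; this gives that the corresponding Hom scheme modulo $\Delta_{\overline{K(S)}}$ is of finite type over $\overline{K(S)}$. Extending the embedding $K(S) \hookrightarrow L$ to $\overline{K(S)} \hookrightarrow L$ (possible because $L$ is algebraically closed of characteristic zero) and base changing this Hom scheme to $L$ yields the desired finite type statement for $Y$ over $L$. The main obstacle is really just the spreading out: simultaneously descending $Y$ to a finitely generated subfield of $L$ and spreading to a flat projective family whose geometric fibres remain normal and integral. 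This is routine but requires the usual EGA~IV care; in particular, the constructibility of geometric normality and integrality allows us to arrange everything on a dense open of $S$.
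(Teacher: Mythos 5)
Your proof is correct and follows the same route the paper intends: the paper's own proof of this corollary is literally the one-line ``This follows from Lemma~\ref{lem:pseudoboundedness_generises},'' and you have filled in the intended reduction (descend the test variety $Y$ to a finitely generated subfield of $L$, apply the lemma to the constant family $X \times S \to S$ with $A = S(k)$, then base change back up). One small remark: you invoke ``the positive-dimensionality of $S$'' to guarantee that $S(k)$ is not a countable union of proper closed subsets, but $S$ may well be zero-dimensional (i.e.\ $\Spec k$), in which case $Y$ is already defined over $k$ and the conclusion is immediate from the hypothesis — or, equivalently, the hypothesis of Lemma~\ref{lem:pseudoboundedness_generises} is vacuously satisfied by $A = \{\ast\}$; either way there is no gap, but the phrasing should acknowledge this degenerate case rather than tacitly assume $\dim S \geq 1$.
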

\begin{proof}
This follows from Lemma \ref{lem:pseudoboundedness_generises} and the arguments used in the proof of Corollary \ref{cor:mb_persists}.
\end{proof}

 \begin{remark}
 A pseudo-bounded variety is not necessarily birational to a bounded variety.
 \end{remark}

 \begin{lemma} \label{lem:1_is_n}  Assume $k$ is \textbf{uncountable}.   Let $X$ be a projective scheme over $k$ and let $\Delta$ be a closed subscheme. 
  If $X$ is $1$-bounded modulo $\Delta$ over $k$, then $X$ is bounded modulo $\Delta$ over $k$.
\end{lemma}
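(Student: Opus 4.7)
The plan is to argue by contrapositive. Suppose there is an $N \geq 2$ and a normal projective variety $V$ over $k$ of dimension $N$ such that $\underline{\Hom}(V,X) \setminus \underline{\Hom}(V,\Delta)$ is not of finite type. Fix ample line bundles $\mathcal{L}$ on $X$ and $\mathcal{M}$ on $V$. The Hom scheme in question decomposes into finite-type pieces indexed by the Hilbert polynomial of the graph $\Gamma_f \subset V \times X$ with respect to $\mathcal{M} \boxtimes \mathcal{L}$, so the failure of finite type produces a sequence of morphisms $f_n \colon V \to X$ with $f_n(V) \not\subset \Delta$ whose graphs realise pairwise distinct Hilbert polynomials. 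These polynomials are recorded by the intersection numbers $s_i(f_n) := (f_n^{\ast}\mathcal{L})^i \cdot \mathcal{M}^{N-i}$ for $i = 0, \ldots, N$, of which $s_0 = \mathcal{M}^N$ is a fixed positive integer.

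The first step is to promote this to the assertion that $s_1(f_n) = (f_n^{\ast}\mathcal{L}) \cdot \mathcal{M}^{N-1}$ is itself unbounded. For this I will invoke the Khovanskii--Teissier log-concavity inequalities applied to the nef classes $f_n^{\ast}\mathcal{L}$ and $\mathcal{M}$, which give $s_i(f_n)^2 \geq s_{i-1}(f_n) \, s_{i+1}(f_n)$. An elementary induction from these inequalities yields $s_i(f_n) \leq s_1(f_n)^i / s_0^{i-1}$, so boundedness of $s_1(f_n)$ would force simultaneous boundedness of every $s_i(f_n)$ and hence of the Hilbert polynomial of $\Gamma_{f_n}$, contradicting the choice of the sequence.

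The second step is to exhibit one smooth projective connected curve $C \subset V$ which tests all of the $f_n$ at once. For $m$ large, consider the family of complete intersections $C = D_1 \cap \cdots \cap D_{N-1}$ with $D_i \in |\mathcal{M}^m|$, parametrised by $\mathbb{P} := |\mathcal{M}^m|^{N-1}$. Since $V$ is normal (hence singular in codimension at least $2$) and $\mathcal{M}^m$ is very ample, Bertini provides a Zariski-dense open $U \subset \mathbb{P}$ of tuples for which $C$ is smooth, projective and connected and lies in the smooth locus of $V$. For each $n$, the hypothesis $f_n(V) \not\subset \Delta$ implies that $f_n^{-1}(\Delta)$ is a proper closed subset of $V$, so the locus $Z_n \subset U$ of tuples with $C \subset f_n^{-1}(\Delta)$ is a proper Zariski closed subset. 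Uncountability of $k$ then yields a point of $U(k) \setminus \bigcup_n Z_n(k)$; the corresponding curve $C$ satisfies $f_n(C) \not\subset \Delta$ for every $n$, while $\deg_C f_n^{\ast}\mathcal{L} = m^{N-1} \cdot s_1(f_n)$ is unbounded in $n$. This contradicts the assumption that $\underline{\Hom}(C,X) \setminus \underline{\Hom}(C,\Delta)$ is of finite type.

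The main obstacle is the first step: translating the coarse non-finite-type assumption, which a priori only ensures that some unspecified intersection number grows, into control of the single privileged number $s_1$ that is visible to a test curve. The Khovanskii--Teissier inequalities are essential here, since without them the higher self-intersections $(f_n^{\ast}\mathcal{L})^i$ with $i \geq 2$ could in principle account for the growth of the Hilbert polynomial while $s_1$ itself remains bounded.
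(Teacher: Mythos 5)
Your proposal takes a genuinely different route from the paper, and the overall strategy is sound, but Step~1 as written has a logical gap that needs to be flagged.

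The paper argues by induction on dimension: assuming $(n-1)$-boundedness modulo $\Delta$, it produces a single smooth ample divisor $D \subset Y$ avoiding all the countably many loci $\Delta_i = f_i^{-1}(\Delta)$ via uncountability, and cites \cite[Lemma~9.1]{JKa} to conclude that $f_i^* \mathcal{O}(1) \cdot D^{n-1} \to \infty$ because the numerical equivalence classes of the $f_i^*\mathcal{O}(1)$ are pairwise distinct. Your version instead jumps directly to dimension $1$ by cutting with a complete intersection of $N-1$ members of $|\mathcal{M}^m|$; the Bertini and uncountability arguments in your Step~2 are essentially the same as the paper's and are fine (for the closedness of $Z_n$ one can note that it is the preimage of $\mathrm{Hilb}(f_n^{-1}(\Delta))$ under the map $U \to \mathrm{Hilb}(V)$). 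The ``one-shot'' cutting is a clean and valid alternative to the paper's dimension descent.

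The gap is in Step~1, in the claim that ``boundedness of every $s_i(f_n)$'' yields ``boundedness of the Hilbert polynomial of $\Gamma_{f_n}$.'' The Hilbert polynomial $m \mapsto \chi\bigl(V, (\mathcal{M} \otimes f_n^*\mathcal{L})^{\otimes m}\bigr)$ is not a function of the numbers $s_i(f_n) = (f_n^*\mathcal{L})^i \cdot \mathcal{M}^{N-i}$ alone: by Snapper/Riemann--Roch its lower-order coefficients involve intersections of $f_n^*\mathcal{L}$ against Todd/Chern classes of $V$ (e.g.\ terms like $(\mathcal{M} + f_n^*\mathcal{L})^{N-1}\cdot K_V$), which are not recorded by the $s_i$. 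So the phrase ``these polynomials are recorded by the intersection numbers $s_i$'' and the ensuing ``hence of the Hilbert polynomial'' are not justified. The statement you want is nonetheless true, but for a different reason: the Hilbert polynomial depends only on the numerical class of $f_n^*\mathcal{L}$ in $\NS(V)$, and bounding $s_1(f_n) = f_n^*\mathcal{L}\cdot\mathcal{M}^{N-1}$ alone already confines the nef class $f_n^*\mathcal{L}$ to a compact subset of $N^1(V)_\RR$ (the linear functional $\alpha \mapsto \alpha\cdot\mathcal{M}^{N-1}$ is strictly positive on the salient nef cone minus the origin), hence to a finite set of lattice points, hence to finitely many Hilbert polynomials. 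This is precisely what \cite[Lemma~9.1]{JKa}, invoked by the paper, packages. Once this is observed, the Khovanskii--Teissier step is superfluous: it proves that bounding $s_1$ bounds all $s_i$, but that intermediate fact is not what bounds the Hilbert polynomial. You should replace the log-concavity detour with the numerical-class compactness argument (or with a citation to \cite[Lemma~9.1]{JKa}) to close the proof.
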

\begin{proof} (We adapt the proof of  \cite[Theorem~9.3]{JKa}.)
We show by induction  on $n\geq 1$ that $X$ is $n$-bounded modulo $\Delta$ over $k$. Thus,  let $n  \geq 2$ and assume that    $X$ is $(n-1)$-bounded. 
 Note that, for $V$ a projective normal variety over $k$,  the Hilbert polynomial of a morphism $f: V \to X$ is uniquely determined by the numerical equivalence class of $f^* {\mathcal O} (1)$ in $\NS(V)$; see the proof of \cite[Theorem~9.3]{JKa} for an argument.
 
 Assume  that  $X$ is not $n$-bounded, so that there is a projective normal variety $Y$ of dimension $n$ over $k$ and morphisms $f_1, f_2, f_3,\ldots$ from $Y$ to $X$    with pairwise distinct Hilbert polynomials.
Note that the numerical equivalence classes of $f_1^* {\mathcal O} (1), f_2^\ast \mathcal{O}(1), \ldots$ are pairwise distinct. Define $\Delta_i:=f_i^{-1}(\Delta)\subset Y$, and note that   $\Delta_i$ is a proper closed subscheme of $Y$. Since $k$ is uncountable, there is a smooth ample divisor $D$ in $Y$ such that, for all $i$, we have that $D$ is not contained in $\Delta_i$. By \cite[Lemma~9.1]{JKa}, it follows that $$f_i^*({\mathcal O} (1)) \cdot D^{\dim Y -1} \to \infty, \quad i\to \infty.$$  In particular, we have that $f_i^*({\mathcal O} (1))|_D \cdot D|_D^{\dim Y -2} \to \infty$. Therefore, the morphisms $f_i|_D:D\to X$ have pairwise distinct Hilbert polynomials and satisfy $f_i|_D(D) \not \subset \Delta$. Since $D$ is a  smooth projective variety with $\dim D = n-1 <n$,   this contradicts the fact that $X$ is $(n-1)$-bounded modulo $\Delta$. We conclude that $X$ is $n$-bounded modulo $\Delta$. 
\end{proof}

 \begin{lemma}\label{lem:ps_alg_hyp}  Let $k\subset L$ be an extension of algebraically closed fields.  Let $X$ be a projective scheme over $k$ and let $\Delta$ be a closed subscheme. 
 If   $X$ is algebraically hyperbolic  modulo $\Delta$ over $k$, then $X_L$ is bounded modulo $\Delta_L$ over $L$. 
 \end{lemma}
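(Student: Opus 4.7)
The plan is to combine algebraic hyperbolicity modulo $\Delta$ with the implications already established, but with a twist: Lemma \ref{lem:1_is_n} (which upgrades $1$-boundedness to full boundedness) requires the base field to be uncountable, while our given $L$ may be countable. So I would first pass to an uncountable enlargement, apply the chain of lemmas there, and then descend back to $L$.

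Concretely, first fix any uncountable algebraically closed field $L'$ containing $L$; for instance take $L'$ to be the algebraic closure of $L(t_i)_{i\in I}$ for an uncountable set $I$. Since $k\subset L\subset L'$ is a tower of algebraically closed field extensions and $X$ is algebraically hyperbolic modulo $\Delta$ over $k$, Lemma \ref{lem:ps_alg} gives that $X_{L'}$ is algebraically hyperbolic modulo $\Delta_{L'}$ over $L'$. By Lemma \ref{lem:basic_implications1}, $X_{L'}$ is therefore $1$-bounded modulo $\Delta_{L'}$ over $L'$, and since $L'$ is uncountable Lemma \ref{lem:1_is_n} upgrades this to the statement that $X_{L'}$ is bounded modulo $\Delta_{L'}$ over $L'$.

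It remains to descend from $L'$ to $L$. Given a smooth projective connected variety $Y$ over $L$, the base change $Y_{L'}$ is again smooth projective and connected over $L'$ because $L$ is algebraically closed. Since the formation of $\underline{\Hom}$-schemes commutes with arbitrary base change, and since $\underline{\Hom}_L(Y,\Delta_L)$ is a closed subscheme of $\underline{\Hom}_L(Y,X_L)$ (whose complement commutes with base change as well), we obtain a natural isomorphism
\[
\bigl(\underline{\Hom}_L(Y,X_L)\setminus \underline{\Hom}_L(Y,\Delta_L)\bigr)\times_L L' \;\cong\; \underline{\Hom}_{L'}(Y_{L'},X_{L'})\setminus \underline{\Hom}_{L'}(Y_{L'},\Delta_{L'}).
\]
The right-hand side is of finite type over $L'$ by what we proved above, and being of finite type descends along the faithfully flat morphism $\Spec L'\to \Spec L$, so the left-hand factor is of finite type over $L$. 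As $Y$ was arbitrary, this shows that $X_L$ is bounded modulo $\Delta_L$ over $L$.

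The only real subtlety is the detour through $L'$: there is no direct argument passing from $1$-bounded to $n$-bounded without an uncountability hypothesis, because the proof of Lemma \ref{lem:1_is_n} chooses a smooth ample divisor avoiding countably many proper closed subsets. Once we allow ourselves to enlarge $L$ to an uncountable field and exploit faithfully flat descent of the finite type property, the rest of the argument is a bookkeeping combination of Lemma \ref{lem:ps_alg}, Lemma \ref{lem:basic_implications1}, and Lemma \ref{lem:1_is_n}.
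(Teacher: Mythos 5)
Your proof is correct and follows the same chain of implications as the paper's: persistence of algebraic hyperbolicity modulo $\Delta$ via Lemma \ref{lem:ps_alg}, passage to $1$-boundedness modulo $\Delta$ via Lemma \ref{lem:basic_implications1}, and the upgrade to full boundedness via Lemma \ref{lem:1_is_n} after enlarging the field. The paper dispatches the reduction with the single line ``we may and do assume that $L$ is uncountable,'' so your explicit descent of the finite-type property along $\Spec L'\to\Spec L$ (using base-change compatibility of the $\underline{\Hom}$-schemes and their open complements) is a welcome elaboration of a step the paper glosses over rather than a genuinely different route.
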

 \begin{proof} We may and do assume that $L$ is uncountable. By Theorem \ref{thm:ps_alg},   we have that $X_L$ is algebraically hyperbolic modulo $\Delta_L$. In particular, it follows that $X_L$ is   $1$-bounded modulo $\Delta_L$ (Lemma \ref{lem:basic_implications1}).  Therefore, as $L$ is uncountable, it follows from Lemma \ref{lem:1_is_n} that  $X_L$ is bounded modulo $\Delta_L$.  
 \end{proof}
 
 \begin{proof}[Proof of Theorem \ref{thm:alghyp_to_bounded}]
 This follows from Lemma \ref{lem:ps_alg_hyp} (with $k=L$).
 \end{proof}

 \begin{proof}[Proof of Theorem \ref{thm:uniformity}]
  We follow the arguments used to prove \cite[Theorem~1.14]{JKa}.
 
Let $\mathcal{M}:=\mathcal{M}_{g,k}$ be the stack of smooth proper genus $g\geq 2$ curves over $k$  and let $\mathcal{U}\to \mathcal{M}$ be the universal  curve of genus $g$. We claim that the morphism $$\underline{\Hom}_{\mathcal{M}}(\mathcal{U}, X\times \mathcal{M}) \setminus \underline{\Hom}_{\mathcal{M}}(\mathcal{U}, \Delta \times \mathcal{M}) \to \mathcal{M}$$ is of finite type.

To do so,  let $\overline{\mathcal{U}}\to \overline{\mathcal{M}}$ be the universal stable curve of genus $g$ over the   stack $\overline{\mathcal{M}}$ of stable curves of genus $g$. Let $P$ be an integral   projective   scheme over $k$ and let $P\to \overline{\mathcal{M}}$ be a finite flat   surjective morphism of schemes; such a scheme exists by the fact that $\overline{\mathcal{M}}$ is a proper Deligne-Mumford stack with projective coarse space \cite[Theorem~16.6]{LMBbook}. Now, let $Z\to P$ be a finite flat surjective morphism with $Z$ an integral projective bounded scheme over $k$. Let $Y:= \overline{\mathcal{U}} \times_{\overline{M}} Z$ and consider the morphism $Y\to Z$. 
 
  Note that $X\times Z$ is  bounded modulo $\Delta\times Z$.
 Therefore, the scheme $\underline{\Hom}_k(Y, X\times Z)\setminus \underline{\Hom}_k(Y, \Delta\times Z)$ is of finite type over $k$. In particular, the morphism $\underline{\Hom}_{Z}(Y,X\times Z)\setminus \underline{\Hom}_Z(Y, \Delta\times Z)\to Z$ is of finite type. Now, by the definition of Hom-functors, we have that 
\[
\underline{\Hom}_{\overline{\mathcal{M}}}(\overline{\mathcal{U}},X\times \overline{\mathcal{M}}) \times_{\overline{\mathcal{M}}} Z = \underline{\Hom}_{\overline{\mathcal{M}}\times_{\overline{\mathcal{M}}} Z }(\overline{\mathcal{U}} \times_{\overline{\mathcal{M}}} Z ,X\times \overline{\mathcal{M}} \times_{\overline{\mathcal{M}}} Z ) = \underline{\Hom}_{Z}(Y,X\times Z).
\] Therefore,  since, the morphism $\underline{\Hom}_{Z}(Y,X\times Z)\setminus \underline{\Hom}_Z(Y, \Delta\times Z)\to Z$ is of finite type, it follows from   fppf descent that  the morphism $\underline{\Hom}_{\overline{\mathcal{M}}}(\overline{\mathcal{U}},X\times \overline{\mathcal{M}}) \setminus \underline{\Hom}_{\overline{\mathcal{M}}}(\overline{\mathcal{U}}, \Delta\times \overline{\mathcal{M}}) \to \overline{\mathcal{M}}$ is of finite type. By base-change, this proves our claim that the morphism $\underline{\Hom}_{\mathcal{M}}(\mathcal{U}, X\times \mathcal{M}) \setminus \underline{\Hom}_{\mathcal{M}}(\mathcal{U}, \Delta \times \mathcal{M}) \to \mathcal{M}$ is of finite type. As  the latter morphism  is of finite type (for every $g\geq 2$) we see that, for every ample line bundle $\mathcal{L}$ on $X$ and every integer $g\geq 2$, there is an integer $\alpha(X, \Delta, \mathcal{L}, g)$ such that, for every smooth projective irreducible curve $C$ of genus $g$ over $k$ and every morphism $f:C\to X$ with $f(C)\not\subset \Delta$, the inequality 
 \[
 \deg_C f^\ast \mathcal{L} \leq \alpha(X,\Delta,\mathcal{L},g)
 \] holds. This implies the desired statement and concludes the proof.
 \end{proof}

 \begin{corollary}\label{cor:uniformity0}
Let $X$ be a projective scheme over $k$ and let $\Delta$ be a closed subscheme of $X$ such that $X$ is $1$-bounded modulo $\Delta$ over $k$. If $k$ is \textbf{uncountable}, for every ample line bundle $\mathcal{L}$ on $X$ and every integer $g\geq 0$, there is a real number $\alpha(X,\Delta,\mathcal{L},g)$ such that, for every smooth projective irreducible curve $C$ of genus $g$ over $k$  and every morphism $f:C\to X$ with $f(C)\not\subset \Delta$, the inequality 
  \[
  \deg_C f^\ast \mathcal{L} \leq \alpha(X,\Delta,\mathcal{L},g) 
  \] holds. 
 \end{corollary}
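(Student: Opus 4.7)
The plan is to deduce this corollary as a direct consequence of two results already established in the paper. First, I would invoke Lemma \ref{lem:1_is_n}, which states that over an uncountable algebraically closed field $k$ of characteristic zero, a projective scheme that is $1$-bounded modulo $\Delta$ is in fact $n$-bounded modulo $\Delta$ for every $n \geq 1$, i.e.\ bounded modulo $\Delta$. This is precisely where the uncountability hypothesis on $k$ enters the picture, matching the assumption in the statement we wish to prove.

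Once boundedness modulo $\Delta$ is secured, the remaining step is to apply Theorem \ref{thm:uniformity}, which guarantees exactly the desired uniform bound: for every ample line bundle $\mathcal{L}$ on $X$ and every integer $g \geq 0$, there is a real number $\alpha(X, \Delta, \mathcal{L}, g)$ such that, for every smooth projective connected curve $C$ of genus $g$ over $k$ and every morphism $f \colon C \to X$ with $f(C) \not\subset \Delta$, we have $\deg_C f^\ast \mathcal{L} \leq \alpha(X, \Delta, \mathcal{L}, g)$. Setting $\alpha(X, \mathcal{L}, \Delta, g) := \alpha(X, \Delta, \mathcal{L}, g)$ yields the conclusion.

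There is essentially no obstacle here: the corollary is a straightforward combination of Lemma \ref{lem:1_is_n} and Theorem \ref{thm:uniformity}, with the uncountability assumption carrying over verbatim from the former. The proposal reduces to a one-line argument chaining these two results.
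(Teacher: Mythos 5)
Your proposal is correct and matches the paper's proof exactly: the paper also cites Lemma \ref{lem:1_is_n} to upgrade $1$-boundedness modulo $\Delta$ to boundedness modulo $\Delta$ (using the uncountability of $k$), and then applies Theorem \ref{thm:uniformity} to obtain the uniform bound.
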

  \begin{proof} 
Since the ground field $k$ is assumed to be uncountable, this follows from Lemma \ref{lem:1_is_n}  and  Theorem \ref{thm:uniformity}.
 \end{proof}

\begin{corollary} Let $k\subset L$ be an extension of algebraically closed fields. Let $X$ be a projective scheme over $k$ and let $\Delta$ be a closed subscheme. If $X$ is bounded modulo $\Delta$ over $k$, then $X_L$ is bounded modulo $\Delta_L$ over $L$.
\end{corollary}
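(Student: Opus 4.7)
I will route the boundedness property from $k$ to $L$ through an auxiliary uncountable algebraically closed field. Fix an uncountable algebraically closed field $\Omega$ containing $L$ (and hence $k$). The two steps are: establish that $X_\Omega$ is bounded modulo $\Delta_\Omega$ over $\Omega$, and then descend finite-typeness from $\Omega$ back to $L$.

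For the first step, boundedness of $X$ modulo $\Delta$ over $k$ yields, via Theorem~\ref{thm:uniformity}, a real number $\alpha(X,\Delta,\mathcal L,g)$ for each ample $\mathcal L$ on $X$ and each $g\geq 0$ that bounds $\deg_C f^\ast\mathcal L$ whenever $f\colon C\to X$ is a morphism from a smooth projective connected curve of genus $g$ over $k$ with $f(C)\not\subset\Delta$. Setting $\psi(g):=\max\{1,\lceil\alpha(X,\Delta,\mathcal L,g)\rceil\}$ and taking the overall constant in Remark~\ref{remark:similar} to be $1$, that remark transports the same numerical bound to every smooth projective connected curve over $\Omega$ and every morphism into $X_\Omega$ whose image avoids $\Delta_\Omega$. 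Consequently, for each such $C/\Omega$, the scheme $\underline{\Hom}_\Omega(C,X_\Omega)\setminus\underline{\Hom}_\Omega(C,\Delta_\Omega)$ is a union of Hilbert components of uniformly bounded $\mathcal L_\Omega$-degree and is therefore of finite type. Thus $X_\Omega$ is $1$-bounded modulo $\Delta_\Omega$ over $\Omega$; because $\Omega$ is uncountable, Lemma~\ref{lem:1_is_n} upgrades this to full boundedness of $X_\Omega$ modulo $\Delta_\Omega$ over $\Omega$.

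For the descent, let $V$ be any normal projective variety over $L$. Since $L$ is algebraically closed, $V$ is geometrically integral and geometrically normal, so $V_\Omega$ is a normal projective variety over $\Omega$. Hom-schemes commute with base change, so the base change to $\Omega$ of $\underline{\Hom}_L(V,X_L)\setminus\underline{\Hom}_L(V,\Delta_L)$ is $\underline{\Hom}_\Omega(V_\Omega,X_\Omega)\setminus\underline{\Hom}_\Omega(V_\Omega,\Delta_\Omega)$, which is of finite type over $\Omega$ by the previous step. Since the property of being of finite type descends along the faithfully flat morphism $\Spec\Omega\to\Spec L$, the original scheme is of finite type over $L$. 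As $V$ ranges over normal projective varieties over $L$ of all dimensions, this shows $X_L$ is bounded modulo $\Delta_L$ over $L$.

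The main obstacle, and the reason for the detour, is that Lemma~\ref{lem:1_is_n} and Lemma~\ref{lem:pseudoboundedness_generises} both require an uncountable base field, so one cannot directly compare $k$ to $L$ when either is countable. The virtue of threading through $\Omega$ is that curve-degree bounds propagate from $k$ to $\Omega$ with no cardinality assumption via Remark~\ref{remark:similar}, after which a routine faithfully flat descent finishes the job.
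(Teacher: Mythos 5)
Your proof is correct, and it takes essentially the same route the paper intends (Theorem~\ref{thm:uniformity} plus Remark~\ref{remark:similar}), but you make explicit two steps that the paper's one-line proof leaves silent. The paper's argument, read literally, produces uniform genus bounds for curves over $L$ and hence $1$-boundedness of $X_L$ modulo $\Delta_L$; but the promotion from $1$-boundedness to full boundedness via Lemma~\ref{lem:1_is_n} needs an uncountable ground field, so for countable $L$ the paper's citation does not close the argument on its own. You supply exactly the missing pieces: pass first to an uncountable $\Omega\supset L$ (so that Lemma~\ref{lem:1_is_n} applies and gives boundedness of $X_\Omega$ modulo $\Delta_\Omega$), and then descend finite-typeness of $\underline{\Hom}_L(V,X_L)\setminus\underline{\Hom}_L(V,\Delta_L)$ along the faithfully flat quasi-compact morphism $\Spec\Omega\to\Spec L$, using that Hom-schemes and the open condition commute with base change and that $V_\Omega$ is again normal projective because $L$ is algebraically closed. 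Both additions are correct and necessary, and your final remark correctly identifies the cardinality constraint in Lemmas~\ref{lem:1_is_n} and~\ref{lem:pseudoboundedness_generises} as the obstacle that forces the detour; so your write-up is a sound completion of the paper's terse proof rather than a genuinely different approach.
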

\begin{proof}  To prove the corollary, we may and do assume that $L$ is uncountable. Now,  since $X$ is bounded modulo $\Delta$ over $k$, it follows from Theorem \ref{thm:uniformity} that, for every ample line bundle $\mathcal{L}$ on $X$ and every integer $g\geq 0$, there is a real number $\alpha(X,\Delta,\mathcal{L},g)$ such that, for every smooth projective irreducible curve $C$ of genus $g$ over $k$ and every morphism $f:C\to X$ with $f(C)\not\subset \Delta$, the inequality 
\[
\deg_C f^\ast \mathcal{L} \leq \alpha(X,\Delta, \mathcal{L}, g)
\]  holds. In particular, by Remark \ref{remark:similar}, the "same" holds over $L$.  More precisely,  for every smooth projective irreducible curve $C_0$ of genus $g$ over $L$ and every morphism $f_0:C_0\to X_L$ with $f_0(C_0)\not\subset \Delta_L$,  there exists a smooth projective irreducible curve $C$ of genus $g$ over $k$ and a morphism   $f \colon C \to X$ with $f(C) \not\subset \Delta$ such that the (in)equality 
\[
\deg_{C_0} f_0^\ast \mathcal{L}_L = \deg_C f^\ast \mathcal{L} \leq \alpha(X,\Delta, \mathcal{L}, g)
\]  holds.  In particular, the projective scheme $X_L$ is $1$-bounded modulo $\Delta_L$ over $L$. Since $L$ is uncountable, it follows from Lemma \ref{lem:1_is_n}   that $X_L$ is bounded modulo $\Delta_L$ over $L$.
\end{proof}

\begin{remark}
The notion of pseudo-algebraic hyperbolicity is further studied in \cite{JXie}. For example, motivated by Vojta's conjecture and the finiteness theorem of Kobayashi--Ochiai \cite{KobaOchi} for varieties of general type, it is shown in \cite[Theorem~1.11]{JXie} that if $X$ is a projective pseudo-algebraically hyperbolic scheme over $k$ and $Y$ is a projective integral variety over $k$, then the set of surjective morphisms $Y\to X$ is finite.  
\end{remark}

\subsection{Pointed boundedness}
We extend the notion of pointed boundedness introduced in \cite[\S4]{JKa} to the pseudo-setting. Note that this notion is referred to as  ``geometric hyperbolicity'' in \cite[\S 11]{JBook}.

\begin{definition}
Let $X$ be a projective scheme  over $k$ and let $\Delta\subset X$ be a closed subscheme. We  say that $X$ is \emph{$(n,1)$-bounded modulo $\Delta$} if, for every smooth projective connected variety $Y$ of dimension at most $n$ over $k$, every $y$ in $Y(k)$, and every $x$ in $X(k)\setminus \Delta$, the scheme $\underline{\Hom}_k([Y,y],[X,x])$ is of finite type over $k$. 
\end{definition}

\begin{remark} 
Let $k\subset L$ be an extension of \textbf{uncountable} algebraically closed fields of characteristic zero, let $X$ be a projective scheme  over $k$ and let $\Delta\subset X$ be a closed subscheme.  
If $X$ is $(n,1)$-bounded   modulo $\Delta$ over $k$, then $X_L$ is $(n,1)$-bounded modulo $\Delta_L$ over $L$. This is proven in a similar manner as Corollary \ref{lem:ps_1b}. 
\end{remark}

\begin{lemma} Let $X$ be a projective   variety over $k$, let $\Delta\subset X$ be a closed subscheme, and let $n\geq 1$ be an integer. If $X$ is $(1,1)$-bounded modulo $\Delta$, then $X$ is $(n,1)$-bounded modulo $\Delta$.
\end{lemma}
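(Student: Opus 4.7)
The plan is to argue by induction on $n$, adapting the strategy of Lemma \ref{lem:1_is_n} to the pointed setting. The base case $n=1$ is the hypothesis, so I take $n\geq 2$ and assume the statement for $n-1$. Let $Y$ be a smooth projective connected variety of dimension $n$ over $k$, let $y\in Y(k)$, and let $x\in X(k)\setminus\Delta$. Assume for contradiction that $\underline{\Hom}_k([Y,y],[X,x])$ is not of finite type; then there exist morphisms $f_1,f_2,\ldots\colon Y\to X$ with $f_i(y)=x$ whose Hilbert polynomials with respect to a fixed ample $\mathcal{O}_X(1)$ are pairwise distinct.

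Exactly as in the proof of Lemma \ref{lem:1_is_n}, pairwise distinct Hilbert polynomials imply that the numerical classes $f_i^{\ast}\mathcal{O}_X(1)\in\NS(Y)$ are also pairwise distinct. Next I choose a smooth connected ample divisor $D\subset Y$ through $y$; such a divisor exists by Bertini applied to a sublinear system of sections of a sufficiently positive very ample line bundle on $Y$ vanishing at $y$, with connectedness following from the Lefschetz hyperplane theorem since $\dim Y=n\geq 2$.

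The crucial point, and the reason the pointed setting needs no uncountability of $k$ in contrast to Lemma \ref{lem:1_is_n}, is that $f_i(y)=x\notin\Delta$ forces $y\notin f_i^{-1}(\Delta)$ for all $i$, so $D\ni y$ automatically satisfies $D\not\subset f_i^{-1}(\Delta)$; in particular each restriction $f_i|_D\colon D\to X$ is a pointed morphism $[D,y]\to[X,x]$ with $x\in X(k)\setminus\Delta$. Applying \cite[Lemma~9.1]{JKa} to the pairwise distinct classes $f_i^{\ast}\mathcal{O}_X(1)$ together with the ample class $[D]$ gives
\[
f_i^{\ast}\mathcal{O}_X(1)\cdot D^{\dim Y-1}\longrightarrow\infty\qquad(i\to\infty),
\]
so the $\mathcal{O}_X(1)$-degrees of the $f_i|_D$ are unbounded. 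After passing to a subsequence, the morphisms $f_i|_D$ have pairwise distinct Hilbert polynomials; since $D$ is a smooth projective connected variety of dimension $n-1$, this contradicts the inductive hypothesis that $X$ is $(n-1,1)$-bounded modulo $\Delta$ and completes the argument.

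The only genuine step to verify is the existence of a smooth connected ample divisor through the prescribed point $y$; everything else is a routine transposition of the unpointed argument of Lemma \ref{lem:1_is_n}, with the pointed hypothesis $x\notin\Delta$ in fact simplifying matters by removing the need to avoid the countably many subvarieties $f_i^{-1}(\Delta)$.
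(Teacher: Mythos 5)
Your proof is correct and follows essentially the same route as the paper: induction on $n$, passing to pairwise distinct numerical classes $f_i^{\ast}\mathcal{O}_X(1)$, restricting to a smooth ample divisor $D$ through $y$, invoking \cite[Lemma~9.1]{JKa} to get unbounded degrees on $D$, and contradicting $(n-1,1)$-boundedness. You are in fact slightly more careful than the paper's terse write-up in two respects: you note explicitly that $D$ must be connected (which the definition of $(n-1,1)$-bounded requires, and which follows from Lefschetz since $n\geq 2$), and you observe that one should pass to a subsequence to extract pairwise distinct Hilbert polynomials from unbounded degrees; both of these are good clarifications, and your identification of why the pointed setting dispenses with the uncountability hypothesis of Lemma~\ref{lem:1_is_n} (namely $y\in D$ with $f_i(y)=x\notin\Delta$ automatically keeps $D$ out of $f_i^{-1}(\Delta)$) matches the paper's implicit reasoning exactly.
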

\begin{proof}
As the argument is similar to the proof of  Lemma \ref{lem:1_is_n}, we will be brief on the details. (Note that we do not require $k$ to be uncountable.)

We argue by induction on $n$. Thus, suppose that $X$ is $(n-1,1)$-bounded modulo $\Delta$.
 Assume  that  $X$ is not $(n,1)$-bounded modulo $\Delta$, so that there is a projective smooth variety $Y$ of dimension $n$ over $k$, a point $y$ in $Y(k)$, a point $x \in X(k)\setminus \Delta$, and morphisms $f_1, f_2, f_3,\ldots$ from $Y$ to $X$    with pairwise distinct Hilbert polynomials and $f_i(y)=x$.
Note that the numerical equivalence classes of $f_1^* {\mathcal O} (1), f_2^\ast \mathcal{O}(1), \ldots$ are pairwise distinct. 
  Let $D$ be a smooth ample divisor  in $Y$ which contains $y$. Then, the morphisms $f_i|_D$ send $y$ to $x$. In particular, the image of $f_i|_D$ is not contained in $\Delta$. Then, by \cite[Lemma~9.1]{JKa},  the sequence of integers $ f_i^\ast(\mathcal{O}(1)) \cdot D^{\dim Y-1}$ tends to infinity as $i$ tends to infinity.  Since $D$ is a  smooth projective variety with $\dim D = n-1 <n$,   we obtain a contradiction. We conclude that $X$ is $(n,1)$-bounded modulo $\Delta$. 
\end{proof}

    \begin{proposition}\label{prop:1b_is_geomhyp} Let $X$ be a projective scheme over $k$ and let $\Delta$ be a   closed subscheme of $X$. Then the following are equivalent.
   \begin{enumerate}
   \item The projective variety $X$ is $(1,1)$-bounded modulo $\Delta$.
   \item For every smooth projective irreducible curve $C$ over $k$, every $c$ in $C(k)$, and every $x$ in $X(k)\setminus \Delta$, the set $\Hom([C,c],[X,x])$ is finite. 
   \end{enumerate}
  \end{proposition}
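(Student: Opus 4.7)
My plan is to adapt the proof of the analogous Proposition \ref{prop:mnbounded} (the case $\Delta = \emptyset$) to the modulo-$\Delta$ setting. Both directions concern a smooth projective connected curve $C$, a point $c \in C(k)$, and a point $x \in X(k) \setminus \Delta$; the case $\dim Y = 0$ in the definition of $(1,1)$-boundedness modulo $\Delta$ is vacuous.

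For $(2) \implies (1)$, I fix such $(C, c, x)$ and show $\underline{\Hom}_k([C,c],[X,x])$ is of finite type. With respect to a fixed ample line bundle on $X$, this pointed Hom scheme decomposes as a disjoint union $\bigsqcup_P \underline{\Hom}^P_k([C,c],[X,x])$ over Hilbert polynomials $P$, and each piece is a quasi-projective $k$-scheme of finite type. By (2), the total set of $k$-points is finite. Since $k$ is algebraically closed, every non-empty quasi-projective $k$-scheme has a $k$-point, so only finitely many pieces can be non-empty. Moreover, each non-empty piece has only finitely many $k$-points and is therefore zero-dimensional (and hence finite). Consequently $\underline{\Hom}_k([C,c],[X,x])$ is a finite disjoint union of finite schemes, and in particular of finite type.

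For $(1) \implies (2)$, I argue by contradiction. Assume $\underline{\Hom}_k([C,c],[X,x])$ is of finite type (which it is by (1), since $x \notin \Delta$) but $\Hom_k([C,c],[X,x])$ is infinite. Then some irreducible component $V$ is positive-dimensional, and the universal morphism $\phi \colon V \times C \to X$ satisfies $\phi(V \times \{c\}) = \{x\}$. Selecting a smooth projective curve $T$ mapping non-trivially into a projective closure of $V$ and resolving the resulting rational map from $T \times C$ to $X$ on a blow-up, Mori's bend-and-break lemma applied to the resulting family (which contracts the image of $T \times \{c\}$ to the point $x$) produces a rational curve $R \subset X$ passing through $x$. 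Composing a parameterization $\pi \colon \mathbb{P}^1 \to R$ with $\pi(0) = x$ with the degree-$n$ self-covers of $\mathbb{P}^1$ fixing $0$ yields an infinite sequence of pointed morphisms $\mathbb{P}^1 \to X$ sending $0 \mapsto x$ of unbounded degree. Hence $\underline{\Hom}_k([\mathbb{P}^1, 0], [X, x])$ is not of finite type, contradicting (1) applied to $Y = \mathbb{P}^1$, $y = 0$, and $x$ (a valid instance since $x \notin \Delta$).

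The main obstacle is the bend-and-break step, namely extracting a rational curve through a fixed marked image point from a positive-dimensional family of pointed morphisms. This is standard in the theory of rational curves on algebraic varieties (see for instance \cite{Debarrebook1}), but requires some care in compactifying the quasi-projective $V$ and resolving the resulting rational map to set up a genuine morphism on $T \times C$ before invoking bend-and-break.
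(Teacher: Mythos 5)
Your proposal is correct and follows essentially the same route as the paper: $(2)\implies(1)$ is the easy direction (the paper dispenses with it as ``clear''), and $(1)\implies(2)$ proceeds by contradiction via finite type $\Rightarrow$ bounded degree $\Rightarrow$ positive-dimensional component $\Rightarrow$ bend-and-break (the paper cites \cite[Proposition~3.5]{Debarrebook1} directly, which packages the compactification/resolution work you sketch) $\Rightarrow$ a rational curve through $x\notin\Delta$ $\Rightarrow$ contradiction with (1) via degree-$n$ self-covers of $\mathbb{P}^1$.
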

  \begin{proof}  Clearly, it suffices to  show that $(1)\implies (2)$. Thus, let us assume  that there is a sequence $f_1, f_2, \ldots$ of pairwise distinct elements of $\Hom_k([C,c],[X,x])$, where $C$ is a smooth projective irreducible curve, $c\in C(k)$ and $x\in X(k)\setminus \Delta$.   Since $\underline{\Hom}_k([C,c],[X,x]) $ is of finite type (by assumption),   the degree of all the $f_i$ is bounded by some real number (depending only on $C, c, X, x,$ and $\Delta$). Then, it follows that $\underline{\Hom}_k([C,c],[X,x])$ is positive-dimensional, so that by bend-and-break   \cite[Proposition~3.5]{Debarrebook1} there is a rational curve in $X$ containing $x$. This contradicts the  fact that every rational curve in $X$ is contained in $\Delta$.
  \end{proof}

\bibliography{refs}{}
\bibliographystyle{plain}

\end{document}